\newtheorem{thm}{Theorem}[section]
\newtheorem{cor}[thm]{Corollary}
\newtheorem{ex}[thm]{Example}
\newtheorem{lem}[thm]{Lemma}
\newtheorem{nota}[thm]{Notation}
\newtheorem{prop}[thm]{Proposition}
\theoremstyle{definition}
\newtheorem*{acknowledgement}{Acknowledgement}
\newtheorem{df}[thm]{Definition}
\newtheorem{rem}[thm]{Remark}
\numberwithin{equation}{section}
\begin{document}

\def\H{\mathcal{H}}
\def\P{\mathcal{P}}
\def\WC{W_{\mathbb C}}
\def\HC{H_{\mathbb C}}

\def\hfootnote#1{}

\title[Square integrable holomorphic functions]{Square integrable holomorphic functions
on infinite-dimensional Heisenberg type groups}
\author[Driver]{Bruce K. Driver{$^{\dagger }$}}
\thanks{\footnotemark {$^\dagger$}This research was supported in part by NSF Grant DMS-0504608 and the Miller Institute at the University of California, at Berkeley.}
\address{Department of Mathematics, 0112\\
University of California, San Diego \\
La Jolla, CA 92093-0112 } \email{driver{@}euclid.ucsd.edu}
\author[Gordina]{Maria Gordina{$^{*}$}}
\thanks{\footnotemark {$*$}Research was supported in part by NSF Grant DMS-0706784 and the Humboldt Foundation Research Fellowship.}
\address{Department of Mathematics\\
University of Connecticut\\
Storrs, CT 06269, U.S.A. } \email{gordina@math.uconn.edu}

\keywords{Heisenberg group, holomorphic, heat kernel,
quasi-invariance, Taylor map} \subjclass{Primary; 35K05,43A15
Secondary; 58G32}


\date{\today \ \emph{File:\jobname{.tex}}}

\begin{abstract}
We introduce a class of non-commutative, complex,
infinite-dimensional Heisenberg like Lie groups based on an abstract
Wiener space. The holomorphic functions which are also square
integrable with respect to a heat kernel measure $\mu$ on these
groups are studied. In particular, we establish a unitary
equivalence between the square integrable holomorphic functions and
a certain completion of the universal enveloping algebra of the
``Lie algebra'' of this class of groups. Using quasi-invariance of
the heat kernel measure, we also construct a skeleton map which
characterizes globally defined functions from the
$L^{2}\left(\nu\right)$-closure of holomorphic polynomials by their
values on the Cameron-Martin subgroup.

\end{abstract}

\maketitle

\tableofcontents


\renewcommand{\contentsname}{Table of Contents}

\section{Introduction\label{s.h1}}

The aim of this paper is to study spaces of holomorphic functions on
an infinite-dimensional Heisenberg like group based on a complex
abstract Wiener space. In particular, we prove Taylor, skeleton, and
holomorphic chaos isomorphism theorems. The tools we use come from
properties of heat kernel measures on such groups which have been
constructed and studied in \cite{DG07b}. We will state the main
results of our paper and then conclude this introduction with a
brief discussion of how our results relate to the existing
literature.

\subsection{Statements of the main results\label{s.h1.1}}

\subsubsection{The Heisenberg like groups and heat kernel
measures\label{s.h1.1.1}}

The basic input to our theory is a complex abstract Wiener space,
$\left( W,H,\mu\right)$,  as in Notation \ref{n.h2.4} which is
equipped with a continuous skew-symmetric bi-linear form
$\omega:W\times W\rightarrow \mathbf{C}$ as in Notation
\ref{n.h3.1}. Here and throughout this paper, $\mathbf{C}$ is a
finite dimensional complex inner product space. The space,
$G:=W\times\mathbf{C}$,  becomes an infinite-dimensional
\textquotedblleft Heisenberg like\textquotedblright\ group when
equipped with the following multiplication rule
\begin{equation}
\left(  w_{1},c_{1}\right)  \cdot\left(  w_{2},c_{2}\right)  =\left(
w_{1}+w_{2},c_{1}+c_{2}+\frac{1}{2}\omega\left(  w_{1},w_{2}\right)  \right)
. \label{e.h1.1}%
\end{equation}
A typical example of such a group is the Heisenberg group of a
symplectic vector space, but in our setting we have an additional
structure of an abstract Wiener space to carry out the heat kernel
measure analysis.

The group $G$ contains the \emph{Cameron--Martin} group, $G_{CM}%
:=H\times\mathbf{C}$,  as a subgroup. The Lie algebras of $G$ and
$G_{CM}$ will be denoted by $\mathfrak{g}$ and $\mathfrak{g}_{CM}$
respectively which, as sets, may be identified with $G$ and $G_{CM}$
respectively --- see Definition \ref{d.h3.2}, Notation \ref{n.h3.3},
and Proposition \ref{p.h3.5} for more details.

Let $b\left(  t\right)  =\left(  B\left(  t\right), B_{0}\left(
t\right) \right)$ be a Brownian motion on $\mathfrak{g}$ associated
to the natural Hilbertian structure on $\mathfrak{g}_{CM}$ as
described in Eq. (\ref{e.h4.1}). The Brownian motion $\left\{
g\left(  t\right)  \right\}_{t\geq0}$ on $G$ is then the solution to
the stochastic differential equation,
\begin{equation}
dg\left(  t\right)=g\left(  t\right)  \circ db\left(  t\right)
\text{ with
}g\left(  0\right)=\mathbf{e}=\left(  0, 0\right). \label{e.h1.2}%
\end{equation}
The explicit solution to Eq. (\ref{e.h1.2}) may be found in Eq. (\ref{e.h4.2}%
). For each $T>0$ we let $\nu_{T}:=\operatorname*{Law}\left( g\left(
T\right)  \right)$ be the \emph{heat kernel} measure on $G$ at time
$T$ as explained in Definitions \ref{d.h4.1} and \ref{d.h4.2}.
Analogous to the abstract Wiener space setting, $\nu_{T}$ is left
(right) quasi-invariant by an element, $h\in G$,  iff $h\in G_{CM}$,
while $\nu_{T}\left(  G_{CM}\right)=0$, see Theorem \ref{t.h4.5},
Proposition \ref{p.h4.6}, and \cite[Proposition 6.3]{DG07b}.

In addition to the above infinite-dimensional structures we will
need corresponding finite dimensional approximations. These
approximations will be indexed by $\operatorname*{Proj}\left(
W\right)$ which we now define.

\begin{nota}
\label{n.h1.1}Let $\operatorname*{Proj}\left(  W\right)$denote the
collection of finite rank continuous linear maps, $P:W\rightarrow
H$,  such that $P|_{H}$ is an orthogonal projection. (Explicitly,
$P$ must be as in Eq. \eqref{e.h2.17} below.) Further, let
$G_{P}:=PW\times\mathbf{C}$ (a subgroup of $G_{CM})$ and
$\pi_{P}:G\rightarrow G_{P}$ be the projection map defined by
$\pi_{P}\left(  w, c\right):=\left(  Pw, c\right)$. \end{nota}

To each $P\in\operatorname*{Proj}\left(  W\right)$, $G_{P}$ is a
finite dimensional Lie group. The Brownian motions and heat kernel
measures, $\left\{  \nu_{t}^{P}\right\}_{t>0}$,  on $G_{P}$ are
constructed similarly to those on $G$--see Definition \ref{d.h4.10}.
We will use $\left\{  \left( G_{P}, \nu_{T}^{P}\right)
\right\}_{P\in\operatorname*{Proj}\left( W\right) }$ as finite
dimensional approximations to $\left(  G, \nu_{T}\right)$.
\subsubsection{The Taylor isomorphism theorem\label{s.h1.1.2}}

The Taylor map, $\mathcal{T}_{T}$,  is a unitary map relating the
\textquotedblleft square integrable\textquotedblright\ holomorphic
functions on $G_{CM}$ with the collection of their derivatives at
$\mathbf{e\in}G_{CM}$. Before we can state this theorem we need to
introduce the two Hilbert spaces involved.

In what follows, $\mathcal{H}\left(  G_{CM}\right)$ and
$\mathcal{H}\left( G\right)$ will denote the space of holomorphic
functions on $G_{CM}$ and $G$ respectively. (See Section \ref{s.h5}
for the properties of these function spaces which are used
throughout this paper.) We also let $\mathbf{T:=T} \left(
\mathfrak{g}_{CM}\right)$ be the algebraic tensor algebra over
$\mathfrak{g}_{CM}$, $\mathbf{T}^{\prime}$ be its algebraic dual,
$J$ be the two-sided ideal in $\mathbf{T}$ generated by
\begin{equation}
\{h\otimes k-k\otimes h-[h,k]:h,k\in\mathfrak{g}_{CM}\}, \label{e.h1.3}%
\end{equation}
and $J^{0}=\{\alpha\in\mathbf{T}^{\prime}:\alpha\left(  J\right)
=0\}$ be the backwards annihilator of $J$--see Notation
\ref{n.h6.1}. Given $f\in\mathcal{H}\left(  G\right)$ we let
$\alpha:=\mathcal{T}f$ denote the element of $J^{0}$ defined by
$\left\langle \alpha, 1\right\rangle =f\left( \mathbf{e}\right)$ and
\[
\left\langle \alpha, h_{1}\otimes\dots\otimes h_{n}\right\rangle
:=\left( \tilde{h}_{1}\dots\tilde{h}_{n}f\right)  \left(
\mathbf{e}\right)
\]
where $h_{i}\in\mathfrak{g}_{CM}$ and $\tilde{h}_{i}$ is the left
invariant vector field on $G_{CM}$ agreeing with $h_{i}$ at
$\mathbf{e}$--see Proposition \ref{p.h3.5} and Definition
\ref{d.h6.2}. We call $\mathcal{T}$ the Taylor map since
$\mathcal{T}f\in J^{0}\left(  \mathfrak{g}_{CM}\right)$ encodes all
of the derivatives of $f$ at $\mathbf{e}$.

\begin{df}
[$L^{2}$--holomorphic functions on $G_{CM}$]\label{d.h1.2} For
$T>0$,  let
\begin{align}
\left\Vert f\right\Vert_{\mathcal{H}_{T}^{2}\left(  G_{CM}\right)  }
&
=\sup_{P\in\operatorname*{Proj}\left(  W\right)  }\left\Vert f|_{G_{P}%
}\right\Vert_{L^{2}\left(  G_{P},\nu_{T}^{P}\right)  }\text{ for all }%
f\in\mathcal{H}\left(  G_{CM}\right),\text{ and }\label{e.h1.4}\\
\mathcal{H}_{T}^{2}\left(  G_{CM}\right)   &  :=\left\{  f\in\mathcal{H}%
\left(  G_{CM}\right)  :\left\Vert
f\right\Vert_{\mathcal{H}_{T}^{2}\left(
G_{CM}\right)  }<\infty\right\}. \label{e.h1.5}%
\end{align}

\end{df}

In Corollary \ref{c.h6.6} below, we will see that
$\mathcal{H}_{T}^{2}\left( G_{CM}\right)$ is not empty and in fact
contains the space of holomorphic cylinder polynomials $\left(
\mathcal{P}_{CM}\right)$on $G_{CM}$ described in Eq. (\ref{e.h1.7})
below. Despite the fact that $\nu_{T}\left( G_{CM}\right)=0$,
$\mathcal{H}_{T}^{2}\left(  G_{CM}\right)$ should roughly be thought
of as the $\nu_{T}$--square integrable holomorphic functions on
$G_{CM}$.
\begin{df}
[Non-commutative Fock space]\label{d.h1.3}Let $T>0$ and
\[
\left\Vert \alpha\right\Vert_{J_{T}^{0}\left(
\mathfrak{g}_{CM}\right)
}^{2}:=\sum_{n=0}^{\infty}\frac{T^{n}}{n!}\sum_{h_{1},\dots,h_{n}\in
S}\left\vert \left\langle \alpha,h_{1}\otimes\dots\otimes
h_{n}\right\rangle
\right\vert ^{2}\text{ for all }\alpha\in J^{0}\left(  \mathfrak{g}%
_{CM}\right),
\]
where $S$ is any orthonormal basis for $\mathfrak{g}_{CM}$. The
\emph{non-commutative Fock space} is defined as
\[
J_{T}^{0}\left(  \mathfrak{g}_{CM}\right) :=\left\{  \alpha\in
J^{0}\left( \mathfrak{g}_{CM}\right) :\left\Vert \alpha\right\Vert
_{J_{T}^{0}\left( \mathfrak{g}_{CM}\right) }^{2}<\infty\right\}.
\]

\end{df}

It is easy to see that $\left\Vert \cdot\right\Vert_{J_{T}^{0}\left(
\mathfrak{g}_{CM}\right) }$ is a Hilbertian norm on $J_{T}^{0}\left(
\mathfrak{g}_{CM}\right)$--see Definition \ref{d.h6.4} and Eq.
(\ref{e.h6.8}). For a detailed introduction to such Fock spaces we
refer to \cite{GrossMall}.

\begin{rem}
\label{r.h1.4} When $\omega=0$,  $G\left(  \omega\right)$ is
commutative and the Fock space, $J_{T}^{0}\left(
\mathfrak{g}_{CM}\right)$, becomes the standard commutative bosonic
Fock space of symmetric tensors over $\mathfrak{g}_{CM}^{\ast}$.
\end{rem}

The following theorem is proved in Section \ref{s.h6}--see Theorem
\ref{t.h6.10}.

\begin{thm}
[The Taylor isomorphism]\label{t.h1.5}For all $T>0$,
$\mathcal{T}\left( \mathcal{H}_{T}^{2}\left(  G_{CM}\right)  \right)
\subset J_{T}^{0}\left( \mathfrak{g}_{CM}\right)$ and the linear
map,
\begin{equation}
\mathcal{T}_{T}:=\mathcal{T}|_{\mathcal{H}_{T}^{2}\left(  G_{CM}\right)
}:\mathcal{H}_{T}^{2}\left(  G_{CM}\right)  \rightarrow J_{T}^{0}\left(
\mathfrak{g}_{CM}\right), \label{e.h1.6}%
\end{equation}
is unitary.
\end{thm}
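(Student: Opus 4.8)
The plan is to reduce the infinite-dimensional statement to its finite-dimensional counterpart on each approximating group $G_{P}$ and then pass to the limit along the net $\operatorname*{Proj}\left(W\right)$. The foundation is a finite-dimensional Taylor isomorphism: for each $P\in\operatorname*{Proj}\left(W\right)$ the map $\mathcal{T}_{T}^{P}:\mathcal{H}_{T}^{2}\left(G_{P}\right)\rightarrow J_{T}^{0}\left(\mathfrak{g}_{P}\right)$ sending a holomorphic function to its array of derivatives at $\mathbf{e}$ is unitary. I would establish this first, since it is the analytic heart of the matter: holomorphic functions on the finite-dimensional complex Lie group $G_{P}$ are determined by their Taylor coefficients at $\mathbf{e}$, and the identity $\left\Vert f\right\Vert_{L^{2}\left(\nu_{T}^{P}\right)}^{2}=\sum_{n}\frac{T^{n}}{n!}\sum_{h_{1},\dots,h_{n}}\left\vert (\tilde{h}_{1}\cdots\tilde{h}_{n}f)\left(\mathbf{e}\right)\right\vert^{2}$ follows by expanding $\int_{G_{P}}\left\vert f\right\vert^{2}\,d\nu_{t}^{P}$ in powers of $t$ using $\frac{d}{dt}\nu_{t}^{P}=\frac{1}{2}\Delta_{P}\nu_{t}^{P}$ together with the compatibility of the Laplacian $\Delta_{P}$ with the holomorphic structure. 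The ideal $J$ enters precisely because left-invariant differentiation only sees the relations $h\otimes k-k\otimes h=[h,k]$ (since $[\tilde{h},\tilde{k}]=\widetilde{[h,k]}$), so the coefficient array annihilates $J$ and lands in $J^{0}\left(\mathfrak{g}_{P}\right)$.

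For the isometry in the infinite-dimensional setting, fix $f\in\mathcal{H}_{T}^{2}\left(G_{CM}\right)$ and choose an orthonormal basis $S$ of $\mathfrak{g}_{CM}$ adapted to the net, so that for each $P$ the finite set $S\cap\mathfrak{g}_{P}$ is an orthonormal basis of $\mathfrak{g}_{P}$. Since $G_{P}\subset G_{CM}$ and each $\tilde{h}_{i}$ with $h_{i}\in\mathfrak{g}_{P}$ is tangent to $G_{P}$, the Taylor coefficients of the restriction $f|_{G_{P}}$ indexed by tensors over $\mathfrak{g}_{P}$ coincide with the coefficients $\langle\mathcal{T}f,h_{1}\otimes\cdots\otimes h_{n}\rangle$ of $\mathcal{T}f$. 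Hence
\[
\left\Vert \mathcal{T}_{T}^{P}(f|_{G_{P}})\right\Vert_{J_{T}^{0}\left(\mathfrak{g}_{P}\right)}^{2}=\sum_{n=0}^{\infty}\frac{T^{n}}{n!}\sum_{h_{1},\dots,h_{n}\in S\cap\mathfrak{g}_{P}}\left\vert\langle\mathcal{T}f,h_{1}\otimes\cdots\otimes h_{n}\rangle\right\vert^{2},
\]
and as $P$ increases this quantity increases monotonically to $\left\Vert\mathcal{T}f\right\Vert_{J_{T}^{0}\left(\mathfrak{g}_{CM}\right)}^{2}$ by monotone convergence. Combining this with the finite-dimensional unitarity and the definition $\left\Vert f\right\Vert_{\mathcal{H}_{T}^{2}\left(G_{CM}\right)}=\sup_{P}\left\Vert f|_{G_{P}}\right\Vert_{L^{2}\left(\nu_{T}^{P}\right)}$ yields $\left\Vert\mathcal{T}f\right\Vert_{J_{T}^{0}\left(\mathfrak{g}_{CM}\right)}=\left\Vert f\right\Vert_{\mathcal{H}_{T}^{2}\left(G_{CM}\right)}$; in particular $\mathcal{T}$ carries $\mathcal{H}_{T}^{2}\left(G_{CM}\right)$ isometrically into $J_{T}^{0}\left(\mathfrak{g}_{CM}\right)$.

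It remains to prove surjectivity. Given $\alpha\in J_{T}^{0}\left(\mathfrak{g}_{CM}\right)$, let $\alpha_{P}$ be its restriction to tensors over $\mathfrak{g}_{P}$, so $\alpha_{P}\in J_{T}^{0}\left(\mathfrak{g}_{P}\right)$ with $\left\Vert\alpha_{P}\right\Vert\uparrow\left\Vert\alpha\right\Vert$. By finite-dimensional surjectivity there is $f_{P}\in\mathcal{H}_{T}^{2}\left(G_{P}\right)$ with $\mathcal{T}_{T}^{P}f_{P}=\alpha_{P}$; composing with $\pi_{P}$ gives cylinder functions $F_{P}:=f_{P}\circ\pi_{P}$ on $G_{CM}$. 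Applying the isometry to the differences $F_{P}-F_{Q}$ shows that $\{F_{P}\}$ is Cauchy in the $\mathcal{H}_{T}^{2}\left(G_{CM}\right)$-norm, so it converges to some $f$ with $\mathcal{T}f=\alpha$ and $\left\Vert f\right\Vert_{\mathcal{H}_{T}^{2}\left(G_{CM}\right)}=\left\Vert\alpha\right\Vert_{J_{T}^{0}\left(\mathfrak{g}_{CM}\right)}$, provided $\mathcal{H}_{T}^{2}\left(G_{CM}\right)$ is complete and the limit is genuinely holomorphic on all of $G_{CM}$.

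The main obstacle is exactly this last point: upgrading $\mathcal{H}_{T}^{2}$-convergence to an honest holomorphic limit. I would resolve it using the pointwise reproducing-kernel bounds for $\mathcal{H}_{T}^{2}\left(G_{CM}\right)$ from Section \ref{s.h5}, namely an estimate $\left\vert f\left(g\right)\right\vert\leq C_{T}\left(g\right)\left\Vert f\right\Vert_{\mathcal{H}_{T}^{2}\left(G_{CM}\right)}$ that is locally uniform in $g\in G_{CM}$. Such a bound forces $\mathcal{H}_{T}^{2}$-Cauchy sequences to converge locally uniformly, so the limit of holomorphic functions is holomorphic (completeness follows the same way) and term-by-term differentiation is legitimate, which identifies $\mathcal{T}f=\alpha$. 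The two auxiliary facts the argument leans on — monotonicity of $P\mapsto\left\Vert f|_{G_{P}}\right\Vert_{L^{2}\left(\nu_{T}^{P}\right)}$ along the net, via consistency of the $\nu_{T}^{P}$ under the projections $\pi_{P}$, and the finite-dimensional isometry itself — are precisely where the Heisenberg group structure and the heat kernel computations do the real work.
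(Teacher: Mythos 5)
Your isometry argument is essentially the paper's: the finite-dimensional identity $\left\Vert f\right\Vert_{L^{2}\left(  G_{P},\nu_{T}^{P}\right)  }=\left\Vert \hat{f}\left(  \mathbf{e}\right)  \right\Vert_{J_{T}^{0}\left(  \mathfrak{g}_{P}\right)  }$ is quoted from the finite-dimensional theory, and the supremum over $P$ is identified with $\left\Vert \mathcal{T}f\right\Vert_{J_{T}^{0}\left(  \mathfrak{g}_{CM}\right)  }$ by monotone convergence along nested bases exactly as in Lemma \ref{l.h6.5}. The surjectivity half, however, has a genuine gap. Your plan lifts $f_{P}$ with $\mathcal{T}_{T}^{P}f_{P}=\alpha_{P}$ to the cylinder function $F_{P}=f_{P}\circ\pi_{P}$ and then wants to ``apply the isometry to $F_{P}-F_{Q}$'' to get a Cauchy net. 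But $\pi_{P}:G_{CM}\rightarrow G_{P}$ is \emph{not} a group homomorphism (Remark \ref{r.h7.5}): $\pi_{P}\left(  g\cdot g^{\prime}\right)$ and $\pi_{P}\left(  g\right)  \cdot\pi_{P}\left(  g^{\prime}\right)$ differ by $\Gamma_{P}\left(  w,w^{\prime}\right)  =\frac{1}{2}\left(  0,\omega\left(  w,w^{\prime}\right)  -\omega\left(  Pw,Pw^{\prime}\right)  \right)$. Consequently the left-invariant derivatives of $F_{P}$ in directions outside $\mathfrak{g}_{P}$ do not vanish, and $\mathcal{T}F_{P}$ is \emph{not} $\alpha_{P}$ extended by zero; it is $\alpha_{P}$ plus remainder terms built from $\Gamma_{P}$ (this is the content of Proposition \ref{p.h7.9} and Lemma \ref{l.h7.10}). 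So $\left\Vert F_{P}-F_{Q}\right\Vert_{\mathcal{H}_{T}^{2}\left(  G_{CM}\right)  }$ is not $\left\Vert \alpha_{P}-\alpha_{Q}\right\Vert$, and controlling the discrepancy requires the Hilbert--Schmidt estimate $\varepsilon_{N}\rightarrow0$ of Proposition \ref{p.h7.11} --- machinery the paper develops for the density theorem in Section \ref{s.h7}, not for the Taylor isomorphism. (A secondary issue: you would also need to know a priori that $F_{P}\in\mathcal{H}_{T}^{2}\left(  G_{CM}\right)$ for a general $f_{P}\in\mathcal{H}_{T}^{2}\left(  G_{P}\right)$, which is not automatic from the growth bounds available.)

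The paper avoids all of this by producing the preimage explicitly: since $G_{CM}$ is two-step nilpotent and $e^{tg}=tg$, Lemma \ref{l.h6.9} shows every $f\in\mathcal{H}\left(  G_{CM}\right)$ equals its own Taylor series $f\left(  g\right)  =\sum_{n}\frac{1}{n!}\left\langle \hat{f}_{n}\left(  \mathbf{e}\right)  ,g^{\otimes n}\right\rangle$, so given $\alpha\in J_{T}^{0}\left(  \mathfrak{g}_{CM}\right)$ one simply \emph{defines} $f\left(  g\right)  :=\sum_{n}\frac{1}{n!}\left\langle \alpha_{n},g^{\otimes n}\right\rangle$. The Cauchy--Schwarz estimate \eqref{e.h6.17} gives uniform convergence on bounded sets (hence holomorphy and, incidentally, the pointwise bound \eqref{e.h6.18} you wanted), one checks $\hat{f}\left(  \mathbf{e}\right)  =\alpha$ by differentiating the series, and the already-proved isometry gives $f\in\mathcal{H}_{T}^{2}\left(  G_{CM}\right)$. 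If you want to salvage your route, you would have to either carry out the $\Gamma_{P}$ correction analysis of Section \ref{s.h7}, or replace $F_{P}$ by the partial sums of this explicit series --- at which point you have reproduced the paper's proof.
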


Associated to this theorem is an analogue of Bargmann's pointwise bounds which
appear in Theorem \ref{t.h6.11} below.

\subsubsection{The skeleton isomorphism theorem\label{s.h1.1.3}}
Similarly to how it has been done on a complex abstract Wiener space
by H.~Sugita in \cite{Sugita1994a,Sugita1994b}, the quasi-invariance
of the heat kernel measure $\nu_{T}$ allows us to define the
skeleton map from $L^{p}\left(  G, \nu_{T}\right)$ to a space of
functions on the Cameron-Martin subgroup $G_{CM}$, a set of
$\nu_{T}$-measure $0$.
\begin{df}
\label{d.h1.6}A \textbf{holomorphic cylinder polynomial on }$G$ is a
holomorphic cylinder function (see Definition \ref{d.h4.3}) of the form,
$f=F\circ\pi_{P}:G\rightarrow\mathbb{C}$,  where $P\in\operatorname*{Proj}%
\left(  W\right)$ and $F:PW\times\mathbf{C}\mathbb{\rightarrow C}$
is a holomorphic polynomial. The space of holomorphic cylinder
polynomials will be denoted by $\mathcal{P}$. \end{df}

The \textquotedblleft Gaussian\textquotedblright\ heat kernel bounds
in Theorem \ref{t.h4.11} easily imply that $\mathcal{P}\subset
L^{p}\left( \nu_{T}\right)$ for all $p<\infty$--see Corollary
\ref{c.h5.10}.

\begin{df}
[Holomorphic $L^{p}$--functions]\label{d.h1.7} For $T>0$ and
$1\leqslant p<\infty$,  let $\mathcal{H}_{T}^{p}\left(  G\right)$
denote the $L^{p}\left(  \nu_{T}\right)$~--~closure of
$\mathcal{P}\subset L^{p}\left( \nu_{T}\right)$. \end{df}

From Corollary \ref{c.h4.8} below, if $T>0$,  $p\in(1,\infty]$,
$f\in L^{p}\left(  G,\nu_{T}\right)$,  and $h\in G_{CM}$,  then
$\int_{G}\left\vert f\left(  h\cdot g\right)  \right\vert
d\nu_{T}\left(  g\right)  <\infty$. Thus, if
$f\in\mathcal{H}_{T}^{2}\left(  G\right)$ we may define the
\emph{skeleton map }(see Definition \ref{d.h4.7}) by%
\[
\left(  S_{T}f\right)  \left(  h\right)  :=\int_{G}f\left(  h\cdot
g\right) d\nu_{T}\left(  g\right).
\]
It is shown in Theorem \ref{t.h5.12} that $S_{T}\left(  \mathcal{H}_{T}%
^{2}\left(  G\right)  \right)  \subset\mathcal{H}_{T}^{2}\left(
G_{CM}\right)$ for all $T>0$.
\begin{thm}
[The skeleton isomorphism]\label{t.h1.8}For each $T>0$,  the
skeleton map,
$S_{T}:\mathcal{H}_{T}^{2}\left(  G\right)  \rightarrow\mathcal{H}_{T}%
^{2}\left(  G_{CM}\right)$,  is unitary.
\end{thm}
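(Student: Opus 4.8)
The plan is to prove that $S_T$ is unitary by establishing three things: that $S_T$ is isometric on the dense subspace $\mathcal{P}$ of holomorphic cylinder polynomials, that this isometry extends to all of $\mathcal{H}_T^2(G)$, and that the extension is surjective onto $\mathcal{H}_T^2(G_{CM})$. Since $\mathcal{H}_T^2(G)$ is by definition the $L^2(\nu_T)$-closure of $\mathcal{P}$, the natural strategy is to verify the isometry property first on cylinder polynomials, where everything reduces to a finite-dimensional computation, and then to pass to the limit.

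**First I would** analyze the skeleton map on a holomorphic cylinder polynomial $f = F \circ \pi_P$. For such an $f$, the defining integral $(S_T f)(h) = \int_G f(h \cdot g)\, d\nu_T(g)$ should reduce, via the projection structure and the compatibility of $\{\nu_T^P\}$ with $\nu_T$ under $\pi_P$, to an integral over the finite-dimensional group $G_P$ against $\nu_T^P$. The key point is to identify $S_T f$ explicitly as a convolution-type expression on $G_{CM}$ and to verify that its restriction to each $G_Q$ is again a holomorphic function, so that $S_T f \in \mathcal{H}_T^2(G_{CM})$ as already asserted from Theorem \ref{t.h5.12}. The isometry $\|S_T f\|_{\mathcal{H}_T^2(G_{CM})} = \|f\|_{L^2(\nu_T)}$ on this class is where the heat kernel semigroup property must be exploited: I expect that $S_T$ acts as a holomorphic projection that preserves the relevant quadratic form, and the identity should follow from a finite-dimensional Fock-space or heat-kernel computation, likely by factoring through the Taylor map $\mathcal{T}_T$ of Theorem \ref{t.h1.5}.

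**The cleanest route** may well be to avoid a direct isometry computation and instead factor $S_T$ through the two unitaries already in hand. One expects a commuting diagram in which the Taylor map on $G$ (an analogue of $\mathcal{T}_T$ for $\mathcal{H}_T^2(G)$, giving a unitary onto the same Fock space $J_T^0(\mathfrak{g}_{CM})$) and the Taylor map $\mathcal{T}_T$ on $G_{CM}$ both land in $J_T^0(\mathfrak{g}_{CM})$, and $S_T$ is precisely the composition $\mathcal{T}_T^{-1} \circ (\text{Taylor map on } G)$. Since the Taylor coefficients of a function and of its skeleton at $\mathbf{e}$ should coincide --- the skeleton map is defined by heat-kernel averaging, which fixes the value and all left-invariant derivatives at the identity up to the same exponential weighting captured in the Fock norm --- this identification would make $S_T$ unitary immediately, as a composition of two unitaries onto the common space $J_T^0(\mathfrak{g}_{CM})$.

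**The main obstacle** will be surjectivity, that is, showing every $F \in \mathcal{H}_T^2(G_{CM})$ arises as $S_T f$ for some $f \in \mathcal{H}_T^2(G)$. Injectivity and the isometry bound are comparatively routine once the finite-dimensional computation is set up, but proving that the image is all of $\mathcal{H}_T^2(G_{CM})$ requires inverting the heat-kernel averaging, which is delicate because $\nu_T(G_{CM}) = 0$ so one cannot simply restrict. The cleanest resolution is to use the Taylor isomorphism: since $\mathcal{T}_T$ maps $\mathcal{H}_T^2(G_{CM})$ \emph{onto} $J_T^0(\mathfrak{g}_{CM})$ and the Taylor map on $G$ has the same image, matching Taylor data gives a preimage, and one must then check this preimage genuinely lies in the $L^2(\nu_T)$-closure of $\mathcal{P}$ rather than in some larger space. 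Verifying that the Taylor-matched function is a genuine limit of holomorphic cylinder polynomials in $L^2(\nu_T)$, using the Bargmann-type pointwise bounds of Theorem \ref{t.h6.11}, will be the technical heart of the argument.
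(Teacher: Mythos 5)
Your overall architecture---isometry on the dense subspace $\mathcal{P}$, extension by continuity, surjectivity---is exactly the paper's. The isometry step is also essentially right in spirit: the paper shows $S_T p = p|_{G_{CM}}$ for $p \in \mathcal{P}$ (Corollary \ref{c.h5.10}, via the heat-kernel mean-value property for holomorphic functions on the finite-dimensional groups $G_P$), and then $\left\Vert p\right\Vert_{L^{2}(\nu_T)} = \lim_n \left\Vert p\right\Vert_{L^{2}(G_{P_n},\nu_T^{P_n})} = \left\Vert p|_{G_{CM}}\right\Vert_{\mathcal{H}_T^2(G_{CM})}$ by the finite-dimensional Taylor isometry (Lemmas \ref{l.h6.5} and \ref{l.h8.2}). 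Your phrase ``holomorphic projection that preserves the relevant quadratic form'' should be sharpened to the identity $S_T p = p|_{G_{CM}}$; that identity is what makes the whole argument run.

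The genuine gap is in your surjectivity argument. Your ``cleanest route'' invokes a ``Taylor map on $G$'' landing unitarily in $J_T^0(\mathfrak{g}_{CM})$, but no such map exists independently of $S_T$: elements of $\mathcal{H}_T^2(G)$ are only $\nu_T$-equivalence classes and $\nu_T(G_{CM})=0$, so they have no intrinsic derivatives at $\mathbf{e}$; the paper's map $\mathcal{T}_T \circ S_T$ is \emph{defined} through the skeleton map, so using its surjectivity to prove surjectivity of $S_T$ is circular. The input that actually closes the argument is Theorem \ref{t.h7.1}: $\mathcal{P}_{CM}$ is dense in $\mathcal{H}_T^2(G_{CM})$. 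Granting that, the image of the isometric extension $\bar S_T$ contains the dense set $\mathcal{P}_{CM}$ and is closed (isometric image of a complete space), hence is everything. This density theorem is a substantial separate result---proved via the finite-rank density Lemma \ref{l.h7.3} (a Fej\'er-kernel average over the dilation action $\varphi_\theta$) together with the approximation $u\circ\pi_N \to u$, which must account for the failure of $\pi_P$ to be a group homomorphism---and it is not supplied by the Bargmann pointwise bounds you cite. Those bounds enter only at the final, easier step you omit: showing that the abstract continuous extension $\bar S_T$ agrees with the concrete integral operator $S_T$ on all of $\mathcal{H}_T^2(G)$, which the paper does by comparing $\lim_n p_n(g)$ computed two ways using Eq. (\ref{e.h6.19}) and Corollary \ref{c.h4.8}.
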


Following Sugita's results \cite{Sugita1994a, Sugita1994b} in the case of an abstract Wiener space, we call $S_{T}%
|_{\mathcal{H}_{T}^{2}\left(  G \right)}$ the skeleton map since it
characterizes $f\in\mathcal{H}_{T}^{2}\left(  G\right)$ by its
\textquotedblleft values\textquotedblright, $S_{T}f$, on $G_{CM}$.
Sugita would refer to $G_{CM}$ as the skeleton of $G\left(
\omega\right)$ owing to the fact that $\nu_{T}\left( G_{CM}\right)
=0$ as we show in Proposition \ref{p.h4.6}.

Theorem \ref{t.h1.8} is proved in Section \ref{s.h8} and relies on
two key density results from Section \ref{s.h7}. The first is Lemma
\ref{l.h7.3} (an infinite-dimensional version of \cite[Lemma
3.5]{D-G-SC07b}) which states that the finite rank tensors (see
Definition \ref{d.h7.2}) are dense inside of $J_{T}^{0}\left(
\mathfrak{g}_{CM}\right)$. The second is Theorem \ref{t.h7.1} which
states that
\begin{equation}
\mathcal{P}_{CM}:=\left\{  p|_{G_{CM}}:p\in\mathcal{P}\right\}  \label{e.h1.7}%
\end{equation}
is a dense subspace of $\mathcal{H}_{T}^{2}\left(  G_{CM}\right)$.
Matt Cecil \cite{Cecil2008} has modified the arguments presented in
Section \ref{s.h7} to cover the situation of path groups over graded
nilpotent Lie groups. Cecil's arguments are necessarily much more
involved because his Lie groups have nilpotency of arbitrary step.

\subsubsection{The holomorphic chaos expansion\label{s.h1.1.4}}

So far we have produced (for each $T>0$) two unitary isomorphisms,
the skeleton map $S_{T}$ and the Taylor isomorphism
$\mathcal{T}_{T}$,
\[
\mathcal{H}_{T}^{2}\left(  G\right)  \overset{S_{T}}{\underset{\cong
}{\longrightarrow}}\mathcal{H}_{T}^{2}\left(  G_{CM}\right)
\overset
{\mathcal{T}_{T}}{\underset{\cong}{\longrightarrow}}J_{T}^{0}\left(
\mathfrak{g}_{CM}\right).
\]
The next theorem gives an explicit formula for $\left(
\mathcal{T}_{T}\circ S_{T}\right)^{-1}:J_{T}^{0}\left(
\mathfrak{g}_{CM}\right)  \rightarrow \mathcal{H}_{T}^{2}\left(
G\right)$.
\begin{thm}
[The holomorphic chaos expansion]\label{t.h1.9} If
$f\in\mathcal{H}_{T}^{2}\left(  G\right)$ and
$\alpha_{f}:=\mathcal{T}_{T}S_{T}f$, then
\begin{equation}
f\left(  g\left(  T\right)  \right)  =\sum_{n=0}^{\infty}\left\langle
\alpha_{f},\int_{0\leq s_{1}\leq s_{2}\leq\dots\leq s_{n}\leq T}db\left(
s_{1}\right)  \otimes\dots\otimes db\left(  s_{n}\right)  \right\rangle
\label{e.h1.8}%
\end{equation}
where $b\left(  t\right)$ and $g\left(  t\right)$ are related as in
Eq. (\ref{e.h1.2}) or equivalently as in Eq. (\ref{e.h4.2}).
\end{thm}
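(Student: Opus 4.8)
The plan is to show that the right-hand side of \eqref{e.h1.8} defines the inverse of the unitary $\mathcal{T}_T\circ S_T$. Concretely, for $\alpha\in J_T^0(\mathfrak{g}_{CM})$ set
\[
\Phi(\alpha):=\sum_{n=0}^{\infty}\Big\langle \alpha,\int_{\Delta_n(T)} db(s_1)\otimes\cdots\otimes db(s_n)\Big\rangle,\qquad \Delta_n(T):=\{0\le s_1\le\cdots\le s_n\le T\},
\]
regarded as a map $J_T^0(\mathfrak{g}_{CM})\to L^2(G,\nu_T)$, where we use the isometry $F\mapsto F(g(T))$ from $L^2(G,\nu_T)$ into $L^2(\Omega)$ afforded by $\nu_T=\operatorname{Law}(g(T))$. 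Since $\mathcal{T}_T\circ S_T$ is unitary by Theorems \ref{t.h1.5} and \ref{t.h1.8}, and $\mathcal{P}$ is dense in $\mathcal{H}_T^2(G)$ by definition, the theorem reduces to two assertions: (a) $\Phi$ is bounded, indeed isometric, on $J_T^0(\mathfrak{g}_{CM})$; and (b) $\Phi(\mathcal{T}_T S_T f)=f$ in $L^2(\nu_T)$ for every holomorphic cylinder polynomial $f\in\mathcal{P}$. Granting these, $\Phi\circ\mathcal{T}_T\circ S_T$ is a bounded operator on $\mathcal{H}_T^2(G)$ agreeing with the identity on the dense subspace $\mathcal{P}$, hence equal to the identity; surjectivity of $\mathcal{T}_T S_T$ then gives $\Phi=(\mathcal{T}_T\circ S_T)^{-1}$, which is \eqref{e.h1.8}.

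The structural fact driving everything is that holomorphic functions are annihilated by the generator $L=\tfrac12\sum_i\tilde h_i^2$ of the heat semigroup: complexifying a real orthonormal basis pairs each $\tilde h$ with $\widetilde{ih}$, and on a holomorphic function $\widetilde{ih}=i\,\tilde h$, so $\tilde h^2+\widetilde{ih}^{\,2}=\tilde h^2-\tilde h^2=0$ and $Lf=0$. Consequently $f(g(t))$ is a local martingale, and the Gaussian heat-kernel bounds of Theorem \ref{t.h4.11} upgrade it to a genuine $L^2$-martingale, so the Stratonovich equation \eqref{e.h1.2} and Itô calculus yield the same iterated-integral expansion. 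Two consequences follow. First, $E[f(g(T))]=f(\mathbf e)$. Second, differentiating $S_Tf(h)=E[f(h\cdot g(T))]$ under the integral and using left-invariance, $\big(\tilde h_{i_1}\cdots\tilde h_{i_n}S_Tf\big)(\mathbf e)=E\big[(\tilde h_{i_1}\cdots\tilde h_{i_n}f)(g(T))\big]$; since $\tilde h_{i_1}\cdots\tilde h_{i_n}f$ is again holomorphic, the martingale property collapses this to $(\tilde h_{i_1}\cdots\tilde h_{i_n}f)(\mathbf e)$. Thus $\alpha_f=\mathcal T_T S_T f$ has components $\langle\alpha_f,h_{i_1}\otimes\cdots\otimes h_{i_n}\rangle=(\tilde h_{i_1}\cdots\tilde h_{i_n}f)(\mathbf e)$, precisely the Taylor data of $f$ itself.

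For (b) I would reduce to finite dimensions: a cylinder polynomial $f=F\circ\pi_P$ satisfies $f(g(T))=F(g_P(T))$, and its iterated integrals involve only $\pi_P b=b_P$, so the identity on $G$ follows from its analogue on the finite-dimensional group $G_P$. On $G_P$, repeated application of Itô's formula to $F(g_P(t))$, with every drift term killed by $LF=0$, produces the stochastic Taylor expansion $F(g_P(T))=\sum_{n=0}^{N}\langle\mathcal T F,\int_{\Delta_n(T)}db_P(s_1)\otimes\cdots\otimes db_P(s_n)\rangle+R_N$, whose generic term pairs the iterated increments against $(\tilde h_{i_1}\cdots\tilde h_{i_n}F)(\mathbf e)$. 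Because $G_P$ is step-two nilpotent, each $\tilde h_i$ lowers the homogeneous degree (weighting $W$-coordinates by $1$ and $\mathbf C$-coordinates by $2$) by one, so iterated left-invariant derivatives of the polynomial $F$ vanish beyond its degree; hence the series terminates, $R_N=0$ for large $N$, and the identity is exact. Combined with the previous paragraph, these coefficients are exactly the components of $\alpha_f$, giving $f(g(T))=\Phi(\alpha_f)$.

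Assertion (a) is the orthogonality computation: the iterated Itô integrals $\int_{\Delta_m(T)}dB^{i_1}\cdots dB^{i_m}$ and $\int_{\Delta_n(T)}dB^{j_1}\cdots dB^{j_n}$ are orthogonal in $L^2(\Omega)$ unless $m=n$ with coinciding multi-indices, in which case repeated use of the Itô isometry gives second moment $\operatorname{Vol}(\Delta_n(T))=T^n/n!$; summing reproduces exactly $\|\alpha\|_{J_T^0(\mathfrak g_{CM})}^2$, so $\Phi$ is isometric and the defining series converges for every $\alpha\in J_T^0(\mathfrak g_{CM})$. The main obstacle is the careful justification of the finite-dimensional stochastic Taylor expansion in this complex setting: verifying rigorously that the drift vanishes ($LF=0$) with enough integrability to make $F(g_P(t))$ a true martingale, legitimizing the differentiation under the expectation defining $S_T$, and confirming that the Itô reading of the increments in \eqref{e.h1.8}, forced by the orthogonality in (a), coincides — thanks again to $LF=0$ — with the Stratonovich increments produced by the equation \eqref{e.h1.2}.
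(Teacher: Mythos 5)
Your overall architecture (isometry of the chaos map on $J_{T}^{0}\left(\mathfrak{g}_{CM}\right)$, exactness of the expansion for holomorphic cylinder polynomials via iterated It\^{o} formula with vanishing drift, then density of $\mathcal{P}$) is the paper's architecture, and your assertion (a) is exactly Corollaries \ref{c.h9.5} and \ref{c.h9.9}. However, there is a genuine gap in your step (b). You reduce to finite dimensions by claiming that a cylinder polynomial $f=F\circ\pi_{P}$ satisfies $f\left(g\left(T\right)\right)=F\left(g_{P}\left(T\right)\right)$ and that ``its iterated integrals involve only $\pi_{P}b=b_{P}$.'' Both claims are false, for the same reason highlighted in Remark \ref{r.h7.5}: $\pi_{P}$ is not a group homomorphism, and correspondingly $\pi_{P}\left(g\left(T\right)\right)\neq g_{P}\left(T\right)$. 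Concretely, for $f\left(w,c\right)=c$ one has $f\left(g\left(T\right)\right)=B_{0}\left(T\right)+\frac{1}{2}\int_{0}^{T}\omega\left(B,dB\right)$ while $F\left(g_{P}\left(T\right)\right)=B_{0}\left(T\right)+\frac{1}{2}\int_{0}^{T}\omega\left(PB,dPB\right)$; moreover the second Taylor coefficient of this $f$ is $\left\langle \hat{f}_{2}\left(\mathbf{e}\right),\left(A_{1},a_{1}\right)\otimes\left(A_{2},a_{2}\right)\right\rangle=\frac{1}{2}\omega\left(A_{1},A_{2}\right)$, which is not supported on $\mathfrak{g}_{P}^{\otimes2}$, so $\left\langle \alpha_{f},\int db\otimes db\right\rangle$ genuinely involves the full Brownian motion $b$ and not just $\pi_{P}b$. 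Your proposed reduction therefore proves the wrong finite-dimensional identity.

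The fix — and the route the paper takes — is to keep $f\in\mathcal{P}$ fixed but let the projection vary: apply the iterated It\^{o} formula to $f\left(g_{P_{k}}\left(t\right)\right)$ for an increasing sequence $P_{k}\in\operatorname*{Proj}\left(W\right)$ with $P_{k}|_{H}\uparrow I_{H}$ (this is legitimate since $f|_{G_{P_{k}}}$ is holomorphic, so $L_{P_{k}}f=0$, and the expansion terminates because $f$ is a polynomial), obtaining $f\left(g_{P_{k}}\left(T\right)\right)=f\left(\mathbf{e}\right)+\sum_{n=1}^{N}\left[D^{n}f\left(\mathbf{e}\right)\right]_{\pi_{P_{k}}}^{\symbol{126}}$, and then pass to the limit $k\rightarrow\infty$ on both sides, using Proposition \ref{p.h4.12} for $f\left(g_{P_{k}}\left(T\right)\right)\rightarrow f\left(g\left(T\right)\right)$ in $L^{2}$ and Proposition \ref{p.h9.8} for $\tilde{\alpha}_{P_{k}}\rightarrow\tilde{\alpha}$. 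The remainder of your argument (the It\^{o} isometry, $S_{T}f=f|_{G_{CM}}$ on $\mathcal{P}$ so that $\alpha_{f}=\hat{f}\left(\mathbf{e}\right)$, and the final density/continuity step) is sound and agrees with the paper.
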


This result is proved in Section \ref{s.h9} and in particular, see
Theorem \ref{t.h9.10}. The precise meaning of the right hand side of
Eq. (\ref{e.h1.8}) is also described there.

\subsection{Discussion\label{s.h1.2}}

As we noticed in Remark \ref{r.h1.4} when the form $\omega\equiv0$
the Fock space $J_{T}^{0}\left( \mathfrak{g}_{CM}\right)$ is the
standard commutative bosonic Fock space \cite{Fock1928}. In this
case the Taylor map is one of three isomorphisms between different
representations of a Fock space, one other being the Segal-Bargmann
transform. The history of the latter is described in
\cite{GrossMall} beginning with works of V.~Bargmann
\cite{Bargmann61} and I.~Segal in \cite{Segal1960a}. For other
relevant results see \cite{HallSengupta1998, Driver1998c}.

To put our results into perspective, recall that the classical Segal-Bargmann
space is the Hilbert space of holomorphic functions on $\mathbb{C}^{n}$ that
are square-integrable with respect to the Gaussian measure $d\mu_{n}%
(z)=\pi^{-n}e^{-|z|^{2}}dz$, where $dz$ is the $2n$--dimensional
Lebesgue measure. One of the features of functions in the
Segal-Bargmann space is that they satisfy the pointwise bounds
$\left\vert f(z)\right\vert \leqslant\Vert
f\Vert_{L^{2}(\mu_{n})}\exp(|z|^{2}/2)$ (compare with Theorem
\ref{t.h6.11}). As it is described in \cite{GrossMall}, if
$\mathbb{C}^{n}$ is replaced by an infinite-dimensional complex
Hilbert space $H$, one of the first difficulties is to find a
suitable version of the Gaussian measure. It can be achieved, but
only on a certain extension $W$ of $H$, which leads one to consider
the complex abstract Wiener space setting. From H.~Sugita's
\cite{Sugita1994a,Sugita1994b} work on holomorphic functions over a
complex abstract Wiener space, it is known that the pointwise bounds
control only the values of the holomorphic functions on $H$. This
difficulty explains, in part, the need to consider two function
spaces: one is of holomorphic functions on $H$ (or $G_{CM}$ in our
case) versus the square-integrable (weakly) holomorphic functions on
$W$ (or $G$ in our case).

The Taylor map has also been studied in other non-commutative
infinite-dimensional settings. M. Gordina \cite{Gordina2000a,
Gordina2000b, Gordina2002} considered the Taylor isomorphism in the
context of Hilbert-Schmidt groups, while M. Cecil \cite{Cecil2008}
considered the Taylor isomorphism for path groups over stratified
Lie groups. The nilpotentcy of the Heisenberg like groups studied in
this paper allow us to give a more complete description of the
square integrable holomorphic function spaces than was possible in
\cite{Gordina2000a, Gordina2000b, Gordina2002} for the
Hilbert-Schmidt groups.

Complex analysis in infinite dimensions in a somewhat different
setting has been studied by L. Lempert (e.g.\cite{Lempert2004}), and
for more results on Gaussian-like measures on infinite-dimensional
curved spaces see papers by D.~Pickrell (e.g.\cite{Pickrell1987,
Pickrell2000}). For another view of different representations of
Fock space, one can look at results in the field of white noise, as
presented in the book by N. Obata \cite{ObataBook}. The map between
an $L^{2}$-space and a space of symmetric tensors sometimes is
called the Segal isomorphism as in \cite{Kondratiev1980, KLPSW1996}.
For more background on this and related topics see \cite{HKPSbook}.

\section{Complex abstract Wiener spaces\label{s.h2}}

Suppose that $W$ is a complex separable Banach space and
$\mathcal{B}_{W}$ is the Borel $\sigma$--algebra on $W$. Let
$W_{\operatorname{Re}}$ denote $W$ thought of as a real Banach
space. For $\lambda\in\mathbb{C}$,  let $M_{\lambda}:W\rightarrow W$
be the operation of multiplication by $\lambda$.
\begin{df}
\label{d.h2.1} A measure $\mu$ on $(W,\mathcal{B}_{W})$ is called a
(mean zero, non-degenerate) \textbf{Gaussian measure} provided that
its characteristic functional is given by
\begin{equation}
\hat{\mu}(u):=\int_{W}e^{iu\left(  w\right)  }d\mu\left(  w\right)
=e^{-\frac{1}{2}q(u,u)}\text{ for all }u\in W_{\operatorname{Re}}^{\ast},
\label{e.h2.1}%
\end{equation}
where $q=q_{\mu}:W_{\operatorname{Re}}^{\ast}\times W_{\operatorname{Re}%
}^{\ast}\rightarrow\mathbb{R}$ is an inner product on $W_{\operatorname{Re}%
}^{\ast}$. If in addition, $\mu$ is invariant under multiplication
by $i$, that is, $\mu\circ M_{i}^{-1}=\mu$,  we say that $\mu$ is a
\textbf{complex Gaussian measure }on $W$.
\end{df}

\begin{rem}
\label{r.h2.2} Suppose $W=\mathbb{C}^{d}$ and let us write $w\in W$
as $w=x+iy$ with $x,y\in\mathbb{R}^{d}$. Then the most general
Gaussian measure on $W$ is of the form
\[
d\mu\left(  w\right)  =\frac{1}{Z}\exp\left(  -\frac{1}{2}Q\left[
\begin{array}
[c]{c}%
x\\
y
\end{array}
\right]  \cdot\left[
\begin{array}
[c]{c}%
x\\
y
\end{array}
\right]  \right)  dx \,  dy
\]
where $Q$ is a real positive definite $2d \times 2d$ matrix and $Z$
is a normalization constant. The matrix $Q$ may be written in
$2\times2$ block form as
\[
Q=\left[
\begin{array}
[c]{cc}%
A & B\\
B^{\operatorname{tr}} & C
\end{array}
\right].
\]
A simple exercise shows $\mu=\mu\circ M_{i}^{-1}$ iff $B=0$ and
$A=C$. Thus the general complex Gaussian measure on $\mathbb{C}^{d}$
is of the form
\begin{align*}
d\mu\left(  w\right)   &  =\frac{1}{Z}\exp\left(  -\frac{1}{2}\left(  Ax\cdot
x+Ay\cdot y\right)  \right)  dx\, dy\\
&  =\frac{1}{Z}\exp\left(  -\frac{1}{2}Aw\cdot\bar{w}\right)  dx\,
dy,
\end{align*}
where $A$ is a real positive definite matrix.
\end{rem}

Given a complex Gaussian measure $\mu$ as in Definition \ref{d.h2.1}, let
\begin{equation}
\left\Vert w\right\Vert_{H}:=\sup\limits_{u\in
W_{\operatorname{Re}}^{\ast }\setminus\left\{  0\right\}
}\frac{|u(w)|}{\sqrt{q(u,u)}}\text{ for all
}w\in W, \label{e.h2.2}%
\end{equation}
and define the \textbf{Cameron-Martin subspace}, $H\subset W$, by
\begin{equation}
H=\left\{  h\in W:\left\Vert h\right\Vert_{H}<\infty\right\}.
\label{e.h2.3}%
\end{equation}
The following theorem summarizes some of the standard properties of
the triple $\left(  W, H, \mu\right)$.

\begin{thm}
\label{t.h2.3}Let $\left(  W, H, \mu\right)$ be as above, where
$\mu$ is a complex Gaussian measure on $\left(  W,
\mathcal{B}_{W}\right)$. Then

\begin{enumerate}
\item $H$ is a dense complex subspace of $W$.
\item There exists a unique inner product, $\left\langle \cdot,\cdot
\right\rangle_{H}$,  on $H$ such that $\left\Vert h\right\Vert_{H}%
^{2}=\left\langle h,h\right\rangle $ for all $h\in H$. Moreover,
with this inner product $H$ is a complete separable complex Hilbert
space.

\item There exists $C<\infty$ such that%
\begin{equation}
\left\Vert h\right\Vert_{W}\leqslant C\left\Vert h\right\Vert
_{H}\text{ for
any }h\in H. \label{e.h2.4}%
\end{equation}

\item If $\left\{  e_{j}\right\}_{j=1}^{\infty}$ is an orthonormal basis for
$H$ and $u, v\in W_{\operatorname{Re}}^{\ast}$,  then%
\begin{equation}
q\left(  u, v\right)  =\left\langle u, v\right\rangle_{H_{\operatorname{Re}%
}^{\ast}}=\sum_{j=1}^{\infty}\left[  u\left(  e_{j}\right)  v\left(
e_{j}\right) + u\left(  ie_{j}\right)  v\left(  ie_{j}\right)
\right].
\label{e.h2.5}%
\end{equation}

\item $\mu\circ M_{\lambda}^{-1}=\mu$ for all $\lambda\in\mathbb{C}$ with
$\left\vert \lambda\right\vert =1$. \end{enumerate}
\end{thm}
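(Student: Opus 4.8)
The plan is to regard $\left(W_{\operatorname{Re}},\mu\right)$ as a real abstract Wiener space and then to add the complex structure using only the hypothesis $\mu\circ M_i^{-1}=\mu$. Since $q$ is an honest inner product on $W_{\operatorname{Re}}^{\ast}$ and the quantity $\left\Vert\cdot\right\Vert_H$ of Eq. \eqref{e.h2.2} is precisely the Cameron--Martin (dual) norm attached to $q$, the classical structure theory of Gaussian measures on Banach spaces (e.g.\ in the books of Bogachev or Kuo) applies directly. That theory provides the real backbone of all five assertions: $H$ is a real subspace of $W$ carrying a real inner product $\langle\cdot,\cdot\rangle_{H_{\operatorname{Re}}}$ with $\left\Vert h\right\Vert_H^2=\langle h,h\rangle_{H_{\operatorname{Re}}}$; it is a separable real Hilbert space; it is dense in $W_{\operatorname{Re}}$; and the inclusion is continuous, which is the bound \eqref{e.h2.4} with some $C<\infty$. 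The same theory identifies the completion of $W_{\operatorname{Re}}^{\ast}$ under $q$ with the dual Hilbert space $H_{\operatorname{Re}}^{\ast}$, so that $q=\langle\cdot,\cdot\rangle_{H_{\operatorname{Re}}^{\ast}}$.

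The first step that genuinely uses the complex hypothesis is to promote $H$ to a complex subspace. Comparing characteristic functionals in $\mu\circ M_i^{-1}=\mu$ gives $\hat\mu(u)=\hat\mu(u\circ M_i)$ and hence $q(u\circ M_i,v\circ M_i)=q(u,v)$ for all $u,v\in W_{\operatorname{Re}}^{\ast}$. Substituting this into the supremum in \eqref{e.h2.2} (and using $M_i^{-1}=M_{-i}$) shows that $M_i$ is a $\left\Vert\cdot\right\Vert_H$--isometry mapping $H$ bijectively onto itself; since $M_i^2=-I$, this makes $H$ invariant under multiplication by every scalar in $\mathbb{C}$ and completes item (1). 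I would then define the Hermitian form $\langle h,k\rangle_H:=\langle h,k\rangle_{H_{\operatorname{Re}}}-i\,\langle h,M_i k\rangle_{H_{\operatorname{Re}}}$ and verify it is sesquilinear and positive. The one identity that makes everything work is $\langle h,M_i h\rangle_{H_{\operatorname{Re}}}=0$, obtained from the isometry relation $\langle M_i h,M_i k\rangle_{H_{\operatorname{Re}}}=\langle h,k\rangle_{H_{\operatorname{Re}}}$ by putting $k=M_i h$, using $M_i^2=-I$, and invoking symmetry of the real inner product. It forces $\langle h,h\rangle_H=\langle h,h\rangle_{H_{\operatorname{Re}}}=\left\Vert h\right\Vert_H^2$, so the complex norm coincides with \eqref{e.h2.2}; completeness and separability descend from the real structure, and uniqueness follows from polarization, since a Hermitian inner product is determined by its associated quadratic form. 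This establishes item (2).

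For item (4) I would take a complex orthonormal basis $\{e_j\}$ of $H$ and check that $\{e_j,M_i e_j\}$ is a real orthonormal basis of $H_{\operatorname{Re}}$. Because $\operatorname{Re}\langle\cdot,\cdot\rangle_H=\langle\cdot,\cdot\rangle_{H_{\operatorname{Re}}}$, each $e_j$ and $M_i e_j$ has real norm one, $\langle e_j,M_i e_j\rangle_{H_{\operatorname{Re}}}=\operatorname{Re}\langle e_j,M_i e_j\rangle_H=0$ by the vanishing identity above, and distinct indices give orthogonal vectors. Expanding $u,v\in W_{\operatorname{Re}}^{\ast}\subset H_{\operatorname{Re}}^{\ast}$ against this real basis in the dual inner product $q=\langle\cdot,\cdot\rangle_{H_{\operatorname{Re}}^{\ast}}$ then yields $q(u,v)=\sum_j\left[u(e_j)v(e_j)+u(M_i e_j)v(M_i e_j)\right]$, which is exactly Eq. \eqref{e.h2.5} once $M_i e_j$ is written as $ie_j$.

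Item (5) is again a characteristic-functional computation. Writing $\lambda=\cos\theta+i\sin\theta$ gives $M_\lambda=\cos\theta\cdot I+\sin\theta\cdot M_i$, so $u\circ M_\lambda=\cos\theta\,u+\sin\theta\,(u\circ M_i)$ and $\widehat{\mu\circ M_\lambda^{-1}}(u)=\hat\mu(u\circ M_\lambda)=\exp\!\bigl(-\tfrac12 q(u\circ M_\lambda,u\circ M_\lambda)\bigr)$. Expanding the quadratic form, the two diagonal terms add up to $q(u,u)$ by the $M_i$--invariance of $q$, while the cross term is a multiple of $q(u,u\circ M_i)$, which vanishes by the same skew-symmetry trick (take $v=u\circ M_i$ in $q(u,v)=q(u\circ M_i,v\circ M_i)$ and use $M_i^2=M_{-1}$ with $u\circ M_{-1}=-u$). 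Thus $q(u\circ M_\lambda,u\circ M_\lambda)=q(u,u)$, the characteristic functionals coincide, and $\mu\circ M_\lambda^{-1}=\mu$. The only point requiring care in the whole argument is organizational: essentially all of the non-real content of items (1), (2), (4) and (5) is routed through the single vanishing identity $\langle h,M_i h\rangle_{H_{\operatorname{Re}}}=0$, equivalently $q(u,u\circ M_i)=0$, so I would prove that once and reuse it, leaving each individual item as a short verification on top of the classical real theory.
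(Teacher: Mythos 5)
Your proposal is correct and follows essentially the same route as the paper: both arguments rest the real structure on the classical theory of Gaussian measures on Banach spaces (the paper cites its companion result \cite[Theorem 2.3]{DG07b} where you cite the textbook literature), and both funnel all of the complex content through the single identity $q\left(u, u\circ M_{i}\right)=0$, equivalently $\left\langle h, ih\right\rangle_{H_{\operatorname{Re}}}=0$, which then yields the $M_{\lambda}$--invariance for $\left\vert\lambda\right\vert=1$, the complex-subspace property of $H$ via the isometry of $M_{\lambda}$ on $\left\Vert\cdot\right\Vert_{H}$, the Hermitian form built from $\left\langle\cdot,\cdot\right\rangle_{H_{\operatorname{Re}}}$, and the expansion of $q$ over the real orthonormal basis $\left\{e_{j}, ie_{j}\right\}$. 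The only differences are cosmetic (the paper defines $\left\langle h,k\right\rangle_{H}$ with the opposite sign convention on the imaginary part, i.e.\ conjugate-linear in the other argument, and derives the vanishing identity by polarizing $\left\Vert\lambda h\right\Vert_{H}=\left\Vert h\right\Vert_{H}$ rather than from the isometry relation directly), so no further comment is needed.
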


\begin{proof}
We will begin with the proof of item 5. From Eq. (\ref{e.h2.1}), the
invariance of $\mu$ under multiplication by $i$ ( $\mu\circ
M_{i}^{-1}=\mu$ ) is equivalent to assuming that $q\left(  u\circ
M_{i},u\circ M_{i}\right) =q\left(  u, u\right)$ for all $u\in
W_{\operatorname{Re}}^{\ast}$. By polarization, we may further
conclude that
\begin{equation}
q\left(  u\circ M_{i}, v\circ M_{i}\right)=q\left(  u, v\right)
\text{ for
all }u, v\in W_{\operatorname{Re}}^{\ast}. \label{e.h2.6}%
\end{equation}
Taking $v=u\circ M_{i}$ in this identity then shows that $q\left(
u\circ M_{i}, -u\right)  =q\left(  u, u\circ M_{i}\right)$ and hence
that
\begin{equation}
q\left(  u,u\circ M_{i}\right) =0\text{ for any }u\in W_{\operatorname{Re}%
}^{\ast}. \label{e.h2.7}%
\end{equation}
Therefore if $\lambda=a+ib$ with $a, b\in\mathbb{R}$, we see that
\begin{align}
q\left(  u\circ M_{\lambda},u\circ M_{\lambda}\right)   &  =q\left(
au+bu\circ M_{i}, au+bu\circ M_{i}\right) \nonumber\\
&  =\left(  a^{2}+b^{2}\right)  q\left(  u, u\right)  =\left\vert
\lambda\right\vert ^{2}q\left(  u, u\right), \label{e.h2.8}%
\end{align}
from which it follows that $q\left(  u\circ M_{\lambda}, u\circ
M_{\lambda }\right)  =q\left(  u, u\right)  $ for all $u\in
W_{\operatorname{Re}}^{\ast}$ and $\left\vert \lambda\right\vert
=1$. Coupling this observation with Eq. (\ref{e.h2.1}) implies
$\mu\circ M_{\lambda}^{-1}=\mu$ for all $\left\vert
\lambda\right\vert =1$. If $\left\vert \lambda\right\vert =1$,  from
Eqs. (\ref{e.h2.2}) and
(\ref{e.h2.8}), it follows that%
\begin{align*}
\left\Vert \lambda w\right\Vert_{H}  &  =\sup_{u\in W_{\operatorname{Re}%
}^{\ast}\setminus\left\{  0\right\}  }\frac{\left\vert u\left(  \lambda
w\right)  \right\vert }{\sqrt{q\left(  u,u\right)  }}=\sup_{u\in
W_{\operatorname{Re}}^{\ast}\setminus\left\{  0\right\}  }\frac{\left\vert
u\circ M_{\lambda}\left(  w\right)  \right\vert }{\sqrt{q\left(  u\circ
M_{\lambda},u\circ M_{\lambda}\right)  }}\\
&  =\sup_{u\in W_{\operatorname{Re}}^{\ast}\setminus\left\{  0\right\}  }%
\frac{\left\vert u\left(  w\right)  \right\vert }{\sqrt{q\left(
u,u\right) }}=\left\Vert w\right\Vert_{H}\text{ for all }w\in W.
\end{align*}
In particular, if $\left\Vert h\right\Vert_{H}<\infty$ and
$\left\vert \lambda\right\vert =1$,  then $\left\Vert \lambda
h\right\Vert_{H}=\left\Vert h\right\Vert_{H}<\infty$ and hence
$\lambda H\subset H$ which shows that $H$ is a complex subspace of
$W$. From \cite[Theorem 2.3]{DG07b} summarizing some well-known
properties of Gaussian measures, we know that item 3. holds, $H$ is
a dense subspace of $W_{\operatorname{Re}}$,  and there exists a
unique real Hilbertian inner product, $\left\langle
\cdot,\cdot\right\rangle_{H_{\operatorname{Re}}}$,  on $H$ such that
$\left\Vert h\right\Vert_{H}^{2}=\left\langle h, h\right\rangle
_{H_{\operatorname{Re}}}$ for all $h\in H$. Polarizing the identity
$\left\Vert \lambda h\right\Vert_{H}=\left\Vert h\right\Vert_{H}$
implies
$\left\langle \lambda h,\lambda k\right\rangle_{H_{\operatorname{Re}}%
}=\left\langle h, k\right\rangle_{H_{\operatorname{Re}}}$ for all
$h,k\in H$. Taking $\lambda=i$ and $k=-ih$ then shows $\left\langle
ih,h\right\rangle_{\operatorname{Re}}=\left\langle
h,-ih\right\rangle_{\operatorname{Re}}$, and hence that
$\left\langle ih,h\right\rangle_{\operatorname{Re}}=0$ for all $h\in
H$. Using this information it is a simple matter to check that
\begin{equation}
\left\langle h, k\right\rangle_{H}:=\left\langle h, k\right\rangle
_{H_{\operatorname{Re}}}+i\left\langle h, ik\right\rangle
_{H_{\operatorname{Re}}}\text{ for all }h, k\in H, \label{e.h2.9}%
\end{equation}
is the unique complex inner product on $H$ such that $\operatorname{Re}%
\left\langle \cdot, \cdot\right\rangle_{H}=\left\langle \cdot,\cdot
\right\rangle_{H_{\operatorname{Re}}}$. 

So it only remains to prove Eq. (\ref{e.h2.5}). For a proof of the
first equality in Eq. (\ref{e.h2.5}), see \cite[Theorem 2.3]{DG07b}.
To prove the second equality in this equation, it suffices to
observe that $\left\{ e_{j},ie_{j}\right\}_{j=1}^{\infty}$ is an
orthonormal basis for $\left( H_{\operatorname{Re}},\left\langle
\cdot,\cdot\right\rangle
_{H_{\operatorname{Re}}}\right)$ and therefore,%
\[
\left\langle u, v\right\rangle_{H_{\operatorname{Re}}^{\ast}}=\sum
_{j=1}^{\infty}\left[  u\left(  e_{j}\right)  v\left(  e_{j}\right)
+u\left( ie_{j}\right)  v\left(  ie_{j}\right)  \right]  \text{ for
any }u, v\in H_{\operatorname{Re}}^{\ast}.
\]

\end{proof}

\begin{nota}
\label{n.h2.4}The triple, $\left(  W, H, \mu\right)$, appearing in
Theorem \ref{t.h2.3} will be called a \textbf{complex abstract
Wiener space}. (Notice that there is redundancy in this notation
since $\mu$ is determined by $H$, and $H$ is determined by $\mu$.)
\end{nota}

\begin{lem}
\label{l.h2.5}Suppose that $u, v\in W_{\operatorname{Re}}^{\ast}$
and $a, b\in\mathbb{C}$, then
\begin{equation}
\int_{W}e^{au+bv}d\mu=\exp\left(  \frac{1}{2}\left(  a^{2}q\left(
u,u\right) +b^{2}q\left(  v, v\right)  +2abq\left(  u, v\right)
\right)  \right).
\label{e.h2.10}%
\end{equation}

\end{lem}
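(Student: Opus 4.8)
The plan is to recognize the left-hand side as an entire function of $(a,b)\in\mathbb{C}^{2}$ and to pin it down by analytic continuation from the values supplied by the characteristic functional \eqref{e.h2.1}. First I would record that for any single real functional $w^{\ast}\in W_{\operatorname{Re}}^{\ast}$ the pushforward $\mu\circ(w^{\ast})^{-1}$ is a centered real Gaussian on $\mathbb{R}$ with variance $q(w^{\ast},w^{\ast})$: its characteristic function is $s\mapsto\hat{\mu}(sw^{\ast})=e^{-\frac{1}{2}s^{2}q(w^{\ast},w^{\ast})}$ by \eqref{e.h2.1}, and the Fourier transform determines the law. In particular $w^{\ast}$ has finite exponential moments of every order. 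Applying this with $w^{\ast}=(\operatorname{Re}a)\,u+(\operatorname{Re}b)\,v$ and using $\lvert e^{au+bv}\rvert=e^{(\operatorname{Re}a)u+(\operatorname{Re}b)v}$ shows that $F(a,b):=\int_{W}e^{au+bv}\,d\mu$ is absolutely convergent for every $(a,b)\in\mathbb{C}^{2}$.

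Next I would check that, for each fixed $b\in\mathbb{C}$, the map $a\mapsto F(a,b)$ is entire, and symmetrically in $b$. Differentiating in $a$ under the integral sign is justified by dominating, on a neighborhood of any point, by an integrable majorant of the form $C\,e^{\alpha u+\beta v}$ with real $\alpha,\beta$ (integrable by the Gaussian exponential moments just established); holomorphy in each variable separately is all we shall need. The right-hand side $G(a,b):=\exp\big(\tfrac{1}{2}(a^{2}q(u,u)+b^{2}q(v,v)+2ab\,q(u,v))\big)$ is manifestly entire. On the imaginary axes, with $a=is$ and $b=it$ for $s,t\in\mathbb{R}$, the exponent satisfies $au+bv=i(su+tv)$ with $su+tv\in W_{\operatorname{Re}}^{\ast}$, so \eqref{e.h2.1} gives $F(is,it)=\hat{\mu}(su+tv)=\exp\big(-\tfrac{1}{2}q(su+tv,su+tv)\big)$; expanding the real symmetric bilinear form $q$ this equals $\exp\big(-\tfrac{1}{2}(s^{2}q(u,u)+t^{2}q(v,v)+2st\,q(u,v))\big)=G(is,it)$.

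Finally, I would invoke the identity theorem one variable at a time. For each fixed real $t$, the entire functions $a\mapsto F(a,it)$ and $a\mapsto G(a,it)$ agree on the line $a\in i\mathbb{R}$, hence on all of $\mathbb{C}$; then for each fixed $a\in\mathbb{C}$, the entire functions $b\mapsto F(a,b)$ and $b\mapsto G(a,b)$ agree on $b\in i\mathbb{R}$, hence on all of $\mathbb{C}$. This gives $F\equiv G$ on $\mathbb{C}^{2}$, which is exactly \eqref{e.h2.10}. The only genuinely technical point is the justification of differentiation under the integral sign (equivalently, the local uniform integrability underlying holomorphy), and this is precisely where the Gaussian exponential-moment bounds do the work; everything else is routine bilinear bookkeeping in $q$ together with the uniqueness of analytic continuation.
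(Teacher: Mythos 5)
Your proof is correct and takes essentially the same route as the paper, whose entire argument is that Eq.\ \eqref{e.h2.10} is easily checked for real $a,b$ and that both sides are analytic functions of $(a,b)\in\mathbb{C}^{2}$. The only differences are cosmetic: you anchor the two-variable continuation on the purely imaginary slice $a=is$, $b=it$, where the identity is literally the characteristic functional \eqref{e.h2.1}, and you supply the integrability and separate-holomorphy details the paper leaves implicit (your stated majorant $Ce^{\alpha u+\beta v}$ should be, say, $Ce^{\alpha|u|+\beta|v|}\leqslant C\left(e^{\alpha u}+e^{-\alpha u}\right)\left(e^{\beta v}+e^{-\beta v}\right)$ so that it dominates uniformly over a full neighborhood of $(a_{0},b_{0})$, but that is a trivial repair).
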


\begin{proof}
Equation (\ref{e.h2.10}) is easily verified when both $a$ and $b$
are real. This suffices to complete the proof, since both sides of
Eq. (\ref{e.h2.10}) are analytic functions of $a,b\in\mathbb{C}$.
\end{proof}

\begin{lem}
\label{l.h2.6}Let $\left(  W,H,\mu\right)  $ be a complex abstract
Wiener space, then for any $\varphi\in W^{\ast}$,  we have
\begin{equation}
\int_{W}e^{\varphi\left(  w\right)  }d\mu\left(  w\right)  =1=\int
_{W}e^{\overline{\varphi\left(  w\right)  }}d\mu\left(  w\right),
\label{e.h2.11}%
\end{equation}%
\begin{equation}
\int_{W}\left\vert \operatorname{Re}\varphi\left(  w\right)  \right\vert
^{2}d\mu\left(  w\right)  =\int_{W}\left\vert \operatorname{Im}\varphi\left(
w\right)  \right\vert ^{2}d\mu\left(  w\right)  =\left\Vert \varphi\right\Vert
_{H^{\ast}}^{2}, \label{e.h2.12}%
\end{equation}
and%
\begin{equation}
\int_{W}\left\vert \varphi\left(  w\right)  \right\vert ^{2}d\mu\left(
w\right)  =2\left\Vert \varphi\right\Vert_{H^{\ast}}^{2}. \label{e.h2.13}%
\end{equation}
More generally, if $\mathbf{C}$ is another complex Hilbert space and
$\varphi\in L\left(  W,\mathbf{C}\right)$,  then
\begin{equation}
\int_{W}\left\Vert \varphi\left(  w\right)  \right\Vert_{\mathbf{C}}^{2}%
d\mu\left(  w\right)  =2\left\Vert \varphi\right\Vert_{H^{\ast}%
\otimes\mathbf{C}}^{2}. \label{e.h2.14}%
\end{equation}

\end{lem}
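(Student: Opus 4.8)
The plan is to reduce every assertion to Lemma \ref{l.h2.5} together with the two structural facts extracted in the proof of Theorem \ref{t.h2.3}, namely $q(u\circ M_i, u\circ M_i) = q(u,u)$ (from Eq. \eqref{e.h2.6}) and $q(u, u\circ M_i) = 0$ (Eq. \eqref{e.h2.7}). The first observation is that a complex-linear $\varphi\in W^*$ is determined by its real part: setting $u:=\operatorname{Re}\varphi\in W_{\operatorname{Re}}^*$, complex linearity $\varphi(iw)=i\varphi(w)$ forces $\operatorname{Im}\varphi = -\,u\circ M_i$, so that $\varphi = u - i\,(u\circ M_i)$ and $\overline{\varphi} = u + i\,(u\circ M_i)$.

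For Eq. \eqref{e.h2.11} I would apply Lemma \ref{l.h2.5} with $a=1$, $b=\mp i$, and $v = u\circ M_i$. The exponent then reads $\frac{1}{2}\left(q(u,u) + (\mp i)^2\, q(u\circ M_i, u\circ M_i) \mp 2i\, q(u, u\circ M_i)\right)$, and the two structural facts collapse this to $\frac{1}{2}\left(q(u,u) - q(u,u) + 0\right) = 0$, making both integrals in \eqref{e.h2.11} equal to $1$.

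For Eq. \eqref{e.h2.12} I note that the characteristic functional \eqref{e.h2.1} shows each $u\in W_{\operatorname{Re}}^*$ is a centered Gaussian of variance $q(u,u)$ under $\mu$ (equivalently, differentiate $\int_W e^{au}\,d\mu = e^{\frac{1}{2}a^2 q(u,u)}$ twice at $a=0$), so $\int_W |\operatorname{Re}\varphi|^2\,d\mu = q(u,u)$, and using $q(u\circ M_i, u\circ M_i) = q(u,u)$ likewise $\int_W |\operatorname{Im}\varphi|^2\,d\mu = q(u\circ M_i, u\circ M_i) = q(u,u)$. It then remains to identify $q(u,u) = \|\varphi\|_{H^*}^2$: by Eq. \eqref{e.h2.5} one has $q(u,u) = \|u\|_{H_{\operatorname{Re}}^*}^2$, and writing $\varphi|_H = \langle \cdot, h_\varphi\rangle_H$ via Riesz, the relation $\operatorname{Re}\langle\cdot,\cdot\rangle_H = \langle\cdot,\cdot\rangle_{H_{\operatorname{Re}}}$ from Eq. \eqref{e.h2.9} shows that $h_\varphi$ also represents $u$ on $H_{\operatorname{Re}}$ and that $\|h_\varphi\|_{H_{\operatorname{Re}}} = \|h_\varphi\|_H$, whence $\|u\|_{H_{\operatorname{Re}}^*} = \|\varphi\|_{H^*}$. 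Equation \eqref{e.h2.13} then follows by adding the two identities in \eqref{e.h2.12}, since $|\varphi|^2 = |\operatorname{Re}\varphi|^2 + |\operatorname{Im}\varphi|^2$.

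Finally, for the vector-valued statement \eqref{e.h2.14} I would fix an orthonormal basis $\{f_k\}$ of $\mathbf{C}$, set $\varphi_k := \langle \varphi(\cdot), f_k\rangle_{\mathbf{C}}\in W^*$, and use $\|\varphi(w)\|_{\mathbf{C}}^2 = \sum_k |\varphi_k(w)|^2$ with Tonelli's theorem to obtain $\int_W \|\varphi\|_{\mathbf{C}}^2\,d\mu = \sum_k \int_W |\varphi_k|^2\,d\mu = 2\sum_k \|\varphi_k\|_{H^*}^2$, where the last sum is exactly $\|\varphi\|_{H^*\otimes\mathbf{C}}^2$ by definition of the Hilbert--Schmidt norm. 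The only genuinely delicate point is the norm identification $\|\operatorname{Re}\varphi\|_{H_{\operatorname{Re}}^*} = \|\varphi\|_{H^*}$ used in \eqref{e.h2.12}, which requires keeping careful track of the real versus complex dual norms; everything else is a direct substitution into Lemma \ref{l.h2.5} or a routine basis expansion.
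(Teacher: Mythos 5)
Your proposal is correct and follows essentially the same route as the paper: write $\varphi = u - i\,(u\circ M_i)$ with $u=\operatorname{Re}\varphi$, feed this into Lemma \ref{l.h2.5} using the identities $q(u\circ M_i,u\circ M_i)=q(u,u)$ and $q(u,u\circ M_i)=0$, and identify $q(u,u)=\left\Vert\varphi\right\Vert_{H^{\ast}}^{2}$ via Eq. \eqref{e.h2.5}. The only minor differences are that the paper computes $q(u,u)=\sum_{k}\left[\left\vert u(e_{k})\right\vert^{2}+\left\vert u(ie_{k})\right\vert^{2}\right]=\sum_{k}\left\vert\varphi(e_{k})\right\vert^{2}$ directly from an orthonormal basis rather than through the Riesz representative of $\varphi|_{H}$, and for Eq. \eqref{e.h2.14} it invokes \cite[Eq. (2.13)]{DG07b} where you give a self-contained Parseval--Tonelli reduction to the scalar case; both variants are sound.
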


\begin{proof}
If $u=\operatorname{Re}\varphi$,  then $\varphi\left(  w\right)
=u\left( w\right)  -iu\left(  iw\right)$. Therefore by Eqs.
(\ref{e.h2.6}),
(\ref{e.h2.7}), and (\ref{e.h2.10}),%
\begin{align*}
\int_{W}e^{\varphi}d\mu &  =\int_{W}e^{u-iu\circ M_{i}}d\mu\\
&  =\exp\left(  \frac{1}{2}\left(  q\left(  u,u\right)  -q\left(  u\circ
M_{i},u\circ M_{i}\right)  -2iq\left(  u,u\circ M_{i}\right)  \right)
\right)  =1.
\end{align*}
Taking the complex conjugation of this identity shows
$\int_{W}e^{\overline {\varphi\left(  w\right)  }}d\mu\left(
w\right)  =1$. Also using Lemma
\ref{l.h2.5}, we have%
\begin{align*}
\int_{W}\left\vert \operatorname{Re}\varphi\left(  w\right)  \right\vert
^{2}d\mu\left(  w\right)   &  =q\left(  u, u\right)  \text{ and}\\
\int_{W}\left\vert \operatorname{Im}\varphi\left(  w\right)
\right\vert ^{2}d\mu\left(  w\right)   &  =\int_{W}\left\vert
u\left(  iw\right) \right\vert ^{2}d\mu\left(  w\right)  =q\left(
u\circ M_{i}, u\circ M_{i}\right)  =q\left(  u, u\right).
\end{align*}
To evaluate $q\left(  u,u\right)$,  let $\left\{  e_{k}\right\}
_{k=1}^{\infty}$ be an orthonormal basis for $H$ so that $\left\{
e_{k},ie_{k}\right\}_{k=1}^{\infty}$ is an orthonormal basis for
$\left( H_{\operatorname{Re}},\operatorname{Re}\left\langle
\cdot,\cdot\right\rangle_{H}\right)$. Then by Eq. (\ref{e.h2.5}),
\[
q\left(  u, u\right)  =\sum_{k=1}^{\infty}\left[  \left\vert u\left(
e_{k}\right)  \right\vert ^{2}+\left\vert u\left(  ie_{k}\right)
\right\vert ^{2}\right]  =\sum_{k=1}^{\infty}\left\vert
\varphi\left(  e_{k}\right) \right\vert ^{2}=\left\Vert
\varphi\right\Vert_{H^{\ast}}^{2}.
\]
To prove Eq. (\ref{e.h2.14}), apply \cite[Eq. (2.13)]{DG07b} to find
\begin{align*}
\int_{W}\left\Vert \varphi\left(  w\right)  \right\Vert_{\mathbf{C}}^{2}%
d\mu\left(  w\right)   &  =\sum_{k=1}^{\infty}\left[  \left\Vert
\varphi\left(  e_{k}\right)  \right\Vert_{\mathbf{C}}^{2}+\left\Vert
\varphi\left(  ie_{k}\right)  \right\Vert_{\mathbf{C}}^{2}\right] \\
&  =2\sum_{k=1}^{\infty}\left\Vert \varphi\left(  e_{k}\right)  \right\Vert
_{\mathbf{C}}^{2}=2\left\Vert \varphi\right\Vert_{H^{\ast}\otimes\mathbf{C}%
}^{2}.
\end{align*}

\end{proof}

\begin{rem}
[Heat kernel interpretation of Lemma \ref{l.h2.6}]\label{r.h2.7}The measure
$\mu$ formally satisfies
\[
\int_{W}f\left(  w\right)  d\mu\left(  w\right)  =\left(  e^{\frac{1}{2}%
\Delta_{H_{\operatorname{Re}}}}f\right)  \left(  0\right),
\]
where
$\Delta_{H_{\operatorname{Re}}}=\sum_{j=1}^{\infty}\partial_{e_{j}}^{2}$
and $\left\{  e_{j}\right\}_{j=1}^{\infty}$ is an orthonormal basis
for $H_{\operatorname{Re}}$. If $f$ is holomorphic or
anti-holomorphic, then $f$ is harmonic and therefore
\[
\int_{W}f\left(  w\right)  d\mu\left(  w\right)  =\left(  e^{\frac{1}{2}%
\Delta_{H_{\operatorname{Re}}}}f\right)  \left(  0\right)  =f\left(  0\right)
.
\]
Applying this identity to $f\left(  w\right)  =e^{\varphi\left(  w\right)  }$
or $f\left(  w\right)  =\overline{e^{\varphi\left(  w\right)  }}$ with
$\varphi\in W^{\ast}$ gives Eq. \eqref{e.h2.11}. If $u\in W_{\operatorname{Re}%
}^{\ast}$,  we have
\begin{align*}
\int_{W}u^{2}\left(  w\right)  d\mu\left(  w\right)   &  =\left(  e^{\frac
{1}{2}\Delta_{H_{\operatorname{Re}}}}u^{2}\right)  \left(  0\right)
=\sum_{n=0}^{\infty}\frac{1}{2^{n}n!}\left(  \Delta_{H_{\operatorname{Re}}%
}^{n}u^{2}\right)  \left(  0\right) \\
&  =\frac{1}{2}\left(  \Delta_{H_{\operatorname{Re}}}u^{2}\right)  \left(
0\right)  =\frac{1}{2}\sum_{j=1}^{\infty}\left(  \partial_{e_{j}}^{2}%
u^{2}\right)  \left(  0\right) \\
&  =\sum_{j=1}^{\infty}u\left(  e_{j}\right)^{2}=\left\Vert
u\right\Vert _{H_{\operatorname{Re}}}^{2}.
\end{align*}
Eqs. \eqref{e.h2.12} and \eqref{e.h2.13} now follow easily from this identity.
\end{rem}

\subsection{The structure of the projections\label{s.h2.1}}

Let $i:H\rightarrow W$ be the inclusion map and
$i^{\ast}:W^{\ast}\rightarrow H^{\ast}$ be its transpose, i.e.
$i^{\ast}\ell:=\ell\circ i$ for all $\ell\in W^{\ast}$. Also let
\begin{equation}
H_{\ast}:=\left\{  h\in H:\left\langle \cdot,h\right\rangle_{H}%
\in\operatorname*{Ran}\left(  i^{\ast}\right)  \subset H^{\ast}\right\}
\label{e.h2.15}%
\end{equation}
or in other words, $h\in H$ is in $H_{\ast}$ iff $\left\langle \cdot
, h\right\rangle_{H}\in H^{\ast}$ extends to a continuous linear
functional on $W$. (We will continue to denote the continuous
extension of $\left\langle \cdot,h\right\rangle_{H}$ to $W$ by
$\left\langle \cdot,h\right\rangle_{H}.)$ Because $H$ is a dense
subspace of $W$,  $i^{\ast}$ is injective, and because $i$ is
injective, $i^{\ast}$ has a dense range. Since $h\in
H\rightarrow\left\langle \cdot,h\right\rangle_{H}\in H^{\ast}$ is a
conjugate linear isometric isomorphism, it follows from the above
comments that $H_{\ast}\ni h\rightarrow\left\langle
\cdot,h\right\rangle_{H}\in W^{\ast}$ is a conjugate linear
isomorphism too, and that $H_{\ast}$ is a dense subspace of
$H$.

\begin{lem}
\label{l.h2.8} There is a one to one correspondence between
$\operatorname*{Proj}\left(  W\right)$ (see Notation \ref{n.h1.1})
and the collection of finite rank orthogonal projections, $P$,  on
$H$ such that $PH\subset H_{\ast}$. \end{lem}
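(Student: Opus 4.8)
The plan is to show that the restriction map $P\mapsto P|_{H}$ is precisely the asserted bijection, its inverse being the operation of continuously extending a projection from $H$ to all of $W$. So I would proceed by (i) checking that restriction lands in the correct target class, (ii) constructing the extension as a candidate inverse, and (iii) verifying the two operations undo one another.

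First I would verify that $P\mapsto P|_{H}$ maps $\operatorname{Proj}(W)$ into the set of finite rank orthogonal projections on $H$ whose range lies in $H_{\ast}$. Given $P\in\operatorname{Proj}(W)$, the operator $P|_{H}$ is by definition a finite rank orthogonal projection on $H$, so only the inclusion $\operatorname{Ran}(P|_{H})\subset H_{\ast}$ needs proof. Choosing an orthonormal basis $\{e_{j}\}_{j=1}^{n}$ of $\operatorname{Ran}(P|_{H})$, the functional $w\mapsto\langle Pw,e_{j}\rangle_{H}$ is continuous on $W$ (it is the composite of the continuous map $P:W\to H$ with the continuous functional $\langle\cdot,e_{j}\rangle_{H}$ on $H$), and for $h\in H$ it equals $\langle P|_{H}h,e_{j}\rangle_{H}=\langle h,e_{j}\rangle_{H}$ because $P|_{H}$ is the orthogonal projection onto $\operatorname{span}\{e_{k}\}$. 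Hence $\langle\cdot,e_{j}\rangle_{H}$ extends to an element of $W^{\ast}$, i.e. $e_{j}\in H_{\ast}$, and therefore $P|_{H}H=\operatorname{Ran}(P|_{H})\subset H_{\ast}$.

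Next I would construct the inverse. Given a finite rank orthogonal projection $Q$ on $H$ with $QH\subset H_{\ast}$, write $Qh=\sum_{j=1}^{n}\langle h,e_{j}\rangle_{H}e_{j}$ for an orthonormal basis $\{e_{j}\}$ of $\operatorname{Ran}(Q)$. Since each $e_{j}\in H_{\ast}$, the functional $\langle\cdot,e_{j}\rangle_{H}$ extends to an element of $W^{\ast}$, still denoted $\langle\cdot,e_{j}\rangle_{H}$, and I define $\tilde{P}w:=\sum_{j=1}^{n}\langle w,e_{j}\rangle_{H}e_{j}$. This $\tilde{P}:W\to H$ is a finite rank continuous linear map with $\tilde{P}|_{H}=Q$, so $\tilde{P}\in\operatorname{Proj}(W)$; and because $H$ is dense in $W$, $\tilde{P}$ is the unique continuous extension of $Q$, so it does not depend on the chosen basis. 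By construction, restricting $\tilde{P}$ returns $Q$.

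Finally I would close the loop by showing that extending $P|_{H}$ recovers $P$, and this is the step I expect to be the crux. The essential point is that $\operatorname{Ran}(P)=\operatorname{Ran}(P|_{H})$: for $w\in W$ pick $h_{k}\in H$ with $h_{k}\to w$ in $W$, so that $Ph_{k}=P|_{H}h_{k}\to Pw$ in $H$ by continuity of $P$; since $\operatorname{Ran}(P|_{H})$ is finite-dimensional and hence closed, the limit $Pw$ lies in $\operatorname{Ran}(P|_{H})$. Granting this, $Pw=\sum_{j}\langle Pw,e_{j}\rangle_{H}e_{j}$, and since $w\mapsto\langle Pw,e_{j}\rangle_{H}$ is exactly the continuous extension of $\langle\cdot,e_{j}\rangle_{H}$ identified in step (i), the extension $\tilde{P}$ built from $P|_{H}$ coincides with $P$. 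Thus restriction and extension are mutually inverse, giving the bijection. The recurring subtlety throughout is that $Q$ is automatically bounded in the $H$-norm but need not be bounded relative to the $W$-norm; it is precisely the condition $QH\subset H_{\ast}$ that promotes $H$-continuity of the functionals $\langle\cdot,e_{j}\rangle_{H}$ to $W$-continuity and thereby makes the extension to $W$ possible.
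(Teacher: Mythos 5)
Your proposal is correct and follows essentially the same route as the paper: the forward direction rests on the identity $\left\langle Ph,u\right\rangle_{H}=\left\langle h,u\right\rangle_{H}$ together with continuity of $P:W\rightarrow H$ to place the range in $H_{\ast}$, and the converse builds the extension by the same formula $\tilde{P}w=\sum_{j}\left\langle w,e_{j}\right\rangle_{H}e_{j}$ as Eq.~(\ref{e.h2.17}). Your third step, explicitly checking that restriction and extension are mutually inverse (via $\operatorname{Ran}(P)=\operatorname{Ran}(P|_{H})$ and uniqueness of continuous extensions off the dense subspace $H$), is left implicit in the paper's proof and is a worthwhile addition rather than a deviation.
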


\begin{proof}
If $P\in\operatorname*{Proj}\left(  W\right)$ and $u\in PW\subset
H$,  then,
because $P|_{H}$ is an orthogonal projection, we have%
\begin{equation}
\left\langle Ph,u\right\rangle_{H}=\left\langle h,Pu\right\rangle
_{H}=\left\langle h,u\right\rangle_{H}\text{ for all }h\in H. \label{e.h2.16}%
\end{equation}
Since $P:W\rightarrow H$ is continuous, it follows that $u\in
H_{\ast}$,  i.e. $PW\subset H_{\ast}$.

Conversely, suppose that $P:H\rightarrow H$ is a finite rank
orthogonal projection such that $PH\subset H_{\ast}$. Let $\left\{
e_{j}\right\}_{j=1}^{n}$ be an orthonormal basis for $PH$ and
$\ell_{j}\in W^{\ast}$ such that $\ell_{j}|_{H}=\left\langle \cdot,
e_{j}\right\rangle_{H}$. Then we may extend $P$ uniquely to a
continuous operator from $W$ to $H$ (still denoted by
$P)$ by letting%
\begin{equation}
Pw:=\sum_{j=1}^{n}\ell_{j}\left(  w\right)  e_{j}=\sum_{j=1}^{n}\left\langle
w,e_{j}\right\rangle_{H}e_{j}\text{ for all }w\in W. \label{e.h2.17}%
\end{equation}
From \cite[Eq. 3.43]{DG07b}, there exists $C=C\left(  P\right)  <\infty$ such
that%
\begin{equation}
\left\Vert Pw\right\Vert_{H}\leqslant C\left\Vert w\right\Vert
_{W}\text{ for
all }w\in W. \label{e.h2.18}%
\end{equation}

\end{proof}

\section{Complex Heisenberg like groups\label{s.h3}}

In this section we review the infinite-dimensional Heisenberg like
groups and Lie algebras which were introduced in \cite[Section
3]{DG07b}.

\begin{nota}
\label{n.h3.1}Let $\left(  W, H, \mu\right)$ be a complex abstract
Wiener space, $\mathbf{C}$ be a complex finite dimensional inner
product space, and $\omega: W\times W\rightarrow\mathbf{C}$ be a
continuous skew symmetric bilinear quadratic form on $W$. Further,
let
\begin{equation}
\left\Vert \omega\right\Vert_{0}:=\sup\left\{  \left\Vert
\omega\left( w_{1},w_{2}\right)  \right\Vert
_{\mathbf{C}}:w_{1},w_{2}\in W\text{ with }\left\Vert
w_{1}\right\Vert_{W}=\left\Vert w_{2}\right\Vert_{W}=1\right\}
\label{e.h3.1}%
\end{equation}
be the uniform norm on $\omega$ which is finite by the assumed
continuity of $\omega$. \end{nota}

\begin{df}
\label{d.h3.2}Let $\mathfrak{g}$ denote $W\times\mathbf{C}$ when
thought of as a Lie algebra with the Lie bracket operation given by
\begin{equation}
\left[  \left(  A,a\right),\left(  B,b\right)  \right]  :=\left(
0,\omega\left(  A,B\right)  \right). \label{e.h3.2}%
\end{equation}
Let $G=G\left(  \omega\right)$ denote $W\times\mathbf{C}$ when
thought of as a group with the multiplication law given by
\begin{equation}
g_{1}g_{2}=g_{1}+g_{2}+\frac{1}{2}\left[  g_{1},g_{2}\right]  \text{ for any
}g_{1},g_{2}\in G \label{e.h3.3}%
\end{equation}
or equivalently by Eq. (\ref{e.h1.1}).
\end{df}

It is easily verified that $\mathfrak{g}$ is a Lie algebra and $G$
is a group. The identity of $G$ is the zero element,
$\mathbf{e:}=\left(  0,0\right)$.
\begin{nota}
\label{n.h3.3}Let $\mathfrak{g}_{CM}$ denote $H\times\mathbf{C}$
when viewed as a Lie subalgebra of $\mathfrak{g}$ and $G_{CM}$
denote $H\times\mathbf{C}$ when viewed as a subgroup of $G=G\left(
\omega\right)$. We will refer to $\mathfrak{g}_{CM}$ ($G_{CM})$ as
the \textbf{Cameron--Martin subalgebra (subgroup) }of $\mathfrak{g}$
$\left(  G\right)$. (For explicit examples of such $\left(
W,H,\mathbf{C}, \omega\right)$,  see \cite{DG07b}.)
\end{nota}

We equip $G=\mathfrak{g}=W\times\mathbf{C}$ with the Banach space
norm
\begin{equation}
\left\Vert \left(  w, c\right)  \right\Vert
_{\mathfrak{g}}:=\left\Vert
w\right\Vert_{W}+\left\Vert c\right\Vert_{\mathbf{C}} \label{e.h3.4}%
\end{equation}
and $G_{CM}=\mathfrak{g}_{CM}=H\times\mathbf{C\ }$with the Hilbert space inner
product,%
\begin{equation}
\left\langle \left(  A, a\right), \left(  B, b\right) \right\rangle
_{\mathfrak{g}_{CM}}:=\left\langle A, B\right\rangle
_{H}+\left\langle
a, b\right\rangle_{\mathbf{C}}. \label{e.h3.5}%
\end{equation}
The associate Hilbertian norm is given by
\begin{equation}
\left\Vert \left(  A,\delta\right)  \right\Vert_{\mathfrak{g}_{CM}}%
:=\sqrt{\left\Vert A\right\Vert_{H}^{2}+\left\Vert \delta\right\Vert
_{\mathbf{C}}^{2}}. \label{e.h3.6}%
\end{equation}
As was shown in \cite[Lemma 3.3]{DG07b}, these Banach space topologies on
$W\times\mathbf{C}$ and $H\times\mathbf{C}$ make $G$ and $G_{CM}$ into
topological groups.

\begin{nota}
[Linear differentials]\label{n.h3.4} Suppose
$f:G\rightarrow\mathbb{C}$,  is a Frech\'{e}t smooth function. For
$g\in G$ and $h, k\in\mathfrak{g}$ let
\[
f^{\prime}\left(  g\right)  h:=\partial_{h}f\left(  g\right)  =\frac{d}%
{dt}\Big|_{0}f\left(  g+th\right)
\]
and%
\[
f^{\prime\prime}\left(  g\right)  \left(  h\otimes k\right)
:=\partial_{h}\partial_{k}f\left(  g\right).
\]
(Here and in the sequel a prime on a symbol will be used to denote its
derivative or differential.)
\end{nota}

As $G$ itself is a vector space, the tangent space, $T_{g}G$,  to
$G$ at $g$ is naturally isomorphic to $G$. Indeed, if $v, g\in G$,
then we may define a tangent vector $v_{g}\in T_{g}G$ by
$v_{g}f=f^{\prime}\left(  g\right)v$ for all Frech\'{e}t smooth
functions $f:G\rightarrow\mathbb{C}$. We will identify
$\mathfrak{g}$ with $T_{\mathbf{e}}G$ and $\mathfrak{g}_{CM}$ with
$T_{\mathbf{e}}G_{CM}$. Recall that as sets $\mathfrak{g}=G$ and
$\mathfrak{g}_{CM}=G_{CM}$. For $g\in G$,  let
\thinspace$l_{g}:G\rightarrow G$ be the left translation by $g$. For
$h\in\mathfrak{g}$,  let $\tilde{h}$ be the \textbf{left invariant
vector field} on $G$ such that $\tilde{h}\left(  g\right)  =h$ when
$g=\mathbf{e}$. More precisely, if $\sigma\left(  t\right)  \in G$
is any smooth curve such that $\sigma\left(  0\right)  =\mathbf{e}$
and $\dot{\sigma }\left(  0\right) =h$ (e.g. $\sigma\left( t\right)
=th)$,  then
\begin{equation}
\tilde{h}\left(  g\right)=,
l_{g\ast}h:=\frac{d}{dt}\left|_{0}\right.g\cdot
\sigma\left(  t\right). \label{e.h3.7}%
\end{equation}
As usual, we view $\tilde{h}$ as a first order differential operator
acting on smooth functions, $f:G\rightarrow\mathbb{C}$,  by
\begin{equation}
\left(  \tilde{h}f\right)  \left(  g\right)  =\frac{d}{dt}\Big|_{0}f\left(
g\cdot\sigma\left(  t\right)  \right). \label{e.h3.8}%
\end{equation}
The proof of the following easy proposition may be found in \cite[Proposition
3.7]{DG07b}.

\begin{prop}
\label{p.h3.5}Let $f:G\rightarrow\mathbb{C}$ be a smooth function,
$h=(A,a)\in\mathfrak{g}$ and $g=\left(  w, c\right)  \in G$. Then%
\begin{equation}
\widetilde{h}\left(  g\right)  :=, l_{g\ast}h=\left(  A, a+\frac{1}{2}%
\omega\left(  w, A\right)  \right)  \text{ for any }g=\left( w,
c\right)  \in
G \label{e.h3.9}%
\end{equation}
and, in particular,
\begin{equation}
\widetilde{\left(  A, a\right)  }f\left(  g\right) =f^{\prime}\left(
g\right)  \left(  A, a+\frac{1}{2}\omega\left( w, A\right)  \right).
\label{e.h3.10}%
\end{equation}
If $h,k\in\mathfrak{g}$,  then%
\begin{equation}
\left(  \tilde{h}\tilde{k}f-\tilde{k}\tilde{h}f\right)  =\widetilde{\left[
h,k\right]  }f. \label{e.h3.11}%
\end{equation}
The one parameter group in $G$,  $e^{th}$,  determined by $h=\left(
A, a\right) \in\mathfrak{g}$,  is given by $e^{th}=th=t\left( A,
\delta\right)$. \end{prop}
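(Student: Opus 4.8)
The plan is to prove the four assertions in turn, deriving each directly from the explicit multiplication law \eqref{e.h1.1} together with the definition of the left invariant vector field in \eqref{e.h3.7}. First I would establish \eqref{e.h3.9} by computing with the curve $\sigma(t)=th=(tA,ta)$, which satisfies $\sigma(0)=\mathbf{e}$ and $\dot\sigma(0)=h$. From \eqref{e.h1.1} one has $g\cdot\sigma(t)=(w,c)\cdot(tA,ta)=\left(w+tA,\;c+ta+\tfrac{t}{2}\omega(w,A)\right)$, and differentiating at $t=0$ gives $\tilde h(g)=\left(A,\,a+\tfrac12\omega(w,A)\right)$, which is \eqref{e.h3.9}. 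Equation \eqref{e.h3.10} is then immediate from the identification $v_g f=f'(g)v$ of tangent vectors with directional derivatives discussed before the statement.

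The main step is the commutator identity \eqref{e.h3.11}, and this is where I expect the one genuine subtlety to arise. Writing $h=(A,a)$, $k=(B,b)$, and $g=(w,c)$, I would apply $\tilde h$ to the function $F:=\tilde k f$, where by \eqref{e.h3.10} we have $F(g)=f'(g)\left(B,\,b+\tfrac12\omega(w,B)\right)$. The key point is that the vector $\tilde k(g)$ at which $f'$ is evaluated itself depends on $g$ through its $W$-component $w$; hence differentiating $F$ along $\tilde h(g)$ produces, by the product rule, two terms: a second order term $f''(g)\bigl(\tilde h(g)\otimes\tilde k(g)\bigr)$ and a first order correction $f'(g)\left(0,\tfrac12\omega(A,B)\right)$ coming from the $w$-dependence of $\tilde k(g)$. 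Forming $\tilde h\tilde k f-\tilde k\tilde h f$, the second order terms cancel because $f''(g)$ is a symmetric bilinear form, while the two first order corrections combine via the skew symmetry of $\omega$ into $f'(g)\bigl(0,\omega(A,B)\bigr)$. Since $[h,k]=(0,\omega(A,B))$ is central (its $W$-component vanishes), \eqref{e.h3.9} gives $\widetilde{[h,k]}(g)=(0,\omega(A,B))$, so the difference equals $\widetilde{[h,k]}f(g)$, proving \eqref{e.h3.11}.

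Finally, for the one parameter subgroup I would verify that $\gamma(t):=th$ is a homomorphism: from \eqref{e.h1.1}, $\gamma(s)\gamma(t)=\left((s+t)A,\,(s+t)a+\tfrac12 st\,\omega(A,A)\right)$, and $\omega(A,A)=0$ by skew symmetry, so $\gamma(s)\gamma(t)=\gamma(s+t)$; together with $\gamma(0)=\mathbf{e}$ and $\dot\gamma(0)=h$ this identifies $e^{th}=th$. The only place demanding real care is the bookkeeping in \eqref{e.h3.11}: one must not drop the derivative of $\tilde k(g)$ with respect to $g$, since it is precisely this correction term, rather than any antisymmetric part of $f''$, that produces the Lie bracket.
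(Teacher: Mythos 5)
Your computation is correct: the formula \eqref{e.h3.9} follows from differentiating $g\cdot(tA,ta)$ at $t=0$, the commutator identity comes from the product-rule correction term $f'(g)\left(0,\tfrac12\omega(A,B)\right)$ surviving after the symmetric second-order terms cancel, and the one-parameter-group claim uses $\omega(A,A)=0$. The paper itself omits the proof, deferring to \cite[Proposition 3.7]{DG07b}, and your direct verification is exactly the standard argument carried out there, including the one subtle point you correctly flag (not dropping the $g$-dependence of $\tilde k(g)$).
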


\section{Brownian motion and heat kernel measures\label{s.h4}}

This section will closely follow \cite[Section 4]{DG07b} except for
the introduction of a certain factor of $1/2$ into the formalism
which will simplify later formulas. Let $\left\{  b\left(  t\right)
=\left(  B\left( t\right), B_{0}\left(  t\right)  \right)  \right\}
_{t\geqslant0}$ be a Brownian motion on
$\mathfrak{g}=W\times\mathbf{C}$ with the variance determined by
\begin{equation}
\mathbb{E}\left[  \operatorname{Re}\left\langle b\left(  s\right) ,
h\right\rangle
_{\mathfrak{g}_{CM}}\cdot\operatorname{Re}\left\langle
b\left(  t\right), k\right\rangle_{\mathfrak{g}_{CM}}\right]  =\frac{1}%
{2}\operatorname{Re}\left\langle h, k\right\rangle
_{\mathfrak{g}_{CM}}s\wedge
t \label{e.h4.1}%
\end{equation}
for all $s, t\in\lbrack0,\infty)$,  $h=\left(  A, a\right)$,  and $k\mathbf{:=}%
\left(  C, c\right)$, where $A,C\in H_{\ast}$ and
$a,c\in\mathbf{C}$. (Recall the definition of $H_{\ast}$ from Eq.
(\ref{e.h2.15}).)

\begin{df}
\label{d.h4.1} The associated \textbf{Brownian motion} on $G$
starting at $\mathbf{e}=\left(  0, 0\right)  \in G$ is defined to be
the process
\begin{equation}
g\left(  t\right)  =\left(  B\left(  t\right), B_{0}\left(  t\right)
+\frac{1}{2}\int_{0}^{t}\omega\left(  B\left(  \tau\right), dB\left(
\tau\right)  \right)  \right). \label{e.h4.2}%
\end{equation}
More generally, if $h\in G$,  we let $g_{h}\left(  t\right) :=h\cdot
g\left( t\right)$, the Brownian motion on $G$ starting at $h$.
\end{df}

\begin{df}
\label{d.h4.2} Let $\mathcal{B}_{G}$ be the Borel $\sigma$--algebra
on $G$ and for any $T>0$,  let
$\nu_{T}:\mathcal{B}_{G}\rightarrow\left[  0, 1\right]$ be the
distribution of $g\left(  T\right)$. We will call $\nu_{T}$ the
\textbf{heat kernel measure} on $G$. \end{df}

To be more explicit, the measure $\nu_{T}$ is the unique measure on
$\left( G,\mathcal{B}_{G}\right)$ such that
\begin{equation}
\nu_{T}\left(  f\right):=\int_{G}fd\nu_{T}=\mathbb{E}\left[ f\left(
g\left(  T\right)  \right)  \right]  \label{e.h4.3}%
\end{equation}
for all bounded measurable functions $f:G\rightarrow\mathbb{C}$. Our
next goal is to describe the generator of the process $\left\{
g_{h}\left( t\right)  \right\}_{t\geqslant0}$.
\begin{df}
\label{d.h4.3} A function $f:G\rightarrow\mathbb{C}$ is said to be a
\textbf{cylinder function} if it may be written as $f=F\circ\pi_{P}$
for some $P\in\operatorname*{Proj}\left(  W\right)$ and some
function $F:G_{P} \rightarrow\mathbb{C}$, where $G_{P}$ is defined
as in Notation \ref{n.h1.1}. We say that $f$ is a
\textbf{holomorphic }(\textbf{smooth})\textbf{ }cylinder function if
$F:G_{P}\rightarrow \mathbb{C}$ is holomorphic (smooth). We will
denote the space of holomorphic (analytic) cylinder functions by
$\mathcal{A}$. \end{df}

\begin{prop}
[Generator of $g_{h}$]\label{p.h4.4}If $f:G\rightarrow\mathbb{C}$ is a smooth
cylinder function, let%
\begin{equation}
Lf:=\sum_{j=1}^{\infty}\left[  \widetilde{\left(  e_{j},0\right)  }%
^{2}+\widetilde{\left(  ie_{j},0\right)  }^{2}\right]
f+\sum_{j=1}^{d}\left[ \widetilde{\left(  0, f_{j}\right)
}^{2}+\widetilde{\left(  0, if_{j}\right)
}^{2}\right]  f, \label{e.h4.4}%
\end{equation}
where $\left\{  e_{j}\right\}_{j=1}^{\infty}$ and $\left\{
f_{j}\right\}_{j=1}^{d}$ are complex orthonormal bases for $\left(
H, \left\langle \cdot, \cdot\right\rangle_{H}\right)$ and $\left(
\mathbf{C},\left\langle \cdot,\cdot\right\rangle
_{\mathbf{C}}\right)$ respectively. Then $Lf$ is well defined, i.e.
the sums in Eq. (\ref{e.h4.4}) are convergent and independent of the
choice of bases. Moreover, for all $h\in G$,  $\frac{1}{4}L$ is the
generator for $\left\{  g_{h}\left(  t\right)  \right\}_{t\geqslant
0}$. More precisely,
\begin{equation}
M_{t}^{f}:=f\left(  g_{h}\left(  t\right)  \right)  -\frac{1}{4}\int_{0}%
^{t}Lf\left(  g_{h}\left(  \tau\right)  \right)  d\tau\label{e.h4.5}%
\end{equation}
is a local martingale for any smooth cylinder function,
$f:G\rightarrow \mathbb{C}$. \end{prop}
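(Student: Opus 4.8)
The plan is to establish the martingale property by Itô's formula, reducing the infinite-dimensional problem to a finite-dimensional computation on $G_P$ and then taking an appropriate limit. The key observation is that both the well-definedness of $L$ and the martingale claim are assertions about a cylinder function $f = F \circ \pi_P$, so everything happens on the finite-dimensional group $G_P = PW \times \mathbf{C}$ where $F$ lives.

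First I would verify that $Lf$ is well defined. Since $f = F \circ \pi_P$ with $P \in \operatorname{Proj}(W)$ of finite rank, only finitely many terms in the $H$-sum survive: writing $\tilde{h}f$ via Eq. (\ref{e.h3.10}), the left-invariant vector field $\widetilde{(e_j,0)}$ applied to $f$ involves $f'(g)(e_j, \tfrac{1}{2}\omega(w,e_j))$, and because $F$ depends only on $Pw$, the derivative $f'(g)(e_j, \cdot)$ vanishes whenever $e_j \perp PH$. Thus the sum over $\{e_j\}$ collapses to a finite sum over an orthonormal basis of $PH$, so convergence is immediate. Basis-independence then follows from the standard fact that an expression of the form $\sum_j [\partial_{e_j}\partial_{e_j} + \partial_{ie_j}\partial_{ie_j}]$ acting through the first and second linear differentials (Notation \ref{n.h3.4}) is the trace of a bilinear form against the real inner product $\langle \cdot,\cdot\rangle_{H_{\operatorname{Re}}}$, which is intrinsic. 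Here I would use Eq. (\ref{e.h3.11}) only implicitly; the main content is that second-order terms assemble into a Laplacian-type trace that does not see the choice of orthonormal basis.

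Next I would compute $df(g_h(t))$ by Itô's formula. The process $g_h(t) = h \cdot g(t)$ solves Eq. (\ref{e.h1.2}), so its $G_P$-projection $\pi_P(g_h(t))$ is a finite-dimensional diffusion driven by the finitely many components of $b$ lying in $PH \times \mathbf{C}$, with the explicit quadratic-variation structure dictated by Eq. (\ref{e.h4.1}). Applying Itô's formula to $F$ along this finite-dimensional semimartingale produces a drift term that is precisely a second-order differential operator applied to $F$, plus a local martingale given by the stochastic integral of $F'$ against $db$. The factor of $\tfrac{1}{2}$ in the variance of Eq. (\ref{e.h4.1}), combined with the two real directions $e_j, ie_j$ (and $f_j, if_j$) per complex basis vector, is what assembles the drift into exactly $\tfrac{1}{4}L f$; this bookkeeping is the one place where the normalization must be tracked carefully, and matching it against Eq. (\ref{e.h4.4}) is the key computation.

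The main obstacle will be correctly identifying the Itô drift as $\tfrac{1}{4}Lf$ rather than merely as \emph{some} second-order operator, i.e.\ showing that the left-invariant vector fields $\widetilde{(e_j,0)}$ in Eq. (\ref{e.h4.4}) are exactly the directions along which the Brownian motion $g_h(t)$ moves. This is where Eq. (\ref{e.h3.9}) is essential: the multiplication rule (\ref{e.h1.1}) means that $g(t)$ increments not additively but via the group law, so the $\tfrac{1}{2}\omega$-correction in $\widetilde{(A,a)}(g) = (A, a + \tfrac{1}{2}\omega(w,A))$ is precisely what accounts for the Stratonovich-to-Itô correction and the $\omega$-driven cross terms in the stochastic integral $\tfrac{1}{2}\int_0^t \omega(B,dB)$ appearing in Eq. (\ref{e.h4.2}). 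Once the drift is matched, the local martingale property of $M_t^f$ in Eq. (\ref{e.h4.5}) follows immediately, with the stochastic-integral remainder being a local martingale by construction. The independence of $h$ is automatic since $L$ is built from left-invariant vector fields and $g_h(t) = h \cdot g(t)$ differs from $g(t)$ only by left translation.
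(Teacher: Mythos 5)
Your overall strategy (reduce to a finite\hyphen dimensional It\^{o} computation and match the drift with $\tfrac14 Lf$) is reasonable, and indeed the paper itself does not carry out this computation but simply cites Proposition 3.29 and Theorem 4.4 of \cite{DG07b} together with a time-change remark explaining the factor $\tfrac14$. However, your argument contains a genuine error at its first step, and the same misconception propagates into the It\^{o} step. You claim that for $f=F\circ\pi_P$ the sum over $\{e_j\}$ in Eq. (\ref{e.h4.4}) ``collapses to a finite sum over an orthonormal basis of $PH$'' because $f'(g)(e_j,\cdot)$ vanishes when $e_j\perp PH$. This is false. By Eq. (\ref{e.h3.10}), $\widetilde{(e_j,0)}f(g)=f'(g)\bigl(e_j,\tfrac12\omega(w,e_j)\bigr)=F'(Pw,c)\bigl(Pe_j,\tfrac12\omega(w,e_j)\bigr)$, and the $\mathbf{C}$--slot $\tfrac12\omega(w,e_j)$ does \emph{not} vanish for $e_j\perp PH$, while $F$ certainly depends on the $\mathbf{C}$--variable. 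A second application gives
\begin{equation*}
\widetilde{(e_j,0)}^{\,2}f(g)=F''(Pw,c)\Bigl[\bigl(Pe_j,\tfrac12\omega(w,e_j)\bigr)^{\otimes 2}\Bigr],
\end{equation*}
so for $e_j\perp PH$ one is still left with $\tfrac14F''(Pw,c)\bigl[(0,\omega(w,e_j))^{\otimes2}\bigr]$, which is generically nonzero for a merely smooth $F$ (it cancels only for holomorphic $f$, which is the content of Corollary \ref{c.h5.8}, not of this proposition). The sum over $j$ is therefore genuinely infinite, and its convergence is exactly the nontrivial content of the well-definedness claim: one needs $\sum_j\Vert\omega(w,e_j)\Vert_{\mathbf{C}}^2=\Vert\omega(w,\cdot)\Vert_{H^{\ast}\otimes\mathbf{C}}^2<\infty$, which holds because $\omega(w,\cdot)\in W^{\ast}\otimes\mathbf{C}$ restricts to a bounded map on $H$ with finite-dimensional range (compare the finiteness of $\Vert\omega\Vert_{H^{\ast}\otimes H^{\ast}\otimes\mathbf{C}}$ discussed around Eq. (\ref{e.h4.8})). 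Your argument erases precisely this issue.

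The same gap reappears when you assert that $\pi_P(g_h(t))$ is ``a finite-dimensional diffusion driven by the finitely many components of $b$ lying in $PH\times\mathbf{C}$.'' It is not: by Eq. (\ref{e.h4.2}) the $\mathbf{C}$--component of $\pi_P(g(t))$ contains $\tfrac12\int_0^t\omega\left(B(\tau),dB(\tau)\right)$, which is driven by the \emph{entire} infinite-dimensional Brownian motion $B$, not by $PB$ alone. It\^{o}'s formula still applies to $F$ composed with this finite-dimensional-\emph{valued} semimartingale, but the quadratic variation of its $\mathbf{C}$--part is an infinite sum over the basis of $H$, and those are exactly the terms needed to produce the infinite part of $\tfrac14Lf$ that you discarded. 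As written, your computation would instead produce the generator associated to the group $G_P$ built from the truncated form $\omega\circ(P\times P)$, which is a different operator. To repair the proof you must keep the full $\omega(B,dB)$ term, compute its quadratic variation as a convergent infinite series, and verify that the resulting drift matches all of Eq. (\ref{e.h4.4}), including the terms with $e_j\perp PH$.
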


\begin{proof}
After bearing in mind the factor of $1/2$ used in defining the
Brownian motion $b\left( t \right)$  in Eq. (\ref{e.h4.1}), this
proposition becomes a direct consequence of Proposition 3.29 and
Theorem 4.4 of \cite{DG07b}. Indeed, the Brownian motions in this
paper are equal in distribution to the Brownian motions used in
\cite{DG07b} after making the time change, $t\rightarrow t/2$. It is
this time change that is responsible for the $1/4$ factor (rather
than $1/2)$ in Eq. (\ref{e.h4.5}).
\end{proof}

\subsection{Heat kernel quasi-invariance properties\label{s.h4.1}}

In this subsection we are going to recall one of the key theorems
from \cite{DG07b}. We first need a little more notation.

Let $C_{CM}^{1}$ denote the collection of $C^{1}$-paths, $g:\left[
0,1\right]  \rightarrow G_{CM}$. The length of $g$ is defined as
\begin{equation}
\ell_{G_{CM}}\left(  g\right)  =\int_{0}^{1}\left\Vert
\,l_{g^{-1}\left( s\right)  \ast}g^{\prime}\left(  s\right)
\right\Vert_{\mathfrak{g}_{CM}}ds.
\label{e.h4.6}%
\end{equation}
As usual, the Riemannian distance between $x,y\in G_{CM}$ is defined as%
\begin{equation}
d_{G_{CM}}(x,y)=\inf\left\{  \ell_{G_{CM}}\left(  g\right)  :g\in C_{CM}%
^{1}~\ni~g\left(  0\right)  =x\text{ and }g\left(  1\right)
=y\right\}.
\label{e.h4.7}%
\end{equation}
Let us also recall the definition of $k\left(  \omega\right)$ from
\cite[Eq. 7.6]{DG07b};
\begin{align}
k\left(  \omega\right)   &  =-\frac{1}{2}\sup_{\left\Vert
A\right\Vert_{H_{\operatorname{Re}}}=1}\left\Vert \omega\left(
\cdot, A\right)
\right\Vert_{H_{\operatorname{Re}}^{\ast}\otimes\mathbf{C}_{\operatorname{Re}%
}}^{2}\nonumber\\
&  =-\sup_{\left\Vert A\right\Vert_{H}=1}\left\Vert \omega\left(
\cdot, A\right)  \right\Vert
_{H^{\ast}\otimes\mathbf{C}}^{2}\geqslant
-\left\Vert \omega\right\Vert_{H^{\ast}\otimes H^{\ast}\otimes C}^{2}%
>-\infty, \label{e.h4.8}%
\end{align}
wherein we have used \cite[Lemma 3.17]{DG07b} in the second equality. It is
known by Fernique's or Skhorohod's theorem that $\left\Vert \omega\right\Vert
_{2}^{2}=\left\Vert \omega\right\Vert_{H^{\ast}\otimes H^{\ast}\otimes C}%
^{2}<\infty$,  see \cite[Proposition 3.14]{DG07b} for details.

\begin{thm}
\label{t.h4.5}For all $h\in G_{CM}$ and $T>0$,  the measures,
$\nu_{T}\circ $\thinspace$l_{h}^{-1}$ and $\nu_{T}\circ r_{h}^{-1}$,
are absolutely
continuous relative to $\nu_{T}$. Let $Z_{h}^{l}:=\frac{d\left(  \nu_{T}%
\circ\, l_{h}^{-1}\right)  }{d\nu_{T}}$ and
$Z_{h}^{r}:=\frac{d\left(  \nu_{T}\circ r_{h}^{-1}\right)
}{d\nu_{T}}$ be the respective Randon-Nikodym
derivatives, $k\left(  \omega\right)$ is given in Eq. (\ref{e.h4.8}), and%
\[
c\left(  t\right)  :=\frac{t}{e^{t}-1} \text{ for any }t\in\mathbb{R}%
\]
with the convention that $c\left(  0\right)  =1$. Then for all
$1\leqslant p<\infty$,  $Z_{h}^{l}$ and $Z_{h}^{r}\ $ are both in
\thinspace$L^{p}\left( \nu_{T}\right)$ and satisfy the estimate
\begin{equation}
\left\Vert Z_{h}^{\ast}\right\Vert_{L^{p}\left(  \nu_{T}\right)  }%
\leqslant\exp\left(  \frac{c\left(  k\left(  \omega\right)
T/2\right) \left(  p-1\right)  }{T}d_{G_{CM}}^{2}\left(  \mathbf{e},
h\right)  \right),
\label{e.h4.9}%
\end{equation}
where $\ast=l$ or $\ast=r$. \end{thm}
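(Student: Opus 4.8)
The plan is to prove the estimate first on the finite-dimensional approximations $G_P$ and then pass to the limit $P \uparrow I$. Each $G_P = PW \times \mathbf{C}$ is a two-step nilpotent (hence unimodular) Lie group, so $\nu_T^P$ has a smooth, strictly positive density $p_T^P$ with respect to a Haar measure $m_P$. Using unimodularity and left-invariance of $m_P$, the left-translation Radon--Nikodym derivative takes the explicit form
\[
Z_h^{l,P}(y) = \frac{p_T^P(h^{-1} y)}{p_T^P(y)},
\]
with the analogous expression $p_T^P(y h^{-1})/p_T^P(y)$ for $Z_h^{r,P}$. Consequently
\[
\left\Vert Z_h^{l,P} \right\Vert_{L^p(\nu_T^P)}^p = \int_{G_P} \left( \frac{p_T^P(h^{-1}y)}{p_T^P(y)} \right)^p p_T^P(y)\, dm_P(y),
\]
and the entire problem reduces to an \emph{integrated Harnack inequality} for the finite-dimensional heat kernel density $p_T^P$, with a constant that is uniform in $P$.

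To obtain such a bound I would invoke the dimension-independent Harnack inequality of F.-Y.~Wang: if the Bakry--\'Emery Ricci tensor of the generator $\tfrac14 L$ on $G_P$ is bounded below by $k(\omega_P)$, then the associated heat semigroup $P_T^P$ satisfies $(P_T^P f)^p(x) \le (P_T^P f^p)(y)\exp\big(C_P(p,T)\, d_{G_P}^2(x,y)\big)$ for an explicit constant $C_P(p,T)$. Letting $f$ approach a point mass at $y$ and integrating against $p_T^P$ converts this pointwise inequality into exactly the integrated bound displayed above, and a short computation identifies $C_P(p,T)$ with $c\big(k(\omega_P)T/2\big)(p-1)/T$, matching the exponent in \eqref{e.h4.9}.

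The geometric input is therefore a lower Ricci bound for the left-invariant metric on $G_P$. A decisive simplification is that $\tfrac14 L$ is the \emph{elliptic} Laplacian: the defining Brownian motion $b$ is nondegenerate in every direction of $\mathfrak{g}_{CM}$, including the center $\mathbf{C}$, so one is in the Riemannian rather than the sub-Riemannian setting and the classical Bakry--\'Emery theory applies directly. The Ricci curvature of a two-step nilpotent group is expressible purely through its structure constants, i.e.\ through the restriction $\omega_P := \omega(P\cdot\,,P\cdot)$, and a direct computation shows that its most negative eigenvalue is controlled by $\sup_{\Vert A \Vert_H = 1}\Vert \omega_P(\cdot, A)\Vert^2_{H^* \otimes \mathbf{C}}$; hence $\mathrm{Ric}_{G_P} \ge k(\omega_P) \ge k(\omega)$ with $k$ as in \eqref{e.h4.8}, the last inequality holding because the supremum defining $k(\omega_P)$ is taken over a subspace. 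Since $t \mapsto c(t)$ is decreasing, $c\big(k(\omega_P)T/2\big) \le c\big(k(\omega)T/2\big)$, so the finite-dimensional constants are dominated, uniformly in $P$, by the constant appearing in \eqref{e.h4.9}.

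Finally I would pass to the limit. Using weak convergence $\nu_T^P \to \nu_T$, the lower semicontinuity of the $L^p$-norms of the densities along this approximation, and the distance estimate $\limsup_P d_{G_P}(\mathbf{e}, \pi_P h) \le d_{G_{CM}}(\mathbf{e}, h)$, the uniform finite-dimensional bounds yield \eqref{e.h4.9} on $G$, and in particular show $Z_h^{l}, Z_h^{r} \in L^p(\nu_T)$ together with the asserted absolute continuity. The main obstacle is the curvature step: one must verify the Ricci lower bound in a form that is genuinely uniform in $P$ \emph{and} produces the sharp constant $c\big(k(\omega)T/2\big)$ rather than a cruder one. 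This is precisely what forces the exact definition of $k(\omega)$ and the use of Wang's \emph{dimension-free} Harnack inequality, since any dimension-dependent Harnack estimate would degenerate as $\dim G_P \to \infty$.
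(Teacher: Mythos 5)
Your proposal is essentially the correct argument, and it is the one the authors rely on: in this paper Theorem \ref{t.h4.5} is proved simply by citing \cite[Theorem 8.1]{DG07b} (with the time change $T\mapsto T/2$), and the proof there runs exactly along your lines --- uniform-in-$P$ Ricci lower bounds $\mathrm{Ric}_{G_P}\geqslant k(\omega_P)\geqslant k(\omega)$ on the finite-dimensional approximations, Wang's dimension-free Harnack inequality converted into an integrated Harnack bound for the densities $p_T^P$ (the subject of the companion paper \cite{DG07a}), and a passage to the limit using the uniform $L^p$ bounds together with $\lim_{P}d_{G_P}(\mathbf{e},\pi_P h)=d_{G_{CM}}(\mathbf{e},h)$. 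No gaps worth flagging.
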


\begin{proof}
This is \cite[Theorem 8.1]{DG07b} (also see \cite[Corollary
7.3]{DG07b}) with the modification that $T$ should be replaced by
$T/2$. This is again due to the fact that the Brownian motions in
this paper are equal in distribution to those in \cite{DG07b} after
making the time change, $t\rightarrow t/2$. \end{proof}

It might be enlightening to note here that we call $G_{CM}$ the Cameron-Martin
subgroup not only because it is constructed from the Cameron-Martin subspace,
$H$, but also because it has properties similar to $H$. In particular, the
following statement holds.

\begin{prop}
\label{p.h4.6}The heat kernel measure does not charge $G_{CM}$, i.e.
$\nu_{T}\left(  G_{CM}\right)  =0$. \end{prop}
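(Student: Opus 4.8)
The plan is to reduce the statement to the classical fact that an infinite-dimensional Gaussian measure assigns zero mass to its own Cameron--Martin subspace. The first observation is that membership of a point $g=(w,c)$ in $G_{CM}=H\times\mathbf{C}$ depends only on its first coordinate: since $c\in\mathbf{C}$ automatically, we have $g\in G_{CM}$ if and only if $w\in H$. Writing $\pi:G\rightarrow W$ for the projection $\pi(w,c)=w$, this says $G_{CM}=\pi^{-1}(H)$, and hence
\[
\nu_{T}(G_{CM})=\left(\pi_{\ast}\nu_{T}\right)(H).
\]

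Next I would identify the pushforward measure $\pi_{\ast}\nu_{T}$. By Definition \ref{d.h4.2} and the explicit formula \eqref{e.h4.2}, the Brownian motion has the form $g(T)=(B(T),\,\cdot\,)$, so $\pi(g(T))=B(T)$ and therefore $\pi_{\ast}\nu_{T}=\operatorname*{Law}(B(T))$. From the variance prescription \eqref{e.h4.1}, $B$ is a $W$-valued Brownian motion whose increments are governed by the inner product $\tfrac{1}{2}\operatorname{Re}\langle\cdot,\cdot\rangle_{\mathfrak{g}_{CM}}$ restricted to the $W$-component; consequently $\operatorname*{Law}(B(T))$ is a mean-zero, nondegenerate Gaussian measure on $W$ whose Cameron--Martin subspace is exactly $H$. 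The associated Hilbertian norm differs from $\left\Vert\cdot\right\Vert_{H}$ only by the deterministic factor $\sqrt{T/2}$, which rescales the norm but leaves $H$ unchanged as a set.

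Finally I would invoke the standard abstract Wiener space result that a Gaussian measure on an infinite-dimensional $W$ with Cameron--Martin space $H$ satisfies (Gaussian measure)$(H)=0$. Concretely, choosing an orthonormal basis $\{e_{j}\}_{j=1}^{\infty}$ of $H$ lying inside the dense subspace $H_{\ast}$ of \eqref{e.h2.15}, the coordinate functionals $\langle B(T),e_{j}\rangle_{H}$ extend to $W$-measurable linear functionals and are independent nondegenerate real (and imaginary) Gaussians. Hence $\left\Vert B(T)\right\Vert_{H}^{2}=\sum_{j}\left\vert\langle B(T),e_{j}\rangle_{H}\right\vert^{2}=\infty$ almost surely by the strong law of large numbers, so $B(T)\notin H$ almost surely, i.e. $(\pi_{\ast}\nu_{T})(H)=0$, giving $\nu_{T}(G_{CM})=0$.

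I expect the only delicate point to be the bookkeeping in the middle step: verifying that $\operatorname*{Law}(B(T))$ is genuinely a Gaussian measure on $W$ whose Cameron--Martin subspace is $H$ (in particular that the time-$T$ rescaling of the norm preserves the subspace $H$), together with the measure-theoretic care that $\left\Vert\cdot\right\Vert_{H}$ is a well-defined measurable $[0,\infty]$-valued function on $W$. The latter follows either from the representation \eqref{e.h2.2} as a countable supremum of measurable functionals, or directly from the basis expansion above once one knows the $e_{j}$ may be taken in $H_{\ast}$; this is exactly what makes the almost-sure divergence of $\left\Vert B(T)\right\Vert_{H}$ a legitimate statement.
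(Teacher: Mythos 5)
Your proposal is correct and follows essentially the same route as the paper: reduce to the projection $\pi(w,c)=w$, note $G_{CM}=\pi^{-1}(H)$ and $\pi_{\ast}\nu_{T}=\operatorname*{Law}\left(B\left(T\right)\right)$, and invoke the classical fact that a Gaussian measure on an infinite-dimensional abstract Wiener space assigns zero mass to its Cameron--Martin subspace. The only difference is that the paper simply cites $\mu_{T}\left(H\right)=0$ while you additionally sketch the standard law-of-large-numbers argument for that fact; this is a harmless elaboration, not a different method.
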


\begin{proof}
Note that for a bounded measurable function $f: W\times C
\to\mathbb{C}$ that depends only on the the first component in
$W\times C$, that is, $f \left(  w, c\right)  =f\left(  w\right)$ we
have

\[
\int_{G} f\left(  w\right)  d\nu_{T}\left(  w, c \right)  =
\mathbb{E} \left[ f \left(  B \left(  T\right)  \right)  \right]
=\int_{W}f\left(  w\right) d\mu_{T}\left(  w\right).
\]
Note that for the projection $\pi: W \times C \to W$, $\pi\left(  w, c
\right)  =w$ we have $\pi_{\ast}\nu_{T}=\mu_{T}$ and therefore%

\[
\nu_{T}\left(  G_{CM} \right)  =\nu_{T}\left(  \pi^{-1}\left(  H
\right) \right)  =\pi_{\ast}\nu_{T}\left(  H \right)  =\mu_{T}\left(
H \right) =0.
\]

\end{proof}

For later purposes, we would like to introduce the heat operator,
$S_{T}:=e^{TL/4}$,  acting on $L^{p}\left(  G, \nu_{T}\right)$. To
motivate our definition, suppose $f:G\rightarrow\mathbb{C}$ is a
smooth cylinder function and suppose we can make sense of $u\left(
t,y\right)  =\left( e^{\left(  T-t\right)  L/4}f\right)  \left(
y\right)$. Then working formally, by It\^{o}'s formula, Eq.
(\ref{e.h4.5}), and the left invariance of $L$,  we expect $u\left(
t, hg\left(  t\right)  \right)$ to be a martingale for $0\leqslant
t\leqslant T$ and in particular,
\begin{equation}
\mathbb{E}\left[  f\left(  hg\left(  T\right)  \right)  \right]
=\mathbb{E}\left[  u\left(  T, hg\left(  T\right)  \right)  \right]
=\mathbb{E}\left[  u\left(  0, hg\left(  0\right)  \right)  \right]
=\left(
e^{TL/4}f\right)  \left(  h\right). \label{e.h4.10}%
\end{equation}

\begin{df}
\label{d.h4.7}For $T>0$,  $p\in(1,\infty]$,  and $f\in L^{p}\left(
G,\nu_{T}\right)$,  let $S_{T}f:G_{CM}\rightarrow\mathbb{C}$ be
defined by
\begin{equation}
\left(  S_{T}f\right)  \left(  h\right)  =\int_{G}f\left(  h\cdot g\right)
d\nu_{T}\left(  g\right)  =\mathbb{E}\left[  f\left(  hg\left(  T\right)
\right)  \right]. \label{e.h4.11}%
\end{equation}

\end{df}

The following result is a simple corollary of Theorem \ref{t.h4.5}
and H\"{o}lder's inequality along with the observation that
$p^{\prime}-1=\left( p-1\right)^{-1}$, where $p^{\prime}$ is the
conjugate exponent to $p\in(1,\infty]$.
\begin{cor}
\label{c.h4.8}If $p>1$,  $T>0$,  $f\in L^{p}\left(  G,\nu_{T}\right)
$,  $h\in G_{CM}$,  and
\begin{equation}
Z_{h}^{l}\in L^{\infty-}\left(  \nu_{T}\right)  :=\cap_{1\leqslant q<\infty
}L^{q}\left(  \nu_{T}\right)  \label{e.h4.12}%
\end{equation}
is as in Theorem \ref{t.h4.5}, then $S_{T}f$ is well defined and may be
computed as
\begin{equation}
\left(  S_{T}f\right)  \left(  h\right)  =
\int_{G}f\left(  g\right)  Z_{h}^{l}\left(  g\right)  d\nu_{T}\left(  g\right). \label{e.h4.13}%
\end{equation}
Moreover, we have the following pointwise \textquotedblleft
Gaussian\textquotedblright\ bounds
\begin{equation}
\left\vert \left(  S_{T}f\right)  \left(  h\right)  \right\vert
\leqslant \left\Vert f\right\Vert_{L^{p}\left(  \nu_{T}\right)
}\exp\left( \frac{c\left(  k\left(  \omega\right)  T/2\right)
}{T\left(  p-1\right)
}d_{G_{CM}}^{2}\left(  \mathbf{e}, h\right)  \right). \label{e.h4.14}%
\end{equation}

\end{cor}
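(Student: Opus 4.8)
The plan is to reduce the entire statement to the quasi-invariance estimate of Theorem \ref{t.h4.5} via a single change of variables followed by H\"older's inequality. First I would unwind the definition $\left(S_{T}f\right)\left(h\right)=\int_{G}f\left(h\cdot g\right)d\nu_{T}\left(g\right)$ by recognizing the integrand as $f\circ l_{h}$, where $l_{h}$ is left translation by $h$, so that $l_{h}\left(g\right)=h\cdot g$. Pushing the measure forward gives
\[
\int_{G}f\left(h\cdot g\right)d\nu_{T}\left(g\right)=\int_{G}f\,d\left(\nu_{T}\circ l_{h}^{-1}\right),
\]
and since $h\in G_{CM}$, Theorem \ref{t.h4.5} guarantees $\nu_{T}\circ l_{h}^{-1}\ll\nu_{T}$ with Radon--Nikodym derivative $Z_{h}^{l}$. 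Substituting $d\left(\nu_{T}\circ l_{h}^{-1}\right)=Z_{h}^{l}\,d\nu_{T}$ yields Eq. (\ref{e.h4.13}); the remaining work is simply to justify the integrability that makes this rewriting legitimate.

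For well-definedness and the norm bound, I would apply H\"older's inequality with exponents $p$ and its conjugate $p^{\prime}$:
\[
\int_{G}\left\vert f\left(g\right)\right\vert Z_{h}^{l}\left(g\right)d\nu_{T}\left(g\right)\leqslant\left\Vert f\right\Vert_{L^{p}\left(\nu_{T}\right)}\left\Vert Z_{h}^{l}\right\Vert_{L^{p^{\prime}}\left(\nu_{T}\right)}.
\]
For $p\in\left(1,\infty\right)$ we have $p^{\prime}\in\left(1,\infty\right)$, so $Z_{h}^{l}\in L^{\infty-}\left(\nu_{T}\right)\subset L^{p^{\prime}}\left(\nu_{T}\right)$ by Theorem \ref{t.h4.5}, making the right-hand side finite; the endpoint $p=\infty$ is even easier, since then $p^{\prime}=1$ and $\left\Vert Z_{h}^{l}\right\Vert_{L^{1}\left(\nu_{T}\right)}=1$ because $Z_{h}^{l}$ is a probability density. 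This finiteness certifies that $S_{T}f$ is well defined and that Eq. (\ref{e.h4.13}) holds.

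Finally, the pointwise bound comes from feeding the quasi-invariance estimate (\ref{e.h4.9}) (with $\ast=l$ and with the exponent there taken to be $p^{\prime}$) into the H\"older bound. The only computation needed is the algebraic identity $p^{\prime}-1=\left(p-1\right)^{-1}$, which follows from $1/p+1/p^{\prime}=1$; substituting it into the exponent of (\ref{e.h4.9}) converts the factor $\exp\left(c\left(k\left(\omega\right)T/2\right)\left(p^{\prime}-1\right)d_{G_{CM}}^{2}\left(\mathbf{e},h\right)/T\right)$ into precisely the claimed expression in Eq. (\ref{e.h4.14}). I do not expect any genuine obstacle here, as the analytic content is entirely contained in Theorem \ref{t.h4.5}; the only points requiring care are the orientation of the change of variables (so that it is $Z_{h}^{l}$, and not $Z_{h^{-1}}^{l}$, that appears) and keeping track of which exponent, $p$ or $p^{\prime}$, enters the quasi-invariance estimate.
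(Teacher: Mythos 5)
Your proposal is correct and coincides with the paper's own (essentially one-line) argument: the paper derives Corollary \ref{c.h4.8} exactly by combining Theorem \ref{t.h4.5} with H\"older's inequality and the identity $p^{\prime}-1=\left(p-1\right)^{-1}$. The change of variables producing $Z_{h}^{l}$ and the bookkeeping of the conjugate exponent are handled correctly, so there is nothing to add.
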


We will see later that when $f$ is \textquotedblleft
holomorphic\textquotedblright\ and $p=2,$\ the above estimate in Eq.
(\ref{e.h4.14}) may be improved to%
\begin{equation}
\left\vert \left(  S_{T}f\right)  \left(  h\right)  \right\vert
\leqslant \left\Vert f\right\Vert_{L^{2}\left(  \nu_{T}\right)
}\exp\left(  \frac {1}{2T}d_{G_{CM}}^{2}\left(  \mathbf{e}, h\right)
\right)  \text{ for any } h
\in G_{CM}. \label{e.h4.15}%
\end{equation}
This bound is a variant of Bargmann's pointwise bounds (see
\cite[Eq. (1.7)]{Bargmann61} and \cite[Eq. (5.4)]{Driver1997c}).

\begin{lem}
\label{l.h4.9}Let $T>0$ and suppose that $f:G\rightarrow\mathbb{C}$
is a continuous and in $L^{p}\left(  \nu_{T}\right)  $ for some
$p>1$. Then $S_{T}f:G_{CM}\rightarrow\mathbb{C}$ is continuous.
\end{lem}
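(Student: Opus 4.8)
The plan is to exploit the alternative formula for the skeleton map furnished by Corollary \ref{c.h4.8} together with the uniform quasi-invariance bounds of Theorem \ref{t.h4.5}. Since $G_{CM}=H\times\mathbf{C}$ carries the Hilbertian metric topology, it is first countable, so it suffices to prove that $S_{T}f$ is sequentially continuous: given $h_{n}\to h_{0}$ in $G_{CM}$, I would show that $\left(S_{T}f\right)\left(h_{n}\right)\to\left(S_{T}f\right)\left(h_{0}\right)$. Writing $\left(S_{T}f\right)\left(h\right)=\int_{G}f\left(h\cdot g\right)\,d\nu_{T}\left(g\right)$, the idea is to pass to the limit under the integral sign by a Vitali-type (uniform integrability) argument rather than by straight dominated convergence, since the integrand $g\mapsto f\left(h_{n}\cdot g\right)$ itself varies with $n$.

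First I would establish pointwise convergence of the integrands. Because the inclusion $G_{CM}\hookrightarrow G$ is continuous by Eq. \eqref{e.h2.4}, the convergence $h_{n}\to h_{0}$ holds in $G$ as well; since $G$ is a topological group and $f$ is continuous, it follows that $f\left(h_{n}\cdot g\right)\to f\left(h_{0}\cdot g\right)$ for every $g\in G$.

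The main work is to verify uniform integrability of the family $\{g\mapsto f\left(h_{n}\cdot g\right)\}_{n}$ with respect to $\nu_{T}$. Here I would change variables using left quasi-invariance (as in Corollary \ref{c.h4.8}) applied to $\varphi=\left\vert f\right\vert\mathbf{1}_{\{\left\vert f\right\vert>\lambda\}}$: for any $\lambda>0$,
\[
\int_{\{\left\vert f\left(h_{n}\cdot g\right)\right\vert>\lambda\}}\left\vert f\left(h_{n}\cdot g\right)\right\vert\,d\nu_{T}\left(g\right)=\int_{\{\left\vert f\right\vert>\lambda\}}\left\vert f\right\vert Z_{h_{n}}^{l}\,d\nu_{T}\leqslant\left\Vert f\mathbf{1}_{\{\left\vert f\right\vert>\lambda\}}\right\Vert_{L^{p}\left(\nu_{T}\right)}\left\Vert Z_{h_{n}}^{l}\right\Vert_{L^{p'}\left(\nu_{T}\right)},
\]
by H\"older's inequality with conjugate exponents $p,p'$, both finite since $p>1$. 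A convergent sequence has bounded Riemannian distance, $\sup_{n}d_{G_{CM}}\left(\mathbf{e},h_{n}\right)=:R<\infty$, so Theorem \ref{t.h4.5} yields $\sup_{n}\left\Vert Z_{h_{n}}^{l}\right\Vert_{L^{p'}\left(\nu_{T}\right)}\leqslant\exp\left(c\left(k\left(\omega\right)T/2\right)\left(p'-1\right)R^{2}/T\right)=:M<\infty$. Since $f\in L^{p}\left(\nu_{T}\right)$, we have $\left\Vert f\mathbf{1}_{\{\left\vert f\right\vert>\lambda\}}\right\Vert_{L^{p}\left(\nu_{T}\right)}\to0$ as $\lambda\to\infty$, so the tail integrals above tend to $0$ uniformly in $n$, which is precisely uniform integrability.

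Finally, pointwise convergence together with uniform integrability over the probability space $\left(G,\nu_{T}\right)$ lets me invoke Vitali's convergence theorem to conclude that $\int_{G}f\left(h_{n}\cdot g\right)\,d\nu_{T}\to\int_{G}f\left(h_{0}\cdot g\right)\,d\nu_{T}$, i.e.\ $\left(S_{T}f\right)\left(h_{n}\right)\to\left(S_{T}f\right)\left(h_{0}\right)$. The step I expect to be the main obstacle is the uniform integrability estimate---specifically, justifying the change of variables for the truncated function and checking that $d_{G_{CM}}\left(\mathbf{e},h_{n}\right)$ stays bounded along the convergent sequence, so that the quasi-invariance bound of Theorem \ref{t.h4.5} applies uniformly in $n$. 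This boundedness follows from the triangle inequality $d_{G_{CM}}\left(\mathbf{e},h_{n}\right)\leqslant d_{G_{CM}}\left(\mathbf{e},h_{0}\right)+d_{G_{CM}}\left(h_{0},h_{n}\right)$ together with the fact that $d_{G_{CM}}$ induces the topology of $G_{CM}$.
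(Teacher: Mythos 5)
Your proposal is correct and follows essentially the same route as the paper: both establish uniform integrability of the family $\left\{ g\mapsto f\left( h_{n}\cdot g\right) \right\}_{n}$ from the uniform $L^{p'}\left( \nu_{T}\right)$ bounds on $Z_{h_{n}}^{l}$ supplied by Theorem \ref{t.h4.5} (using that $\sup_{n}d_{G_{CM}}\left( \mathbf{e},h_{n}\right)<\infty$ along a convergent sequence) and then pass to the limit using pointwise convergence. The only cosmetic difference is that the paper verifies uniform integrability by showing $\sup_{n}\mathbb{E}\left\vert f\left( h_{n}g\left( T\right)\right)\right\vert^{q}<\infty$ for some $q\in\left( 1,p\right)$, whereas you estimate the tails directly via the truncation $f\mathbf{1}_{\left\{ \left\vert f\right\vert>\lambda\right\}}$; both are standard, interchangeable criteria here.
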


\begin{proof}
For $q\in\left(  1, p\right)$ and $h\in G_{CM}$ we have by
H\"{o}lder's
inequality and Theorem \ref{t.h4.5} that%
\begin{align}
\mathbb{E}\left\vert f\left(  hg\left(  T\right)  \right)
\right\vert ^{q} &  =v_{T}\left(  \left\vert f\right\vert
^{q}Z_{h}^{l}\right)  \leqslant \left\Vert f\right\Vert_{L^{p}\left(
\nu_{T}\right)  }^{q/p}\cdot\left\Vert Z_{h_{n}}^{l}\right\Vert
_{L^{\frac{p}{p-q}}\left(  \nu_{T}\right)
}\nonumber\\
&  \leqslant\left\Vert f\right\Vert_{L^{p}\left(  \nu_{T}\right)  }^{q/p}%
\exp\left(  \frac{c\left(  k\left(  \omega\right)  T/2\right)  q}{T\left(
p-q\right)  }d_{G_{CM}}^{2}\left(  \mathbf{e},h\right)  \right)
\label{e.h4.16}%
\end{align}
Hence if $\left\{  h_{n}\right\}_{n=1}^{\infty}\subset G_{CM}$ is a
sequence converging to $h\in G_{CM}$, it follows that
\begin{equation}
\sup_{n}\mathbb{E}\left\vert f\left(  h_{n}g\left(  T\right) \right)
\right\vert ^{q}\leqslant\left\Vert f\right\Vert_{L^{p}\left(
\nu_{T}\right)  }^{q/p}\exp\left( \frac{c\left(  k\left(
\omega\right) T/2\right)  q}{T\left( p-q\right)
}\sup_{n}d_{G_{CM}}^{2}\left( \mathbf{e}, h_{n}\right)  \right)
<\infty\label{e.h4.17},
\end{equation}
which implies that $\left\{  f\left(  h_{n}g\left(  T\right) \right)
\right\}_{n=1}^{\infty}$ is uniformly integrable. Since  by
continuity of $f$,  $\lim_{n\rightarrow\infty}f\left( h_{n}g\left(
T\right)  \right) =f\left(  hg\left(  T\right) \right)$,  we may
pass to the limit under the expectation to find
\[
\lim_{n\rightarrow\infty}S_{T}f\left(  h_{n}\right)
=\lim_{n\rightarrow \infty}\mathbb{E}f\left(  h_{n}g\left(  T\right)
\right)  =\mathbb{E}\left[ f\left(  hg\left(  T\right)  \right)
\right]  =S_{T}f\left(  h\right).
\]

\end{proof}

\subsection{Finite dimensional approximations\label{s.h4.2}}

\begin{nota}
\label{d.h4.10}For each $P\in\operatorname*{Proj}\left(  W\right) $,
let $g_{P}\left(  t\right)$ denote the $G_{P}$--valued Brownian
motion defined by
\begin{equation}
g_{P}\left(  t\right)  =\left(  PB\left(  t\right),B_{0}\left(
t\right) +\frac{1}{2}\int_{0}^{t}\omega\left(  PB\left(  \tau\right)
,dPB\left(
\tau\right)  \right)  \right). \label{e.h4.18}%
\end{equation}
Also, for any $t>0$,  let $\nu_{t}^{P}:=\operatorname*{Law}\left(
g_{P}\left( t\right)  \right)$ be the corresponding heat kernel
measure on $G_{P}$. \end{nota}

The following Theorem is a restatement of \cite[Theorem 4.16]{DG07b}.

\begin{thm}
[Integrated heat kernel bounds]\label{t.h4.11}Suppose that $\rho
^{2}:G\rightarrow\lbrack0,\infty)$ be defined as
\begin{equation}
\rho^{2}\left(  w,c\right)  :=\left\Vert
w\right\Vert_{W}^{2}+\left\Vert
c\right\Vert_{\mathbf{C}}. \label{e.h4.19}%
\end{equation}
Then there exists a $\delta>0$ such that for all
$\varepsilon\in\left( 0, \delta\right)$ and $T>0$
\begin{equation}
\sup_{P\in\operatorname*{Proj}\left(  W\right)  }\mathbb{E}\left[
e^{\frac{\varepsilon}{T}\rho^{2}\left(  g_{P}\left(  T\right)  \right)
}\right]  <\infty\text{ and }\int_{G}e^{\frac{\varepsilon}{T}\rho^{2}\left(
g\right)  }d\nu_{T}\left(  g\right)  <\infty. \label{e.h4.20}%
\end{equation}

\end{thm}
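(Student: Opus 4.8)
The plan is to prove the uniform moment bound $\sup_P \mathbb{E}[e^{\frac{\varepsilon}{T}\rho^2(g_P(T))}]<\infty$ first, and then deduce the $\nu_T$-integral bound from it by Fatou's lemma. Since $\rho^2(g_P(T)) = \|PB(T)\|_W^2 + \|c_P(T)\|_{\mathbf C}$ with $c_P(T) := B_0(T) + \frac12\int_0^T\omega(PB(\tau),dPB(\tau))$, I would begin by applying the Cauchy--Schwarz inequality to factor
\[
\mathbb{E}\big[e^{\frac{\varepsilon}{T}\rho^2(g_P(T))}\big] \le \big(\mathbb{E}[e^{\frac{2\varepsilon}{T}\|PB(T)\|_W^2}]\big)^{1/2}\big(\mathbb{E}[e^{\frac{2\varepsilon}{T}\|c_P(T)\|_{\mathbf C}}]\big)^{1/2},
\]
and treat the two factors separately. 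Throughout I would exploit Brownian scaling, $\{B(Ts)\}_s\stackrel{d}{=}\{\sqrt{T}B(s)\}_s$, which removes the explicit $T$-dependence at the critical order: it gives $\frac{\varepsilon}{T}\|PB(T)\|_W^2\stackrel{d}{=}\varepsilon\|PB(1)\|_W^2$ and, after the change of variables $\tau = Ts$, $\frac{\varepsilon}{T}\int_0^T\omega(PB,dPB)\stackrel{d}{=}\varepsilon\int_0^1\omega(PB,dPB)$.

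For the $W$-factor the scaling reduces everything to showing $\sup_P\mathbb{E}[e^{2\varepsilon\|PB(1)\|_W^2}]<\infty$ for small $\varepsilon$. The obstacle here is that $P$ does not contract the $W$-norm, so one cannot simply dominate $\|Pw\|_W$ by $\|w\|_W$. Instead I would pass to the envelope seminorm $N(w):=\sup_{P\in\operatorname{Proj}(W)}\|Pw\|_W$, verify that $N$ is a measurable seminorm that is almost surely finite (finiteness following from the convergence $P_n w\to w$ in $W$ along any sequence $P_n\uparrow I$, together with a separability argument reducing the uncountable supremum to a countable one), and then apply Fernique's theorem to $N$ to obtain $\mathbb{E}[e^{2\varepsilon N(B(1))^2}]<\infty$ for $\varepsilon$ below a threshold $\delta$ that is independent of $P$. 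Since $\|PB(1)\|_W\le N(B(1))$, this bounds the $W$-factor uniformly in $P$.

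For the $\mathbf C$-factor I would split $\|c_P(T)\|_{\mathbf C}\le\|B_0(T)\|_{\mathbf C}+\tfrac12\|\int_0^T\omega(PB,dPB)\|_{\mathbf C}$ and apply Cauchy--Schwarz once more. The term $B_0(T)$ is a fixed finite-dimensional Gaussian, so $\mathbb{E}[e^{\lambda\|B_0(T)\|_{\mathbf C}}]<\infty$ for every $\lambda$, hence for the relevant exponent. The stochastic integral $M_P(T):=\int_0^T\omega(PB,dPB)$ is a $\mathbf C$-valued martingale lying in the second Wiener chaos; after scaling it suffices to bound $\mathbb{E}[e^{\varepsilon\|M_P(1)\|_{\mathbf C}}]$ uniformly in $P$ for small $\varepsilon$. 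Using the Burkholder--Davis--Gundy inequality together with the estimate $\|\omega(PB(\tau),\cdot)\|\le\|\omega\|_0\|PB(\tau)\|_W$ on the integrand, the quadratic variation of $M_P$ is controlled by $\|\omega\|_0^2$ and a quadratic functional of $B$, which yields factorial growth $\mathbb{E}\|M_P(1)\|_{\mathbf C}^n\le C^n n!$ with $C$ depending only on $\|\omega\|_0$ (finite by Notation \ref{n.h3.1}). Such factorial moment growth is exactly the exponential-tail condition that makes $e^{\varepsilon\|M_P(1)\|_{\mathbf C}}$ integrable for small $\varepsilon$, uniformly in $P$; this is also the structural reason that $\rho^2$ carries $\|c\|_{\mathbf C}$ only to the first power, matching the exponential (rather than Gaussian) tails of a second-chaos variable. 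Finally, the integral bound over $(G,\nu_T)$ follows from the uniform expectation bound: since $\nu_T=\operatorname{Law}(g(T))$ and $g_P(T)\to g(T)$ in $G$ as $P\uparrow I$, Fatou's lemma gives
\[
\int_G e^{\frac{\varepsilon}{T}\rho^2}\,d\nu_T=\mathbb{E}\big[e^{\frac{\varepsilon}{T}\rho^2(g(T))}\big]\le\liminf_{P\uparrow I}\mathbb{E}\big[e^{\frac{\varepsilon}{T}\rho^2(g_P(T))}\big]\le\sup_P\mathbb{E}\big[e^{\frac{\varepsilon}{T}\rho^2(g_P(T))}\big]<\infty.
\]
I expect the main obstacle to be the uniform-in-$P$ Fernique estimate for the $W$-component — precisely the passage to the almost surely finite envelope seminorm $N$ — since this is where the lack of $W$-norm contractivity of $P$ must be circumvented; the uniform second-chaos bound is routine once $\|\omega\|_0<\infty$ is invoked.
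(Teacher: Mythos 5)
The paper does not actually prove this theorem: it is stated as a restatement of \cite[Theorem 4.16]{DG07b}, so there is no internal proof to compare against. Judged on its own merits, your architecture (Cauchy--Schwarz to separate the $W$- and $\mathbf{C}$-components, Brownian scaling to remove $T$, Fernique for the Gaussian part, factorial moment growth via BDG for the second-chaos stochastic integral, and Fatou along $g_{P_n}(T)\rightarrow g(T)$ to pass from the finite dimensional bound to the $\nu_T$-integral) is sensible. But the step you yourself flag as the main obstacle is a genuine gap. The theorem asks for a supremum of expectations over $P\in\operatorname*{Proj}(W)$, and you convert this into an expectation of the envelope $N(w):=\sup_{P}\left\Vert Pw\right\Vert_{W}$. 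The asserted almost sure finiteness of $N$ does not follow from your argument: the convergence $P_nw\rightarrow w$ along an increasing sequence controls the supremum along that one sequence, with a null set depending on the sequence, and $\operatorname*{Proj}(W)$ is not exhausted by any single increasing sequence. A separability reduction to a countable subfamily does not help either, since a countable family of a.s.\ finite seminorms can have an a.s.\ infinite supremum; there is no L\'{e}vy-type maximal inequality available here because the projections range over arbitrary orthonormal frames in $H_{\ast}$, not sub-sums of a fixed basis. So the key uniformity-in-$P$ estimate is not established (and the a.s.\ finiteness of $N$ may simply be false in general).

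The standard repair keeps the estimate per-$P$ with constants uniform in $P$: since $P|_{H}$ is an orthogonal projection, $\left\Vert u\circ P\right\Vert_{H_{\operatorname{Re}}^{\ast}}\leqslant\left\Vert u\right\Vert_{H_{\operatorname{Re}}^{\ast}}$ for every $u\in W_{\operatorname{Re}}^{\ast}$, so the covariance form of $\operatorname{Law}(PB(T))$ is dominated by that of $\operatorname{Law}(B(T))$; Anderson's inequality then gives $\mathbb{P}\left(\left\Vert PB(T)\right\Vert_{W}>t\right)\leqslant\mathbb{P}\left(\left\Vert B(T)\right\Vert_{W}>t\right)$ for all $t$, hence $\mathbb{E}\left[e^{\lambda\left\Vert PB(T)\right\Vert_{W}^{2}}\right]\leqslant\mathbb{E}\left[e^{\lambda\left\Vert B(T)\right\Vert_{W}^{2}}\right]$ uniformly in $P$, with the right side finite for small $\lambda$ by a single application of Fernique. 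A secondary point: in the quadratic-variation bound for $M_{P}(t)=\int_{0}^{t}\omega(PB,dPB)$ you cannot get by with $\left\Vert \omega\right\Vert_{0}$ alone, since the bracket involves $\sum_{j}\left\Vert \omega(PB(\tau),e_{j})\right\Vert_{\mathbf{C}}^{2}$ over an orthonormal basis of $PH$, and bounding each term by $\left\Vert \omega\right\Vert_{0}^{2}\left\Vert PB(\tau)\right\Vert_{W}^{2}\left\Vert e_{j}\right\Vert_{W}^{2}$ leaves the possibly divergent factor $\sum_{j}\left\Vert e_{j}\right\Vert_{W}^{2}$, while using $\left\Vert \omega\right\Vert_{H^{\ast}\otimes H^{\ast}\otimes\mathbf{C}}$ leaves the uncontrolled factor $\left\Vert PB(\tau)\right\Vert_{H}^{2}$; one needs the mixed estimate $\left\Vert \omega(w,\cdot)\right\Vert_{H^{\ast}\otimes\mathbf{C}}\leqslant K\left\Vert w\right\Vert_{W}$, which is exactly the kind of bound established in \cite{DG07b} and is not immediate from Notation \ref{n.h3.1}.
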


\begin{prop}
\label{p.h4.12}Let $P_{n}\in\operatorname*{Proj}\left(  W\right)$
such that
$P_{n}|_{H}\uparrow I_{H}$ on $H$ and let $g_{n}\left(  T\right)  :=g_{P_{n}%
}\left(  T\right)$. Further suppose that $\delta>0$ is as in Theorem
\ref{t.h4.11}, $p\in\lbrack1,\infty)$,  and
$f:G\rightarrow\mathbb{C}$ is a continuous function such that
\begin{equation}
\left\vert f\left(  g\right)  \right\vert \leqslant Ce^{\varepsilon\rho
^{2}\left(  g\right)  /\left(  pT\right)  }\text{ for all }g\in G
\label{e.h4.21}%
\end{equation}
for some $\varepsilon\in\left(  0,\delta\right)$. Then $f\in
L^{p}\left(
\nu_{T}\right)$ and for all $h\in G$ we have%
\begin{equation}
\lim_{n\rightarrow\infty}\mathbb{E}\left\vert f\left(  hg\left(  T\right)
\right)  -f\left(  hg_{n}\left(  T\right)  \right)  \right\vert ^{p}=0,
\label{e.h4.22}%
\end{equation}
and%
\begin{equation}
\lim_{n\rightarrow\infty}\mathbb{E}\left\vert f\left(  g\left(  T\right)
h\right)  -f\left(  g_{n}\left(  T\right)  h\right)  \right\vert ^{p}=0.
\label{e.h4.23}%
\end{equation}

\end{prop}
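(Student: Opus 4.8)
The plan is to obtain the two $L^{p}$ limits from convergence in probability of the approximations together with a uniform integrability estimate, i.e. via Vitali's convergence theorem. The assertion $f\in L^{p}(\nu_{T})$ is immediate: the growth hypothesis gives $|f(g)|^{p}\le C^{p}e^{\varepsilon\rho^{2}(g)/T}$ with $\rho^{2}$ as in \eqref{e.h4.19}, and since $\varepsilon\in(0,\delta)$ the right-hand side is $\nu_{T}$-integrable by the second bound in Theorem \ref{t.h4.11}.

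Next I would show $g_{n}(T)\to g(T)$ in probability for the norm $\|(w,c)\|_{\mathfrak g}=\|w\|_{W}+\|c\|_{\mathbf C}$. For the $W$-component, $P_{n}B(T)\to B(T)$ in $W$ almost surely: since $P_{n}|_{H}\uparrow I_{H}$, the sequence $\{P_{n}B(T)\}$ is a $W$-valued martingale converging to $B(T)$ by the standard abstract Wiener space martingale argument (as in \cite{DG07b}). For the $\mathbf C$-component the difference $\tfrac12\int_{0}^{T}[\omega(P_{n}B,dP_{n}B)-\omega(B,dB)]$ tends to $0$ in $L^{2}(\mathbf C)$ by the It\^{o} isometry together with $P_{n}\to I$, again following the construction in \cite{DG07b}. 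Hence $g_{n}(T)\to g(T)$ in probability. Because $G$ is a topological group, left and right translation by the fixed element $h$ is continuous, so $hg_{n}(T)\to hg(T)$ and $g_{n}(T)h\to g(T)h$ in probability; continuity of $f$ then yields $f(hg_{n}(T))\to f(hg(T))$ and $f(g_{n}(T)h)\to f(g(T)h)$ in probability.

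The crux is the uniform integrability of $\{|f(hg_{n}(T))|^{p}\}_{n}$. Writing $h=(w_{0},c_{0})$ and $g_{n}(T)=(W_{n},C_{n})$ with $W_{n}=P_{n}B(T)$, the product rule \eqref{e.h1.1} gives $hg_{n}(T)=(w_{0}+W_{n},\,c_{0}+C_{n}+\tfrac12\omega(w_{0},W_{n}))$. Using $\|w_{0}+W_{n}\|_{W}^{2}\le(1+\eta)\|W_{n}\|_{W}^{2}+(1+\eta^{-1})\|w_{0}\|_{W}^{2}$ and $\tfrac12\|\omega\|_{0}\|w_{0}\|_{W}\|W_{n}\|_{W}\le\eta\|W_{n}\|_{W}^{2}+C'_{h,\eta}$ to absorb the cross term, and recalling that $\rho^{2}$ is quadratic in the $W$-slot but only linear in the $\mathbf C$-slot, one obtains for every $\eta>0$ an estimate of the form
\[
\rho^{2}(hg_{n}(T))\le(1+\eta)\,\rho^{2}(g_{n}(T))+C_{h,\eta},
\]
with $C_{h,\eta}$ independent of $n$; the identical estimate holds for $g_{n}(T)h$ by skew-symmetry of $\omega$. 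Since $\varepsilon<\delta$ I may fix $r\in(1,\delta/\varepsilon)$ and then $\eta>0$ small enough that $r(1+\eta)\varepsilon<\delta$. Combining the growth bound with the displayed estimate gives
\[
\mathbb{E}\left[\,|f(hg_{n}(T))|^{pr}\,\right]\le C^{pr}e^{r\varepsilon C_{h,\eta}/T}\,\mathbb{E}\left[\,e^{r(1+\eta)\varepsilon\rho^{2}(g_{n}(T))/T}\,\right],
\]
and the right-hand side is bounded uniformly in $n$ by the $\sup_{P}$ bound of Theorem \ref{t.h4.11}, since $g_{n}=g_{P_{n}}$ and $r(1+\eta)\varepsilon<\delta$. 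Thus $\{|f(hg_{n}(T))|^{p}\}_{n}$ is bounded in $L^{r}$ for some $r>1$, hence uniformly integrable.

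Finally, convergence in probability together with uniform integrability yields the $L^{p}$ limits \eqref{e.h4.22} and \eqref{e.h4.23} by Vitali's theorem. I expect the main obstacle to be the $L^{2}$-convergence of the stochastic area integrals in the $\mathbf C$-component, and the constant bookkeeping in the uniform integrability step, where the hypothesis $\varepsilon<\delta$ is exactly what provides the room to select an exponent $r>1$.
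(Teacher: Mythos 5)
Your proposal is correct and follows essentially the same route as the paper: convergence in probability of $g_{n}(T)$ to $g(T)$ (which the paper simply cites from \cite[Lemma 4.7]{DG07b}), the quadratic estimate $\rho^{2}(hg)\leqslant(1+\eta)\rho^{2}(g)+C_{h,\eta}$ to absorb the translation, and uniform integrability from the $\sup_{P}$ bound of Theorem \ref{t.h4.11} with a slightly enlarged exponent still below $\delta$. The only cosmetic difference is that the paper first treats $h=\mathbf{e}$ and then observes that $g\mapsto f(hg)$ satisfies a bound of the same form as \eqref{e.h4.21}, whereas you fold the translation estimate directly into the uniform integrability step.
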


\begin{proof}
If $q\in\left(  p,\infty\right)$ is sufficiently close to $p$ so
that
$qp^{-1}\varepsilon<\delta$,  then%
\[
\sup_{n}\mathbb{E}\left\vert f\left(  g_{n}\left(  T\right)  \right)
\right\vert ^{q}\leqslant C^{q}\sup_{n}\mathbb{E}\left[  e^{p^{-1}%
q\varepsilon\rho^{2}\left(  g\right)  /T}\right]
\]
which is finite by Theorem \ref{t.h4.11}. This shows that $\left\{
\left\vert f\left(  g_{n}\left(  T\right)  \right)  \right\vert
^{p}\right\}_{n=1}^{\infty}$ is uniformly integrable. As a
consequence of \cite[Lemma 4.7]{DG07b} and the continuity of $f$, we
also know that $f\left( g_{n}\left(  T\right)  \right) \rightarrow
f\left(  g\left(  T\right) \right)$ in probability as
$n\rightarrow\infty$. Thus we have shown Eqs. (\ref{e.h4.22}) and
(\ref{e.h4.23}) hold when $h=\mathbf{e}=0$. Now suppose that
$g=\left(  w,c\right)$ and $h=\left(  A,a\right)$ are in
$G$. Then for all $\alpha>0,$%
\begin{align}
\rho^{2}\left(  gh\right)   &  =\left\Vert
w+A\right\Vert_{W}^{2}+\left\Vert
a+c+\frac{1}{2}\omega\left(  w,A\right)  \right\Vert_{\mathbf{C}}\nonumber\\
&  \leqslant\left\Vert w\right\Vert_{W}^{2}+\left\Vert A\right\Vert_{W}%
^{2}+2\left\Vert A\right\Vert_{W}\left\Vert w\right\Vert
_{W}+\left\Vert
a\right\Vert_{\mathbf{C}}+\left\Vert c\right\Vert_{\mathbf{C}}+\frac{1}%
{2}\left\Vert \omega\left(  w,A\right)  \right\Vert_{\mathbf{C}}\nonumber\\
&  \leqslant\rho^{2}\left(  g\right)  +\rho^{2}\left(  h\right)  +C\left\Vert
A\right\Vert_{W}\left\Vert w\right\Vert_{W}\label{e.h4.24}\\
&  \leqslant\rho^{2}\left(  g\right)  +\rho^{2}\left(  h\right)  +\frac{C}%
{2}\left[  \alpha^{-1}\left\Vert
A\right\Vert_{W}^{2}+\alpha\left\Vert
w\right\Vert_{W}^{2}\right] \nonumber\\
&  \leqslant\left(  1+\frac{C\alpha}{2}\right)  \rho^{2}\left(
g\right) +\left(  1+\frac{C}{2\alpha}\right)  \rho^{2}\left(
h\right),\nonumber
\end{align}
where $C:=\left(  2+\frac{1}{2}\left\Vert \omega\right\Vert
_{0}\right)$. As Eq. (\ref{e.h4.24}) is invariant under
interchanging $g$ and $h$ the same bound also hold for
$\rho^{2}\left(  hg\right)$. By choosing $\alpha>0$ sufficiently
small so that $\left(  1+\frac{C\alpha}{2}\right)  \varepsilon
<\delta$,  we see that $g\rightarrow f\left(  gh\right)$ and
$g\rightarrow f\left(  hg\right)$ satisfy the same type of bound as
in Eq. (\ref{e.h4.21}) for $\,g\rightarrow f\left(  g\right)$.
Therefore, by the first paragraph, we have now verified Eqs.
(\ref{e.h4.22}) and (\ref{e.h4.23}) hold for any $h\in G$.
\end{proof}

\section{Holomorphic functions on $G$ and $G_{CM}$\label{s.h5}}

We will begin with a short summary of the results about holomorphic functions
on Banach spaces that will be needed in this paper.

\subsection{Holomorphic functions on Banach spaces\label{s.h5.1}}

Let $X$ and $Y$ be two complex Banach space and for $a\in X$ and $\delta>0$
let
\[
B_{X}\left(  a,\delta\right)  :=\left\{  x\in X:\left\Vert x-a\right\Vert
_{X}<\delta\right\}
\]
be the open ball in $X$ with center $a$ and radius $\delta$.
\begin{df}
[{Hille and Phillips \cite[Definition 3.17.2, p. 112.]{HP74}}]\label{d.h5.1}%
Let $\mathcal{D}$ be an open subset of $X$. A function $u:\mathcal{D}%
\rightarrow Y\ $is said to be \textbf{holomorphic (or analytic)} if the
following two conditions hold.

\begin{enumerate}
\item $u$ is locally bounded, namely for all $a\in\mathcal{D}$ there exists
an $r_{a}>0$ such that
\[
M_{a}:=\sup\left\{  \left\Vert u\left(  x\right)
\right\Vert_{Y}:x\in B_{X}\left(  a,r_{a}\right)  \right\}  <\infty.
\]

\item The function $u$ is complex G\^{a}teaux differentiable on $\mathcal{D}$,  i.e. for each $a\in\mathcal{D}$ and $h\in X$,  the function
$\lambda\rightarrow u\left(  a+\lambda h\right)$ is complex
differentiable at $\lambda =0\in\mathbb{C}$. \end{enumerate}

(Holomorphic and analytic will be considered to be synonymous terms for the
purposes of this paper.)
\end{df}

The next theorem gathers together a number of basic properties of
holomorphic functions which may be found in \cite{HP74}. (Also see
\cite{Herve89}.) One of the key ingredients to all of these results
is Hartog's theorem, see \cite[Theorem 3.15.1]{HP74}.


\begin{thm}
\label{t.h5.2}If $u:\mathcal{D}\rightarrow Y$ is holomorphic, then
there exists a function
$u^{\prime}:\mathcal{D}\rightarrow\operatorname{Hom}\left(
X,Y\right)$, the space of bounded \textbf{complex} linear operators
from $X$ to $Y$, satisfying

\begin{enumerate}
\item If $a\in\mathcal{D}$,  $x\in B_{X}\left(  a,r_{a}/2\right)$,  and $h\in
B_{X}\left(  0,r_{a}/2\right)$,  then
\begin{equation}
\left\Vert u\left(  x+h\right)  -u\left(  x\right)  -u^{\prime}\left(
x\right)  h\right\Vert_{Y}\leqslant\frac{4M_{a}}{r_{a}\left(  r_{a}%
-2\left\Vert h\right\Vert_{X}\right)  }\left\Vert h\right\Vert
_{X}^{2}.
\label{e.h5.1}%
\end{equation}
In particular, $u$ is continuous and Frech\'{e}t differentiable on
$\mathcal{D}$.
\item The function $u^{\prime}:\mathcal{D}\rightarrow\operatorname{Hom}\left(
X,Y\right)$ is holomorphic.
\end{enumerate}
\end{thm}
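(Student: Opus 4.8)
The plan is to derive the differentiability statement (item 1) by extracting the linear term from the local power series expansion that holomorphy guarantees, and then to bootstrap holomorphy of $u'$ (item 2) from the same machinery. First I would fix $a\in\mathcal{D}$ and work inside the ball $B_X(a,r_a)$ on which $u$ is bounded by $M_a$. For each fixed $x$ near $a$ and each direction $h$, the complex G\^{a}teaux differentiability hypothesis says $\lambda\mapsto u(x+\lambda h)$ is a bounded holomorphic function of $\lambda$ on a disc in $\mathbb{C}$. The key device is the one–variable Cauchy integral formula: writing
\[
u(x+\lambda h)=\sum_{n=0}^{\infty}a_n(x,h)\lambda^{n},\qquad a_n(x,h)=\frac{1}{2\pi i}\oint_{|\zeta|=r}\frac{u(x+\zeta h)}{\zeta^{n+1}}\,d\zeta,
\]
with the contour chosen so that $x+\zeta h$ stays in $B_X(a,r_a)$. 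The coefficient $a_1(x,\cdot)$ is the candidate for $u'(x)$, and the Cauchy estimate $\|a_n(x,h)\|_Y\le M_a r^{-n}$ immediately controls the tail.

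The main obstacle, and the reason Hartogs' theorem is flagged as the crucial input, is showing that $h\mapsto a_1(x,h)=:u'(x)h$ is genuinely a bounded complex-linear map rather than merely homogeneous in each direction. Homogeneity of degree one in $h$ is automatic from the integral formula (rescaling $h$ and $\zeta$), but additivity $u'(x)(h+k)=u'(x)h+u'(x)k$ is not obvious from the one-dimensional definition alone. This is exactly where joint holomorphy is needed: Hartogs' theorem (cited as \cite[Theorem 3.15.1]{HP74}) upgrades separate G\^{a}teaux holomorphy to joint holomorphy in finitely many variables, so that $(\lambda,\mu)\mapsto u(x+\lambda h+\mu k)$ is holomorphic in two complex variables. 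Expanding this in a bihomogeneous power series and reading off the total-degree-one part yields additivity of $u'(x)$; boundedness then follows from the $n=1$ Cauchy estimate, giving $\|u'(x)\|_{\operatorname{Hom}(X,Y)}\le 2M_a/r_a$ on $B_X(a,r_a/2)$.

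With $u'(x)$ identified as a bounded linear operator, I would prove the quantitative bound in Eq.~(\ref{e.h5.1}) by applying the Cauchy integral formula to the remainder. Writing $u(x+h)-u(x)-u'(x)h=\sum_{n\ge 2}a_n(x,h)$ and summing the geometric tail of the Cauchy estimates with the radius taken as $r_a/2$ (legitimate because $x\in B_X(a,r_a/2)$ and $\|h\|_X<r_a/2$ keep the relevant segment inside $B_X(a,r_a)$) produces precisely a bound of the form $\frac{4M_a}{r_a(r_a-2\|h\|_X)}\|h\|_X^2$. This estimate simultaneously shows Frech\'{e}t differentiability and, by letting $h\to 0$, continuity of $u$ on $\mathcal{D}$.

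For item 2, the plan is to show $u':\mathcal{D}\to\operatorname{Hom}(X,Y)$ is itself holomorphic by verifying the two defining conditions. Local boundedness is the estimate $\|u'(x)\|\le 2M_a/r_a$ just obtained. For complex G\^{a}teaux differentiability of $u'$, I would fix $x$ and a direction $v$ and analyze $\lambda\mapsto u'(x+\lambda v)$, again realizing its matrix coefficients (applied to any test vector $h$) through a Cauchy integral of $u$, so that holomorphy in $\lambda$ is inherited from the joint holomorphy of $u$ in the two variables $(\lambda,\zeta)$. Differentiating the Cauchy formula for $u'(x)h$ under the contour integral with respect to the parameter $\lambda$ shows $\lambda\mapsto u'(x+\lambda v)h$ is complex differentiable at $0$ for every $h$, uniformly enough to conclude the operator-valued map is G\^{a}teaux holomorphic. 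I expect the genuinely delicate point throughout to be the legitimacy of differentiating under the integral sign and the repeated use of Hartogs to pass from directional to joint holomorphy; once those are in hand, every remaining step is a routine Cauchy estimate.
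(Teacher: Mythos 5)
Your proof is correct, and the machinery it runs on --- one- and two-variable Cauchy integrals, Cauchy estimates, and Hartogs' theorem to pass from directional to joint holomorphy --- is precisely what underlies the results the paper uses. The difference is one of packaging. The paper does not reconstruct this machinery: it cites Hille--Phillips outright for the existence and linearity of $u^{\prime}\left(a\right)$ (Theorem 26.3.2), for the Cauchy estimate $\left\Vert u^{\prime}\left(x\right)h\right\Vert_{Y}\leqslant M_{a}$ (Theorem 3.16.3), and for the remainder bound of Eq.~(\ref{e.h5.1}) (the proof of Theorem 3.17.1); you instead re-derive these from the expansion $u\left(x+\lambda h\right)=\sum_{n}a_{n}\left(x,h\right)\lambda^{n}$, using the two-variable power series to get additivity of $h\mapsto a_{1}\left(x,h\right)$ and the geometric tail $\sum_{n\geqslant2}M_{a}\left(2\left\Vert h\right\Vert_{X}/r_{a}\right)^{n}$ to recover exactly the constant in Eq.~(\ref{e.h5.1}). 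The genuine divergence is in item 2. You differentiate the Cauchy integral under the integral sign to obtain G\^{a}teaux holomorphy of $u^{\prime}$; this works, but the passage from convergence of the difference quotients of $u^{\prime}\left(x+\lambda v\right)h$ for each fixed $h$ to convergence in the norm of $\operatorname{Hom}\left(X,Y\right)$ --- which Definition \ref{d.h5.1} requires --- is exactly the uniformity you gesture at with ``uniformly enough,'' and you should record explicitly that the relevant second-order Cauchy estimate on the integrand is uniform over $\left\Vert h\right\Vert_{X}\leqslant1$ because it depends only on $M_{a}$ and the radii. The paper instead bootstraps item 1: it applies the already-established estimate of Eq.~(\ref{e.h5.1}) to the holomorphic functions $F_{h}\left(x\right):=u^{\prime}\left(x\right)h$ and takes the supremum over $\left\Vert h\right\Vert_{X}=1$, which yields Fr\'{e}chet (not merely G\^{a}teaux) differentiability of $u^{\prime}$ together with an explicit second-order remainder bound --- the stronger, cleanly quantitative conclusion that Remark \ref{r.h5.3} then iterates to all orders.
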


\begin{rem}
\label{r.h5.3}By applying Theorem \ref{t.h5.2} repeatedly, it
follows that any holomorphic function, $u:\mathcal{D}\rightarrow Y$
is Frech\'{e}t differentiable to all orders and each of the
Frech\'{e}t differentials are again holomorphic functions on
$\mathcal{D}$. \end{rem}

\begin{proof}
By \cite[Theorem 26.3.2 on p. 766.]{HP74}, for each
$a\in\mathcal{D}$ there is a linear operator, $u^{\prime}\left(
a\right): X\rightarrow Y$ such that $du\left(  a+\lambda h\right)
/d\lambda|_{\lambda=0}=u^{\prime}\left( a\right)  h$. The Cauchy
estimate in Theorem 3.16.3 (with $n=1$) of \cite{HP74} implies that
if $a\in\mathcal{D}$,  $x\in B_{X}\left( a,r_{a}/2\right)$ and $h\in
B_{X}\left(  0,r_{a}/2\right)$ (so that $x+h\in B_{X}\left( a,
r_{a}\right)  )$,  then $\left\Vert u^{\prime}\left( x\right)
h\right\Vert_{Y}\leqslant M_{a}$. It follows from this estimate
that%
\begin{equation}
\sup\left\{  \left\Vert u^{\prime}\left(  x\right)  \right\Vert
_{\operatorname{Hom}\left(  X,Y\right)  }:x\in B_{X}\left(  a,r_{a}/2\right)
\right\}  \leqslant2M_{a}/r_{a}. \label{e.h5.2}%
\end{equation}
and hence that
$u^{\prime}:\mathcal{D}\rightarrow\operatorname{Hom}\left(
X,Y\right)$ is a locally bounded function. The estimate in Eq.
(\ref{e.h5.1}) appears in the proof of the Theorem 3.17.1 in
\cite{HP74} which completes the proof of item 1.

To prove item 2. we must show $u^{\prime}$ is G\^{a}teaux
differentiable on $\mathcal{D}$. We will in fact show more, namely,
that $u^{\prime}$ is Frech\'{e}t differentiable on $\mathcal{D}$.
Given $h\in X$,  let $F_{h}:\mathcal{D}\rightarrow Y$ be defined by
$F_{h}\left(  x\right) :=u^{\prime}\left(  x\right)  h$. According
to \cite[Theorem 26.3.6]{HP74}, $F_{h}$ is holomorphic on
$\mathcal{D}$ as well. Moreover, if $a\in\mathcal{D}$ and $x\in
B\left(  a,r_{a}/2\right)$ we have by Eq. (\ref{e.h5.2}) that
\[
\left\Vert F_{h}\left(  x\right)
\right\Vert_{Y}\leqslant2M_{a}\left\Vert h\right\Vert_{X}/r_{a}.
\]
So applying the estimate in Eq. (\ref{e.h5.1}) to $F_{h}$,  we learn that%
\begin{equation}
\left\Vert F_{h}\left(  x+k\right)  -F_{h}\left(  x\right)
-F_{h}^{\prime }\left(  x\right)
k\right\Vert_{Y}\leqslant\frac{4\left(  2M_{a}\left\Vert
h\right\Vert_{X}/r_{a}\right)  }{\frac{r_{a}}{2}\left(  \frac{r_{a}}%
{2}-2\left\Vert k\right\Vert_{X}\right)  }\cdot\left\Vert
k\right\Vert
_{X}^{2} \label{e.h5.3}%
\end{equation}
for $x\in B\left(  a,r_{a}/4\right)$ and $\left\Vert k\right\Vert_{X}%
<r_{a}/4$, where
\[
F_{h}^{\prime}\left(  x\right)
k=\frac{d}{d\lambda}\left|_{0}F_{h}\left( x+\lambda k\right)\right.
=\frac{d}{d\lambda}|_{0}u^{\prime}\left( x+\lambda k\right)
h=:\left(  \delta^{2}u\right)  \left( x;h,k\right).
\]
Again by \cite[Theorem 26.3.6]{HP74}, for each fixed
$x\in\mathcal{D}$,  $\left(  \delta^{2}u\right)  \left(
x;h,k\right) $ is a continuous symmetric bilinear form in $\left(
h,k\right) \in X\times X$. Taking the supremum of Eq. (\ref{e.h5.3})
over those $h\in X$ with $\left\Vert h\right\Vert_{X}=1$,  we may conclude that%
\begin{align*}
\big\Vert u^{\prime}\left(  x+k\right)  -u^{\prime}\left(  x\right)   &
-\delta^{2}u\left(  x;\cdot,k\right)  \big\Vert_{\operatorname{Hom}\left(
X,Y\right)  }\\
&  =\sup_{\left\Vert h\right\Vert_{X}=1}\left\Vert F_{h}\left(
x+k\right)
-F_{h}\left(  x\right)  -F_{h}^{\prime}\left(  x\right)  k\right\Vert_{Y}\\
&  \leqslant\frac{4\left(  2M_{a}/r_{a}\right)
}{\frac{r_{a}}{2}\left( \frac{r_{a}}{2}-2\left\Vert
k\right\Vert_{X}\right)  } \left\Vert k\right\Vert_{X}^{2}.
\end{align*}
This estimate shows $u^{\prime}$ is Frech\'{e}t differentiable with
$u^{\prime\prime}\left(  x\right)  \in\operatorname{Hom}\left(
X,\operatorname{Hom}\left(  X,Y\right)  \right)$ being given by
$u^{\prime\prime}\left(  x\right)  k=\left(  \delta^{2}u\right)
\left( x;\cdot,k\right)  \in\operatorname{Hom}\left(  X,Y\right)$
for all $k\in X$ and $x\in\mathcal{D}$. \end{proof}

\subsection{Holomorphic functions on $G$ and $G_{CM}$\label{s.h5.2}}

For the purposes of this section, let $G_{0}=G$ and $\mathfrak{g}%
_{0}=\mathfrak{g}$ or $G_{0}=G_{CM}$ and
$\mathfrak{g}_{0}=\mathfrak{g}_{CM}$. Also for $g,h\in\mathfrak{g}$,
let (as usual) $ad_{g}h:=\left[  g,h\right]$.
\begin{lem}
\label{l.h5.4}For each $g\in G_{0}$,
\thinspace$l_{g}:G_{0}\rightarrow G_{0}$
is holomorphic in the $\left\Vert \cdot\right\Vert_{\mathfrak{g}_{0}}%
$--topology. Moreover, a function $u:G_{0}\rightarrow\mathbb{C}$ defined in a
neighborhood of $g\in G_{0}$ is G\^{a}teaux (Frech\'{e}t) differentiable at
$g$ iff $u\circ$\thinspace$l_{g}$ is G\^{a}teaux (Frech\'{e}t) differentiable
at $0$. In addition, if $u$ is Frech\'{e}t differentiable at $g$,  then%
\begin{equation}
\left(  u\circ\,l_{g}\right)^{\prime}\left(  0\right)
h=u^{\prime}\left(
g\right)  \left(  h+\frac{1}{2}\left[  g,h\right]  \right). \label{e.h5.4}%
\end{equation}
(See \cite[Theorem 5.7]{GrossMall} for an analogous result in the context of
path groups.)
\end{lem}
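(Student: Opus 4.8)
The plan is to exploit that left translation in $G_0$ is an affine map whose linear part is a bounded, complex-linear, \emph{invertible} operator; the three assertions then reduce to the chain rule.

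First I would write $l_g$ out explicitly. By the multiplication law \eqref{e.h3.3} and the definition \eqref{e.h3.2} of the bracket, for $x\in G_0$ we have $l_g(x)=g\cdot x=g+x+\tfrac12[g,x]=g+L_g x$, where $L_g:=I+\tfrac12\,ad_g$. Since $ad_g x=(0,\omega(A,B))$ for $g=(A,a)$ and $x=(B,b)$, the operator $ad_g$ is complex linear (as $\omega$ is complex bilinear) and bounded on $\mathfrak g_0$ (as $\omega$ is continuous, cf. \eqref{e.h3.1}); hence so is $L_g$. Thus $l_g$ is affine with bounded complex-linear part. To see it is holomorphic in the sense of Definition \ref{d.h5.1}, note it is continuous and therefore locally bounded, and for each $x,h$ the map $\lambda\mapsto l_g(x+\lambda h)=l_g(x)+\lambda L_g h$ is affine in $\lambda$, hence complex G\^{a}teaux differentiable; this simultaneously identifies $l_g'(x)=L_g$ for every $x$.

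The crucial structural fact is that $L_g$ is invertible because $G_0$ is two-step nilpotent. Indeed $ad_g$ takes values in the center $\{0\}\times\mathbf C$ and annihilates it, so $ad_g^2=0$ and consequently $L_g^{-1}=I-\tfrac12\,ad_g$. Equivalently, one checks $g^{-1}=-g$ and that the inverse of $l_g$ is the affine (hence holomorphic) map $l_{-g}$. Therefore $l_g$ is an affine biholomorphism of $G_0$ carrying $0$ to $g$, with the bounded linear isomorphism $L_g$ as its differential.

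With this in hand the remaining claims follow from the chain rule. Because $l_g$ is affine, the difference quotients of $u\circ l_g$ at $0$ transform \emph{exactly}, with no error term, by $L_g$: the directional derivative of $u\circ l_g$ at $0$ in direction $h$ equals that of $u$ at $g$ in direction $L_g h$. Since $L_g$ is a bounded linear isomorphism, linearity and continuity in the direction variable pass back and forth, giving G\^{a}teaux differentiability of $u\circ l_g$ at $0$ iff $u$ is G\^{a}teaux differentiable at $g$; the Fr\'{e}chet case is handled identically (the affine remainder estimate being transported through $L_g$), and the converse direction uses $u=(u\circ l_g)\circ l_{-g}$. Finally, evaluating the chain rule at $0$ gives $(u\circ l_g)'(0)h=u'(g)L_g h=u'(g)\bigl(h+\tfrac12[g,h]\bigr)$, which is \eqref{e.h5.4}. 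I expect the only point needing care to be the transfer of G\^{a}teaux/Fr\'{e}chet differentiability through the composition, which is painless precisely because $l_g$ is affine with invertible bounded linear part.
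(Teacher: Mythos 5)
Your proof is correct and follows essentially the same route as the paper's: both write $l_g(x)=g+\bigl(Id_{\mathfrak g_0}+\tfrac12 ad_g\bigr)x$, observe that the linear part is bounded and invertible with inverse $Id_{\mathfrak g_0}-\tfrac12 ad_g$ since $ad_g^2=0$, and then transfer differentiability through this affine biholomorphism (the paper spells out the G\^{a}teaux directions via the same substitution $k=h\pm\tfrac12[g,h]$ that your chain-rule argument encodes). No gaps.
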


\begin{proof}
Since
\[
\,l_{g}\left(  h\right)  =gh=g+h+\frac{1}{2}\left[  g,h\right]  =g+\left(
Id_{\mathfrak{g}_{0}}+\frac{1}{2}ad_{g}\right)  h,
\]
it is easy to see that \thinspace$l_{g}$ is holomorphic and $l_{g}^{\prime}$
is the constant function equal to $Id_{\mathfrak{g}_{0}}+\frac{1}{2}ad_{g}%
\in\operatorname*{End}\left(  \mathfrak{g}_{0}\right)$. Using
$ad_{g}^{2}=0$ or the fact that $l_{g}^{-1}=l_{g^{-1}}$,  we see
that $l_{g}^{\prime}$ is invertible and that
\[
l_{g}^{\prime}{}^{-1}=\left(  Id_{\mathfrak{g}_{0}}+\frac{1}{2}ad_{g}\right)
^{-1}=Id_{\mathfrak{g}_{0}}-\frac{1}{2}ad_{g}.
\]
These observations along with the chain rule imply the Frech\'{e}t
differentiability statements of the lemma and the identity in Eq.
(\ref{e.h5.4}).

If $u$ is G\^{a}teaux differentiable at $g$, $h\in\mathfrak{g}_{0}$,
and $k:=h+\frac{1}{2}\left[  g,h\right]$,  then
\[
\frac{d}{d\lambda}|_{0}u\circ\,l_{g}\left(  \lambda h\right)  =\frac
{d}{d\lambda}|_{0}u\left(  g\cdot\left(  \lambda h\right)  \right)  =\frac
{d}{d\lambda}|_{0}u\left(  g+\lambda k\right)
\]
and the existence of $\frac{d}{d\lambda}|_{0}u\left(  g+\lambda
k\right)$ implies the existence of
$\frac{d}{d\lambda}|_{0}u\circ$\thinspace $l_{g}\left(  \lambda
h\right)$. Conversely, if $u\circ$\thinspace$l_{g}$ is G\^{a}teaux
differentiable at $0$,  $h\in\mathfrak{g}_{0}$,  and
\[
k:=h-\frac{1}{2}\left[  g,h\right]  =\left(  Id_{\mathfrak{g}_{0}}+\frac{1}%
{2}ad_{g}\right)^{-1}h,
\]
then
\[
\,l_{g}\left(  \lambda k\right)  =g+\lambda\left(  Id_{\mathfrak{g}_{0}}%
+\frac{1}{2}ad_{g}\right)  k=g+\lambda h.
\]
So the existence of $\frac{d}{d\lambda}|_{0}\left(  u\circ\,l_{g}\right)
\left(  \lambda k\right)$ implies the existence of $\frac{d}{d\lambda}%
|_{0}u\left(  g+\lambda h\right)$. \end{proof}

\begin{cor}
\label{c.h5.5}A function $u:G_{0}\rightarrow\mathbb{C}$ is
holomorphic iff it is locally bounded and $h\rightarrow u\left(
ge^{h}\right)  =u\left(  g\cdot h\right)$ is G\^{a}teaux
(Frech\'{e}t) differentiable at $0$ for all $g\in G_{0}$. Moreover,
if $u$ is holomorphic and $h\in\mathfrak{g}_{0}$,  then
\[
\left(  \tilde{h}u\right)  \left(  g\right)  =\frac{d}{d\lambda}|_{0}u\left(
ge^{\lambda h}\right)  =u^{\prime}\left(  g\right)  \left(  h+\left[
g,h\right]  \right)
\]
is holomorphic as well.
\end{cor}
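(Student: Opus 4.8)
The plan is to deduce the whole statement from Lemma~\ref{l.h5.4} together with the observation, recorded in Proposition~\ref{p.h3.5}, that the one-parameter subgroups of $G_{0}$ are linear: $e^{\lambda h}=\lambda h$, so that $u(ge^{\lambda h})=u(g\cdot\lambda h)=(u\circ\,l_{g})(\lambda h)$ and in particular $u(ge^{h})=(u\circ\,l_{g})(h)$. With this identification the two halves of the equivalence become transparent.

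First I would settle the stated equivalence. By Definition~\ref{d.h5.1}, ``holomorphic'' means locally bounded together with complex G\^ateaux differentiability at every point. Lemma~\ref{l.h5.4} asserts that $u$ is G\^ateaux (Fr\'echet) differentiable at $g$ iff $u\circ\,l_{g}$ is G\^ateaux (Fr\'echet) differentiable at $0$; since $u\circ\,l_{g}(h)=u(ge^{h})$, G\^ateaux differentiability of $u$ at every $g$ is exactly G\^ateaux differentiability of every $h\mapsto u(ge^{h})$ at $0$. Thus, if $u$ is holomorphic it is locally bounded and each $h\mapsto u(ge^{h})$ is G\^ateaux differentiable at $0$; conversely, local boundedness plus G\^ateaux differentiability of every $h\mapsto u(ge^{h})$ at $0$ forces, by Lemma~\ref{l.h5.4}, G\^ateaux differentiability of $u$ at every point, hence holomorphicity by Definition~\ref{d.h5.1}. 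The Fr\'echet version is identical, using the Fr\'echet clause of Lemma~\ref{l.h5.4}.

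Next, for the displayed formula I would take the curve $\sigma(t)=e^{th}=th$ in the definition~\eqref{e.h3.8} of $\tilde h$, so that $(\tilde h u)(g)=\frac{d}{d\lambda}\big|_{0}u(ge^{\lambda h})=\frac{d}{d\lambda}\big|_{0}(u\circ\,l_{g})(\lambda h)=(u\circ\,l_{g})'(0)h$. By Eq.~\eqref{e.h5.4} this equals $u'(g)\bigl(h+\tfrac12[g,h]\bigr)$; writing $g=(w,c)$ and $h=(A,a)$ and recalling $[g,h]=(0,\omega(w,A))$ from Eq.~\eqref{e.h3.2}, this is precisely $u'(g)\bigl(A,a+\tfrac12\omega(w,A)\bigr)$, in agreement with Eq.~\eqref{e.h3.10} of Proposition~\ref{p.h3.5}. (The Lie-bracket term thus carries the factor $\tfrac12$ inherited from the group law and from Eq.~\eqref{e.h5.4}.)

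Finally I would prove that $\tilde h u$ is again holomorphic, which is the only substantive point. The clean way to see it is to read $(\tilde h u)(g)=u'(g)\bigl(h+\tfrac12[g,h]\bigr)$ as the composition of the holomorphic map $u'\colon G_{0}\to\operatorname{Hom}(\mathfrak g_{0},\mathbb C)$ (holomorphic by item~2 of Theorem~\ref{t.h5.2}) with the evaluation pairing and the affine map $g\mapsto h+\tfrac12[g,h]$: since $g\mapsto[g,h]=(0,\omega(w,A))$ is continuous and complex linear (as $\omega$ is a continuous bilinear form), this affine map is holomorphic, and the evaluation $\operatorname{Hom}(\mathfrak g_{0},\mathbb C)\times\mathfrak g_{0}\to\mathbb C$ is a bounded bilinear map, hence holomorphic, so the composite is holomorphic. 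To stay strictly within the tools already set up, however, I would instead verify the two conditions of Definition~\ref{d.h5.1} directly: local boundedness of $\tilde h u$ follows from the operator bound $\|u'(x)\|_{\operatorname{Hom}}\le 2M_{a}/r_{a}$ on $B_{X}(a,r_{a}/2)$ in Eq.~\eqref{e.h5.2} together with the obvious local boundedness of the affine factor, while complex G\^ateaux differentiability follows by expanding $(\tilde h u)(g+\lambda k)=u'(g+\lambda k)\bigl(h+\tfrac12[g,h]+\tfrac\lambda2[k,h]\bigr)$ and differentiating this product in $\lambda$ at $0$, using that $\lambda\mapsto u'(g+\lambda k)$ is complex differentiable because $u'$ is holomorphic. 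The main obstacle is exactly this holomorphicity claim: one must be secure that evaluation pairings and compositions of Banach-space holomorphic maps remain holomorphic, which is most safely handled here by reducing to the local-boundedness plus G\^ateaux-differentiability criterion of Definition~\ref{d.h5.1} rather than invoking an abstract chain rule.
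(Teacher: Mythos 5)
Your proposal is correct and follows exactly the route the paper intends: the corollary is stated without proof as an immediate consequence of Lemma \ref{l.h5.4} (together with $e^{h}=h$ from Proposition \ref{p.h3.5}) and of the holomorphy of $u^{\prime}$ from item 2 of Theorem \ref{t.h5.2}, which is precisely what you spell out. One remark: your computation gives $u^{\prime}\left(  g\right)  \left(  h+\frac{1}{2}\left[  g,h\right]  \right)$, consistent with Eqs. (\ref{e.h3.10}) and (\ref{e.h5.4}), so the factor $\frac{1}{2}$ appears to have been dropped in the displayed formula of the corollary as printed, and your version is the correct one.
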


\begin{nota}
\label{n.h5.6}The space of globally defined holomorphic functions on $G$ and
$G_{CM}$ will be denoted by $\mathcal{H}\left(  G\right)$ and $\mathcal{H}%
\left(  G_{CM}\right)$ respectively.
\end{nota}

Notice that the space $\mathcal{A}$ of holomorphic cylinder
functions as described in Definition \ref{d.h4.3} is contained in
$\mathcal{H}\left( G\right)$. Also observe that a simple induction
argument using Corollary
\ref{c.h5.5} allows us to conclude that $\tilde{h}_{1}\dots\tilde{h}_{n}%
u\in\mathcal{H}\left(  G_{0}\right)$ for all $u\in\mathcal{H}\left(
G_{0}\right)$ and $h_{1},\dots,h_{n}\in\mathfrak{g}_{0}$.
\begin{prop}
\label{p.h5.7}If $f\in\mathcal{H}\left(  G\right)$ and
$h\in\mathfrak{g}$,  then $\widetilde{ih}f=i\tilde{h}f$,
$\widetilde{ih}\bar{f}=-i\tilde{h}\bar {f}$,  \begin{align} \left[
\left(  \widetilde{ih}\right)^{2}+\tilde{h}^{2}\right]  f  &
=0,\text{ and}\label{e.h5.5}\\
\left(  \tilde{h}^{2}+\widetilde{ih}^{2}\right)  \left\vert f\right\vert ^{2}
&  =4\left\vert \tilde{h}f\right\vert ^{2}. \label{e.h5.6}%
\end{align}

\end{prop}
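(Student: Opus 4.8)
The plan is to reduce everything to two structural facts and then do bookkeeping with the scalar $i$. The two facts are: (a) for holomorphic $f$ the complex differential $f'(g)$ is $\mathbb{C}$-linear — this is exactly the content of Theorem \ref{t.h5.2}, whose $f'$ takes values in the \emph{complex} homomorphisms $\operatorname{Hom}(\mathfrak{g},\mathbb{C})$ — and (b) the push-forward $l_{g\ast}$ commutes with multiplication by $i$. Fact (b) is immediate from Eq. (\ref{e.h3.9}): writing $h=(A,a)$ and $g=(w,c)$, the complex bilinearity of $\omega$ gives $l_{g\ast}(ih)=(iA,ia+\tfrac12\omega(w,iA))=i(A,a+\tfrac12\omega(w,A))=i\,l_{g\ast}h$, i.e.\ $\widetilde{ih}(g)=i\,\tilde h(g)$ as tangent vectors. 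Since Eqs. (\ref{e.h3.9})--(\ref{e.h3.10}) give $(\tilde h f)(g)=f'(g)(\tilde h(g))$, the first identity drops out:
\[
(\widetilde{ih}f)(g)=f'(g)\big(\widetilde{ih}(g)\big)=f'(g)\big(i\,\tilde h(g)\big)=i\,f'(g)\big(\tilde h(g)\big)=i\,(\tilde h f)(g).
\]

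For the second identity I would use that the pointwise conjugate $\bar f$ is G\^{a}teaux differentiable with a \emph{conjugate}-linear differential $k\mapsto\overline{f'(g)k}$. Since $\tfrac{d}{dt}\big|_0\overline{f(ge^{th})}=\overline{(\tilde h f)(g)}$, one has $\tilde h\bar f=\overline{\tilde h f}$, and rerunning the computation above while carrying the conjugation through the factor $i$ turns $i$ into $-i$:
\[
(\widetilde{ih}\bar f)(g)=\overline{f'(g)\big(i\,\tilde h(g)\big)}=\overline{i\,f'(g)\big(\tilde h(g)\big)}=-i\,\overline{(\tilde h f)(g)}=-i\,(\tilde h\bar f)(g).
\]

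The two second-order identities then follow formally, using the Remark after Corollary \ref{c.h5.5} that $\tilde h f$ is again holomorphic so that the operators may legitimately be composed. For Eq. (\ref{e.h5.5}) I would iterate the first identity: $(\widetilde{ih})^2 f=\widetilde{ih}(i\,\tilde h f)=i\,\widetilde{ih}(\tilde h f)=i\cdot i\,\tilde h^2 f=-\tilde h^2 f$. Conjugating this (noting $\widetilde{ih}$ commutes with conjugation, being differentiation along a real-parameter flow) gives the companion relation $(\widetilde{ih})^2\bar f=-\tilde h^2\bar f$. For Eq. (\ref{e.h5.6}) I would expand $|f|^2=f\bar f$ with the Leibniz rule for the derivations $\tilde h$ and $\widetilde{ih}$,
\[
\tilde h^2(f\bar f)=(\tilde h^2 f)\bar f+2(\tilde h f)(\tilde h\bar f)+f(\tilde h^2\bar f),
\]
and the analogous expansion for $\widetilde{ih}^2$. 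Substituting $(\widetilde{ih})^2 f=-\tilde h^2 f$, $(\widetilde{ih})^2\bar f=-\tilde h^2\bar f$, $\widetilde{ih}f=i\tilde h f$, and $\widetilde{ih}\bar f=-i\tilde h\bar f$, the pure second-order terms cancel in pairs while the cross terms add (because $i\cdot(-i)=1$), leaving $4(\tilde h f)(\tilde h\bar f)=4|\tilde h f|^2$.

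The only genuinely delicate point is the very first reduction: justifying $(\tilde h f)(g)=f'(g)(\tilde h(g))$ with $f'(g)$ a \emph{complex}-linear functional. Everything afterwards is algebra with the scalar $i$. I would therefore invoke Theorem \ref{t.h5.2} and the complex bilinearity of $\omega$ explicitly, and confirm at each stage that the intermediate functions $\tilde h f$ and $\tilde h\bar f$ remain (anti)holomorphic, so that the vector fields can be applied a second time and differentiated under conjugation without any analytic subtlety.
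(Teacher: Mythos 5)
Your proposal is correct and follows essentially the same route as the paper: the first-order identities come from the complex linearity of $f^{\prime}(g)$ together with $l_{g\ast}(ih)=i\,l_{g\ast}h$ (which the paper compresses into ``directly related to the definition of $f$ being holomorphic''), Eq.~(\ref{e.h5.5}) is obtained by applying $\widetilde{ih}f=i\tilde{h}f$ twice, and Eq.~(\ref{e.h5.6}) by summing the two Leibniz expansions of $\tilde h^2|f|^2$ and $\widetilde{ih}^2|f|^2$ using $\tilde h\bar f=\overline{\tilde h f}$. Your version merely makes explicit the details the paper leaves implicit.
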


\begin{proof}
The first assertions are directly related to the definition of $f$
being holomorphic. Using the identity $\widetilde{ih}f=i\tilde{h}f$
twice implies Eq. (\ref{e.h5.5}). Eq.(\ref{e.h5.6}) is a consequence
of summing the following two identities
\[
\tilde{h}^{2}\left\vert f\right\vert ^{2}=\tilde{h}\left(  f\cdot\bar
{f}\right)  =\tilde{h}^{2}f\cdot\bar{f}+f\cdot\tilde{h}^{2}\bar{f}+2\tilde
{h}f\cdot\tilde{h}\bar{f}%
\]
and%
\begin{align*}
\widetilde{ih}^{2}\left\vert f\right\vert ^{2}  &  =\widetilde{ih}\left(
f\cdot\bar{f}\right)  =\widetilde{ih}^{2}f\cdot\bar{f}+f\cdot\widetilde
{ih}^{2}\bar{f}+2\widetilde{ih}f\cdot\widetilde{ih}\bar{f}\\
&  =-\widetilde{h}^{2}f\cdot\bar{f}-f\cdot\widetilde{h}^{2}\bar{f}%
+2\widetilde{h}f\cdot\widetilde{h}\bar{f},
\end{align*}
and using $\widetilde{h}\bar{f}=\overline{\widetilde{h}f}$.
\end{proof}

\begin{cor}
\label{c.h5.8}Let $L$ be as in Proposition \ref{p.h4.4}. Suppose that
$f:G\rightarrow\mathbb{C}$ is a holomorphic cylinder function (i.e.
$f\in\mathcal{A})$,  then $Lf=0$ and%
\begin{equation}
L\left\vert f\right\vert ^{2}=\sum_{h\in\Gamma}\left\vert \tilde
{h}f\right\vert ^{2}, \label{e.h5.7}%
\end{equation}
where $\Gamma$ is an orthonormal basis for $\mathfrak{g}_{CM}$ of the form
\begin{equation}
\Gamma=\Gamma_{e}\cup\Gamma_{f}=\left\{  \left(  e_{j},0\right)
\right\}_{j=1}^{\infty}\cup\left\{  \left(  0,f_{j}\right)
\right\}_{j=1}^{d}
\label{e.h5.8}%
\end{equation}
with $\left\{  e_{j}\right\}_{j=1}^{\infty}$ and $\left\{
f_{j}\right\}_{j=1}^{d}$ being complex orthonormal bases for $H$ and
$\mathbf{C}$ respectively.
\end{cor}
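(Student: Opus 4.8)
The plan is to obtain both identities by summing, term by term over the basis $\Gamma$, the pointwise identities already proved in Proposition~\ref{p.h5.7}. This is legitimate because a holomorphic cylinder function $f\in\mathcal{A}$ is in particular globally holomorphic on $G$ (as observed after Notation~\ref{n.h5.6}), so Proposition~\ref{p.h5.7} applies to $f$ and to each left-invariant vector field appearing in $L$. First I would rewrite the operator of Proposition~\ref{p.h4.4} in the compact form
\[
L=\sum_{h\in\Gamma}\left[\tilde{h}^{2}+\widetilde{ih}^{2}\right],
\]
noting that $(ie_{j},0)=i\cdot(e_{j},0)$ and $(0,if_{j})=i\cdot(0,f_{j})$, so that the four families of terms in \eqref{e.h4.4} are exactly the pairs $\tilde{h}^{2}$ and $\widetilde{ih}^{2}$ as $h$ runs over $\Gamma=\Gamma_{e}\cup\Gamma_{f}$ of \eqref{e.h5.8}.

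For the identity $Lf=0$ I would apply \eqref{e.h5.5}, which asserts $[(\widetilde{ih})^{2}+\tilde{h}^{2}]f=0$ for every $h\in\mathfrak{g}$. Thus every summand of $L$ annihilates $f$, and summing vanishing terms gives $Lf=0$ with no convergence question to address. For the second identity I would apply the companion formula \eqref{e.h5.6}, which evaluates $(\tilde{h}^{2}+\widetilde{ih}^{2})\left|f\right|^{2}$ in terms of $\left|\tilde{h}f\right|^{2}$, to each $h\in\Gamma$ and sum over the basis; this produces \eqref{e.h5.7} as a sum of nonnegative contributions indexed by $\Gamma$.

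The only genuine subtlety, and the step I would treat most carefully, is the legitimacy of carrying out these term-by-term operations over the infinite family $\Gamma_{e}$. Here I would lean on Proposition~\ref{p.h4.4}: since $\left|f\right|^{2}$ is a smooth cylinder function, $L\left|f\right|^{2}$ is well defined, convergent, and independent of the chosen orthonormal basis. Exploiting this basis-independence, I would select the orthonormal basis $\{e_{j}\}$ of $H$ so that finitely many of its members span $PH$ and the rest are orthogonal to it, whence $Pe_{j}=0$ for all large $j$. For such $j$ the vector field $\widetilde{(e_{j},0)}$ applied to $f=F\circ\pi_{P}$ reduces to a central derivative of $F$ multiplied by $\tfrac{1}{2}\omega(\cdot,e_{j})$, and the convergence of the resulting tail $\sum_{j}\|\omega(\cdot,e_{j})\|_{\mathbf{C}}^{2}$ is guaranteed by the Hilbert--Schmidt bound $\|\omega\|_{2}<\infty$ recalled before Theorem~\ref{t.h4.5}. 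Since every term on the right-hand side of \eqref{e.h5.7} is nonnegative, its convergence is in any case automatic once $L\left|f\right|^{2}$ is known to be finite, so the manipulation is justified and the two displayed identities follow.
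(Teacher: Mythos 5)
Your proof is correct and takes essentially the same route as the paper, whose entire argument is the one line that the assertions follow from Eqs.\ \eqref{e.h4.4}, \eqref{e.h5.5}, and \eqref{e.h5.6} --- precisely the term-by-term summation over $\Gamma$ that you perform. The additional care you devote to convergence of the infinite sum is already subsumed by Proposition \ref{p.h4.4}, which guarantees that $L$ is well defined and basis-independent on smooth cylinder functions such as $\left\vert f\right\vert^{2}$.
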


\begin{proof}
These assertions follow directly form Eqs. (\ref{e.h4.4}), (\ref{e.h5.5}), and
(\ref{e.h5.6}).
\end{proof}

Formally, if $f:G\rightarrow\mathbb{C}$ is a holomorphic function, then
$e^{TL/4}f=f$ and therefore we should expect $S_{T}f=f|_{G_{CM}}$ where
$S_{T}$ is defined in Definition \ref{d.h4.7}. Theorem \ref{t.h5.9} below is a
precise version of this heuristic.

\begin{thm}
\label{t.h5.9}Suppose $p\in\left(  1,\infty\right)$ and
$f:G\rightarrow
\mathbb{C}$ is a continuous function such that $f|_{G_{CM}}\in\mathcal{H}%
\left(  G_{CM}\right)$ and there exists
$P_{n}\in\operatorname*{Proj}\left( W\right)$ such that
$P_{n}|_{H}\uparrow I_{H}$, then
\begin{equation}
\left\Vert f\right\Vert_{L^{p}\left(  \nu_{T}\right)  }\leqslant\sup
_{n}\left\Vert f\right\Vert_{L^{p}\left(
G_{P_{n}},\nu_{T}^{P_{n}}\right)
}. \label{e.h5.9}%
\end{equation}
If we further assume that%
\begin{equation}
\sup_{n}\left\Vert f\right\Vert_{L^{p}\left(  G_{P_{n}},\nu_{T}^{P_{n}%
}\right)  }<\infty, \label{e.h5.10}%
\end{equation}
then $f\in L^{p}\left(  \nu_{T}\right)$,  $S_{T}f=f|_{G_{CM}}$, and
$f$ satisfies the Gaussian bounds
\begin{equation}
\left\vert f\left(  h\right)  \right\vert \leqslant\left\Vert f\right\Vert
_{L^{p}\left(  \nu_{T}\right)  }\exp\left(  \frac{c\left(  k\left(
\omega\right)  T/2\right)  }{T\left(  p-1\right)  }d_{G_{CM}}^{2}\left(
\mathbf{e},h\right)  \right)  \text{ for any } h\in G_{CM}. \label{e.h5.11}%
\end{equation}

\end{thm}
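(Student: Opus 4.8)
The plan is to establish the three assertions in the stated order, deriving the Gaussian bound \eqref{e.h5.11} as an essentially free consequence of the skeleton identity $S_T f=f|_{G_{CM}}$. For the norm inequality \eqref{e.h5.9} I would argue by Fatou's lemma. By \cite[Lemma 4.7]{DG07b} (the same input used in the proof of Proposition \ref{p.h4.12}) the finite dimensional approximations converge, $g_{P_n}(T)\to g(T)$ in probability on $G$, so after passing to a subsequence we may assume the convergence is almost sure. Continuity of $f$ then gives $\left\vert f\left(g_{P_n}(T)\right)\right\vert^{p}\to\left\vert f\left(g(T)\right)\right\vert^{p}$ a.s., and since $\mathbb{E}\left\vert f\left(g_{P_n}(T)\right)\right\vert^{p}=\left\Vert f\right\Vert_{L^{p}\left(G_{P_n},\nu_{T}^{P_n}\right)}^{p}$, Fatou's lemma yields $\left\Vert f\right\Vert_{L^{p}\left(\nu_{T}\right)}^{p}=\mathbb{E}\left\vert f\left(g(T)\right)\right\vert^{p}\leqslant\liminf_{n}\mathbb{E}\left\vert f\left(g_{P_n}(T)\right)\right\vert^{p}\leqslant\sup_{n}\left\Vert f\right\Vert_{L^{p}\left(G_{P_n},\nu_{T}^{P_n}\right)}^{p}$. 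In particular, under hypothesis \eqref{e.h5.10} this proves $f\in L^{p}\left(\nu_{T}\right)$.

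For the skeleton identity the crucial observation is that, while $f$ need not be holomorphic on all of $G$, its restriction to each finite dimensional complex subgroup $G_{P_n}\subset G_{CM}$ is holomorphic (restriction of the holomorphic function $f|_{G_{CM}}$ to a complex submanifold). Thus the finite dimensional analogue of Corollary \ref{c.h5.8} applies on $G_{P_n}$, giving $L^{P_n}\left(f|_{G_{P_n}}\right)=0$, where $L^{P_n}$ is the generator of $g_{P_n}$. Feeding this into Proposition \ref{p.h4.4} on $G_{P_n}$ makes $t\mapsto f\left(\pi_{P_n}(h)\cdot g_{P_n}(t)\right)$ a local martingale; upgrading it to a genuine martingale via the finite dimensional Gaussian/Bargmann bounds (valid since $f|_{G_{P_n}}\in L^{p}$ by \eqref{e.h5.10}) yields the finite dimensional skeleton identity $\mathbb{E}\left[f\left(\pi_{P_n}(h)\cdot g_{P_n}(T)\right)\right]=f\left(\pi_{P_n}(h)\right)$ for every $h\in G_{CM}$. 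I would then let $n\to\infty$: since $\pi_{P_n}(h)\to h$ and $g_{P_n}(T)\to g(T)$ in probability, continuity of the group multiplication and of $f$ give $f\left(\pi_{P_n}(h)\cdot g_{P_n}(T)\right)\to f\left(h\cdot g(T)\right)$ in probability, while $f\left(\pi_{P_n}(h)\right)\to f(h)$.

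To pass the limit inside the expectation I would prove uniform integrability of $\left\{f\left(\pi_{P_n}(h)\cdot g_{P_n}(T)\right)\right\}_{n}$ by producing a uniform $L^{q}$ bound for some $q\in(1,p)$. Using the finite dimensional quasi-invariance (Theorem \ref{t.h4.5} on $G_{P_n}$, with left Radon--Nikodym derivative $Z^{l,P_n}_{\pi_{P_n}(h)}$) and Hölder's inequality,
\[
\mathbb{E}\left\vert f\left(\pi_{P_n}(h)g_{P_n}(T)\right)\right\vert^{q}=\int_{G_{P_n}}\left\vert f\right\vert^{q}Z^{l,P_n}_{\pi_{P_n}(h)}\,d\nu_{T}^{P_n}\leqslant\left\Vert f\right\Vert_{L^{p}\left(\nu_{T}^{P_n}\right)}^{q}\left\Vert Z^{l,P_n}_{\pi_{P_n}(h)}\right\Vert_{L^{(p/q)'}\left(\nu_{T}^{P_n}\right)},
\]
where the first factor is controlled by \eqref{e.h5.10} and the second by the estimate \eqref{e.h4.9}, \emph{provided} $\sup_{n}k\left(\omega|_{G_{P_n}}\right)$ (automatic, as $k\left(\omega|_{G_{P_n}}\right)\in[k(\omega),0]$) and $\sup_{n}d_{G_{P_n}}\left(\mathbf{e},\pi_{P_n}(h)\right)$ are finite. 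Granting these, uniform integrability together with the convergence in probability gives $S_T f(h)=\mathbb{E}\left[f\left(hg(T)\right)\right]=\lim_{n}\mathbb{E}\left[f\left(\pi_{P_n}(h)g_{P_n}(T)\right)\right]=\lim_{n}f\left(\pi_{P_n}(h)\right)=f(h)$, i.e. $S_T f=f|_{G_{CM}}$. The Gaussian bound \eqref{e.h5.11} is then immediate, since $\left\vert f(h)\right\vert=\left\vert S_T f(h)\right\vert$ and Corollary \ref{c.h4.8} bounds the right hand side exactly as in \eqref{e.h5.11}.

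I expect the main obstacle to be the two uniformity statements underpinning the limit, above all the geometric bound $\sup_{n}d_{G_{P_n}}\left(\mathbf{e},\pi_{P_n}(h)\right)<\infty$, which compares the intrinsic finite dimensional Riemannian distances with $d_{G_{CM}}$ and should follow from a comparison lemma for $\left(G_{P_n},d_{G_{P_n}}\right)$ and $\left(G_{CM},d_{G_{CM}}\right)$ in \cite{DG07b}; a secondary technical point is the rigorous upgrade of the finite dimensional local martingale to a true martingale, which relies on the finite dimensional pointwise bounds for holomorphic $L^{p}$ functions rather than on the infinite dimensional result being proved.
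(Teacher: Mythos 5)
Your proposal is correct in substance and follows the same overall skeleton as the paper's proof (Fatou's lemma for \eqref{e.h5.9}, a finite-dimensional mean-value identity plus uniform integrability for $S_Tf=f|_{G_{CM}}$, and Corollary \ref{c.h4.8} for \eqref{e.h5.11}), but it diverges at two points worth noting. First, where you re-derive the finite-dimensional identity $\mathbb{E}\left[f\left(k\,g_{P_n}(T)\right)\right]=f(k)$ by the martingale argument ($Lf=0$ for holomorphic $f$ as in Corollary \ref{c.h5.8}, fed into Proposition \ref{p.h4.4}), the paper simply invokes \cite[Proposition 1.8]{DG07a}, which is precisely this averaging property of holomorphic functions against finite-dimensional heat kernels; your route is the standard proof of that cited result, and the local-to-true martingale upgrade you flag is exactly the technical content hidden in that citation. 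Second, and more significantly, you work with a general $h\in G_{CM}$ via $\pi_{P_n}(h)\in G_{P_n}$, which forces you to control $\sup_n d_{G_{P_n}}\left(\mathbf{e},\pi_{P_n}(h)\right)$ — the obstacle you correctly identify. This bound does hold: for any $C^1$ curve $\sigma$ from $\mathbf{e}$ to $h$ in $G_{CM}$, the projected curves $\pi_{P_n}\circ\sigma$ lie in $G_{P_n}$ and $\ell_{G_{CM}}\left(\pi_{P_n}\circ\sigma\right)\rightarrow\ell_{G_{CM}}\left(\sigma\right)$ (this is shown in the proof of \cite[Theorem 8.1]{DG07b} and reused in the proof of Theorem \ref{t.h6.11}), so $\limsup_n d_{G_{P_n}}\left(\mathbf{e},\pi_{P_n}(h)\right)\leqslant d_{G_{CM}}\left(\mathbf{e},h\right)$. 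The paper avoids this issue entirely by first fixing $h\in G_{P_m}\subset G_{0}:=\cup_m G_{P_m}$, where $d_{G_{P_n}}\left(\mathbf{e},h\right)$ is monotone decreasing in $n\geqslant m$, establishing $S_Tf(h)=f(h)$ on the dense subgroup $G_0$, and then extending to all of $G_{CM}$ using the continuity of $S_Tf$ from Lemma \ref{l.h4.9}. Your version buys a direct statement at every $h$ at the cost of the distance-comparison lemma; the paper's version trades that for an appeal to Lemma \ref{l.h4.9} and density. Either way the argument closes, so there is no genuine gap, only two technical debts you should discharge explicitly: the distance comparison just described, and the martingale upgrade (or, more simply, a citation of \cite[Proposition 1.8]{DG07a} in its place).
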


\begin{proof}
According to \cite[Lemma 4.7]{DG07b}, by passing to a subsequence if
necessary, we may assume that $g_{P_{n}}\left(  T\right) \rightarrow
g\left( T\right)$ almost surely. Hence an application of Fatou's
lemma implies Eq. (\ref{e.h5.9}). In particular, if we assume Eq.
(\ref{e.h5.10}) holds, then $f\in L^{p}\left( \nu_{T}\right)$ and so
$S_{T}f$ is well defined.

Now suppose that $P\in\operatorname*{Proj}\left(  W\right)$ and
$h\in G_{P}$. Working exactly as in the proof of Lemma \ref{l.h4.9},
we find for any
$q\in\left(  1,p\right)$ that%
\begin{equation}
\mathbb{E}\left\vert f\left(  hg_{P}\left(  T\right)  \right)
\right\vert ^{q}\leqslant\left\Vert f\right\Vert_{L^{p}\left(
G_{P},\nu_{T}^{P}\right) }^{q/p}\exp\left(  \frac{c\left(
k_{P}\left(  \omega\right)  T/2\right) q}{T\left(  q-p\right)
}d_{G_{P}}^{2}\left(  \mathbf{e},h\right)  \right),
\label{e.h5.12}%
\end{equation}
where $d_{G_{P}}\left(  \cdot,\cdot\right)$ is the Riemannian
distance on
$G_{P}$ and (see \cite[Eq. (\ref{e.h5.13})]{DG07b}),%
\begin{equation}
k_{P}\left(  \omega\right)  :=-\frac{1}{2}\sup\left\{  \left\Vert
\omega\left(  \cdot,A\right)  \right\Vert_{\left(  PH\right)^{\ast}%
\otimes\mathbf{C}}^{2}:A\in PH,\ \left\Vert A\right\Vert
_{PH}=1\right\}.
\label{e.h5.13}%
\end{equation}
Observe that $k_{P}\left(  \omega\right)  \geqslant k\left(
\omega\right)$ and therefore, as $c$ is a decreasing function,
$c\left(  k\left( \omega\right)  \right)  \geqslant c\left(
k_{P}\left(  \omega\right) \right)$. Let $m\in\mathbb{N}$ be given
and $h\in G_{P_{m}}$. Then for $n\geqslant m$ we
have from Eq. (\ref{e.h5.12}) that%
\begin{align*}
\mathbb{E}\left\vert f\left(  hg_{P_{n}}\left(  T\right)  \right)
\right\vert ^{q}  &  \leqslant\left\Vert f\right\Vert_{L^{p}\left(
G_{P_{n}},\nu_{T}^{P_{n}}\right)  }^{q/p}\exp\left(  \frac{c\left(
k_{P_{n}}\left( \omega\right)  T/2\right)  q}{T\left(  q-p\right)
}d_{G_{P_{n}}}^{2}\left(
\mathbf{e},h\right)  \right) \\
&  \leqslant\left\Vert f\right\Vert_{L^{p}\left(  G_{P_{n}},\nu_{T}^{P_{n}%
}\right)  }^{q/p}\exp\left(  \frac{c\left(  k\left(  \omega\right)
T/2\right)  q}{T\left(  q-p\right)  }d_{G_{P_{m}}}^{2}\left(  \mathbf{e}%
,h\right)  \right)
\end{align*}
wherein in the last inequality we have used $c\left(  k\left(
\omega\right) \right)  \geqslant c\left(  k_{P}\left(  \omega\right)
\right)$ and the fact that $d_{G_{P_{n}}}^{2}\left(
\mathbf{e},h\right)$ is decreasing in $n\geqslant m$. Hence it
follows that $\sup_{n\geqslant m}\mathbb{E}\left\vert f\left(
hg_{P_{n}}\left(  T\right)  \right)  \right\vert ^{q}<\infty$ and
thus that $\left\{  f\left(  hg_{P_{n}}\left(  T\right)  \right)
\right\}_{n\geqslant m}$ is uniformly integrable. Therefore,
\begin{equation}
S_{T}f\left(  h\right)  =\mathbb{E}f\left(  hg\left(  T\right)  \right)
=\lim_{n\rightarrow\infty}\mathbb{E}f\left(  hg_{P_{n}}\left(  T\right)
\right)  =\lim_{n\rightarrow\infty}\int_{G_{P_{n}}}f\left(  hx\right)
d\nu_{T}^{P_{n}}\left(  x\right). \label{e.h5.14}%
\end{equation}

On the other hand by \cite[Lemma 4.8]{DG07b} (with $T$ replaced by
$T/2$ because of our normalization in Eq. (\ref{e.h4.1})),
$\nu_{T}^{P_{n}}$ is the heat kernel measure on $G_{P_{n}}$ based at
$\mathbf{e}\in G_{P_{n}}$,  i.e. $\nu_{T}^{P_{n}}\left(  dx\right)
=p_{T/2}^{P_{n}}\left(  e, x\right) dx$, where $dx$ is the
Riemannian volume measure (equal to a Haar measure) on $G_{P_{n}}$
and $p_{T}^{P_{n}}\left(  x,y\right)$ is the heat kernel on
$G_{P_{n}}$. Since $f|_{G_{P_{n}}}$ is holomorphic, the previous
observations allow us to apply \cite[Proposition 1.8]{DG07a} to
conclude that
\begin{equation}
\int_{G_{P_{n}}}f\left(  hx\right)  d\nu_{T}^{P_{n}}\left(  x\right)
=f\left(  \mathbf{e}\right)  \text{ for all }n\geqslant m. \label{e.h5.15}%
\end{equation}
As $m\in\mathbb{N}$ was arbitrary, combining Eqs. (\ref{e.h5.14})
and (\ref{e.h5.15}) implies that $S_{T}f\left(  h\right)  =f\left(
h\right)$ for all $h\in G_{0}:=\cup_{m\in\mathbb{N}}G_{P_{m}}$.
Recall from Lemma \ref{l.h4.9} that
$S_{T}f:G_{CM}\rightarrow\mathbb{C}$ is continuous and from the
proof of \cite[Theorem 8.1]{DG07b} that $G_{0}$ is a dense subgroup
of $G_{CM}$. Therefore we may conclude that in fact $S_{T}f\left(
h\right) =f\left(  h\right)$ for all $h\in G_{CM}$. The Gaussian
bound now follows immediately from Corollary \ref{c.h4.8}.
\end{proof}

\begin{cor}
\label{c.h5.10}Suppose that $\delta>0$ is as in Theorem
\ref{t.h4.11} and $f:G\rightarrow\mathbb{C}$ is a continuous
function such that $f|_{G_{CM}}$ is holomorphic and $\left\vert
f\right\vert \leqslant Ce^{\varepsilon\rho ^{2}/\left(  pT\right) }$
for some $\varepsilon\in\lbrack0,\delta)$. Then $f\in L^{p}\left(
\nu_{T}\right)$,  $S_{T}f=f$,  and the Gaussian bounds in Eq.
(\ref{e.h5.11}) hold.
\end{cor}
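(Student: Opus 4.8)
The plan is to deduce everything directly from Theorem~\ref{t.h5.9}, so the only real work is to verify its standing hypotheses together with the uniform finite-dimensional bound in Eq.~\eqref{e.h5.10}. First I would record that a sequence $P_{n}\in\operatorname*{Proj}\left(W\right)$ with $P_{n}|_{H}\uparrow I_{H}$ exists: since $H$ is a separable complex Hilbert space (Theorem~\ref{t.h2.3}) and $H_{\ast}$ is dense in $H$ (see the discussion following Eq.~\eqref{e.h2.15}), one may choose an orthonormal basis of $H$ lying in $H_{\ast}$ and let $P_{n}$ be the orthogonal projection onto the span of its first $n$ vectors; then $P_{n}H\subset H_{\ast}$, so by Lemma~\ref{l.h2.8} each $P_{n}$ belongs to $\operatorname*{Proj}\left(W\right)$. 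Together with the hypotheses that $f$ is continuous on $G$ and $f|_{G_{CM}}$ is holomorphic, this places us exactly in the setting of Theorem~\ref{t.h5.9}.

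The key step is to convert the pointwise bound $\left\vert f\right\vert\leqslant Ce^{\varepsilon\rho^{2}/\left(pT\right)}$ into finiteness of $\sup_{n}\left\Vert f\right\Vert_{L^{p}\left(G_{P_{n}},\nu_{T}^{P_{n}}\right)}$. Raising the pointwise bound to the $p$-th power cancels the $1/p$ in the exponent, and since $\nu_{T}^{P_{n}}=\operatorname*{Law}\left(g_{P_{n}}\left(T\right)\right)$ we obtain
\[
\left\Vert f\right\Vert_{L^{p}\left(G_{P_{n}},\nu_{T}^{P_{n}}\right)}^{p}=\mathbb{E}\left\vert f\left(g_{P_{n}}\left(T\right)\right)\right\vert^{p}\leqslant C^{p}\,\mathbb{E}\left[e^{\frac{\varepsilon}{T}\rho^{2}\left(g_{P_{n}}\left(T\right)\right)}\right].
\]
Because $\varepsilon\in\left[0,\delta\right)$, Theorem~\ref{t.h4.11} bounds the right-hand side uniformly—indeed uniformly over all $P\in\operatorname*{Proj}\left(W\right)$, hence in particular over $n$. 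The degenerate case $\varepsilon=0$ is immediate, since then $\left\vert f\right\vert\leqslant C$ forces $\left\Vert f\right\Vert_{L^{p}\left(G_{P_{n}},\nu_{T}^{P_{n}}\right)}\leqslant C$. Thus Eq.~\eqref{e.h5.10} holds.

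With Eq.~\eqref{e.h5.10} in hand, the conclusion of Theorem~\ref{t.h5.9} yields at once that $f\in L^{p}\left(\nu_{T}\right)$, that $S_{T}f=f|_{G_{CM}}$ (which is what is meant by $S_{T}f=f$, as $S_{T}f$ is a function on $G_{CM}$), and that the Gaussian bounds in Eq.~\eqref{e.h5.11} hold—these last two assertions requiring $p\in\left(1,\infty\right)$ exactly as in Theorem~\ref{t.h5.9}. I do not expect a genuine obstacle here: the corollary simply repackages Theorem~\ref{t.h5.9} and Theorem~\ref{t.h4.11}. The one piece of bookkeeping worth flagging is the exponent matching—the factor $1/p$ appearing in the hypothesis on $f$ is chosen precisely so that after taking $p$-th powers one lands on the quantity $\mathbb{E}\left[e^{\varepsilon\rho^{2}/T}\right]$ that Theorem~\ref{t.h4.11} controls for $\varepsilon<\delta$.
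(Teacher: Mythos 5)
Your proposal is correct and follows exactly the paper's own route: convert the pointwise bound $\left\vert f\right\vert \leqslant Ce^{\varepsilon\rho^{2}/\left(pT\right)}$ into the uniform bound of Eq.~(\ref{e.h5.10}) via Theorem~\ref{t.h4.11}, and then invoke Theorem~\ref{t.h5.9}. The only difference is that you spell out the existence of the sequence $P_{n}$ and the $\varepsilon=0$ case, which the paper leaves implicit.
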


\begin{proof}
By Theorem \ref{t.h4.11}, the given function $f$ verifies Eq.
(\ref{e.h5.10}) for any choice of $\left\{  P_{n}\right\}
_{n=1}^{\infty}\subset \operatorname*{Proj}\left(  W\right)$ with
$P_{n}|_{H}\uparrow P$ strongly as $n\uparrow\infty$. Hence Theorem
\ref{t.h5.9} is applicable.
\end{proof}

As a simple consequence of Corollary \ref{c.h5.10}, we know that
$\mathcal{P}\subset L^{p}\left(  \nu_{T}\right)$ (see Definition
\ref{d.h1.6}) and that $\left(  S_{T}p\right)  \left(  h\right)
=p\left( h\right)$ for all $h\in G_{CM}$ and $p\in\mathcal{P}$.
\begin{nota}
\label{n.h5.11}For $T>0$ and $1\leqslant p<\infty$,  let
$\mathcal{A}_{T}^{p}$ and $\mathcal{H}_{T}^{p}\left(  G\right)$
denote the $L^{p}\left(  \nu_{T}\right)$~--~closure of
$\mathcal{A}\cap L^{p}\left(  \nu_{T}\right)$ and $\mathcal{P}$,
where $\mathcal{A}$ and $\mathcal{P}$ denote the holomorphic
cylinder functions (see Definition \ref{d.h4.3}) and holomorphic
cylinder polynomials on $G$ respectively.
\end{nota}

\begin{thm}
\label{t.h5.12}For all $T>0$ and $p\in\left(  1,\infty\right)$, we
have $S_{T}\left(  \mathcal{H}_{T}^{p}\left(  G\right)  \right)
\subset \mathcal{H}\left(  G_{CM}\right)$. \end{thm}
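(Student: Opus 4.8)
The plan is to approximate $f$ by holomorphic cylinder polynomials and to exploit the fact that the skeleton map sends such polynomials to their restrictions to $G_{CM}$, which are manifestly holomorphic; the Gaussian pointwise bounds of Corollary \ref{c.h4.8} then upgrade $L^{p}$-convergence to local uniform convergence on $G_{CM}$, and holomorphicity is preserved under such limits.

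First I would fix $f\in\mathcal{H}_{T}^{p}\left(  G\right)$ and, by Notation \ref{n.h5.11}, choose $p_{n}\in\mathcal{P}$ with $\left\Vert p_{n}-f\right\Vert_{L^{p}\left(  \nu_{T}\right)}\to0$. Since $p>1$, Corollary \ref{c.h4.8} guarantees that $S_{T}f$ and each $S_{T}p_{n}$ are honestly defined pointwise functions on $G_{CM}$, while Corollary \ref{c.h5.10} gives $S_{T}p_{n}=p_{n}|_{G_{CM}}\in\mathcal{H}\left(  G_{CM}\right)$. Applying the linearity of $S_{T}$ together with the Gaussian bound in Eq.~(\ref{e.h4.14}) to $p_{n}-f$, I obtain for every $h\in G_{CM}$
\[
\left\vert \left(  S_{T}p_{n}\right)  \left(  h\right)  -\left(  S_{T}f\right)  \left(  h\right)  \right\vert \leqslant\left\Vert p_{n}-f\right\Vert_{L^{p}\left(  \nu_{T}\right)  }\exp\left(  \frac{c\left(  k\left(  \omega\right)  T/2\right)  }{T\left(  p-1\right)  }d_{G_{CM}}^{2}\left(  \mathbf{e},h\right)  \right).
\]

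Next I would convert the Riemannian distance into a bound in the Hilbert norm, so that the convergence is uniform on norm-bounded sets. Using the one-parameter curve $\gamma\left(  s\right)  =sh=e^{sh}$ from Proposition \ref{p.h3.5} together with formula~(\ref{e.h3.9}), one checks that $l_{\gamma\left(  s\right)^{-1}\ast}\gamma^{\prime}\left(  s\right)  =h$ (the term involving $\omega\left(  A,A\right)$ drops out by skew-symmetry), so that $d_{G_{CM}}\left(  \mathbf{e},h\right)  \leqslant\ell_{G_{CM}}\left(  \gamma\right)  =\left\Vert h\right\Vert_{\mathfrak{g}_{CM}}$. Hence on any ball $\left\{  \left\Vert h\right\Vert_{\mathfrak{g}_{CM}}\leqslant R\right\}$ the exponential factor is bounded by a constant depending only on $R,T,p,\omega$, and the displayed inequality shows that $S_{T}p_{n}\to S_{T}f$ uniformly there.

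Finally I would invoke the permanence of holomorphicity under local uniform limits. The limit $S_{T}f$ is locally bounded by the Gaussian bound, verifying condition (1) of Definition \ref{d.h5.1}. For the G\^{a}teaux differentiability in condition (2), I fix $h_{0}\in G_{CM}$ and $k\in\mathfrak{g}_{CM}$ and consider $\phi_{n}\left(  \lambda\right)  :=\left(  S_{T}p_{n}\right)  \left(  h_{0}+\lambda k\right)$, which are entire functions of $\lambda\in\mathbb{C}$; since $h_{0}+\lambda k$ stays norm-bounded as $\lambda$ ranges over a compact set, the estimate of the previous paragraph shows $\phi_{n}\to\phi$ uniformly on compact subsets of $\mathbb{C}$, where $\phi\left(  \lambda\right)  :=\left(  S_{T}f\right)  \left(  h_{0}+\lambda k\right)$. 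By the classical Weierstrass theorem $\phi$ is holomorphic, hence complex differentiable at $\lambda=0$, which is exactly complex G\^{a}teaux differentiability of $S_{T}f$ at $h_{0}$ in the direction $k$. Thus $S_{T}f\in\mathcal{H}\left(  G_{CM}\right)$. I expect the only genuinely delicate point to be the second step, namely confirming that the $L^{p}$-convergence really does pass to local uniform convergence via the distance/exponential estimate; once that is in hand, the reduction of G\^{a}teaux differentiability to the one-variable Weierstrass theorem is routine.
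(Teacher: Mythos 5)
Your proposal is correct and follows essentially the same route as the paper: approximate $f$ by $p_{n}\in\mathcal{P}$, use $S_{T}p_{n}=p_{n}|_{G_{CM}}$ and the Gaussian bound of Corollary \ref{c.h4.8} applied to $f-p_{n}$ to get convergence uniform on norm-bounded subsets of $G_{CM}$, then conclude holomorphicity of the limit. The only difference is cosmetic: where the paper cites \cite[Theorem 3.18.1]{HP74} for the permanence of holomorphicity under locally uniform limits, you supply the underlying argument yourself (the estimate $d_{G_{CM}}(\mathbf{e},h)\leqslant\left\Vert h\right\Vert_{\mathfrak{g}_{CM}}$ via the curve $s\mapsto sh$, plus the one-variable Weierstrass theorem), which is a correct filling-in of that step.
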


\begin{proof}
Let $f\in\mathcal{H}_{T}^{p}\left(  G\right)$ and
$p_{n}\in\mathcal{P}$ such that $\lim_{n\rightarrow\infty}\left\Vert
f-p_{n}\right\Vert_{L^{p}\left( \nu_{T}\right)  }=0$. If $h\in
G_{CM}$,  then by Corollary \ref{c.h4.8}
\begin{align*}
\left\vert S_{T}f\left(  h\right)  -p_{n}\left(  h\right)  \right\vert  &
=\left\vert S_{T}\left(  f-p_{n}\right)  \left(  h\right)  \right\vert \\
&  \leqslant\left\Vert f-p_{n}\right\Vert_{L^{p}\left(  \nu_{T}\right)  }%
\exp\left(  \frac{c\left(  k\left(  \omega\right)  T/2\right)
}{T\left( p-1\right)  }d_{G_{CM}}^{2}\left(  \mathbf{e},h\right)
\right).
\end{align*}
This shows that $S_{T}f$ is the limit of
$p_{n}|_{G_{CM}}\in\mathcal{H}\left( G_{CM}\right)$ with the limit
being uniform over any bounded subset of $h$'s contained in
$G_{CM}$. This is sufficient to show that $S_{T}f\in
\mathcal{H}\left(  G_{CM}\right)$ via an application of
\cite[Theorem 3.18.1]{HP74}.
\end{proof}

\begin{rem}
\label{r.h5.13} It seems reasonable to conjecture that $\mathcal{A}_{T}%
^{2}=\mathcal{H}_{T}^{2}\left(  G\right)$,  nevertheless we do not
know if these two spaces are equal! We also do not know if
$S_{T}f=f$ for every $f\in\mathcal{A}\cap L^{2}\left(
\nu_{T}\right)$. However, Theorem \ref{t.h5.9} does show that
$S_{T}f=f$ for all $f\in\mathcal{A}\cap
_{P\in\operatorname*{Proj}\left(  W\right)  }L^{p}\left(
\nu_{T}^{P}\right)$ with $L^{p}\left(  \nu_{T}^{P}\right)$--norms of
$f$ being bounded.
\end{rem}

\section{The Taylor isomorphism theorem\label{s.h6}}

The main purpose of this section is to prove the Taylor isomorphism
Theorem \ref{t.h1.5} (or Theorem \ref{t.h6.10}). We begin with the
formal development of the algebraic setup. In what follows below for
 a vector space $V$ we will denote the algebraic dual to $V$ by
$V^{\prime}$. If $V$ happens to be a normed space, we will let
$V^{\ast}$ denote the topological dual of $V$.
\subsection{A non-commutative Fock space\label{s.h6.1}}

\begin{nota}
\label{n.h6.1}For $n\in\mathbb{N}$ let $\mathfrak{g}_{CM}^{\otimes
n}$ denote the $n$--fold algebraic tensor product of $\mathfrak{\
g}_{CM}$ with itself, and by convention let $\mathfrak{g}_{CM}^{\otimes0}:=\mathbb{C}$. Also let%
\[
\mathbf{T:=T}\left(  \mathfrak{g}_{CM}\right)  =\mathbb{C}\oplus
\mathfrak{g}_{CM}\oplus\mathfrak{g}_{CM}^{\otimes2}\oplus\mathfrak{g}%
_{CM}^{\otimes3}\oplus\dots
\]
be the algebraic tensor algebra over $\mathfrak{g}_{CM}$,
$\mathbf{T}^{\prime }$ be its algebraic dual, and $J$ be the
two-sided ideal in $\mathbf{T}$ generated by the elements in Eq.
(\ref{e.h1.3}). The backwards annihilator of
$J$ is%
\begin{equation}
J^{0}=\{\alpha\in\mathbf{T}^{\prime}:\alpha\left(  J\right)  =0\}.
\label{e.h6.1}%
\end{equation}
For any $\alpha\in\mathbf{T}^{\prime}$ and $n\in\mathbb{N\cup}\left\{
0\right\}$,  we let $\alpha_{n}:=\alpha|_{\mathfrak{g}_{CM}^{\otimes n}}%
\in\left(  \mathfrak{g}_{CM}^{\otimes n}\right)^{\prime}$.
\end{nota}

After the next definition we will be able to give numerous examples
of elements in $J^{0}$.
\begin{df}
[Left differentials]\label{d.h6.2}For $f\in\mathcal{H}\left(
G_{CM}\right) $,  $n\in\mathbb{N\cup}\left\{  0\right\}$,  and $g\in
G_{CM}$,  define $\hat{f}_{n}\left(  g\right)  :=D^{n}f\left(
g\right)  \in\left( \mathfrak{g}_{CM}^{\otimes n}\right)^{\prime}$
by
\begin{align}
&  \left(  D^{0}f\right)  \left(  g\right)  =f\left(  g\right)  \text{
and}\nonumber\\
&  \left\langle D^{n}f\left(  g\right), h_{1}\otimes\dots\otimes
h_{n}\right\rangle =\left(  \tilde{h}_{1}\dots\tilde{h}_{n}f\right)
\left(
g\right)  \label{e.h6.2}%
\end{align}
for all and $h_{1},...,h_{n}\in\mathfrak{g}_{0}$,  where
$\tilde{h}f$ is given as in Eq. (\ref{e.h3.8}) or Eq.
(\ref{e.h3.10}). We will write $Df$ for
$D^{1}f$ and $\hat{f}\left(  g\right)  $ to be the element of $\mathbf{T}%
\left(  \mathfrak{g}_{CM}\right)^{\prime}$ determined by%
\begin{equation}
\left\langle \hat{f}\left(  g\right),\beta\right\rangle
=\left\langle \hat{f}_{n}\left(  g\right), \beta\right\rangle \text{
for all }\beta
\in\mathfrak{g}_{CM}^{\otimes n}\text{ and }n\in\mathbb{N}_{0}. \label{e.h6.3}%
\end{equation}

\end{df}

\begin{ex}
\label{ex.h6.3}As a consequence of Eq. (\ref{e.h3.11}),
$\hat{f}\left( g\right)  \in J^{0}$ for all $f\in\mathcal{H}\left(
G_{CM}\right)$ and $g\in G_{CM}$. \end{ex}

In order to put norms on $J^{0}$,  let us equip
$\mathfrak{g}_{CM}^{\otimes n}$ with the usual inner product
determined by
\begin{equation}
\left\langle h_{1}\otimes\dots\otimes h_{n},
k_{1}\otimes\dots\otimes
k_{n}\right\rangle_{\mathfrak{g}_{CM}^{\otimes n}}=\prod_{j=1}^{n}%
\left\langle h_{j}, k_{j}\right\rangle_{\mathfrak{g}_{CM}}\text{ for
any
}h_{i},k_{j}\in\mathfrak{g}_{CM}. \label{e.h6.4}%
\end{equation}
For $n=0$ we let $\left\langle z,w\right\rangle
_{\mathfrak{g}_{CM}^{\otimes 0}}:=z\bar{w}$ for all
$z,w\in\mathfrak{g}_{CM}^{\otimes0}=\mathbb{C}$. The
inner product $\left\langle \cdot,\cdot\right\rangle_{\mathfrak{g}%
_{CM}^{\otimes n}}$ induces a dual inner product on $\left(  \mathfrak{g}%
_{CM}^{\otimes n}\right)^{\ast}$ which we will denote by
$\left\langle
\cdot,\cdot\right\rangle_{n}$. The associated norm on $\left(  \mathfrak{g}%
_{CM}^{\otimes n}\right)^{\ast}$ will be denoted by $\left\Vert
\cdot\right\Vert_{n}$. We extend $\left\Vert \cdot\right\Vert_{n}$
to all of $\left(  \mathfrak{g}_{CM}^{\otimes n}\right)^{\prime}$ by
setting
$\left\Vert \beta\right\Vert_{n}=\infty$ if $\beta\in\left(  \mathfrak{g}%
_{CM}^{\otimes n}\right)^{\prime}\setminus\left(
\mathfrak{g}_{CM}^{\otimes
n}\right)^{\ast}$. If $\Gamma$ is any orthonormal basis for $\mathfrak{g}%
_{CM}$,  then $\left\Vert \beta\right\Vert_{n}$ may be computed
using
\begin{equation}
\left\Vert \beta\right\Vert_{\mathfrak{g}_{CM}^{\otimes
n}}^{2}:=\sum_{h_{1},\dots,h_{n}\in\Gamma}\left\vert \left\langle
\beta,h_{1}\otimes
\dots\otimes h_{n}\right\rangle \right\vert ^{2}. \label{e.h6.5}%
\end{equation}

\begin{df}
[Non-commutative Fock space]\label{d.h6.4}Given $T>0$ and $\alpha\in
J^{0}\left(  \mathfrak{g}_{CM}\right)$,  let
\begin{equation}
\left\Vert \alpha\right\Vert_{J_{T}^{0}\left(
\mathfrak{g}_{CM}\right)
}^{2}:=\sum_{n=0}^{\infty}\frac{T^{n}}{n!}\left\Vert
\alpha_{n}\right\Vert
_{n}^{2}. \label{e.h6.6}%
\end{equation}
Further let%
\begin{equation}
J_{T}^{0}\left(  \mathfrak{g}_{CM}\right)  :=\left\{  \alpha\in
J^{0}\left( \mathfrak{g}_{CM}\right)  :\left\Vert
\alpha\right\Vert_{J_{T}^{0}\left(
\mathfrak{g}_{CM}\right)  }^{2}<\infty\right\}. \label{e.h6.7}%
\end{equation}

\end{df}

The space, $J_{T}^{0}\left(  \mathfrak{g}_{CM}\right)$,  is then a
Hilbert space when equipped with the inner product
\begin{equation}
\left\langle \alpha,\beta\right\rangle_{J_{T}^{0}\left(  \mathfrak{g}%
_{CM}\right)  }=\sum_{n=0}^{\infty}\frac{T^{n}}{n!}\left\langle \alpha
_{n},\beta_{n}\right\rangle_{n}\text{ for any }\alpha,\beta\in J_{T}%
^{0}\left(  \mathfrak{g}_{CM}\right). \label{e.h6.8}%
\end{equation}

\subsection{The Taylor isomorphism\label{s.h6.2}}

\begin{lem}
\label{l.h6.5}Let $f\in\mathcal{H}\left(  G_{CM}\right)  $ and $T>0$
and suppose that $\left\{  P_{n}\right\}_{n=1}^{\infty}\subset
\operatorname*{Proj}\left(  W\right)  $ is a sequence such that $P_{n}%
|_{\mathfrak{g}_{CM}}\uparrow I_{\mathfrak{g}_{CM}}$ as $n\rightarrow\infty$. Then%
\begin{equation}
\lim_{n\rightarrow\infty}\left\Vert \hat{f}\left(  \mathbf{e}\right)
\right\Vert_{J_{T}^{0}\left(  \mathfrak{g}_{P_{n}}\right)
}=\left\Vert \hat{f}\left(  \mathbf{e}\right)  \right\Vert
_{J_{T}^{0}\left(
\mathfrak{g}_{CM}\right)  }=\left\Vert f\right\Vert_{\mathcal{H}_{T}%
^{2}\left(  G_{CM}\right)  }=\lim_{n\rightarrow\infty}\left\Vert f\right\Vert
_{L^{2}\left(  G_{P_{n}},\nu_{T}^{P_{n}}\right)  }, \label{e.h6.9}%
\end{equation}
where $\left\Vert \cdot\right\Vert_{\mathcal{H}_{T}^{2}\left(
G_{CM}\right) }$ is defined in Eq. (\ref{e.h1.4}).
\end{lem}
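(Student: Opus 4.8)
The plan is to reduce the whole statement to the finite-dimensional Taylor isometry on the approximating groups $G_{P}$ and then pass to the limit by monotone convergence. The structural fact I will use throughout is that, by the explicit formula $\tilde h(g)=(A,a+\tfrac12\omega(w,A))$ of Proposition \ref{p.h3.5} (for $g=(w,c)$), the left invariant vector field attached to $h=(A,a)\in\mathfrak g_{P}$ depends only on $g$ and $h$; hence on the subgroup $G_{P}\subset G_{CM}$ it is just the restriction of the corresponding left invariant vector field on $G_{CM}$. Therefore, for $h_{1},\dots,h_{n}\in\mathfrak g_{P}$ the derivatives $(\tilde h_{1}\cdots\tilde h_{n}f)(\mathbf e)$ agree whether computed for $f$ on $G_{P}$ or on $G_{CM}$, so the Taylor datum of $f|_{G_{P}}$ at $\mathbf e$ is exactly the restriction of $\hat f(\mathbf e)$ to $\mathbf T(\mathfrak g_{P})$, and $\|\hat f(\mathbf e)\|_{J_{T}^{0}(\mathfrak g_{P})}^{2}$ (computed via Eqs. (\ref{e.h6.5})--(\ref{e.h6.6}) using an orthonormal basis of $\mathfrak g_{P}$) is a sub-sum of $\|\hat f(\mathbf e)\|_{J_{T}^{0}(\mathfrak g_{CM})}^{2}$.

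The second ingredient is the finite-dimensional Taylor isometry: for each $P\in\operatorname*{Proj}(W)$ and $f\in\mathcal H(G_{CM})$,
\[
\|f\|_{L^{2}(G_{P},\nu_{T}^{P})}=\|\hat f(\mathbf e)\|_{J_{T}^{0}(\mathfrak g_{P})}.
\]
Since $G_{P}$ is a finite-dimensional complex nilpotent Lie group and $\nu_{T}^{P}$ its heat kernel measure, this is the finite-dimensional Taylor isomorphism theorem, which I would cite. Alternatively one re-derives it from the heat equation: with $u(t):=\mathbb E|f(g_{P}(t))|^{2}$, Proposition \ref{p.h4.4} (the generator is $\tfrac14 L$) together with the identity for $L|f|^{2}$ in Corollary \ref{c.h5.8} (equivalently Eq. (\ref{e.h5.6})) and the fact that each $\tilde h f$ is again holomorphic give, by induction, $u^{(n)}(0)=\sum_{h_{1},\dots,h_{n}\in\Gamma_{P}}|(\tilde h_{1}\cdots\tilde h_{n}f)(\mathbf e)|^{2}=\|\hat f_{n}(\mathbf e)\|_{n}^{2}$, whence $u(T)=\sum_{n}\tfrac{T^{n}}{n!}\|\hat f_{n}(\mathbf e)\|_{n}^{2}=\|\hat f(\mathbf e)\|_{J_{T}^{0}(\mathfrak g_{P})}^{2}$; the normalization of the generator is exactly what makes the time parameters match.

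For the limiting equalities I would choose the bases compatibly. Because $P_{n}|_{\mathfrak g_{CM}}\uparrow I_{\mathfrak g_{CM}}$, I can pick orthonormal bases $\Gamma_{n}$ of $\mathfrak g_{P_{n}}$ with $\Gamma_{n}\uparrow\Gamma$, an orthonormal basis of $\mathfrak g_{CM}$. Applying the finite-dimensional isometry to each $P_{n}$ and writing out the Fock norm gives
\[
\|f\|_{L^{2}(G_{P_{n}},\nu_{T}^{P_{n}})}^{2}=\|\hat f(\mathbf e)\|_{J_{T}^{0}(\mathfrak g_{P_{n}})}^{2}=\sum_{k=0}^{\infty}\frac{T^{k}}{k!}\sum_{h_{1},\dots,h_{k}\in\Gamma_{n}}\big|(\tilde h_{1}\cdots\tilde h_{k}f)(\mathbf e)\big|^{2}.
\]
Every summand is nonnegative and, since $\Gamma_{n}\uparrow\Gamma$, nondecreasing in $n$, so Tonelli's theorem lets the right-hand side increase to $\|\hat f(\mathbf e)\|_{J_{T}^{0}(\mathfrak g_{CM})}^{2}$. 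This yields at once the equality of the first, second, and fourth quantities in (\ref{e.h6.9}).

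It remains to identify these with $\|f\|_{\mathcal H_{T}^{2}(G_{CM})}=\sup_{P}\|f|_{G_{P}}\|_{L^{2}(\nu_{T}^{P})}$ (Eq. (\ref{e.h1.4})). By the finite-dimensional isometry and the sub-sum observation of the first paragraph, $\|f|_{G_{P}}\|_{L^{2}(\nu_{T}^{P})}=\|\hat f(\mathbf e)\|_{J_{T}^{0}(\mathfrak g_{P})}\le\|\hat f(\mathbf e)\|_{J_{T}^{0}(\mathfrak g_{CM})}$ for every $P$, so the supremum is $\le\|\hat f(\mathbf e)\|_{J_{T}^{0}(\mathfrak g_{CM})}$; restricting the supremum to the sequence $\{P_{n}\}$ and using the previous paragraph gives the reverse inequality, so the two agree and all four quantities in (\ref{e.h6.9}) coincide. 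The one genuinely delicate point is the finite-dimensional isometry for a general holomorphic $f|_{G_{P}}$ (not a polynomial, and possibly not yet known to lie in $L^{2}(\nu_{T}^{P})$): justifying the differentiation of $u(t)$ under the expectation and the validity of the identity when either side is infinite is where the real work sits; once that is in hand, the rest is bookkeeping with nonnegative series.
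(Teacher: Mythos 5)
Your proof is correct and follows essentially the same route as the paper's: the paper likewise invokes the finite-dimensional Taylor isometry (Theorem 5.1 of Driver--Gross) to get $\|f\|_{L^{2}(G_{P},\nu_{T}^{P})}=\|\hat f(\mathbf e)\|_{J_{T}^{0}(\mathfrak g_{P})}$, observes that each $J_{T}^{0}(\mathfrak g_{P})$-norm is a sub-sum of the $J_{T}^{0}(\mathfrak g_{CM})$-norm, and passes to the limit along nested orthonormal bases $\Gamma_{P_{n}}\uparrow\Gamma$ by monotone convergence. Your added verification that the left invariant vector fields on $G_{P}$ restrict those on $G_{CM}$, and the optional heat-semigroup derivation of the finite-dimensional isometry, are refinements of steps the paper leaves implicit or handles by citation.
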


\begin{proof}
By Theorem 5.1 of \cite{Driver1997c}, for all
$P\in\operatorname*{Proj}\left( W\right)$,  \begin{equation}
\left\Vert f\right\Vert_{L^{2}\left(  G_{P},\nu_{T}^{P}\right)
}=\left\Vert \hat{f}\left(  \mathbf{e}\right)  \right\Vert
_{J_{T}^{0}\left(
\mathfrak{g}_{P}\right)  }, \label{e.h6.10}%
\end{equation}
where
\begin{equation}
\left\Vert \hat{f}\left(  \mathbf{e}\right)
\right\Vert_{J_{T}^{0}\left(
\mathfrak{g}_{P}\right)  }^{2}=\sum_{n=0}^{\infty}\frac{T^{n}}{n!}%
\sum_{\left\{  h_{j}\right\}_{j=1}^{n}\subset\Gamma_{P}}\left\vert
\left\langle \hat{f}\left(  \mathbf{e}\right)
,h_{1}\otimes\dots\otimes
h_{n}\right\rangle \right\vert ^{2} \label{e.h6.11}%
\end{equation}
and $\Gamma_{P}$ is an orthonormal basis for $\mathfrak{g}_{P}$. In
particular, it follows that
\begin{equation}
\left\Vert f\right\Vert_{\mathcal{H}_{T}^{2}\left(  G_{CM}\right)  }%
=\sup_{P\in\operatorname*{Proj}\left(  W\right)  }\left\Vert
\hat{f}\left( \mathbf{e}\right)  \right\Vert_{J_{T}^{0}\left(
\mathfrak{g}_{P}\right)  }
\label{e.h6.12}%
\end{equation}
and hence we must now show
\begin{equation}
\sup_{P\in\operatorname*{Proj}\left(  W\right)  }\left\Vert
\hat{f}\left( \mathbf{e}\right)  \right\Vert_{J_{T}^{0}\left(
\mathfrak{g}_{P}\right) }=\left\Vert \hat{f}\left( \mathbf{e}\right)
\right\Vert_{J_{T}^{0}\left(
\mathfrak{g}_{CM}\right)  }. \label{e.h6.13}%
\end{equation}
If $\Gamma$ is an orthonormal basis for $\mathfrak{g}_{CM}$
containing $\Gamma_{P}$,  it follows that
\[
\left\Vert \hat{f}\left(  \mathbf{e}\right)
\right\Vert_{J_{T}^{0}\left(
\mathfrak{g}_{P}\right)  }^{2}=\sum_{n=0}^{\infty}\frac{T^{n}}{n!}%
\sum_{\left\{  h_{j}\right\}_{j=1}^{n}\subset\Gamma}\left\vert
\left\langle \hat{f}\left(  \mathbf{e}\right)
,h_{1}\otimes\dots\otimes h_{n}\right\rangle \right\vert
^{2}=\left\Vert \hat{f}\left(  \mathbf{e}\right)  \right\Vert
_{J_{T}^{0}\left(  \mathfrak{g}_{CM}\right)  }^{2},
\]
which shows that $\sup_{P\in\operatorname*{Proj}\left(  W\right)
}\left\Vert \hat{f}\left(  \mathbf{e}\right)
\right\Vert_{J_{T}^{0}\left(
\mathfrak{g}_{P}\right)  }\leqslant\left\Vert \hat{f}\left(  \mathbf{e}%
\right)  \right\Vert_{J_{T}^{0}\left(  \mathfrak{g}_{CM}\right) }$.
We may choose orthonormal bases, $\Gamma_{P_{n}}$,  for
$\mathfrak{g}_{P_{n}}$ such that $\Gamma_{P_{n}}\uparrow\Gamma$ as
$n\uparrow\infty$. Then it is easy to show that
\begin{align*}
\lim_{n\rightarrow\infty}\left\Vert f\right\Vert_{L^{2}\left(  G_{P_{n}}%
,\nu_{T}^{P_{n}}\right)  }  &  =\lim_{n\rightarrow\infty}\left\Vert \hat
{f}\left(  \mathbf{e}\right)  \right\Vert_{J_{T}^{0}\left(  \mathfrak{g}%
_{P_{n}}\right)  }\\
&
=\lim_{n\rightarrow\infty}\sum_{n=0}^{\infty}\frac{T^{n}}{n!}\sum_{\left\{
h_{j}\right\}_{j=1}^{n}\subset\Gamma_{P_{n}}}\left\vert \left\langle
\hat {f}\left(  \mathbf{e}\right),h_{1}\otimes\dots\otimes
h_{n}\right\rangle
\right\vert ^{2}\\
&  =\sum_{n=0}^{\infty}\frac{T^{n}}{n!}\sum_{\left\{  h_{j}\right\}
_{j=1}^{n}\subset\Gamma}\left\vert \left\langle \hat{f}\left(  \mathbf{e}%
\right),h_{1}\otimes\dots\otimes h_{n}\right\rangle \right\vert
^{2}=\left\Vert \hat{f}\left(  \mathbf{e}\right)  \right\Vert_{J_{T}%
^{0}\left(  \mathfrak{g}_{CM}\right)  }%
\end{align*}
from which it follows that $\sup_{P\in\operatorname*{Proj}\left(
W\right) }\left\Vert \hat{f}\left(  \mathbf{e}\right)
\right\Vert_{J_{T}^{0}\left(
\mathfrak{g}_{P}\right)  }\geqslant\left\Vert \hat{f}\left(  \mathbf{e}%
\right)  \right\Vert_{J_{T}^{0}\left(  \mathfrak{g}_{CM}\right) }$.
\end{proof}

For the next corollary, recall that $\mathcal{P}$ and $\mathcal{P}_{CM}$
denote the spaces of holomorphic cylinder polynomials on $G$ and $G_{CM}$
respectively, see Definition \ref{d.h1.6} and Eq. (\ref{e.h1.7}).

\begin{cor}
\label{c.h6.6}If $f:G\rightarrow\mathbb{C}$ is a continuous function
satisfying the bounds in Proposition \ref{p.h4.12} with $p=2$,  then
$f|_{G_{CM}}\in\mathcal{H}_{T}^{2}\left(  G_{CM}\right)$ and
$\hat{f}\left( \mathbf{e}\right)  \in J_{T}^{0}\left(
\mathfrak{g}_{CM}\right)$. In particular, for all $T>0$,
$\mathcal{P}_{CM}\subset\mathcal{H}_{T}^{2}\left(
G_{CM}\right)$ and for any $p\in\mathcal{P}$,  $\hat{p}\left(  \mathbf{e}%
\right)  \in J_{T}^{0}\left(  \mathfrak{g}_{CM}\right)$. This shows
that $\mathcal{H}_{T}^{2}\left(  G_{CM}\right)$ and $J_{T}^{0}\left(
\mathfrak{g}_{CM}\right)$ are non-trivial spaces.
\end{cor}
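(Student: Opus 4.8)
The plan is to reduce the statement to two results already in hand: the uniform integrated Gaussian bounds of Theorem \ref{t.h4.11} and the norm identity of Lemma \ref{l.h6.5}. Throughout I regard $f|_{G_{CM}}$ as holomorphic (as in Corollary \ref{c.h5.10}), so that $\hat{f}\left(\mathbf{e}\right)$ is defined and Example \ref{ex.h6.3} applies; for the intended application to $\mathcal{P}$ this holomorphy is automatic. Writing the $p=2$ hypothesis of Proposition \ref{p.h4.12} explicitly, we have $\left\vert f\left(g\right)\right\vert\leqslant Ce^{\varepsilon\rho^{2}\left(g\right)/\left(2T\right)}$ for some $\varepsilon\in\left(0,\delta\right)$, with $\delta$ as in Theorem \ref{t.h4.11}.

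First I would square this bound to get $\left\vert f\left(g\right)\right\vert^{2}\leqslant C^{2}e^{\varepsilon\rho^{2}\left(g\right)/T}$ and integrate against $\nu_{T}^{P}=\operatorname*{Law}\left(g_{P}\left(T\right)\right)$. Since $g_{P}\left(T\right)$ takes values in $G_{P}$, this gives
\[
\left\Vert f|_{G_{P}}\right\Vert_{L^{2}\left(G_{P},\nu_{T}^{P}\right)}^{2}=\mathbb{E}\left\vert f\left(g_{P}\left(T\right)\right)\right\vert^{2}\leqslant C^{2}\,\mathbb{E}\left[e^{\frac{\varepsilon}{T}\rho^{2}\left(g_{P}\left(T\right)\right)}\right],
\]
and Theorem \ref{t.h4.11} bounds the right-hand side uniformly over $P\in\operatorname*{Proj}\left(W\right)$. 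Taking the supremum and invoking Definition \ref{d.h1.2} yields $\left\Vert f\right\Vert_{\mathcal{H}_{T}^{2}\left(G_{CM}\right)}<\infty$, so $f|_{G_{CM}}\in\mathcal{H}_{T}^{2}\left(G_{CM}\right)$. Feeding this into the chain of equalities in Lemma \ref{l.h6.5} gives $\left\Vert\hat{f}\left(\mathbf{e}\right)\right\Vert_{J_{T}^{0}\left(\mathfrak{g}_{CM}\right)}=\left\Vert f\right\Vert_{\mathcal{H}_{T}^{2}\left(G_{CM}\right)}<\infty$, while Example \ref{ex.h6.3} guarantees $\hat{f}\left(\mathbf{e}\right)\in J^{0}$; together these give $\hat{f}\left(\mathbf{e}\right)\in J_{T}^{0}\left(\mathfrak{g}_{CM}\right)$.

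For the \textquotedblleft in particular\textquotedblright\ assertion I would verify that every $p\in\mathcal{P}$ meets the hypotheses just used. Such a $p=F\circ\pi_{P}$ is holomorphic on $G$ (hence $p|_{G_{CM}}$ is holomorphic), and since $F$ is a polynomial and $\left\Vert Pw\right\Vert_{H}\leqslant C\left\Vert w\right\Vert_{W}$ by Eq. \eqref{e.h2.18}, $\left\vert p\left(g\right)\right\vert$ grows at most polynomially in $\rho\left(g\right)$. As $e^{\varepsilon\rho^{2}/\left(2T\right)}$ dominates any polynomial in $\rho$ for every $\varepsilon>0$, one may choose $\varepsilon\in\left(0,\delta\right)$ and a constant $C$ so that the bound of Proposition \ref{p.h4.12} holds; the first part then gives $p|_{G_{CM}}\in\mathcal{H}_{T}^{2}\left(G_{CM}\right)$ and $\hat{p}\left(\mathbf{e}\right)\in J_{T}^{0}\left(\mathfrak{g}_{CM}\right)$, i.e. $\mathcal{P}_{CM}\subset\mathcal{H}_{T}^{2}\left(G_{CM}\right)$. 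Non-triviality is then immediate since, e.g., the constant function $1$ lies in $\mathcal{P}$. The point worth emphasizing is that there is no genuine obstacle here: all of the analytic difficulty is already absorbed into Theorem \ref{t.h4.11} and Lemma \ref{l.h6.5}, and the only routine checks are the domination of polynomial growth by the Gaussian weight and the implicit holomorphy of $f|_{G_{CM}}$.
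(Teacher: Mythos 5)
Your proof is correct and is essentially the argument the paper intends (the corollary is stated without proof, but its placement after Lemma \ref{l.h6.5} and the reference to Theorem \ref{t.h4.11} make clear that the intended route is exactly yours: the pointwise Gaussian bound plus the uniform integrated bounds give $\sup_{P}\left\Vert f\right\Vert_{L^{2}\left(G_{P},\nu_{T}^{P}\right)}<\infty$, and Lemma \ref{l.h6.5} transfers this to $\hat{f}\left(\mathbf{e}\right)\in J_{T}^{0}\left(\mathfrak{g}_{CM}\right)$). You were also right to flag that holomorphy of $f|_{G_{CM}}$ must be read into the hypotheses, and your verification that cylinder polynomials satisfy the required growth bound is the correct routine check.
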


\begin{df}
\label{d.h6.7}For each $T>0$,  the \textbf{Taylor map} is the linear
map, $\mathcal{T}_{T}:\mathcal{H}_{T}^{2}\left(  G_{CM}\right)
\rightarrow
J_{T}^{0}\left(  \mathfrak{g}_{CM}\right)$,  defined by $\mathcal{T}%
_{T}f:=\hat{f}\left(  \mathbf{e}\right)$. \end{df}

\begin{cor}
\label{c.h6.8}The Taylor map,
$\mathcal{T}_{T}:\mathcal{H}_{T}^{2}\left( G_{CM}\right) \rightarrow
J_{T}^{0}\left(  \mathfrak{g}_{CM}\right)$,  is
injective. Moreover, the function $\left\Vert \cdot\right\Vert_{\mathcal{H}%
_{T}^{2}\left(  G_{CM}\right)}$  is a norm on
$\mathcal{H}_{T}^{2}\left(
G_{CM}\right)$ which is induced by the inner product on $\mathcal{H}_{T}%
^{2}\left(  G_{CM}\right)$ defined by%
\begin{equation}
\left\langle u, v\right\rangle_{\mathcal{H}_{T}^{2}\left(
G_{CM}\right) }:=\left\langle \hat{u}\left(  \mathbf{e}\right)
,\hat{v}\left( \mathbf{e}\right)  \right\rangle_{J_{T}^{0}\left(
\mathfrak{g}_{CM}\right) }\text{ for any }u,
v\in\mathcal{H}_{T}^{2}\left(  G_{CM}\right).
\label{e.h6.14}%
\end{equation}

\end{cor}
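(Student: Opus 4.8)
The plan is to read off almost everything from Lemma \ref{l.h6.5}. That lemma shows $\mathcal{T}_T$ is isometric: for every $f\in\mathcal{H}_T^2(G_{CM})$ one has $\|\mathcal{T}_T f\|_{J_T^0(\mathfrak{g}_{CM})}=\|\hat f(\mathbf e)\|_{J_T^0(\mathfrak{g}_{CM})}=\|f\|_{\mathcal{H}_T^2(G_{CM})}<\infty$; in particular $\hat f(\mathbf e)$, which lies in $J^0(\mathfrak g_{CM})$ by Example \ref{ex.h6.3}, actually lies in $J_T^0(\mathfrak g_{CM})$, so $\mathcal{T}_T$ is well defined. Consequently the only real content of the corollary is the injectivity of $\mathcal{T}_T$; once injectivity is established the ``moreover'' assertions are purely formal, as explained below.

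To prove injectivity I would argue as follows. Suppose $\mathcal{T}_T f=\hat f(\mathbf e)=0$. By Lemma \ref{l.h6.5} this forces $\|f\|_{\mathcal{H}_T^2(G_{CM})}=0$, and by the defining formula \eqref{e.h1.4} this means $\|f|_{G_P}\|_{L^2(G_P,\nu_T^P)}=0$ for every $P\in\operatorname*{Proj}(W)$, i.e. $f|_{G_P}=0$ for $\nu_T^P$-almost every point. Now $\nu_T^P$ is the heat kernel measure on the finite dimensional connected Lie group $G_P$, so (as recalled in the proof of Theorem \ref{t.h5.9}) it has a strictly positive density $p_{T/2}^{P}(\mathbf e,\cdot)$ with respect to the Riemannian volume measure and hence has full support. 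Since $f|_{G_P}$ is continuous (indeed holomorphic) and vanishes $\nu_T^P$-almost everywhere, it vanishes identically on $G_P$. As $P$ was arbitrary, $f\equiv 0$ on $G_0:=\bigcup_n G_{P_n}$ for any sequence with $P_n|_H\uparrow I_H$. Recalling again from the proof of Theorem \ref{t.h5.9} that $G_0$ is dense in $G_{CM}$ and that $f$ is continuous, we conclude $f\equiv 0$ on $G_{CM}$, which is the desired injectivity.

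Granting injectivity, the remaining claims follow formally. The right-hand side of \eqref{e.h6.14} is well defined by the first paragraph, is sesquilinear because $u\mapsto\hat u(\mathbf e)$ is linear and $\langle\cdot,\cdot\rangle_{J_T^0(\mathfrak g_{CM})}$ from \eqref{e.h6.8} is an inner product, and is positive since $\langle u,u\rangle_{\mathcal{H}_T^2(G_{CM})}=\|\hat u(\mathbf e)\|_{J_T^0(\mathfrak g_{CM})}^2=\|u\|_{\mathcal{H}_T^2(G_{CM})}^2$ by Lemma \ref{l.h6.5}. Its non-degeneracy---equivalently, that $\|\cdot\|_{\mathcal{H}_T^2(G_{CM})}$ is a genuine norm and not merely a seminorm---is exactly the implication $\|u\|_{\mathcal{H}_T^2(G_{CM})}=0\Rightarrow\hat u(\mathbf e)=0\Rightarrow u=0$ supplied by the injectivity just proved. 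Thus $\langle\cdot,\cdot\rangle_{\mathcal{H}_T^2(G_{CM})}$ is an inner product whose associated norm is $\|\cdot\|_{\mathcal{H}_T^2(G_{CM})}$.

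The one step requiring genuine input is the injectivity, and within it the passage from ``$f$ vanishes $\nu_T^P$-a.e.'' to ``$f$ vanishes on $G_P$''. This rests on two facts imported from \cite{DG07b} and already used in Section \ref{s.h5}: the strict positivity of the finite dimensional heat kernels $p^P_{T/2}$ (so that $\nu_T^P$ charges every nonempty open set) and the density of $G_0$ in $G_{CM}$. Both are available, so no essentially new analysis is needed. An alternative route, avoiding the measures, would show directly that $\hat f(\mathbf e)=0$ forces every Fr\'{e}chet derivative $f^{(n)}(\mathbf e)$ to vanish---the jets of left-invariant and ordinary derivatives at $\mathbf e$ differ by an invertible upper-triangular transformation, by \eqref{e.h3.10}---and then invoke the local Taylor expansion and the identity theorem for holomorphic functions on the connected space $G_{CM}$ from \cite{HP74}.
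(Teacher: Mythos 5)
Your proof is correct and follows essentially the same route as the paper: from $\hat f(\mathbf e)=0$ deduce $\Vert f\Vert_{\mathcal{H}_T^2(G_{CM})}=0$ via the isometry of Lemma \ref{l.h6.5}, hence $f|_{G_P}\equiv 0$ for every $P$, and conclude by continuity of $f$ and density of $\cup_P G_P$ in $G_{CM}$; the ``moreover'' part then follows formally from the isometry. The only difference is that you spell out the full-support argument for $\nu_T^P$ (strict positivity of the finite-dimensional heat kernel) that the paper leaves implicit when passing from $\Vert f|_{G_P}\Vert_{L^2(\nu_T^P)}=0$ to $f|_{G_P}\equiv 0$.
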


\begin{proof}
If $\hat{f}\left(  \mathbf{e}\right)  =0$,  then $\left\Vert
f\right\Vert_{\mathcal{H}_{T}^{2}\left(  G_{CM}\right)  }=0$ which
then implies that $f|_{G_{P}}\equiv0$ for all
$P\in\operatorname*{Proj}\left(  W\right)$. As
$f:G_{CM}\rightarrow\mathbb{C}$ is continuous and $\cup_{P\in
\operatorname*{Proj}\left(  W\right)  }G_{P}$ is dense in $G_{CM}$
(see the end of the proof of Theorem \ref{t.h5.9}), it follows that
$f\equiv0$. Hence we have shown $\mathcal{T}_{T}$ injective. Since
$\left\Vert \cdot\right\Vert_{J_{T}^{0}\left(
\mathfrak{g}_{CM}\right)  }$ is a Hilbert norm and, by Lemma
\ref{l.h6.9}, $\left\Vert f\right\Vert_{\mathcal{H}_{T}^{2}\left(
G_{CM}\right)  }=\left\Vert \mathcal{T}_{T}f\right\Vert
_{J_{T}^{0}\left( \mathfrak{g}_{CM}\right)}$,  it follows that
$\left\Vert \cdot\right\Vert
_{\mathcal{H}_{T}^{2}\left(  G_{CM}\right)}$ is the norm on $\mathcal{H}%
_{T}^{2}\left(  G_{CM}\right)  $ induced by the inner product defined in Eq.
(\ref{e.h6.14}).
\end{proof}

Our next goal is to show that the Taylor map, $\mathcal{T}_{T}$,  is
surjective. The following lemma motivates the construction of the
inverse of the Taylor map.

\begin{lem}
\label{l.h6.9}For every $f\in\mathcal{H}\left(  G_{CM}\right),$%
\begin{equation}
f\left(  g\right)  =\sum_{n=0}^{\infty}\frac{1}{n!}\left\langle \hat{f}%
_{n}\left(  \mathbf{e}\right), g^{\otimes n}\right\rangle \text{ for
any
}g\in G_{CM}, \label{e.h6.15}%
\end{equation}
where the above sum is absolutely convergent. By convention,
$g^{\otimes 0}=1\in\mathbb{C}$. (For a more general version of this
Lemma, see Proposition 5.1 in \cite{Driver1995a}.)
\end{lem}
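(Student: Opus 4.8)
The plan is to reduce the multivariable statement to an ordinary one-variable Taylor expansion of an entire function. Fix $f\in\mathcal{H}\left(G_{CM}\right)$ and $g\in G_{CM}$, and define $\phi:\mathbb{C}\rightarrow\mathbb{C}$ by $\phi\left(\lambda\right):=f\left(\lambda g\right)=f\left(e^{\lambda g}\right)$, where I have used that $e^{\lambda g}=\lambda g$ by Proposition \ref{p.h3.5}. Since $\lambda\mapsto\lambda g$ is a complex-linear, hence holomorphic, map of $\mathbb{C}$ into $G_{CM}$ and $f$ is holomorphic, the composition $\phi$ is holomorphic on all of $\mathbb{C}$, i.e. entire. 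Everything then follows from the fact that an entire function equals its Maclaurin series everywhere, with absolute convergence.

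The key computation is to identify the Taylor coefficients of $\phi$ at $0$ with the left differentials of $f$. Because $\left[g,g\right]=0$, the map $\lambda\mapsto\lambda g$ is a one-parameter subgroup, so $e^{\left(\lambda+t\right)g}=e^{\lambda g}\cdot e^{tg}=e^{\lambda g}\cdot\left(tg\right)$. Taking $\sigma\left(t\right):=tg=e^{tg}$ as the curve with $\sigma\left(0\right)=\mathbf{e}$ and $\dot{\sigma}\left(0\right)=g$ in the definition of the left-invariant vector field (Eq. (\ref{e.h3.8})), I obtain
\[
\phi^{\prime}\left(\lambda\right)=\frac{d}{dt}\Big|_{0}f\big(e^{\lambda g}\cdot\sigma\left(t\right)\big)=\big(\tilde{g}f\big)\big(e^{\lambda g}\big).
\]
Since $\tilde{g}f$ is again holomorphic on $G_{CM}$ (the remark following Corollary \ref{c.h5.5}), I may iterate: by induction $\phi^{\left(n\right)}\left(\lambda\right)=\big(\tilde{g}^{\,n}f\big)\big(e^{\lambda g}\big)$ for all $n$. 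Evaluating at $\lambda=0$ and using Definition \ref{d.h6.2} gives
\[
\phi^{\left(n\right)}\left(0\right)=\big(\tilde{g}^{\,n}f\big)\left(\mathbf{e}\right)=\big\langle\hat{f}_{n}\left(\mathbf{e}\right),g^{\otimes n}\big\rangle.
\]

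To finish, I would simply write out the Maclaurin series of the entire function $\phi$ and evaluate at $\lambda=1$:
\[
f\left(g\right)=\phi\left(1\right)=\sum_{n=0}^{\infty}\frac{\phi^{\left(n\right)}\left(0\right)}{n!}=\sum_{n=0}^{\infty}\frac{1}{n!}\big\langle\hat{f}_{n}\left(\mathbf{e}\right),g^{\otimes n}\big\rangle,
\]
which is exactly Eq. (\ref{e.h6.15}); the absolute convergence is automatic since the Maclaurin series of an entire function has infinite radius of convergence. The only genuine obstacle is the inductive verification that the scalar derivatives $\phi^{\left(n\right)}$ coincide with the iterated left-invariant derivatives $\tilde{g}^{\,n}f$ evaluated along the one-parameter subgroup. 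This rests on the decomposition $e^{\left(\lambda+t\right)g}=e^{\lambda g}e^{tg}$, so that each scalar $t$-derivative at $t=0$ turns into an application of $\tilde{g}$; once this is in place, the holomorphy of $\phi$ and the standard theory of entire functions do the rest.
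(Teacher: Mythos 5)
Your proof is correct and follows essentially the same route as the paper: both consider the entire function $\lambda\mapsto f(\lambda g)=f(e^{\lambda g})$, identify its $n$-th Maclaurin coefficient with $\frac{1}{n!}\left(\tilde{g}^{\,n}f\right)\left(\mathbf{e}\right)=\frac{1}{n!}\left\langle \hat{f}_{n}\left(\mathbf{e}\right),g^{\otimes n}\right\rangle$ via the one-parameter subgroup property $e^{(\lambda+t)g}=e^{\lambda g}\cdot e^{tg}$, and evaluate at $\lambda=1$. You merely make explicit the induction that the paper compresses into the single line $u^{(n)}(0)=\left(\tilde{g}^{\,n}f\right)\left(\mathbf{e}\right)$.
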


\begin{proof}
The function $u\left(  z\right)  :=f\left(  zg\right)  $ is a
holomorphic function of $z\in\mathbb{C}$. Therefore,
\[
f\left(  g\right)  =u\left(  1\right)  =\sum_{n=0}^{\infty}\frac{1}%
{n!}u^{\left(  n\right)  }\left(  0\right)
\]
and the above sum is absolutely convergent. In fact, one easily sees that for
all $R>0$ there exists $C\left(  R\right)  <\infty$ such that $\frac{1}%
{n!}\left\vert u^{\left(  n\right)  }\left(  0\right)  \right\vert
\leqslant C\left(  R\right)  R^{-n}$ for all $n\in\mathbb{N}$. The
proof is now completed upon observing
\begin{align*}
u^{\left(  n\right)  }\left(  0\right)   &  =\left(
\frac{d}{dt}\right)^{n}u\left(  t\right)\left|_{t=0}\right.=\left(
\frac{d}{dt}\right)^{n}f\left(
tg\right)\left|_{t=0}\right. \\
&  =\left(  \frac{d}{dt}\right)^{n}f\left(  e^{tg}\right)
\left|_{t=0}\right.=\left(
\tilde{g}^{n}f\right)  \left(  \mathbf{e}\right)  =\left\langle \hat{f}%
_{n}\left(  \mathbf{e}\right), g^{\otimes n}\right\rangle .
\end{align*}

\end{proof}

The next theorem is a more precise version of Theorem \ref{t.h1.5}.

\begin{thm}
[Taylor isomorphism theorem]\label{t.h6.10}For all $T>0$,  the space
$\mathcal{H}_{T}^{2}\left(  G_{CM}\right)  $ equipped with the inner
product $\left\langle \cdot,\cdot\right\rangle
_{\mathcal{H}_{T}^{2}\left(
G_{CM}\right)  }$ is a Hilbert space, $\mathcal{T}\left(  \mathcal{H}_{T}%
^{2}\left(  G_{CM}\right)  \right)  \subset J_{T}^{0}\left(  \mathfrak{g}%
_{CM}\right)$,  and $\mathcal{T}_{T}:=\mathcal{T}|_{\mathcal{H}_{T}%
^{2}\left(  G_{CM}\right)  }:\mathcal{H}_{T}^{2}\left(  G_{CM}\right)
\rightarrow J_{T}^{0}\left(  \mathfrak{g}_{CM}\right)  $ is a unitary transformation.
\end{thm}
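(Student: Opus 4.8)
The plan is to combine the isometry already contained in Lemma \ref{l.h6.5} with an explicit construction of the inverse Taylor map suggested by Lemma \ref{l.h6.9}, and then read off completeness and unitarity formally. First I would dispose of the easy half. For $f\in\mathcal{H}_{T}^{2}\left(G_{CM}\right)$, Example \ref{ex.h6.3} gives $\hat{f}\left(\mathbf{e}\right)\in J^{0}\left(\mathfrak{g}_{CM}\right)$, while Lemma \ref{l.h6.5} gives $\left\Vert\hat{f}\left(\mathbf{e}\right)\right\Vert_{J_{T}^{0}\left(\mathfrak{g}_{CM}\right)}=\left\Vert f\right\Vert_{\mathcal{H}_{T}^{2}\left(G_{CM}\right)}<\infty$; hence $\mathcal{T}_{T}f=\hat{f}\left(\mathbf{e}\right)$ actually lies in $J_{T}^{0}\left(\mathfrak{g}_{CM}\right)$ and $\mathcal{T}_{T}$ is norm-preserving. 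Together with the injectivity and the inner-product description from Corollary \ref{c.h6.8}, this already establishes the containment $\mathcal{T}\left(\mathcal{H}_{T}^{2}\left(G_{CM}\right)\right)\subset J_{T}^{0}\left(\mathfrak{g}_{CM}\right)$ and shows $\mathcal{T}_{T}$ is an injective isometry, so the only substantive point left is surjectivity.

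For surjectivity, given $\alpha\in J_{T}^{0}\left(\mathfrak{g}_{CM}\right)$ I would define the candidate preimage by
\[
f\left(g\right):=\sum_{n=0}^{\infty}\frac{1}{n!}\left\langle\alpha_{n},g^{\otimes n}\right\rangle,\qquad g\in G_{CM},
\]
mimicking the expansion of Lemma \ref{l.h6.9}. Since $\left\Vert\alpha_{n}\right\Vert_{n}<\infty$, each summand is a continuous homogeneous polynomial (the map $g\mapsto g^{\otimes n}$ is holomorphic with $\left\Vert g^{\otimes n}\right\Vert_{\mathfrak{g}_{CM}^{\otimes n}}=\left\Vert g\right\Vert_{\mathfrak{g}_{CM}}^{n}$, composed with the bounded functional $\alpha_{n}$). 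Splitting $\frac{1}{n!}\left\Vert\alpha_{n}\right\Vert_{n}\left\Vert g\right\Vert^{n}$ so as to expose the Fock weights $T^{n}/n!$ and applying the Cauchy--Schwarz inequality in the index $n$ yields the pointwise bound
\[
\left\vert f\left(g\right)\right\vert\leqslant\left\Vert\alpha\right\Vert_{J_{T}^{0}\left(\mathfrak{g}_{CM}\right)}\exp\!\left(\tfrac{1}{2T}\left\Vert g\right\Vert_{\mathfrak{g}_{CM}}^{2}\right).
\]
This shows the series converges absolutely and uniformly on bounded subsets of $G_{CM}$, so $f$ is a locally bounded, locally uniform limit of holomorphic polynomials and is therefore holomorphic by \cite[Theorem 3.18.1]{HP74}; that is, $f\in\mathcal{H}\left(G_{CM}\right)$.

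It remains to check $\mathcal{T}_{T}f=\alpha$. Applying Lemma \ref{l.h6.9} to this $f$ gives a second expansion $f\left(g\right)=\sum_{n}\frac{1}{n!}\left\langle\hat{f}_{n}\left(\mathbf{e}\right),g^{\otimes n}\right\rangle$. Replacing $g$ by $zg$ and comparing the two power series in $z\in\mathbb{C}$ coefficient by coefficient gives $\left\langle\alpha_{n}-\hat{f}_{n}\left(\mathbf{e}\right),g^{\otimes n}\right\rangle=0$ for every $g\in\mathfrak{g}_{CM}$ and every $n$. Both $\alpha$ and $\hat{f}\left(\mathbf{e}\right)$ annihilate $J$, so their difference $\beta:=\alpha-\hat{f}\left(\mathbf{e}\right)$ descends to a linear functional on $\mathbf{T}/J$; since the images of the elements $g^{\otimes n}$ span $\mathbf{T}/J$ (the Poincar\'{e}--Birkhoff--Witt fact that symmetrized powers generate the universal enveloping algebra, recovered from $g\mapsto g^{\otimes n}$ by polarization), $\beta$ must vanish, i.e. $\hat{f}\left(\mathbf{e}\right)=\alpha$. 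Then Lemma \ref{l.h6.5} gives $\left\Vert f\right\Vert_{\mathcal{H}_{T}^{2}\left(G_{CM}\right)}=\left\Vert\alpha\right\Vert_{J_{T}^{0}\left(\mathfrak{g}_{CM}\right)}<\infty$, so $f\in\mathcal{H}_{T}^{2}\left(G_{CM}\right)$ and $\mathcal{T}_{T}f=\alpha$.

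Finally, $\mathcal{T}_{T}$ is now a linear bijection preserving norms between $\mathcal{H}_{T}^{2}\left(G_{CM}\right)$ (with the inner product of Corollary \ref{c.h6.8}) and the Hilbert space $J_{T}^{0}\left(\mathfrak{g}_{CM}\right)$; completeness transfers back across this bijective isometry, so $\mathcal{H}_{T}^{2}\left(G_{CM}\right)$ is a Hilbert space and $\mathcal{T}_{T}$ is unitary. I expect the main obstacle to be the surjectivity step, and within it the identity $\hat{f}\left(\mathbf{e}\right)=\alpha$: the convergence estimate is routine Cauchy--Schwarz, but matching the symmetric coefficients $\left\langle\alpha_{n},g^{\otimes n}\right\rangle$ against the genuinely non-commutative left-invariant derivatives $\hat{f}_{n}\left(\mathbf{e}\right)$ relies essentially on the description of $J^{0}$ as the dual of $\mathbf{T}/J\cong U\left(\mathfrak{g}_{CM}\right)$, which is precisely what lets a functional in $J^{0}$ be recovered from its values on symmetric tensors.
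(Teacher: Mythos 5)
Your proposal is correct and follows essentially the same route as the paper: surjectivity via the candidate inverse $f(g)=\sum_{n}\frac{1}{n!}\langle\alpha_{n},g^{\otimes n}\rangle$, the Cauchy--Schwarz estimate giving uniform convergence on bounded sets and hence holomorphy, the power-series comparison in $z$, and Lemma \ref{l.h6.5} to close the loop. The one step the paper delegates to a citation (the argument after Eq. (6.13) in \cite{Driver1995a}, or Theorem 2.5 of \cite{D-G-SC07b}) --- deducing $\hat{f}(\mathbf{e})=\alpha$ from agreement on symmetric powers --- you correctly supply yourself via polarization and the Poincar\'{e}--Birkhoff--Witt identification of $\mathbf{T}/J$ with $U(\mathfrak{g}_{CM})$, which is precisely the content of the cited argument.
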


\begin{proof}
Given Corollary \ref{c.h6.8}, it only remains to prove
$\mathcal{T}_{T}$ is surjective. So let $\alpha\in J_{T}^{0}\left(
\mathfrak{g}_{CM}\right)$. By Lemma \ref{l.h6.9}, if
$f=\mathcal{T}_{T}^{-1}\alpha$ exists it must be given by
\begin{equation}
f\left(  g\right)  :=\sum_{n=0}^{\infty}\frac{1}{n!}\left\langle \alpha
_{n},g^{\otimes n}\right\rangle \text{ for any }g \in G_{CM}. \label{e.h6.16}%
\end{equation}
We now have to check that the sum is convergent, the resulting function $f$ is
in $\mathcal{H}\left(  G_{CM}\right)$,  and $\hat{f}\left(  \mathbf{e}%
\right)  =\alpha$. Once this is done, we may apply Lemma
\ref{l.h6.5} to conclude that $\left\Vert f\right\Vert
_{\mathcal{H}_{T}^{2}\left( G_{CM}\right)  }=\left\Vert
\alpha\right\Vert_{J_{T}^{0}\left( \mathfrak{g}_{CM}\right)
}<\infty$ and hence we will have shown that
$f\in\mathcal{H}_{T}^{2}\left(  G_{CM}\right)  $ and $\mathcal{T}_{T}%
f=\alpha$. For each $n\in\mathbb{N}\cup\left\{  0\right\}$,  the
function $u_{n}\left( g\right)  :=\frac{1}{n!}\left\langle
\alpha_{n},g^{\otimes n}\right\rangle $ is a continuous complex
$n$--linear form in $g\in G_{CM}$ and therefore holomorphic. Since
$\left\vert \left\langle \alpha_{n},g^{\otimes n}\right\rangle
\right\vert \leqslant\left\Vert \alpha_{n}\right\Vert_{n}\left\Vert
g\right\Vert_{\mathfrak{g}_{CM}}^{n}$,  then for $R>0$
\[
\sup\left\{  \left\vert u_{n}\left(  g\right)  \right\vert
:\left\Vert g\right\Vert_{\mathfrak{g}_{CM}}\leqslant R\right\}
\leqslant\left\Vert \alpha_{n}\right\Vert_{n}R^{n}.
\]
Therefore it follows that%
\begin{align}
\sum_{n=0}^{\infty}\sup\left\{  \left\vert u_{n}\left(  g\right)
\right\vert :\left\Vert g\right\Vert_{\mathfrak{g}_{CM}}\leqslant
R\right\}   & \leqslant\sum_{n=0}^{\infty}\frac{T^{n}}{n!}\left\Vert
\alpha_{n}\right\Vert
_{n}\frac{R^{n}}{T^{n}}\nonumber\\
&  \leqslant\sqrt{\sum_{n=0}^{\infty}\frac{T^{n}}{n!}\left\Vert
\alpha
_{n}\right\Vert_{n}^{2}}\sqrt{\sum_{n=0}^{\infty}\frac{T^{n}}{n!}\left(
\frac{R^{n}}{T^{n}}\right)^{2}}\nonumber\\
&  =\left\Vert \alpha\right\Vert_{J_{T}^{0}\left(
\mathfrak{g}_{CM}\right)
} e^{R^{2}/\left(  2T\right)  }<\infty. \label{e.h6.17}%
\end{align}
This shows $f\left(  g\right)  =\lim_{N\rightarrow\infty}\sum_{n=0}^{N}%
u_{n}\left(  g\right)  $ with the limit being uniform over $g$ in
bounded subsets of $\mathfrak{g}_{CM}$. Hence, the sum in Eq.
(\ref{e.h6.16}) is convergent and (see \cite[Theorem 3.18.1]{HP74})
the resulting function, $f$,  is in $\mathcal{H}\left(
G_{CM}\right)$.
Since%
\[
f\left(  zh\right)  =\sum_{n=0}^{\infty}\frac{z^{n}}{n!}\left\langle
\alpha_{n},h^{\otimes n}\right\rangle \text{ for any }z\in\mathbb{C}\text{ and
}h\in\mathfrak{g}_{CM},
\]
it follows that
\[
\left\langle \alpha_{n},h^{\otimes n}\right\rangle =\left(  \frac{d}%
{dz}\right)^{n}f\left(  zh\right)  |_{z=0}=\left(
\frac{d}{dt}\right) ^{n}f\left(  e^{th}\right)  |_{t=0}=\left\langle
\hat{f}_{n}\left( \mathbf{e}\right),h^{\otimes n}\right\rangle .
\]
This is true for all $n$ and $h\in\mathfrak{g}_{CM}$,  so we may use
the argument following Eq. (6.13) in \cite{Driver1995a} (or see the
proof of Theorem 2.5 in \cite{D-G-SC07b}) to show $\hat{f}\left(
\mathbf{e}\right) =\alpha$. \end{proof}

As a consequence of Eq. (\ref{e.h6.17}) we see that if $f\in\mathcal{H}%
_{T}^{2}\left(  G_{CM}\right)  $ then%
\begin{equation}
\left\vert f\left(  g\right)  \right\vert \leqslant\left\Vert
f\right\Vert _{\mathcal{H}_{T}^{2}\left(  G_{CM}\right)  }
e^{\left\Vert g\right\Vert _{\mathfrak{g}_{CM}}^{2}/\left( 2T\right)
}\text{ for any }g\in G_{CM}.
\label{e.h6.18}%
\end{equation}
The next theorem, which is an analogue of Bargmann's pointwise
bounds (see \cite[Eq. (1.7)]{Bargmann61} and \cite[Eq.
(5.4)]{Driver1997c}), improves upon the estimate in Eq.
(\ref{e.h6.18}).

\begin{thm}
[Pointwise bounds]\label{t.h6.11}If $f\in\mathcal{H}_{T}^{2}\left(
G_{CM}\right)$ and $g\in G_{CM}$,  then for all $g\in G_{CM},$%
\begin{equation}
\left\vert f\left(  g\right)  \right\vert \leqslant\left\Vert
f\right\Vert _{\mathcal{H}_{T}^{2}\left(  G_{CM}\right)  }
e^{d_{CM}^{2}\left(
\mathbf{e}, g\right)  /\left(  2T\right)}, \label{e.h6.19}%
\end{equation}
where $d_{CM}^{2}\left(  \cdot,\cdot\right)$ is the distance
function on $G_{CM}$ defined in Eq. (\ref{e.h4.7}).
\end{thm}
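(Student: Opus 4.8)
The plan is to deduce the infinite-dimensional bound \eqref{e.h6.19} from the finite-dimensional Bargmann-type pointwise bounds on the approximating groups $G_{P}$, letting $P\uparrow I_{\mathfrak{g}_{CM}}$ in exactly the spirit of Lemma \ref{l.h6.5}. I will first establish the inequality for $g$ in the dense subgroup $G_{0}:=\cup_{P\in\operatorname*{Proj}\left(W\right)}G_{P}$ and then extend it to all of $G_{CM}$ by continuity.

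Fix $g\in G_{0}$ and choose $P_{m}\in\operatorname*{Proj}\left(W\right)$ with $g\in G_{P_{m}}$, together with a sequence satisfying $P_{n}|_{\mathfrak{g}_{CM}}\uparrow I_{\mathfrak{g}_{CM}}$ and $P_{n}\geqslant P_{m}$ for $n\geqslant m$, so that $g\in G_{P_{n}}$ for every $n\geqslant m$. On each finite-dimensional complex group $G_{P_{n}}$ the restriction $f|_{G_{P_{n}}}$ is holomorphic and lies in $L^{2}\left(G_{P_{n}},\nu_{T}^{P_{n}}\right)$, so the finite-dimensional Bargmann pointwise bound (the statement underlying \cite[Eq. (5.4)]{Driver1997c}; see also \cite[Eq. (1.7)]{Bargmann61}), with the time normalization of Eq. \eqref{e.h4.1}, gives
\[
\left\vert f\left(g\right)\right\vert \leqslant\left\Vert f\right\Vert_{L^{2}\left(G_{P_{n}},\nu_{T}^{P_{n}}\right)}e^{d_{G_{P_{n}}}^{2}\left(\mathbf{e},g\right)/\left(2T\right)},
\]
where $d_{G_{P_{n}}}$ is the Riemannian distance on $G_{P_{n}}$. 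By the definition of $\left\Vert\cdot\right\Vert_{\mathcal{H}_{T}^{2}\left(G_{CM}\right)}$ in Eq. \eqref{e.h1.4} we have $\left\Vert f\right\Vert_{L^{2}\left(G_{P_{n}},\nu_{T}^{P_{n}}\right)}\leqslant\left\Vert f\right\Vert_{\mathcal{H}_{T}^{2}\left(G_{CM}\right)}$, whence
\[
\left\vert f\left(g\right)\right\vert \leqslant\left\Vert f\right\Vert_{\mathcal{H}_{T}^{2}\left(G_{CM}\right)}e^{d_{G_{P_{n}}}^{2}\left(\mathbf{e},g\right)/\left(2T\right)}\text{ for all }n\geqslant m.
\]

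Since every $G_{P_{n}}$ is a subgroup of $G_{CM}$ carrying the restricted left-invariant metric, any curve in $G_{P_{n}}$ is a curve of the same length in $G_{CM}$, so $d_{G_{P_{n}}}\left(\mathbf{e},g\right)\geqslant d_{G_{CM}}\left(\mathbf{e},g\right)$; moreover $d_{G_{P_{n}}}^{2}\left(\mathbf{e},g\right)$ is non-increasing in $n$ (as already used in the proof of Theorem \ref{t.h5.9}) and converges down to $d_{G_{CM}}\left(\mathbf{e},g\right)$. Letting $n\rightarrow\infty$ in the last display therefore yields the bound \eqref{e.h6.19} for every $g\in G_{0}$. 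Finally, recalling (from the end of the proof of Theorem \ref{t.h5.9}, or \cite{DG07b}) that $G_{0}$ is dense in $G_{CM}$, and that both $f\in\mathcal{H}\left(G_{CM}\right)$ and $g\mapsto d_{G_{CM}}\left(\mathbf{e},g\right)$ are continuous, the inequality passes to all of $G_{CM}$, which is \eqref{e.h6.19}.

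I expect the one genuinely delicate point to be the identification $\lim_{n}d_{G_{P_{n}}}\left(\mathbf{e},g\right)=d_{G_{CM}}\left(\mathbf{e},g\right)$. The inequality $d_{G_{P_{n}}}\geqslant d_{G_{CM}}$ and the monotonicity come for free, but the convergence requires approximating a nearly length-minimizing path in $G_{CM}$ by paths supported in the finite-dimensional subgroups $G_{P_{n}}$; because $\pi_{P_{n}}$ is not a group homomorphism, one must check that the $\omega$-dependent correction to the length of the projected path vanishes as $P_{n}\uparrow I_{\mathfrak{g}_{CM}}$. This rests on the geometric estimates for $d_{G_{CM}}$ developed in \cite{DG07b}.
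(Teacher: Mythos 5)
Your proposal is correct and follows essentially the same route as the paper: apply the finite-dimensional Bargmann bound from \cite[Eq.\ (5.4)]{Driver1997c} on each $G_{P_{n}}$, dominate the $L^{2}\left(G_{P_{n}},\nu_{T}^{P_{n}}\right)$ norm by $\left\Vert f\right\Vert_{\mathcal{H}_{T}^{2}\left(G_{CM}\right)}$, pass to the limit in the distances, and finish by density of $G_{0}$ and continuity. The delicate point you flag is exactly how the paper handles it --- projecting a $C^{1}$ curve $\sigma$ in $G_{CM}$ to $\sigma_{n}:=\pi_{P_{n}}\circ\sigma$ in $G_{P_{n}}$, using $d_{G_{P_{n}}}^{2}\left(\mathbf{e},g\right)\leqslant\ell_{G_{CM}}^{2}\left(\sigma_{n}\right)$ together with $\lim_{n}\ell_{G_{CM}}\left(\sigma_{n}\right)=\ell_{G_{CM}}\left(\sigma\right)$ from the proof of \cite[Theorem 8.1]{DG07b}, and then optimizing over $\sigma$.
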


\begin{proof}
Let $P_{n}\in\operatorname*{Proj}\left(  W\right)  $ be chosen so
that $P_{n}|_{\mathfrak{g}_{CM}}\uparrow I_{\mathfrak{g}_{CM}}$ as
$n\rightarrow \infty$ and recall that
$G_{0}:=\cup_{n=1}^{\infty}G_{P_{n}}$ is a dense subgroup of
$G_{CM}$ as explained in the proof of Theorem \ref{t.h5.9}. Let
$g\in G_{P_{m}}$ for some $m\in\mathbb{N}$ and let $\sigma:\left[
0,1\right] \rightarrow G_{CM}$ be a $C^{1}$--curve such that
$\sigma\left(  0\right) =\mathbf{e}$ and $\sigma\left(  1\right)
=g$. Then for $n\geqslant m$,  $\sigma_{n}\left(  t\right)
:=\pi_{P_{n}}\left(  \sigma\left(  t\right) \right)  $ is a $C^{1}$
curve in $G_{n}$ such that $\sigma_{n}\left( 0\right)  =\mathbf{e}$
and $\sigma_{n}\left(  1\right)  =g$. Therefore by
\cite[Eq. (5.4)]{Driver1997c}, we have%
\begin{equation}
\left\vert f\left(  g\right)  \right\vert \leqslant\left\Vert f|_{G_{P_{n}}%
}\right\Vert_{L^{2}\left(  G_{P_{n}},\nu_{T}^{P_{n}}\right)  }\cdot
e^{d_{G_{P_{n}}}^{2}\left(  \mathbf{e},g\right)  /\left(  2T\right)
}\leqslant\left\Vert f\right\Vert_{\mathcal{H}_{T}^{2}\left(
G_{CM}\right) }\cdot e^{\ell_{G_{CM}}^{2}\left(  \sigma_{n}\right)
/\left(  2T\right)  },
\label{e.h6.20}%
\end{equation}
where $\ell_{G_{CM}}\left(  \sigma_{n}\right)  $ is the length of
$\sigma_{n}$ as in Eq. (\ref{e.h4.6}). In the proof \cite[Theorem
8.1]{DG07b}, it was shown that
$\lim_{n\rightarrow\infty}\ell_{G_{CM}}\left(  \sigma_{n}\right)
=\ell_{G_{CM}}\left(  \sigma\right)$. Hence we may pass to the limit
in Eq. (\ref{e.h6.20}) to find, $\left\vert f\left(  g\right)
\right\vert \leqslant\left\Vert f\right\Vert
_{\mathcal{H}_{T}^{2}\left(  G_{CM}\right) }\cdot
e^{\ell_{G_{CM}}^{2}\left(  \sigma\right)  /\left(  2T\right)  }$.
Optimizing this last inequality over all $\sigma$ joining
$\mathbf{e}$ to $g$ then shows that Eq. (\ref{e.h6.19}) holds for
all $g\in G_{0}$. This suffices to prove Eq. (\ref{e.h6.19}) as both
sides of this inequality are continuous in $g\in G_{CM}$ and $G_{0}$
is dense in $G_{CM}$. \end{proof}

\section{Density theorems\label{s.h7}}

The following density result is the main theorem of this section and is
crucial to the next section. Techniques similar to those used in this section
have appeared in Cecil \cite{Cecil2008} to prove an analogous result for path
groups over stratified Lie groups.

\begin{thm}
[Density theorem]\label{t.h7.1} For all $T>0$,  $\mathcal{P}_{CM}$
defined by Eq. (\ref{e.h1.7}) is a dense subspace of
$\mathcal{H}_{T}^{2}\left( G_{CM}\right)$.
\end{thm}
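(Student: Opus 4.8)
The plan is to transport the statement through the Taylor isomorphism. By Theorem~\ref{t.h6.10} the map $\mathcal{T}_T\colon\mathcal{H}_T^2\left(G_{CM}\right)\to J_T^0\left(\mathfrak{g}_{CM}\right)$ is unitary, so $\mathcal{P}_{CM}$ is dense in $\mathcal{H}_T^2\left(G_{CM}\right)$ if and only if its image $\mathcal{T}_T\left(\mathcal{P}_{CM}\right)$ is dense in $J_T^0\left(\mathfrak{g}_{CM}\right)$. Everything thus reduces to locating $\mathcal{T}_T\left(\mathcal{P}_{CM}\right)$ well enough inside the non-commutative Fock space, and the bridge will be the finite rank tensors of Definition~\ref{d.h7.2}, whose density in $J_T^0\left(\mathfrak{g}_{CM}\right)$ is asserted by Lemma~\ref{l.h7.3}.

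The first step I would carry out is to show that every finite rank tensor lies in $\mathcal{T}_T\left(\mathcal{P}_{CM}\right)$. Fix such an $\alpha$: there are $P\in\operatorname{Proj}\left(W\right)$ and $N\in\mathbb{N}$ with $\alpha_n=0$ for $n>N$ and each $\alpha_n$ factoring through $\mathfrak{g}_P^{\otimes n}$, so that $\alpha\in J_T^0\left(\mathfrak{g}_{CM}\right)$ since its norm is a finite sum. Applying Theorem~\ref{t.h6.10}, the preimage $f:=\mathcal{T}_T^{-1}\alpha$ is given by the \emph{finite} sum $f\left(g\right)=\sum_{n=0}^N\frac{1}{n!}\left\langle\alpha_n,g^{\otimes n}\right\rangle$ of Eq.~\eqref{e.h6.16}, and $\mathcal{T}_Tf=\alpha$. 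Because each $\alpha_n$ is supported on $\mathfrak{g}_P^{\otimes n}$, we have $\left\langle\alpha_n,g^{\otimes n}\right\rangle=\left\langle\alpha_n,\left(\pi_P g\right)^{\otimes n}\right\rangle$, so $f=F\circ\pi_P|_{G_{CM}}$ where $F\left(x\right):=\sum_{n=0}^N\frac{1}{n!}\left\langle\alpha_n,x^{\otimes n}\right\rangle$ is a holomorphic polynomial on the finite dimensional group $G_P$. Hence $f$ is the restriction to $G_{CM}$ of the holomorphic cylinder polynomial $F\circ\pi_P\in\mathcal{P}$, that is $f\in\mathcal{P}_{CM}$ and $\alpha=\mathcal{T}_Tf\in\mathcal{T}_T\left(\mathcal{P}_{CM}\right)$. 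I would stress that in this direction no delicate computation of the derivatives $\hat{p}\left(\mathbf{e}\right)$ of a cylinder polynomial is required: although evaluating $\hat{p}\left(\mathbf{e}\right)$ directly through left invariant vector fields produces $\omega$-cross terms in the central direction, the identity $\mathcal{T}_Tf=\alpha$ is guaranteed abstractly by Theorem~\ref{t.h6.10} and Example~\ref{ex.h6.3}, since membership in $J^0$ forces the antisymmetric parts to be determined by the diagonal values $\left\langle\alpha_n,g^{\otimes n}\right\rangle$.

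Combining the two ingredients then finishes the proof: $\mathcal{T}_T\left(\mathcal{P}_{CM}\right)$ contains the finite rank tensors, which are dense in $J_T^0\left(\mathfrak{g}_{CM}\right)$ by Lemma~\ref{l.h7.3}, and $\mathcal{T}_T$ is a unitary map, so $\mathcal{P}_{CM}$ is dense in $\mathcal{H}_T^2\left(G_{CM}\right)$.

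The genuine obstacle is therefore Lemma~\ref{l.h7.3} itself, the density of the finite rank tensors, the infinite-dimensional analogue of \cite[Lemma 3.5]{D-G-SC07b}, which I would establish as a separate result feeding into this proof. The difficulty there is that one cannot merely truncate a general $\alpha\in J_T^0\left(\mathfrak{g}_{CM}\right)$ in degree and in the $\mathfrak{g}_{CM}$ directions, since a naive truncation need not remain in the annihilator $J^0$. The structural fact to exploit is that for each $P\in\operatorname{Proj}\left(W\right)$ the subspace $\mathfrak{g}_P=PW\times\mathbf{C}$ is a Lie subalgebra, because the bracket $\left[\left(A,a\right),\left(B,b\right)\right]=\left(0,\omega\left(A,B\right)\right)$ lands in the fully retained center $\mathbf{C}$; consequently the projection of $\mathfrak{g}_{CM}$ onto $\mathfrak{g}_P$ induces a norm-nonincreasing projection of $J_T^0\left(\mathfrak{g}_{CM}\right)$ onto finite rank tensors that is compatible with the ideal $J$. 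One then verifies that the image of $\alpha$ under such a projection followed by a degree truncation converges to $\alpha$ as $P\uparrow I_{\mathfrak{g}_{CM}}$ and $N\to\infty$, using the convergence of the series $\sum_{n=0}^{\infty}\frac{T^n}{n!}\left\Vert\alpha_n\right\Vert_n^2$.
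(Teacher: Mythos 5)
There is a genuine gap, and it sits exactly where the paper has to work hardest. Your argument rests on the claim that a finite rank tensor $\alpha$ admits a $P\in\operatorname{Proj}\left(W\right)$ such that every $\alpha_{n}$ factors through $\mathfrak{g}_{P}^{\otimes n}$. That is not what Definition \ref{d.h7.2} says: finite rank only means $\alpha_{n}=0$ for all but finitely many $n$, i.e.\ truncation in the \emph{degree} direction, while each surviving $\alpha_{n}\in\left(\mathfrak{g}_{CM}^{\otimes n}\right)^{\ast}$ may have full infinite-dimensional support. Correspondingly, Lemma \ref{l.h7.3} (proved via the scaling automorphisms $\varphi_{\theta}$ and Fej\'{e}r averaging) only kills high degrees; it says nothing about spatial localization. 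For such an $\alpha$ the function $f=\mathcal{T}_{T}^{-1}\alpha$ is a finite sum of continuous $n$-linear forms but is \emph{not} a cylinder function, so it does not lie in $\mathcal{P}_{CM}$, and your step ``every finite rank tensor lies in $\mathcal{T}_{T}\left(\mathcal{P}_{CM}\right)$'' fails. The whole of Subsection \ref{s.h7.2} of the paper (Lemma \ref{l.h7.6} through Proposition \ref{p.h7.12}) exists precisely to close this gap: one approximates such an $f$ by the genuine cylinder polynomials $f\circ\pi_{N}$ and must control the Taylor coefficients of the composition, which are \emph{not} $\alpha_{n}\circ\pi_{N}^{\otimes n}$ but carry correction terms built from $\Gamma_{P}\left(w,w^{\prime}\right)=\frac{1}{2}\left(0,\omega\left(w,w^{\prime}\right)-\omega\left(Pw,Pw^{\prime}\right)\right)$, because $\pi_{P}$ is not a group homomorphism (Remark \ref{r.h7.5}).

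The same oversight undermines your sketch of Lemma \ref{l.h7.3}. You assert that, since $\mathfrak{g}_{P}$ is a Lie subalgebra, composing with $\pi_{P}^{\otimes n}$ gives a projection of $J_{T}^{0}\left(\mathfrak{g}_{CM}\right)$ ``compatible with the ideal $J$.'' But compatibility requires $\pi_{P}$ to be a Lie algebra \emph{homomorphism}, and it is not: $\pi_{P}\left[h,k\right]=\left(0,\omega\left(A,B\right)\right)$ whereas $\left[\pi_{P}h,\pi_{P}k\right]=\left(0,\omega\left(PA,PB\right)\right)$, so $\pi_{P}^{\otimes\bullet}$ does not map $J$ into $J$ and $\alpha\circ\pi_{P}^{\otimes\bullet}$ need not annihilate $J$. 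This is why the paper's proof of Lemma \ref{l.h7.3} uses the maps $\varphi_{\theta}\left(A,a\right)=\left(e^{i\theta}A,e^{2i\theta}a\right)$, which genuinely are Lie algebra homomorphisms, and accepts that only the degree truncation can be achieved this way, leaving the spatial approximation to the $\Gamma_{P}$-analysis of Propositions \ref{p.h7.9}--\ref{p.h7.12}.
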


\begin{proof}
This theorem is a consequence of Corollary \ref{c.h7.4} and Proposition
\ref{p.h7.12} below.
\end{proof}

The remainder of this section will be devoted to proving the results
used in the proof of the theorem. We will start by constructing some
auxiliary dense subspaces of $J_{T}^{0}\left(
\mathfrak{g}_{CM}\right)$ and $\mathcal{H}_{T}^{2}\left(
G_{CM}\right)$.
\subsection{Finite rank subspaces\label{s.h7.1}}

\begin{df}
\label{d.h7.2}A tensor, $\alpha\in J^{0}\left(
\mathfrak{g}_{CM}\right)$,  is said to have \textbf{finite rank} if
$\alpha_{n}=0$ for all but finitely many $n\in\mathbb{N}$.
\end{df}

The next lemma is essentially a special case of \cite[Lemma 3.5]{D-G-SC07b}.

\begin{lem}
[Finite Rank Density Lemma]\label{l.h7.3}The finite rank tensors in $J_{T}%
^{0}\left(  \mathfrak{g}_{CM}\right)  $ are dense in
$J_{T}^{0}\left( \mathfrak{g}_{CM}\right)$. \end{lem}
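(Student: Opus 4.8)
The plan is to approximate a given $\alpha\in J_T^0(\mathfrak{g}_{CM})$ by its tensor-degree truncations, corrected slightly to repair the annihilation property. For $N\in\mathbb{N}$ the naive truncation $\alpha^{(N)}$ (with $\alpha^{(N)}_n=\alpha_n$ for $n\le N$ and $\alpha^{(N)}_n=0$ for $n>N$) is finite rank and satisfies $\|\alpha-\alpha^{(N)}\|_{J_T^0(\mathfrak{g}_{CM})}^2=\sum_{n>N}\frac{T^n}{n!}\|\alpha_n\|_n^2\to0$, so convergence in norm is automatic. The only difficulty is that $\alpha^{(N)}$ need not lie in $J^0$, precisely because the ideal $J$ generated by \eqref{e.h1.3} is \emph{not} homogeneous: each generator $h\otimes k-k\otimes h-[h,k]$ mixes degrees $2$ and $1$.

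First I would record the condition $\alpha\in J^0$ degree by degree. Since $[h,k]\in\mathbf{C}$ is central by \eqref{e.h3.2}, $\alpha\in J^0$ is equivalent to
\[
\langle\alpha_{n+1},\,a\otimes(h\otimes k-k\otimes h)\otimes b\rangle=\langle\alpha_n,\,a\otimes[h,k]\otimes b\rangle
\]
for all $h,k\in\mathfrak{g}_{CM}$ and all homogeneous $a,b$ with $\deg a+\deg b=n-1$. Inspecting which of these identities survive truncation at level $N$: those with $n\le N-1$ involve only degrees $\le N$ and are preserved, those with $n\ge N+1$ read $0=0$, and the \emph{sole} broken family is $n=N$, which for $\alpha^{(N)}$ would demand $\langle\alpha_N,a\otimes[h,k]\otimes b\rangle=0$, whereas for the original $\alpha$ this equals $\langle\alpha_{N+1},a\otimes(h\otimes k-k\otimes h)\otimes b\rangle$ — a small quantity, controlled by $\|\alpha_{N+1}\|_{N+1}$.

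Accordingly I would correct only the top slot: set $\beta_n=\alpha_n$ for $n\le N-1$, $\beta_n=0$ for $n>N$, and $\beta_N=\alpha_N+\gamma$, where $\gamma\in(\mathfrak{g}_{CM}^{\otimes N})^{\ast}$ is chosen so that (i) $\gamma$ vanishes on the span $V_1$ of the tensors $a'\otimes(h\otimes k-k\otimes h)\otimes b'$ with $\deg a'+\deg b'=N-2$, which keeps the level-$(N-1)$ relation coupling $\beta_N$ to $\beta_{N-1}=\alpha_{N-1}$ intact, and (ii) $\gamma=-\alpha_N$ on the span $V_2$ of the tensors $a\otimes c\otimes b$ with $c\in[\mathfrak{g}_{CM},\mathfrak{g}_{CM}]$ and $\deg a+\deg b=N-1$, which restores the broken level-$N$ relation with $\beta_{N+1}=0$. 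Consistency of these two prescriptions on $V_1\cap V_2$ follows from $\alpha\in J^0$ together with two-step nilpotency (brackets of central elements vanish); one then takes the minimal-norm $\gamma$ vanishing off $V_1+V_2$. With these choices $\beta\in J^0$, $\beta$ is finite rank, and $\|\alpha-\beta\|_{J_T^0(\mathfrak{g}_{CM})}^2=\frac{T^N}{N!}\|\gamma\|_N^2+\sum_{n>N}\frac{T^n}{n!}\|\alpha_n\|_n^2$.

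The main obstacle I anticipate is bounding the correction so that $\frac{T^N}{N!}\|\gamma\|_N^2\to0$; this is the genuinely technical step and the point at which the argument specializes \cite[Lemma 3.5]{D-G-SC07b}. Here the geometry of the problem helps decisively: because $[\mathfrak{g}_{CM},\mathfrak{g}_{CM}]\subseteq\mathbf{C}$ is finite-dimensional and central, $V_2$ is a comparatively thin subspace, and $\gamma$ is essentially the projection of $-\alpha_N$ onto the $V_2$-directions transverse to $V_1$. Estimating the relevant angle yields $\|\gamma\|_N\le C\,\|\alpha_N\|_N$ up to lower-degree boundary contributions, and since each $\frac{T^N}{N!}\|\alpha_N\|_N^2$ is a term of the convergent series defining $\|\alpha\|_{J_T^0(\mathfrak{g}_{CM})}^2$, it tends to $0$. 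This completes the approximation and hence the density of finite rank tensors.
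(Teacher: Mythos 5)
Your truncate-and-repair strategy is genuinely different from the paper's proof of Lemma \ref{l.h7.3} (which uses the grading automorphisms $\varphi_{\theta}(A,a)=(e^{i\theta}A,e^{2i\theta}a)$, the induced unitaries $\alpha\mapsto\alpha\circ\Phi_{\theta}$ of $J_{T}^{0}(\mathfrak{g}_{CM})$, and averaging against the Fej\'{e}r kernel to kill all tensor degrees above a given level while staying inside $J^{0}$), but it has a fatal gap: the consistency you assert on $V_{1}\cap V_{2}$ is false. Your construction requires $\alpha_{N}$ to vanish on $V_{1}\cap V_{2}$, since $v\in V_{1}$ forces $\langle\gamma,v\rangle=0$ while $v\in V_{2}$ forces $\langle\alpha_{N}+\gamma,v\rangle=0$. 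Take $N=3$, pick $h,k\in H\times\{0\}$ with $c:=[h,k]=(0,\omega(h,k))\neq 0$, and set
\[
v:=c\otimes\left(h\otimes k-k\otimes h\right)\in\mathfrak{g}_{CM}^{\otimes 3}.
\]
Then $v\in V_{1}$ (with $a'=c$ and $b'$ empty) and $v\in V_{2}$ (each summand has its first slot in $[\mathfrak{g}_{CM},\mathfrak{g}_{CM}]$), yet $\alpha\in J^{0}$ gives $\langle\alpha_{3},v\rangle=\langle\alpha_{2},c\otimes c\rangle$, which need not vanish: for $\alpha=\hat{f}(\mathbf{e})$ with $f(w,z)=\phi(z)^{2}$, where $\phi\in\mathbf{C}^{\ast}$ satisfies $\phi(\omega(h,k))=1$ (a holomorphic cylinder polynomial, so $\alpha\in J_{T}^{0}(\mathfrak{g}_{CM})$ by Corollary \ref{c.h6.6}), one computes $\langle\alpha_{2},c\otimes c\rangle=(\tilde{c}\tilde{c}f)(\mathbf{e})=2\neq 0$. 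Two-step nilpotency only handles those elements of $V_{1}\cap V_{2}$ in which the central slot sits \emph{inside} the antisymmetrized pair; here it sits outside.

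The failure is structural, not cosmetic: the level-$(N-1)$ relation pins $\beta_{N}$ on $V_{1}$ to the data of $\alpha_{N-1}$, while the level-$N$ relation demands $\beta_{N}=0$ on $V_{2}$, and $V_{1}\cap V_{2}$ carries genuine information from $\alpha_{N-1}$ (in the example, the second derivative of $f$ in the central directions). So a correction confined to the top slot cannot exist in general; one would have to modify $\alpha_{N-1},\alpha_{N-2},\dots$ as well, and that cascade is exactly what your argument was designed to avoid. Separately, even where the correction does exist, the bound $\Vert\gamma\Vert_{N}\leqslant C\Vert\alpha_{N}\Vert_{N}$ with $C$ independent of $N$ (which you need for $\frac{T^{N}}{N!}\Vert\gamma\Vert_{N}^{2}\rightarrow 0$) and the control of the ``lower-degree boundary contributions'' are asserted rather than proved. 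I would recommend abandoning the truncation in favor of the dilation-and-Fej\'{e}r-average argument, where the approximants $\alpha(n)=\int_{-\pi}^{\pi}(\alpha\circ\Phi_{\theta})F_{n}(\theta)\,d\theta$ are finite rank because degree-$m$ components carry only frequencies between $m$ and $2m$, and lie in $J^{0}$ because each $\Phi_{\theta}$ preserves the ideal $J$.
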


\begin{proof}
For $\theta\in\mathbb{R}$,  let $\varphi_{\theta}:\mathfrak{g}_{CM}%
\rightarrow\mathfrak{g}_{CM}$ be defined by
\[
\varphi_{\theta}\left(  A, a\right)  =\left( e^{i\theta}A,
e^{i2\theta }a\right).
\]
Since
\begin{align*}
\left[  \varphi_{\theta}\left(  A, a\right), \varphi_{\theta}\left(
B, b\right)  \right]   &  =\left[  \left( e^{i\theta}A,
e^{i2\theta}a\right)
, \left(  e^{i\theta}B, e^{i2\theta}b\right)  \right] \\
&  =\left(  0, \omega\left(  e^{i\theta}A, e^{i\theta}B\right)
\right) =\left(  0, e^{i2\theta}\omega\left(  A, B\right)  \right)
=\varphi_{\theta }\left[  \left(  A, a\right), \left(  B, b\right)
\right]
\end{align*}
we see that $\varphi_{\theta}$ is a Lie algebra homomorphism.

Now let $\Phi_{\theta}:\mathbf{T}\left(  \mathfrak{g}_{CM}\right)
\rightarrow\mathbf{T}\left(  \mathfrak{g}_{CM}\right)$ be defined by
$\Phi_{\theta}1=1$ and
\[
\Phi_{\theta}\left(  h_{1}\otimes\dots\otimes h_{n}\right)  =\varphi_{\theta
}h_{1}\otimes\dots\otimes\varphi_{\theta}h_{n}\text{ for all }h_{i}%
\in\mathfrak{g}_{CM}\text{ and }n\in\mathbb{N}.
\]
If we write $\xi\wedge\eta$ for $\xi\otimes\eta-\eta\otimes\xi$,  then%
\begin{align*}
\Phi_{\theta}(\xi\wedge\eta-[\xi,\eta])  &  =(\varphi_{e^{i\theta}}\xi
)\wedge(\varphi_{e^{i\theta}}\eta)-\varphi_{e^{i\theta}}[\xi,\eta]\\
&  =(\varphi_{e^{i\theta}}\xi)\wedge(\varphi_{e^{i\theta}}\eta)-[\varphi
_{e^{i\theta}}\xi,\varphi_{e^{i\theta}}\eta].
\end{align*}
From this it follows that $\Phi_{\theta}\left(  J\right)  \subset J$
and therefore if $\alpha\in J^{0}\left(  \mathfrak{g}_{CM}\right)$,
then $\alpha\circ\Phi_{\theta}\in J^{0}\left(
\mathfrak{g}_{CM}\right)$. Letting $\Gamma$ be an orthonormal basis
as in Eq. (\ref{e.h5.8}), we have $\varphi_{\theta}h=e^{i2\theta}h$
or $\varphi_{\theta}h=e^{i\theta}h$ for all $h\in\Gamma$. Therefore
it follows that
\begin{align*}
\left\vert \langle\alpha\circ\Phi_{\theta},k_{1}\otimes k_{2}\otimes
\dots\otimes k_{n}\rangle\right\vert ^{2}  &  =\left\vert \langle
\alpha,\varphi_{\theta}k_{1}\otimes\varphi_{\theta}k_{2}\otimes\dots
\otimes\varphi_{\theta}k_{n}\rangle\right\vert ^{2}\\
&  =\left\vert \langle\alpha, k_{1}\otimes k_{2}\otimes\dots\otimes
k_{n}\rangle\right\vert ^{2}%
\end{align*}
and hence that
\begin{align*}
\left\Vert \alpha\circ\Phi_{\theta}\right\Vert_{J_{T}^{0}\left(
\mathfrak{g}_{CM}\right)  }^{2}  &  =\sum_{n=0}^{\infty}\frac{T^{n}}{n!}%
\sum_{k_{1},k_{2},\dots, k_{n}\in\Gamma}\left\vert
\langle\alpha\circ \Phi_{\theta}, k_{1}\otimes
k_{2}\otimes\dots\otimes k_{n}\rangle\right\vert
^{2} \\
&  =\sum_{n=0}^{\infty}\frac{T^{n}}{n!}\sum_{k_{1},k_{2},\dots,
k_{n}\in\Gamma
}\left\vert \langle\alpha, k_{1}\otimes k_{2}\otimes\dots\otimes k_{n}%
\rangle\right\vert ^{2}=\left\Vert
\alpha\right\Vert_{J_{T}^{0}\left( \mathfrak{g}_{CM}\right)  }^{2}.
\end{align*}
So the map $\alpha\in J_{T}^{0}\left(  \mathfrak{g}_{CM}\right)
\rightarrow\alpha\circ\Phi_{\theta}\in J_{T}^{0}\left(  \mathfrak{g}%
_{CM}\right)  $ is unitary. Moreover, since%
\[
\left\vert \langle\alpha,\varphi_{\theta}k_{1}\otimes\varphi_{\theta}%
k_{2}\otimes\dots\otimes\varphi_{\theta}k_{n}\rangle-\langle\alpha
,k_{1}\otimes k_{2}\otimes\dots\otimes k_{n}\rangle\right\vert ^{2}%
\leqslant2\left\vert \langle\alpha, k_{1}\otimes
k_{2}\otimes\dots\otimes
k_{n}\rangle\right\vert ^{2}%
\]
we may apply the dominated convergence theorem to conclude
\begin{align*}
&  \lim_{\theta\rightarrow0}\left\Vert \alpha\circ\Phi_{\theta}-\alpha
\right\Vert_{J_{T}^{0}\left(  \mathfrak{g}_{CM}\right)  }^{2}\\
&  =\sum_{n=0}^{\infty}\frac{T^{n}}{n!}\sum_{k_{1}, k_{2}, \dots,
k_{n}\in\Gamma
}\lim_{\theta\rightarrow0}\left\vert \langle\alpha, \varphi_{\theta}%
k_{1}\otimes\varphi_{\theta}k_{2}\otimes\dots\otimes\varphi_{\theta}%
k_{n}\rangle-\langle\alpha, k_{1}\otimes k_{2}\otimes\dots\otimes k_{n}%
\rangle\right\vert ^{2}\\
&  =0,
\end{align*}
so that $\alpha\rightarrow\alpha\circ\Phi_{\theta}$ is continuous.
(Notice that $\Phi_{\theta}\circ\Phi_{\alpha}=\Phi_{\theta+\alpha}$,
so it suffices to check continuity at $\theta=0$.)

Let
\[
F_{n}(\theta)=\frac{1}{2\pi n}\ \sum_{k=0}^{n-1}\ \sum_{\ell=-k}^{k}%
e^{i\ell\theta}=\frac{1}{2\pi n}\ \frac{\sin^{2}(k\theta/2)}{\sin^{2}%
(\theta/2)}%
\]
denote Fejer's kernel \cite[p. 143]{T}. Then $\int_{-\pi}^{\pi}F_{n}%
(\theta)d\theta=1$ for all $n$ and
\[
\lim_{n\rightarrow\infty}\int_{-\pi}^{\pi}F_{n}(\theta)u(\theta)d\theta
=u\left(  0\right)  \text{ for all }u\in C\left(  [-\pi, \pi],\mathbb{C}%
\right).
\]
We now let
\[
\alpha\left(  n\right)  :=\int_{-\pi}^{\pi}\alpha\circ\Phi_{\theta}%
F_{n}\left(  \theta\right)  d\theta.
\]
Then
\begin{align*}
\limsup_{n\rightarrow\infty}\left\Vert \alpha-\alpha\left(  n\right)
\right\Vert_{J_{T}^{0}\left(  \mathfrak{g}_{CM}\right)  }^{2}  &
\leqslant\limsup_{n\rightarrow\infty}\left\Vert
\int_{-\pi}^{\pi}\left[ \alpha-\alpha\circ\Phi_{\theta}\right]
F_{n}\left(  \theta\right)
d\theta\right\Vert_{J_{T}^{0}\left(  \mathfrak{g}_{CM}\right)  }\\
&  \leqslant\limsup_{n\rightarrow\infty}\int_{-\pi}^{\pi}\left\Vert
\alpha-\alpha\circ\Phi_{\theta}\right\Vert_{J_{T}^{0}\left(  \mathfrak{g}%
_{CM}\right)  }F_{n}\left(  \theta\right)  d\theta=0.
\end{align*}

Moreover if $\beta:=k_{1},\dots, k_{m}\in\mathfrak{g}_{CM}$ with
$m>n$,  then
there exits $\beta_{l}\in\mathfrak{g}_{CM}^{\otimes m}$ such that%
\[
\Phi_{\theta}\beta=\sum_{l=m}^{2m}e^{il\theta}\beta_{l}.
\]
From this it follows that
\[
\left\langle \alpha\left(  n\right), \beta\right\rangle
:=\int_{-\pi}^{\pi }\left\langle
\alpha,\Phi_{\theta}\beta\right\rangle F_{n}\left(
\theta\right)  d\theta=\sum_{l=m}^{2m}\left\langle \alpha,\beta_{l}%
\right\rangle \int_{-\pi}^{\pi}e^{il\theta}F_{n}\left(  \theta\right)
d\theta=0
\]
from which it follows that $\alpha\left(  n\right)_{m}\equiv0$ for
all $m>n$. Thus $\alpha\left(  n\right)  $ is a finite rank tensor
for all $n\in\mathbb{N}$ and $\limsup_{n\rightarrow\infty}\left\Vert
\alpha
-\alpha\left(  n\right)  \right\Vert_{J_{T}^{0}\left(  \mathfrak{g}%
_{CM}\right)}^{2}=0$. \end{proof}

\begin{cor}
\label{c.h7.4}The vector space,%
\begin{equation}
\mathcal{H}_{T,\text{fin}}^{2}\left(  G_{CM}\right)  :=\left\{  u\in
\mathcal{H}_{T}^{2}\left(  G_{CM}\right)  :\hat{u}\left(  \mathbf{e}\right)
\in J_{T}^{0}\left(  \mathfrak{g}_{CM}\right)  \text{ has a finite
rank}\right\}  \label{e.h7.1}%
\end{equation}
is a dense subspace of $\mathcal{H}_{T}^{2}\left(  G_{CM}\right)$.
\end{cor}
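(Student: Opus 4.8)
The plan is to deduce this corollary directly from the Taylor isomorphism (Theorem \ref{t.h6.10}) together with the Finite Rank Density Lemma (Lemma \ref{l.h7.3}), by transporting the density statement across the unitary map $\mathcal{T}_T$. The first step is to observe that, by definition of $\mathcal{T}_T f = \hat{f}(\mathbf{e})$ in Definition \ref{d.h6.7}, the subspace $\mathcal{H}_{T,\text{fin}}^{2}(G_{CM})$ introduced in Eq. \eqref{e.h7.1} is exactly the preimage under $\mathcal{T}_T$ of the set of finite rank tensors in $J_{T}^{0}(\mathfrak{g}_{CM})$. Indeed, $u \in \mathcal{H}_{T,\text{fin}}^{2}(G_{CM})$ holds precisely when $\mathcal{T}_T u = \hat{u}(\mathbf{e})$ has finite rank in the sense of Definition \ref{d.h7.2}.

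The second step is to invoke Theorem \ref{t.h6.10}, which tells us that $\mathcal{T}_T : \mathcal{H}_{T}^{2}(G_{CM}) \to J_{T}^{0}(\mathfrak{g}_{CM})$ is a unitary transformation, and in particular an isometric isomorphism of Hilbert spaces. Since any isometric isomorphism is a homeomorphism, it carries dense subsets to dense subsets in both directions; equivalently, for any subset $S \subset J_{T}^{0}(\mathfrak{g}_{CM})$ we have $\overline{\mathcal{T}_T^{-1}(S)} = \mathcal{T}_T^{-1}(\overline{S})$. Applying this with $S$ equal to the collection of finite rank tensors, and using Lemma \ref{l.h7.3} to conclude that $\overline{S} = J_{T}^{0}(\mathfrak{g}_{CM})$, we obtain
\[
\overline{\mathcal{H}_{T,\text{fin}}^{2}(G_{CM})} = \mathcal{T}_T^{-1}\!\left(\overline{S}\right) = \mathcal{T}_T^{-1}\!\left(J_{T}^{0}(\mathfrak{g}_{CM})\right) = \mathcal{H}_{T}^{2}(G_{CM}),
\]
which is the desired density.

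There is essentially no serious obstacle here, since all of the analytic content has already been absorbed into the two cited results: the surjectivity and norm-preservation of the Taylor map (the substantive part of Theorem \ref{t.h6.10}) and the Fejér-kernel averaging argument underlying Lemma \ref{l.h7.3}. The only point requiring a moment's care is the clean identification in the first step, namely that having $\hat{u}(\mathbf{e})$ of finite rank as an element of $J_{T}^{0}(\mathfrak{g}_{CM})$ is the same as $u$ lying in $\mathcal{T}_T^{-1}$ of the finite rank tensors; this is immediate from the definitions once one notes that $\mathcal{T}_T$ is a bijection onto $J_{T}^{0}(\mathfrak{g}_{CM})$, so that no finite rank tensor is lost or gained in passing between the two spaces.
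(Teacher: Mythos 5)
Your proposal is correct and is precisely the paper's argument: the authors also deduce the corollary directly from Lemma \ref{l.h7.3} and the Taylor isomorphism Theorem \ref{t.h6.10}, with your write-up merely making explicit the (routine) transport of density across the unitary map $\mathcal{T}_T$.
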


\begin{proof}
This follows directly from Lemma \ref{l.h7.3} and the Taylor isomorphism
Theorem \ref{t.h6.10}.
\end{proof}

\subsection{Polynomial approximations\label{s.h7.2}}

To prove Theorem \ref{t.h7.1}, it suffices to show that every
element $u\in\mathcal{H}_{T, \text{fin}}^{2}\left(  G_{CM}\right)  $
may be well approximated by an element from
$\mathcal{H}_{T}^{2}\left(  G\right)$. In order to do this, let
$\left\{  e_{j}:j=1,2,\right\}  \subset H_{\ast}$ be an
orthonormal basis for $H$ and for $N\in\mathbb{N}$,  define $P_{N}%
\in\operatorname*{Proj}\left(  W\right)$ as in Eq. (\ref{e.h2.17}), i.e.%
\begin{equation}
P_{N}\left(  w\right)  =\sum_{j=1}^{N}\left\langle w,e_{j}\right\rangle
_{H}e_{j}\text{ for all }w\in W. \label{e.h7.2}%
\end{equation}
Let us further define $\pi_{N}:=\pi_{P_{N}}$ and
\begin{equation}
u_{N}:=u\circ\pi_{N}\text{ for all }N\in\mathbb{N}. \label{e.h7.3}%
\end{equation}
We are going to prove Theorem \ref{t.h7.1} by showing
$u_{N}\in\mathcal{P}$ and $u_{N}\rightarrow u$ in
$\mathcal{H}_{T}^{2}\left(  G_{CM}\right)$.
\begin{rem}
\label{r.h7.5}A complicating factor in showing $u_{N}|_{G_{CM}}\rightarrow u$
in $\mathcal{H}_{T}^{2}\left(  G_{CM}\right)  $ is the fact that for general
$\omega$ and $P\in\operatorname*{Proj}\left(  W\right)$,  $\pi_{P}%
:G\rightarrow G_{P}\subset G_{CM}$ is \textbf{not } a group homomorphism. In
fact we have,
\begin{equation}
\pi_{P}\left[  \left(  w, c\right)  \cdot\left( w^{\prime},
c^{\prime}\right) \right]  -\pi_{P}\left(  w, c\right)
\cdot\pi_{P}\left(  w^{\prime}, c^{\prime
}\right)  =\Gamma_{P}\left(  w, w^{\prime}\right)  \label{e.h7.4}%
\end{equation}
where%
\begin{equation}
\Gamma_{P}\left(  w, w^{\prime}\right)  =\frac{1}{2}\left(
0,\omega\left( w, w^{\prime}\right)  -\omega\left(
Pw,Pw^{\prime}\right)  \right)
\label{e.h7.5}%
\end{equation}
So unless $\omega$ is \textquotedblleft supported\textquotedblright\
on the range of $P$,  $\pi_{P}$ is not a group homomorphism. Since,
$\left( w,b\right)  +\left(  0,c\right)  =\left(  w,b\right)
\cdot\left(  0, c\right)$ for all $w\in W$ and $b, c\in\mathbf{C}$,
we may also write equation \ref{e.h7.4} as
\begin{equation}
\pi_{P}\left[  \left(  w, c\right)  \cdot\left( w^{\prime},
c^{\prime}\right) \right]  =\pi_{P}\left(  w, c\right)
\cdot\pi_{P}\left(  w^{\prime},c^{\prime
}\right)  \cdot\Gamma_{P}\left(  w, w^{\prime}\right). \label{e.h7.6}%
\end{equation}

\end{rem}

\begin{lem}
\label{l.h7.6}To each $k:=\left(  A, a\right) \in\mathfrak{g}_{CM}$,
$g=\left(  w, c\right)  \in G$,  and $P\in\operatorname*{Proj}\left(
W\right)$,  let
\begin{equation}
k^{P}\left(  g\right)  =k^{P}\left(  w, c\right)
:=\pi_{P}k+\Gamma_{P}\left(
w,A\right)  \in\mathfrak{g}_{P} \label{e.h7.7}%
\end{equation}
where $\Gamma_{P}$ is defined in Eq. (\ref{e.h7.5}) above. If $u:G_{CM}%
\rightarrow\mathbb{C}$ is a holomorphic function and $g\in G$,  then%
\begin{equation}
\left(  \tilde{k}\left(  u\circ\pi_{P}\right)  \right)  \left(
g\right) =\left\langle Du\left(  \pi_{P}\left(  g\right) \right),
k^{P}\left(
g\right)  \right\rangle \label{e.h7.8}%
\end{equation}
or equivalently put,%
\begin{equation}
\left\langle \widehat{u\circ\pi_{P}}\left(  g\right),k\right\rangle
=\left\langle D\left(  u\circ\pi_{P}\right)  \left(  g\right)
,k\right\rangle =\left\langle Du\left(  \pi_{P}\left(  g\right)
\right), k^{P}\left(
g\right)  \right\rangle . \label{e.h7.9}%
\end{equation}

\end{lem}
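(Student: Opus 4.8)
The plan is to reduce both sides of \eqref{e.h7.8} to the ordinary Fréchet derivative $u'$ evaluated at $\pi_P(g)$, and to check that the correction $\Gamma_P$ is precisely what makes the two expressions agree. Throughout I write $g = (w,c) \in G$ and $k = (A,a) \in \mathfrak{g}_{CM}$.

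First I would compute the left-hand side straight from the definition \eqref{e.h3.8} of the left-invariant vector field. Using $e^{tk} = tk$ (Proposition \ref{p.h3.5}) together with the multiplication law \eqref{e.h1.1},
\[
g \cdot e^{tk} = \left( w + tA,\ c + ta + \tfrac{t}{2}\omega(w,A) \right),
\]
so that $\gamma(t) := \pi_P(g \cdot e^{tk}) = \left( Pw + tPA,\ c + ta + \tfrac{t}{2}\omega(w,A) \right)$ is an affine curve lying in $G_P \subset G_{CM}$ with $\gamma(0) = \pi_P(g)$ and $\dot\gamma(0) = \left( PA,\ a + \tfrac{1}{2}\omega(w,A) \right)$. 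Since $u \in \mathcal{H}(G_{CM})$ is Fréchet differentiable (Theorem \ref{t.h5.2}, Remark \ref{r.h5.3}), the chain rule yields
\[
\left( \tilde{k}(u\circ\pi_P) \right)(g) = \frac{d}{dt}\Big|_0 u(\gamma(t)) = u'(\pi_P(g))\left( PA,\ a + \tfrac{1}{2}\omega(w,A) \right).
\]

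Next I would expand the right-hand side. By Definition \ref{d.h6.2}, $\langle Du(\pi_P(g)), k^P(g)\rangle = (\widetilde{k^P(g)}\,u)(\pi_P(g))$, the left-invariant field $\widetilde{k^P(g)}$ applied to $u$ at the base point $\pi_P(g) = (Pw, c)$. From \eqref{e.h7.7} and \eqref{e.h7.5} one has $k^P(g) = \pi_P k + \Gamma_P(w,A) = \left( PA,\ a + \tfrac{1}{2}\omega(w,A) - \tfrac{1}{2}\omega(Pw,PA) \right)$. Applying \eqref{e.h3.10} with base point $(Pw,c)$, so that the intrinsic correction term is $\tfrac{1}{2}\omega(Pw,PA)$, gives
\[
(\widetilde{k^P(g)}\,u)(\pi_P(g)) = u'(\pi_P(g))\left( PA,\ a + \tfrac{1}{2}\omega(w,A) - \tfrac{1}{2}\omega(Pw,PA) + \tfrac{1}{2}\omega(Pw,PA) \right),
\]
which simplifies to $u'(\pi_P(g))\left( PA,\ a + \tfrac{1}{2}\omega(w,A) \right)$, matching the left-hand side and proving \eqref{e.h7.8}. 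The equivalent formulation \eqref{e.h7.9} is then immediate upon reading off the definition of the left differential of $u \circ \pi_P$.

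There is no serious obstacle here; the content of the lemma is the bookkeeping of two distinct half-$\omega$ terms. Differentiating along $\pi_P(g\cdot e^{tk})$ produces $\tfrac{1}{2}\omega(w,A)$ with the \emph{unprojected} $w$, whereas a left-invariant field based at the projected point $\pi_P(g)$ intrinsically produces $\tfrac{1}{2}\omega(Pw,PA)$. Their discrepancy $\tfrac{1}{2}\bigl(\omega(w,A) - \omega(Pw,PA)\bigr)$ is exactly $\Gamma_P(w,A)$, which is the manifestation of the failure of $\pi_P$ to be a homomorphism recorded in Remark \ref{r.h7.5}; folding it into the definition of $k^P(g)$ is precisely what forces the identity to hold.
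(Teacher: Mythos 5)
Your proof is correct and follows essentially the same direct computation as the paper: the paper differentiates $u\bigl(\pi_{P}(g)\cdot[\pi_{P}(g)]^{-1}\pi_{P}(g\cdot e^{tk})\bigr)$ and identifies the derivative of the increment as $\pi_{P}k+\Gamma_{P}(w,A)$, which is the same bookkeeping of the two half-$\omega$ terms that you carry out after translating both sides into the Fr\'{e}chet differential $u'$ via Eq. (\ref{e.h3.10}).
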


\begin{proof}
By direct computation,%
\begin{align*}
\left(  \tilde{k}\left(  u\circ\pi_{P}\right)  \right)  \left(  g\right)   &
=\frac{d}{dt}\Big|_{0}u\left(  \pi_{P}\left(  g\cdot e^{tk}\right)  \right) \\
&  =\frac{d}{dt}\Big|_{0}\left\langle Du\left(  \pi_{P}\left(
g\right) \right), \left[  \pi_{P}\left(  g\right)
\right]^{-1}\cdot\pi_{P}\left( g\cdot e^{tk}\right)  \right\rangle
\end{align*}
where by Eq. (\ref{e.h7.6}),
\begin{align*}
\frac{d}{dt}\Big|_{0}\left(  \left[  \pi_{P}\left(  g\right)  \right]
^{-1}\cdot\pi_{P}\left(  g\cdot e^{tk}\right)  \right)   &  =\frac{d}%
{dt}\Big|_{0}\left(  P\left(  tA\right), a+\frac{1}{2}\omega\left(
w, tA\right)  -\omega\left(  Pw, tPA\right)  \right) \\
&  =\left(  PA, a+\frac{1}{2}\omega\left(  w, A\right)
-\omega\left(
Pw, PA\right)  \right) \\
&  =\pi_{P}k+\Gamma_{P}\left(  w, A\right).
\end{align*}

\end{proof}

\begin{nota}
\label{n.h7.7}Given $P\in\operatorname*{Proj}\left(  W\right)$ and
$k_{j}=\left(  A_{j}, c_{j}\right)  \in\mathfrak{g}_{CM}$,  let $K_{j}%
:=k_{j}^{P}: G_{CM}\rightarrow\mathfrak{g}_{CM}$ and $\kappa_{n}:G_{CM}%
\rightarrow\oplus_{j=1}^{n}\mathfrak{g}_{CM}^{\otimes j}$ be defined by
\begin{align}
\kappa_{n}  &  =\left(  \tilde{k}_{n}+K_{n}\otimes\right)  \left(  \tilde
{k}_{n-1}+K_{n-1}\otimes\right)  \dots\left(  \tilde{k}_{1}+K_{1}%
\otimes\right)  1\nonumber\\
&  =\left(  \tilde{k}_{n}+K_{n}\otimes\right)  \left(  \tilde{k}_{n-1}%
+K_{n-1}\otimes\right)  \dots\left(  \tilde{k}_{2}+K_{2}\otimes\right)  K_{1}.
\label{e.h7.10}%
\end{align}
In these expressions, $K_{j}\otimes$ denotes operation of left
tensor multiplication by $K_{j}$. \end{nota}

\begin{ex}
\label{ex.h7.8}The functions $\kappa_{n}$ are determined recursively by
$\kappa_{1}=K_{1}$ and then
\begin{equation}
\kappa_{n}=\left(  K_{n}\otimes+\tilde{k}_{n}\right)  \kappa_{n-1}%
=K_{n}\otimes\kappa_{n-1}+\tilde{k}_{n}\kappa_{n-1}\text{ for all }%
n\geqslant2. \label{e.h7.11}%
\end{equation}
The first four $\kappa_{n}$ are easily seen to be given by, $\kappa_{1}%
=K_{1}$,
\[ \kappa_{2}=K_{2}\otimes
K_{1}+\tilde{k}_{2}K_{1}=K_{2}\otimes K_{1}+\Gamma_{P}\left(
A_{2},A_{1}\right),
\]%
\begin{align*}
\kappa_{3}  &  =\left(  K_{3}\otimes+\tilde{k}_{3}\right)  \left(
K_{2}\otimes K_{1}+\Gamma_{P}\left(  A_{2},A_{1}\right)  \right) \\
&  =K_{3}\otimes K_{2}\otimes K_{1}+K_{3}\otimes\Gamma_{P}\left(  A_{2}%
,A_{1}\right)  +\Gamma_{P}\left(  A_{3},A_{2}\right)  \otimes K_{1}%
+K_{2}\otimes\Gamma_{P}\left(  A_{3},A_{1}\right),
\end{align*}
and%
\begin{align*}
\kappa_{4}  &  =K_{4}\otimes K_{3}\otimes K_{2}\otimes K_{1}\\
&  +\left(
\begin{array}
[c]{c}%
K_{4}\otimes K_{3}\otimes\Gamma_{P}\left(  A_{2}, A_{1}\right)  +K_{4}%
\otimes\Gamma_{P}\left(  A_{3}, A_{2}\right)  \otimes
K_{1}+K_{4}\otimes
K_{2}\otimes\Gamma_{P}\left(  A_{3}, A_{1}\right) \\
+\Gamma_{P}\left(  A_{4}, A_{3}\right)  \otimes K_{2}\otimes K_{1}+K_{3}%
\otimes\Gamma_{P}\left(  A_{4}, A_{2}\right)  \otimes
K_{1}+K_{3}\otimes K_{2}\otimes\Gamma_{P}\left(  A_{4}, A_{1}\right)
\end{array}
\right) \\
&  +\Gamma_{P}\left(  A_{4}, A_{3}\right)  \otimes\Gamma_{P}\left(
A_{2}, A_{1}\right)  +\Gamma_{P}\left(  A_{3}, A_{2}\right)
\otimes\Gamma_{P}\left( A_{4}, A_{1}\right)  +\Gamma_{P}\left(
A_{4}, A_{2}\right)  \otimes\Gamma_{P}\left(  A_{3}, A_{1}\right).
\end{align*}
At the end we will only use $\kappa_{n}$ evaluated at $\mathbf{e}\in
G_{CM}$. Evaluating the above expressions at $\mathbf{e}$ amounts to
replacing $K_{j}$ by $\pi_{P}k_{j}$ in all of the previous formulas.
\end{ex}

\begin{prop}
\label{p.h7.9}If $u\in\mathcal{H}\left(  G_{CM}\right)$,  then, with
the
setup in Notation \ref{n.h7.7}, we have%
\begin{equation}
\left\langle \widehat{u\circ\pi_{P}},k_{n}\otimes\dots\otimes k_{1}%
\right\rangle =\left\langle \hat{u}\circ\pi_{P},\kappa_{n}\right\rangle \text{
for any }n\in\mathbb{N}, \label{e.h7.12}%
\end{equation}
where both sides of this equation are holomorphic functions on
$G_{CM}$. \end{prop}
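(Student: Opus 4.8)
The plan is to prove Eq.~\eqref{e.h7.12} by induction on $n$, peeling off one left-invariant vector field at a time and matching the resulting terms against the recursion $\kappa_{n}=K_{n}\otimes\kappa_{n-1}+\tilde{k}_{n}\kappa_{n-1}$ from Eq.~\eqref{e.h7.11}. Unwinding Definition~\ref{d.h6.2} for the function $u\circ\pi_{P}$, the left-hand side of Eq.~\eqref{e.h7.12} is exactly $\bigl(\tilde{k}_{n}\tilde{k}_{n-1}\cdots\tilde{k}_{1}(u\circ\pi_{P})\bigr)(g)$, so the content of the proposition is that this iterated derivative is computed by contracting $\hat{u}\circ\pi_{P}$ against $\kappa_{n}$. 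For the base case $n=1$ this is immediate from Lemma~\ref{l.h7.6}: by Eq.~\eqref{e.h7.8}, $\tilde{k}_{1}(u\circ\pi_{P})(g)=\langle Du(\pi_{P}(g)),K_{1}(g)\rangle=\langle \hat{u}\circ\pi_{P},\kappa_{1}\rangle(g)$ since $\kappa_{1}=K_{1}$.

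For the inductive step I would assume $\langle\widehat{u\circ\pi_{P}},k_{n-1}\otimes\dots\otimes k_{1}\rangle=\langle\hat{u}\circ\pi_{P},\kappa_{n-1}\rangle$ and apply $\tilde{k}_{n}$ to both sides. On the left this produces $\langle\widehat{u\circ\pi_{P}},k_{n}\otimes\dots\otimes k_{1}\rangle$ by Definition~\ref{d.h6.2}, so it remains to show $\tilde{k}_{n}\langle\hat{u}\circ\pi_{P},\kappa_{n-1}\rangle=\langle\hat{u}\circ\pi_{P},\kappa_{n}\rangle$. Writing $\kappa_{n-1}=\sum_{j}(\kappa_{n-1})_{j}$ with $(\kappa_{n-1})_{j}$ its degree-$j$ component (a genuine finite-rank tensor valued in a finite-dimensional subspace of $\mathfrak{g}_{CM}^{\otimes j}$, since $\pi_{P}$ and the $\Gamma_{P}$-terms have finite-dimensional range), the pairing is $\sum_{j}\langle D^{j}u\circ\pi_{P},(\kappa_{n-1})_{j}\rangle$, and the product rule splits $\tilde{k}_{n}$ into a term where $(\kappa_{n-1})_{j}$ is frozen and a term where $D^{j}u\circ\pi_{P}$ is frozen. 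The frozen-derivative term yields $\langle\hat{u}\circ\pi_{P},\tilde{k}_{n}(\kappa_{n-1})_{j}\rangle$ directly.

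The main work, and the step I expect to be the real obstacle, is the frozen-tensor term: I must show that differentiating $g\mapsto D^{j}u(\pi_{P}(g))$ along $\tilde{k}_{n}$ and contracting against a frozen $j$-tensor $\beta$ gives $\langle D^{j+1}u(\pi_{P}(g)),K_{n}(g)\otimes\beta\rangle$. By linearity it suffices to check this for an elementary $\beta=h_{1}\otimes\dots\otimes h_{j}$; then $\langle D^{j}u(\pi_{P}(\cdot)),\beta\rangle=v\circ\pi_{P}$ with $v:=\tilde{h}_{1}\cdots\tilde{h}_{j}u$, which lies in $\mathcal{H}(G_{CM})$ by the remark following Corollary~\ref{c.h5.5}. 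Applying Lemma~\ref{l.h7.6} to this holomorphic $v$ gives $\tilde{k}_{n}(v\circ\pi_{P})(g)=\langle Dv(\pi_{P}(g)),K_{n}(g)\rangle$, and by Definition~\ref{d.h6.2} the latter equals $\langle D^{j+1}u(\pi_{P}(g)),K_{n}(g)\otimes h_{1}\otimes\dots\otimes h_{j}\rangle$, as wanted. Summing the two contributions over $j$ and recognizing $K_{n}\otimes\kappa_{n-1}+\tilde{k}_{n}\kappa_{n-1}=\kappa_{n}$ from Eq.~\eqref{e.h7.11} closes the induction. Holomorphy of both sides is then automatic: the left-hand side arises from the holomorphic $u\circ\pi_{P}$ by repeatedly applying the $\tilde{k}_{i}$, which preserve $\mathcal{H}(G_{CM})$ by Corollary~\ref{c.h5.5}, and the right-hand side equals it.
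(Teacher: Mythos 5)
Your proof is correct and follows essentially the same route as the paper's: induction on $n$ with the base case from Lemma \ref{l.h7.6}, the product rule splitting the derivative into the two terms matching the recursion \eqref{e.h7.11}, and the key computation that $\tilde{k}_{n}$ applied to $g\mapsto\langle \hat{u}(\pi_{P}(g)),\beta\rangle=(\tilde{v}u)\circ\pi_{P}$ is handled by applying Lemma \ref{l.h7.6} to the holomorphic function $\tilde{v}u$, which is exactly the paper's Eq. (\ref{e.h7.14}).
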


\begin{proof}
The proof is by induction with the case $n=1$ already completed via
Equation (\ref{e.h7.9}). To proceed with the induction argument,
suppose that Eq. (\ref{e.h7.12}) holds for some $n\in\mathbb{N}$.
Then by induction and the product rule
\begin{align}
\left\langle \widehat{u\circ\pi_{P}}, k_{n+1}\otimes
k_{n}\otimes\dots\otimes
k_{1}\right\rangle  &  =\tilde{k}_{n+1}\left\langle \widehat{u\circ\pi_{P}%
},k_{n+1}\otimes k_{n}\otimes\dots\otimes k_{1}\right\rangle \nonumber\\
&  =\tilde{k}_{n+1}\left\langle \hat{u}\circ\pi_{P},\kappa_{n}\right\rangle
\nonumber\\
&  =\left\langle
\hat{u}\circ\pi_{P},\tilde{k}_{n+1}\kappa_{n}\right\rangle
+\left\langle \tilde{k}_{n+1}\left[
\hat{u}\circ\pi_{P}\right],\kappa
_{n}\right\rangle . \label{e.h7.13}%
\end{align}
To evaluate $\tilde{k}_{n+1}\left[  \hat{u}\circ\pi_{P}\right]  $
let $v\in\mathbf{T}\left(  \mathfrak{g}_{CM}\right)  $ and let
$\tilde{v}$ denote the corresponding left invariant differential
operator on $G_{CM}$. Then
\begin{align}
\left\langle \tilde{k}_{n+1}\left[  \hat{u}\circ\pi_{P}\right],
v\right\rangle \left(  g\right)   &  =\left(
\tilde{k}_{n+1}\left\langle \left[  \hat{u}\circ\pi_{P}\right],
v\right\rangle \right)  \left(  g\right)
\nonumber\\
&  =\left(  \tilde{k}_{n+1}\left[  \left(  \tilde{v}u\right)  \circ\pi
_{P}\right]  \right)  \left(  g\right) \nonumber\\
&  =\left\langle D\left(  \tilde{v}u\right)  \left(  \pi_{P}\left(  g\right)
\right),k_{n+1}^{P}\left(  g\right)  \right\rangle \nonumber\\
&  =\left(  \widetilde{k_{n+1}^{P}\left(  g\right)  }\tilde{v}u\right)
\left(  \pi_{P}\left(  g\right)  \right) \nonumber\\
&  =\left\langle \hat{u}\left(  \pi_{P}\left(  g\right)  \right),k_{n+1}%
^{P}\left(  g\right)  \otimes v\right\rangle . \label{e.h7.14}%
\end{align}
Combining Eqs. (\ref{e.h7.13}) and (\ref{e.h7.14}) shows,%
\begin{align*}
\left\langle \widehat{u\circ\pi_{P}},k_{n+1}\otimes k_{n}\otimes\dots\otimes
k_{1}\right\rangle  &  =\left\langle \hat{u}\circ\pi_{P},\tilde{k}_{n+1}%
\kappa_{n}\right\rangle +\left\langle \hat{u}\circ\pi_{P},k_{n+1}^{P}%
\otimes\kappa_{n}\right\rangle \\
&  =\left\langle \hat{u}\circ\pi_{P},\tilde{k}_{n+1}\kappa_{n}+k_{n+1}%
^{P}\otimes\kappa_{n}\right\rangle =\left\langle \hat{u}\circ\pi_{P}%
,\kappa_{n+1}\right\rangle
\end{align*}
wherein we have used Eq. (\ref{e.h7.11}) for the last equality.
\end{proof}

The induction proof of the following lemma will be left to the reader with
Example \ref{ex.h7.8} as a guide.

\begin{lem}
\label{l.h7.10}Let $k_{j}=\left(  A_{j},c_{j}\right)
\in\mathfrak{g}_{CM}$ for $1\leqslant j\leqslant n$,  $\left\lfloor
\frac{n}{2}\right\rfloor =n/2$ if $n$ is even and $\left(
n-1\right)  /2$ if $n$ is odd, and $\kappa_{n}$ be as
in Eq. (\ref{e.h7.10}). Then%
\begin{equation}
\kappa_{n}\left(  \mathbf{e}\right)
=\pi_{P}k_{n}\otimes\dots\otimes\pi
_{P}k_{2}\otimes\pi_{P}k_{1}+R\left(  P:k_{n}, \dots, k_{1}\right),
\label{e.h7.15}%
\end{equation}
where
\begin{equation}
R\left(  P:k_{n},,\dots,k_{1}\right)  =\sum_{j=1}^{\left\lfloor \frac{n}%
{2}\right\rfloor }R_{j}\left(  P:k_{n},,\dots,k_{1}\right)  \label{e.h7.16}%
\end{equation}
with $R_{j}\left(  P:k_{1},\dots,k_{n}\right)  \in\mathfrak{g}_{CM}%
^{\otimes\left(  n-j\right)  }$. Each remainder term, $R_{j}\left(
P:k_{1},\dots,k_{n}\right)$,  is a linear combination (with
coefficients coming from $\left\{  \pm1,0\right\}  )$ of homogenous
tensors which are permutations of the indices and order of the terms
in the tensor product of
the form%
\begin{equation}
\Gamma_{P}\left(  A_{1},A_{2}\right)  \otimes\dots\otimes\Gamma_{P}\left(
A_{2j-1},A_{2j}\right)  \otimes k_{2j+1}\otimes\dots\otimes k_{n}.
\label{e.h7.17}%
\end{equation}

\end{lem}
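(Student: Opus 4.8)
The plan is to argue by induction on $n$, but to carry along a structural description of the entire function $\kappa_n$ on $G_{CM}$, not merely its value at $\mathbf{e}$, since the recursion (\ref{e.h7.11}) requires $\tilde{k}_{n+1}$ to differentiate $\kappa_n$ as a function. The strengthened inductive hypothesis I would maintain is that $\kappa_n(g)$ is a $\{\pm1\}$--linear combination of tensor monomials $F_1(g)\otimes\dots\otimes F_m(g)$ in which each factor $F_\ell$ is either of \emph{type $K$}, i.e. $F_\ell=K_i=\pi_P k_i+\Gamma_P(\cdot,A_i)$ for some index $i$, or of \emph{type $\Gamma$}, i.e. $F_\ell=\Gamma_P(A_i,A_j)$ with $i>j$, and where each index in $\{1,\dots,n\}$ occurs exactly once across the factors of a given monomial. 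Thus a monomial with $j$ factors of type $\Gamma$ has rank $n-j$. The base case $\kappa_1=K_1$ is immediate, and Example \ref{ex.h7.8} (the explicit forms of $\kappa_2,\kappa_3,\kappa_4$) exhibits exactly this structure and serves as the template.

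The two elementary differentiation rules driving the induction come next. Writing $g=(w,c)$, using that $e^{tk_{n+1}}=(tA_{n+1},tc_{n+1})$ by Proposition \ref{p.h3.5}, and using the bilinearity and antisymmetry of $\Gamma_P$ from Eq. (\ref{e.h7.5}), a one--line computation gives
\[
\tilde{k}_{n+1}K_i=\Gamma_P\left(A_{n+1},A_i\right)\quad\text{and}\quad\tilde{k}_{n+1}\,\Gamma_P\left(A_i,A_j\right)=0,
\]
the second because $\Gamma_P(A_i,A_j)$ is constant in $g$. Since $\tilde{k}_{n+1}$ is a first--order operator, it acts on each tensor monomial by the Leibniz rule, one factor at a time. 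Hence in $\kappa_{n+1}=K_{n+1}\otimes\kappa_n+\tilde{k}_{n+1}\kappa_n$ the first term prepends a new type--$K$ factor carrying the index $n+1$ (rank $+1$, same number of pairs), while the second term, applied to a monomial, sums over its type--$K$ factors $K_i$ and replaces each by the type--$\Gamma$ factor $\Gamma_P(A_{n+1},A_i)$ (same rank, one more pair), the type--$\Gamma$ factors contributing nothing. Since $n+1$ is the largest index, each newly created pair has larger index first, so $\kappa_{n+1}$ again satisfies the inductive hypothesis, with index $n+1$ appearing either as a new singleton or paired with a previously unpaired index.

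Evaluating at $\mathbf{e}=(0,0)$ then finishes the proof: since $\Gamma_P(0,A_i)=0$ we have $K_i(\mathbf{e})=\pi_P k_i$, while the type--$\Gamma$ factors are already constant. The unique monomial obtained by always taking the prepend branch is $K_n\otimes\dots\otimes K_1$, which at $\mathbf{e}$ equals $\pi_P k_n\otimes\dots\otimes\pi_P k_1$, the leading term of (\ref{e.h7.15}). Every other monomial arises from $j\geq1$ differentiations, hence carries exactly $j$ factors $\Gamma_P(A_\cdot,A_\cdot)$ and $n-2j$ factors $\pi_P k_\cdot$, has rank $n-j$, and corresponds to a choice of $j$ disjoint pairs in $\{1,\dots,n\}$ together with an ordering of the $n-j$ slots; collecting those with a fixed number $j$ of pairs yields $R_j\in\mathfrak{g}_{CM}^{\otimes(n-j)}$, and $j\leq\lfloor n/2\rfloor$ because the pairs are disjoint. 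Each such monomial is a permutation of the slots and a relabeling of indices of the canonical tensor in (\ref{e.h7.17}), and the antisymmetry of $\Gamma_P$ normalizes the order within each pair at the cost of a sign, so all coefficients lie in $\{\pm1,0\}$.

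The only genuine work is the combinatorial bookkeeping: verifying that the pairing patterns produced by iterating the two branches of the recursion are precisely the partial matchings (involutions) of $\{1,\dots,n\}$, so that the remainder splits as $R=\sum_{j=1}^{\lfloor n/2\rfloor}R_j$ with each $R_j$ of the stated homogeneity and tensor shape. I expect this indexing to be the main obstacle to writing out in full, which is presumably why the lemma is left to the reader with Example \ref{ex.h7.8} as a guide; no analytic estimate or input beyond the two differentiation identities above is required.
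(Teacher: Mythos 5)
Your proposal is correct and is exactly the induction the paper has in mind: the paper explicitly leaves this lemma to the reader as an induction guided by Example \ref{ex.h7.8}, and your argument (Leibniz rule for the first-order operator $\tilde{k}_{n+1}$ on tensor monomials, together with the identities $\tilde{k}_{n+1}K_i=\Gamma_P(A_{n+1},A_i)$ and $\tilde{k}_{n+1}\Gamma_P(A_i,A_j)=0$, then evaluation at $\mathbf{e}$) fills in precisely that induction. Nothing is missing; if anything your version is slightly more precise than the stated lemma in recording that the unpaired slots carry $\pi_P k_i$ rather than $k_i$ after evaluation at $\mathbf{e}$.
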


\begin{prop}
\label{p.h7.11}Let $P_{N}\in\operatorname*{Proj}\left(  W\right)  $ and
$\pi_{N}:=\pi_{P_{N}}$ be as in Notation \ref{n.h1.1} and suppose that
$u\in\mathcal{H}\left(  G_{CM}\right)  $ satisfies $\left\Vert \hat{u}%
_{n}\left(  \mathbf{e}\right)  \right\Vert_{n}<\infty$ for all $n$.
Then
\begin{equation}
\lim_{N\rightarrow\infty}\left\Vert \hat{u}_{n}\left(  \mathbf{e}\right)
-\left[  \widehat{u\circ\pi_{N}}\left(  \mathbf{e}\right)  \right]
_{n}\right\Vert_{n}=0\text{ for }n=0,1,2,\dots.. \label{e.h7.18}%
\end{equation}

\end{prop}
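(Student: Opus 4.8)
The plan is to use Proposition \ref{p.h7.9} together with Lemma \ref{l.h7.10} to split $[\widehat{u\circ\pi_N}(\mathbf{e})]_n$ into a leading term and a remainder, and then to show that the leading term converges to $\hat{u}_n(\mathbf{e})$ while the remainder tends to $0$ in $\|\cdot\|_n$. Combining Proposition \ref{p.h7.9} (evaluated at $g=\mathbf{e}$, where $\pi_N(\mathbf{e})=\mathbf{e}$) with the expansion of $\kappa_n(\mathbf{e})$ in Lemma \ref{l.h7.10} gives
\[
\langle[\widehat{u\circ\pi_N}(\mathbf{e})]_n,k_n\otimes\dots\otimes k_1\rangle=\langle\hat{u}_n(\mathbf{e}),\pi_N k_n\otimes\dots\otimes\pi_N k_1\rangle+\langle\hat{u}(\mathbf{e}),R(P_N:k_n,\dots,k_1)\rangle.
\]
Denote by $L_N$ the functional defined by the first term on the right and by $\mathcal{R}_N$ the functional defined by the second, so that
\[
\hat{u}_n(\mathbf{e})-[\widehat{u\circ\pi_N}(\mathbf{e})]_n=\bigl(\hat{u}_n(\mathbf{e})-L_N\bigr)-\mathcal{R}_N.
\]
It therefore suffices to prove $\|\hat{u}_n(\mathbf{e})-L_N\|_n\to0$ and $\|\mathcal{R}_N\|_n\to0$.

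For the leading term, note that $\pi_N$ restricts to the orthogonal projection of $\mathfrak{g}_{CM}$ onto $\mathrm{span}\{e_1,\dots,e_N\}\times\mathbf{C}$, so $\pi_N\to I$ strongly on $\mathfrak{g}_{CM}$ (because $P_N|_H\uparrow I_H$ and $\pi_N$ is the identity on the $\mathbf{C}$--factor). Since the hypothesis $\|\hat{u}_n(\mathbf{e})\|_n<\infty$ lets us identify $\hat{u}_n(\mathbf{e})$ with a vector $\xi$ in the Hilbert-space completion of $\mathfrak{g}_{CM}^{\otimes n}$, and since $L_N=\hat{u}_n(\mathbf{e})\circ\pi_N^{\otimes n}$ corresponds under this identification to $\pi_N^{\otimes n}\xi$ (using that $\pi_N$ is self-adjoint), we get $\|\hat{u}_n(\mathbf{e})-L_N\|_n=\|\xi-\pi_N^{\otimes n}\xi\|$. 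The telescoping identity $\pi_N^{\otimes n}-I=\sum_{k=1}^n\pi_N^{\otimes(k-1)}\otimes(\pi_N-I)\otimes I^{\otimes(n-k)}$ together with $\|\pi_N\|\le1$ shows $\pi_N^{\otimes n}\to I$ strongly, whence $\|\hat{u}_n(\mathbf{e})-L_N\|_n\to0$.

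For the remainder I would use the structure of $R(P_N:\cdot)$ described in Lemma \ref{l.h7.10}: it is a finite $\{\pm1,0\}$--combination of permutations of tensors of the form \eqref{e.h7.17}, each a tensor of length $n-j$ with $1\le j\le\lfloor n/2\rfloor$ paired against $\hat{u}_{n-j}(\mathbf{e})$. By the triangle inequality it is enough to bound each such elementary contribution. Writing $\Gamma_{P_N}(A,B)=\tfrac12(0,\gamma_N(A,B))$ with $\gamma_N(A,B):=\omega(A,B)-\omega(P_N A,P_N B)$, each elementary functional factors as $\hat{u}_{n-j}(\mathbf{e})\circ\Phi_N$, where $\Phi_N:\mathfrak{g}_{CM}^{\otimes n}\to\mathfrak{g}_{CM}^{\otimes(n-j)}$ is a tensor product (arranged by the relevant permutation) of $j$ bilinear maps $k\otimes k'\mapsto\Gamma_{P_N}(A,A')$ and $n-2j$ maps of operator norm at most $1$ on the surviving legs. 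Each bilinear factor has operator norm at most its Hilbert--Schmidt norm $\tfrac12\|\gamma_N\|_2$, and since the Hilbert cross norm is multiplicative, $\|\Phi_N\|_{\mathrm{op}}\le(\tfrac12\|\gamma_N\|_2)^{\,j}$; hence the elementary functional has $\|\cdot\|_n\le(\tfrac12\|\gamma_N\|_2)^{\,j}\,\|\hat{u}_{n-j}(\mathbf{e})\|_{n-j}$, the second factor being finite by hypothesis. Thus everything reduces to showing $\|\gamma_N\|_2\to0$ in Hilbert--Schmidt norm.

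This last point is the crux of the argument. Using $\gamma_N(A,B)=\omega((I-P_N)A,B)+\omega(P_N A,(I-P_N)B)$ and the fact, recorded after \eqref{e.h4.8}, that $\omega$ is Hilbert--Schmidt on $H\times H$ (i.e. $\|\omega\|_2<\infty$), I would argue that precomposing the fixed Hilbert--Schmidt tensor $\omega$ with the strongly null contractions $(I-P_N)$ on one leg forces convergence to $0$ in Hilbert--Schmidt norm: concretely $\sum_{i,j}\|\omega((I-P_N)e_i,e_j)\|_{\mathbf{C}}^2\to0$ by dominated convergence, since $(I-P_N)e_i\to0$ and the summands are dominated by the $\|\omega\|_2^2$--summable family $\|\omega(e_i,e_j)\|_{\mathbf{C}}^2$, and the same applies to the second summand. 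The main obstacle is precisely this Hilbert--Schmidt estimate: the operator norm of $\gamma_N$ does \emph{not} tend to $0$ (because $\|I-P_N\|=1$), so one genuinely needs the Hilbert--Schmidt nature of $\omega$; it is this decay that makes every remainder term with $j\ge1$ vanish in the limit, while the surviving direct legs contribute only bounded factors.
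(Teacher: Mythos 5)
Your proposal is correct and follows essentially the same route as the paper: the same decomposition via Proposition \ref{p.h7.9} and Lemma \ref{l.h7.10} into a leading term handled by strong convergence of $\pi_N^{\otimes n}$ (the paper phrases this as a tail sum over $\Gamma^n\setminus\Gamma_N^n$) and a remainder controlled by powers of the Hilbert--Schmidt quantity $\varepsilon_N=\tfrac14\sum_{k,l}\Vert\omega(e_k,e_l)-\omega(P_Ne_k,P_Ne_l)\Vert_{\mathbf{C}}^2$, which tends to zero precisely because $\Vert\omega\Vert_{H^{\ast}\otimes H^{\ast}\otimes\mathbf{C}}<\infty$. Your operator-norm packaging of the remainder estimate is just a cleaner restatement of the paper's basis-sum computation, and your identification of the Hilbert--Schmidt decay of $\omega$ as the crux matches the paper exactly.
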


\begin{proof}
To simplify notation, let $\alpha_{n}:=\hat{u}_{n}\left(
\mathbf{e}\right)  $ and $\alpha_{n}\left(  N\right)  :=\left[
\widehat{u\circ\pi_{N}}\left( \mathbf{e}\right)  \right]_{n}$. Let
$\Gamma$ be an orthonormal basis for
$\mathfrak{g}_{CM}$ of the form in Eq. (\ref{e.h5.8}) and let $\mathbf{k}%
:=\left(  k_{1},k_{2},\dots,k_{n}\right)  \in\Gamma^{n}$. Then%
\[
\left\langle \alpha-\alpha\left(  N\right),k_{1}\otimes\dots\otimes
k_{n}\right\rangle =\left\langle \alpha,k_{1}\otimes\dots\otimes
k_{n}-\pi _{N}k_{1}\otimes\dots\otimes\pi_{N}k_{n}\right\rangle
+\left\langle \alpha,R\left(  P_{N}:\mathbf{k}\right)  \right\rangle
\]
where $R\left(  P_{N}:\mathbf{k}\right)  $ is as in Lemma \ref{l.h7.10}.
Therefore, $\left\Vert \alpha_{n}-\alpha_{n}\left(  N\right)  \right\Vert
_{n}\leqslant C_{N}+D_{N}$ where%
\begin{align*}
C_{N}  &  :=\sqrt{\sum_{\mathbf{k}\in\Gamma^{n}}\left\vert \left\langle
\alpha,R\left(  P_{N}:\mathbf{k}\right)  \right\rangle \right\vert ^{2}}\text{
and}\\
D_{N}  &  :=\sqrt{\sum_{\mathbf{k}\in\Gamma^{n}}\left\vert \left\langle
\alpha_{n},k_{1}\otimes\dots\otimes k_{n}-\pi_{N}k_{1}\otimes\dots\otimes
\pi_{N}k_{n}\right\rangle \right\vert ^{2}}.
\end{align*}
We will complete the proof by showing that, $\lim_{N\rightarrow\infty}%
C_{N}=0=\lim_{N\rightarrow\infty}D_{N}$. To estimate $C_{N}$,  use
Lemma \ref{l.h7.10} and the triangle inequality for $\ell_{2}\left(
\Gamma^{n}\right)  $ to find,
\[
C_{N}=\sqrt{\sum_{\mathbf{k}\in\Gamma^{n}}\left\vert \sum_{j=1}^{\left\lfloor
\frac{n}{2}\right\rfloor }\left\langle \alpha,R_{j}\left(  P_{N}%
:\mathbf{k}\right)  \right\rangle \right\vert ^{2}}\leqslant\sum
_{j=1}^{\left\lfloor \frac{n}{2}\right\rfloor }\sqrt{\sum_{\mathbf{k}\in
\Gamma^{n}}\left\vert \left\langle \alpha,R_{j}\left(  P_{N}:\mathbf{k}%
\right)  \right\rangle \right\vert ^{2}}.
\]
But $\sum_{\mathbf{k}\in\Gamma^{n}}\left\vert \left\langle \alpha,R_{j}\left(
P_{N}:\mathbf{k}\right)  \right\rangle \right\vert ^{2}$ is bounded by a sum
of terms (the number of these terms depends only on $j$ and $n$ and
\textbf{not }$N)$ of which a typical term (see Eq. (\ref{e.h7.17})) is;%
\begin{equation}
\sum_{\mathbf{k}\in\Gamma^{n}}\left\vert \left\langle \alpha_{n-j}%
,\Gamma_{P_{N}}\left(  A_{1},A_{2}\right)  \otimes\dots\otimes\Gamma_{P_{N}%
}\left(  A_{2j-1},A_{2j}\right)  \otimes k_{2j+1}\otimes\dots\otimes
k_{n}\right\rangle \right\vert ^{2}. \label{e.h7.19}%
\end{equation}
The sum in Eq. (\ref{e.h7.19}) may be estimated by,%
\[
\left\Vert
\alpha_{n-j}\right\Vert_{n-j}\sum_{l_{1},\dots,l_{2j}=1}^{\infty
}\left\Vert \Gamma_{P_{N}}\left(  e_{l_{1}},e_{l_{2}}\right)
\right\Vert
_{\mathfrak{g}_{CM}}^{2}\dots\left\Vert \Gamma_{P_{N}}\left(  e_{l_{2j-1}%
},e_{l_{2j}}\right)  \right\Vert_{\mathfrak{g}_{CM}}^{2}=\left\Vert
\alpha_{n-j}\right\Vert_{n-j}^{2}\varepsilon_{N}^{j},
\]
where%
\begin{align*}
\varepsilon_{N}  &  =\frac{1}{4}\sum_{k,l=1}^{\infty}\left\Vert \omega\left(
e_{k},e_{l}\right)  -\omega\left(  P_{N}e_{k},P_{N}e_{l}\right)  \right\Vert
_{\mathbf{C}}^{2}\\
&  =\frac{1}{4}\sum_{\max\left(  k,l\right)  >N}^{\infty}\left\Vert
\omega\left(  e_{k},e_{l}\right)  -\omega\left(  P_{N}e_{k},P_{N}e_{l}\right)
\right\Vert_{\mathbf{C}}^{2}\\
&  \leqslant\frac{1}{2}\sum_{\max\left(  k,l\right)
>N}^{\infty}\left\Vert \omega\left(  e_{k},e_{l}\right)
\right\Vert_{\mathbf{C}}^{2}\rightarrow 0\text{ and
}N\rightarrow\infty.
\end{align*}
Thus we have shown $\lim_{N\rightarrow\infty}C_{N}=0$

For $N\in\mathbb{N}$,  let $\Gamma_{N}=\left\{  \left(
0,f_{j}\right)
\right\}_{j=1}^{d}\cup\left\{  \left(  e_{j},0\right)  \right\}_{j=1}%
^{N}$. Since $k_{1}\otimes\dots\otimes
k_{n}=\pi_{N}k_{1}\otimes\dots \otimes\pi_{N}k_{n}$ if
$\mathbf{k}:=\left(  k_{1},k_{2},\dots,k_{n}\right)
\in\Gamma_{N}^{n}$,  it follows that%
\begin{align}
D_{N}^{2}  &  =\sum_{\mathbf{k}\in\Gamma^{n}\setminus\Gamma_{N}^{n}}\left\vert
\left\langle \alpha_{n},k_{1}\otimes\dots\otimes k_{n}-\pi_{N}k_{1}%
\otimes\dots\otimes\pi_{N}k_{n}\right\rangle \right\vert ^{2}\nonumber\\
&  \leqslant2\sum_{\mathbf{k}\in\Gamma^{n}\setminus\Gamma_{N}^{n}}\left\vert
\left\langle \alpha_{n},k_{1}\otimes\dots\otimes k_{n}\right\rangle
\right\vert ^{2}. \label{e.h7.20}%
\end{align}
Because
\[
\sum_{\mathbf{k}\in\Gamma^{n}}\left\vert \left\langle \alpha_{n},k_{1}%
\otimes\dots\otimes k_{n}\right\rangle \right\vert ^{2}=\left\Vert
\alpha _{n}\right\Vert_{n}^{2}<\infty
\]
and $\Gamma_{N}^{n}\uparrow\Gamma_{N}$ as $N\uparrow\infty$,  the
sum in Eq. (\ref{e.h7.20}) tends to zero as $N\rightarrow\infty$.
Thus $\lim_{N\rightarrow\infty}D_{N}=0$ and the proof is complete.
\end{proof}

\begin{prop}
\label{p.h7.12}If $u\in\mathcal{H}_{T,\text{fin}}^{2}\left(
G_{CM}\right)  $ and $u_{N}:=u\circ\pi_{N}$ as in Eq.
(\ref{e.h7.3}), then $u_{N}\in \mathcal{P}$ and
$u_{N}|_{G_{CM}}\rightarrow u$ in $\mathcal{H}_{T}^{2}\left(
G_{CM}\right)$. \end{prop}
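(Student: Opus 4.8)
The plan is to prove the two assertions in turn: first that $u_{N}\in\mathcal{P}$, and then that $u_{N}|_{G_{CM}}\to u$ in $\mathcal{H}_{T}^{2}\left(  G_{CM}\right)$. For the membership $u_{N}\in\mathcal{P}$, the point is that the finite-rank hypothesis collapses the Taylor series of $u$ to a polynomial. Since $u\in\mathcal{H}_{T,\text{fin}}^{2}\left(  G_{CM}\right)$, there is an $M\in\mathbb{N}$ with $\hat{u}_{n}\left(  \mathbf{e}\right)  =0$ for all $n>M$, so Lemma \ref{l.h6.9} reduces the absolutely convergent expansion to the finite sum $u\left(  g\right)  =\sum_{n=0}^{M}\frac{1}{n!}\left\langle \hat{u}_{n}\left(  \mathbf{e}\right),g^{\otimes n}\right\rangle$. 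Thus $u$ is a holomorphic polynomial on $G_{CM}$, and its restriction $F:=u|_{G_{P_{N}}}$ is a holomorphic polynomial on $G_{P_{N}}=P_{N}W\times\mathbf{C}$. As $u_{N}=u\circ\pi_{N}=F\circ\pi_{P_{N}}$, Definition \ref{d.h1.6} yields $u_{N}\in\mathcal{P}$, and Corollary \ref{c.h6.6} then gives $u_{N}|_{G_{CM}}\in\mathcal{H}_{T}^{2}\left(  G_{CM}\right)$.

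For the convergence statement I would pass through the Taylor isomorphism. By Corollary \ref{c.h6.8} the norm on $\mathcal{H}_{T}^{2}\left(  G_{CM}\right)$ is computed by $\mathcal{T}_{T}$, so it suffices to show
\[
\left\Vert \widehat{u\circ\pi_{N}}\left(  \mathbf{e}\right)  -\hat{u}\left(  \mathbf{e}\right)  \right\Vert_{J_{T}^{0}\left(  \mathfrak{g}_{CM}\right)  }^{2}=\sum_{n=0}^{\infty}\frac{T^{n}}{n!}\left\Vert \left[  \widehat{u\circ\pi_{N}}\left(  \mathbf{e}\right)  \right]_{n}-\hat{u}_{n}\left(  \mathbf{e}\right)  \right\Vert_{n}^{2}\longrightarrow0.
\]
Proposition \ref{p.h7.11} already supplies the termwise convergence $\left\Vert \left[  \widehat{u\circ\pi_{N}}\left(  \mathbf{e}\right)  \right]_{n}-\hat{u}_{n}\left(  \mathbf{e}\right)  \right\Vert_{n}\to0$ for each fixed $n$, but this alone does not control the infinite sum. \textbf{This is the main obstacle:} upgrading termwise convergence to convergence of the full $J_{T}^{0}$ norm.

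The resolution I propose is to show that the tensors $\widehat{u\circ\pi_{N}}\left(  \mathbf{e}\right)$ have rank bounded by $2M$ \emph{uniformly} in $N$, which collapses the series to a finite sum. Combining Proposition \ref{p.h7.9} (evaluated at $\mathbf{e}$, using $\pi_{N}\left(  \mathbf{e}\right)  =\mathbf{e}$) with Lemma \ref{l.h7.10} gives
\[
\left\langle \widehat{u\circ\pi_{N}}\left(  \mathbf{e}\right),k_{n}\otimes\dots\otimes k_{1}\right\rangle =\left\langle \hat{u}_{n}\left(  \mathbf{e}\right),\pi_{N}k_{n}\otimes\dots\otimes\pi_{N}k_{1}\right\rangle +\sum_{j=1}^{\left\lfloor n/2\right\rfloor }\left\langle \hat{u}_{n-j}\left(  \mathbf{e}\right),R_{j}\left(  P_{N}:k_{n},\dots,k_{1}\right)  \right\rangle ,
\]
where each $R_{j}$ lands in $\mathfrak{g}_{CM}^{\otimes\left(  n-j\right)  }$. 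For $n>M$ the leading term vanishes, and the $j$-th summand survives only when $n-j\leqslant M$, i.e. $j\geqslant n-M$. Since also $j\leqslant\left\lfloor n/2\right\rfloor$, a surviving term forces $n-M\leqslant\left\lfloor n/2\right\rfloor$, which fails once $n>2M$ (as then $n-M>n/2\geqslant\left\lfloor n/2\right\rfloor$). Hence $\left[  \widehat{u\circ\pi_{N}}\left(  \mathbf{e}\right)  \right]_{n}=0$ for all $n>2M$, with the bound $2M$ independent of $N$.

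With this uniform rank bound in hand, the $J_{T}^{0}$ norm of the difference reduces to the finite sum $\sum_{n=0}^{2M}\frac{T^{n}}{n!}\left\Vert \left[  \widehat{u\circ\pi_{N}}\left(  \mathbf{e}\right)  \right]_{n}-\hat{u}_{n}\left(  \mathbf{e}\right)  \right\Vert_{n}^{2}$, and Proposition \ref{p.h7.11} drives each of its finitely many terms to zero as $N\to\infty$. Therefore $\left\Vert u_{N}|_{G_{CM}}-u\right\Vert_{\mathcal{H}_{T}^{2}\left(  G_{CM}\right)  }\to0$, which completes the proof.
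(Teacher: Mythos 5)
Your proof is correct and follows essentially the same route as the paper: the crucial step --- the uniform rank bound $\left[\widehat{u\circ\pi_{N}}\left(\mathbf{e}\right)\right]_{n}=0$ for $n>2M$, obtained from Proposition \ref{p.h7.9} and Lemma \ref{l.h7.10} and used to collapse the $J_{T}^{0}$ norm to a finite sum that Proposition \ref{p.h7.11} controls termwise --- is exactly the paper's argument. The only (harmless) deviation is that you obtain $u_{N}\in\mathcal{P}$ directly from Lemma \ref{l.h6.9} (finite rank forces $u$ itself to be a polynomial), whereas the paper deduces it from the same vanishing of the high-order Taylor coefficients of $u_{N}$.
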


\begin{proof}
Suppose $m\in\mathbb{N}$ is chosen so that $\hat{u}_{n}\left(  \mathbf{e}%
\right)  =0$ if $n>m$. According to Proposition \ref{p.h7.9},
\[
\left\langle \hat{u}_{N}\left(
\mathbf{e}\right),k_{n}\otimes\dots\otimes k_{1}\right\rangle
=\left\langle \hat{u}\left(  \mathbf{e}\right),\kappa _{n}\left(
\mathbf{e}\right)  \right\rangle
\]
where $\kappa_{n}\left(  \mathbf{e}\right)
\in\bigoplus_{j=1}^{\left\lfloor \frac{n}{2}\right\rfloor
}\mathfrak{g}_{CM}^{\otimes\left(  n-j\right)  }$. From this it
follows that $\left\langle \hat{u}_{N}\left(  \mathbf{e}\right)
,k_{n}\otimes\dots\otimes k_{1}\right\rangle =0$ if
$n\geqslant2m+2$. Therefore, $u_{N}$ restricted to
$P_{N}H\times\mathbf{C}$ is a holomorphic polynomial and since
$u_{N}=u_{N}|_{P_{N}H\times\mathbf{C}}\circ\pi_{N}$,  it follows
that $u_{N}\in\mathcal{P}$. Moreover,
\[
\lim_{N\rightarrow\infty}\left\Vert \hat{u}\left(  \mathbf{e}\right)
-\hat {u}_{N}\left(  \mathbf{e}\right)  \right\Vert_{J_{T}^{0}\left(
\mathfrak{g}_{CM}\right)  }^{2}=\lim_{N\rightarrow\infty}\sum_{n=0}%
^{2m+2}\frac{T^{n}}{n!}\left\Vert \hat{u}_{n}\left(
\mathbf{e}\right) -\left[  \hat{u}_{N}\left(  \mathbf{e}\right)
\right]_{n}\right\Vert_{n}^{2}=0,
\]
wherein we have used Proposition \ref{p.h7.11} to conclude $\lim
_{N\rightarrow\infty}\left\Vert \hat{u}_{n}\left(  \mathbf{e}\right)
-\left[ \hat{u}_{N}\left(  \mathbf{e}\right)  \right]
_{n}\right\Vert_{n}=0$ for all $n$. It then follows by the Taylor
isomorphism Theorem \ref{t.h6.10} that
$\lim_{N\rightarrow\infty}\left\Vert u-u_{N}\right\Vert_{\mathcal{H}_{T}%
^{2}\left(  G_{CM}\right)  }=0$. \end{proof}

\section{The skeleton isomorphism\label{s.h8}}

This section is devoted to the proof of the skeleton Theorem \ref{t.h1.8}. Let
us begin by gathering together a couple of results that we have already proved.

\begin{prop}
\label{p.h8.1}If $f:G\rightarrow\mathbb{C}$ is a continuous function such that
$f|_{G_{CM}}$ is holomorphic, then%
\begin{equation}
\left\Vert f\right\Vert_{L^{2}\left(  \nu_{T}\right)
}\leqslant\left\Vert f|_{G_{CM}}\right\Vert
_{\mathcal{H}_{T}^{2}\left(  G_{CM}\right) }=\left\Vert
\hat{f}\left(  \mathbf{e}\right)  \right\Vert_{J_{T}^{0}\left(
\mathfrak{g}_{CM}\right)  }. \label{e.h8.1}%
\end{equation}
If $\left\Vert f|_{G_{CM}}\right\Vert_{\mathcal{H}_{T}^{2}\left(
G_{CM}\right)  }^{2}<\infty$,  then $S_{T}f=f$ and $f$ satisfies the
Gaussian pointwise bounds in Eq. (\ref{e.h6.19}). (See Corollary
\ref{c.h8.3} for a more sophisticated version of this proposition.)
\end{prop}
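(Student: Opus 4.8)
The plan is to reduce both assertions to results already in hand, chiefly Theorem \ref{t.h5.9} and the Taylor isomorphism Theorem \ref{t.h6.10}. First I would establish the chain of (in)equalities in Eq. (\ref{e.h8.1}). The second equality, $\left\Vert f|_{G_{CM}}\right\Vert_{\mathcal{H}_{T}^{2}\left(G_{CM}\right)}=\left\Vert\hat{f}\left(\mathbf{e}\right)\right\Vert_{J_{T}^{0}\left(\mathfrak{g}_{CM}\right)}$, is immediate once we know $f|_{G_{CM}}\in\mathcal{H}_{T}^{2}\left(G_{CM}\right)$: it is exactly the statement that the Taylor map is an isometry, recorded in Lemma \ref{l.h6.5} (see also Corollary \ref{c.h6.8}). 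The genuine content is the first inequality, $\left\Vert f\right\Vert_{L^{2}\left(\nu_{T}\right)}\leqslant\left\Vert f|_{G_{CM}}\right\Vert_{\mathcal{H}_{T}^{2}\left(G_{CM}\right)}$. I would obtain this directly from Eq. (\ref{e.h5.9}) of Theorem \ref{t.h5.9}, applied with $p=2$: choosing any sequence $P_{n}\in\operatorname*{Proj}\left(W\right)$ with $P_{n}|_{H}\uparrow I_{H}$, that theorem gives
\[
\left\Vert f\right\Vert_{L^{2}\left(\nu_{T}\right)}\leqslant\sup_{n}\left\Vert f\right\Vert_{L^{2}\left(G_{P_{n}},\nu_{T}^{P_{n}}\right)}\leqslant\left\Vert f|_{G_{CM}}\right\Vert_{\mathcal{H}_{T}^{2}\left(G_{CM}\right)},
\]
where the last step is just the definition \eqref{e.h1.4} of the $\mathcal{H}_{T}^{2}\left(G_{CM}\right)$-norm as a supremum over \emph{all} $P$.

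For the second assertion, suppose $\left\Vert f|_{G_{CM}}\right\Vert_{\mathcal{H}_{T}^{2}\left(G_{CM}\right)}^{2}<\infty$. By definition \eqref{e.h1.4} this is precisely the hypothesis $\sup_{P}\left\Vert f\right\Vert_{L^{2}\left(G_{P},\nu_{T}^{P}\right)}<\infty$, which in particular forces the finiteness condition \eqref{e.h5.10} along any increasing sequence $\left\{P_{n}\right\}$. Thus Theorem \ref{t.h5.9} applies in full and yields both $f\in L^{2}\left(\nu_{T}\right)$ and the identity $S_{T}f=f|_{G_{CM}}$, which is the claimed $S_{T}f=f$ read on $G_{CM}$. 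The Gaussian pointwise bound \eqref{e.h6.19} then follows because $f|_{G_{CM}}\in\mathcal{H}_{T}^{2}\left(G_{CM}\right)$, so Theorem \ref{t.h6.11} is directly applicable. (The weaker bound \eqref{e.h5.11} coming out of Theorem \ref{t.h5.9} could also be cited, but \eqref{e.h6.19} is the sharp one we want to record here.)

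The only point requiring a little care — and the step I expect to be the main obstacle — is verifying the hypotheses of Theorem \ref{t.h5.9} cleanly, namely that the $P$-indexed supremum in \eqref{e.h1.4} can legitimately be replaced by a supremum over a single increasing sequence $P_{n}|_{H}\uparrow I_{H}$, and that $f|_{G_{CM}}$ is genuinely a function in $\mathcal{H}\left(G_{CM}\right)$ so that Lemma \ref{l.h6.5} applies. The first is handled by Lemma \ref{l.h6.5}, whose proof shows that the supremum over all of $\operatorname*{Proj}\left(W\right)$ is already attained in the limit along any such increasing sequence (via the monotone behaviour of the truncated sums in \eqref{e.h6.11}), so no information is lost. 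The second is part of the standing hypothesis that $f|_{G_{CM}}$ is holomorphic. Once these are in place the proof is essentially a bookkeeping assembly of Theorems \ref{t.h5.9}, \ref{t.h6.10}, and \ref{t.h6.11}, and I would keep it short, deferring the sharper and more self-contained statement to Corollary \ref{c.h8.3} as the remark already indicates.
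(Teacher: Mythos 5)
Your proposal is correct and follows essentially the same route as the paper, whose proof consists precisely of citing Theorems \ref{t.h5.9} and \ref{t.h6.11} (with Lemma \ref{l.h6.5} supplying the identification of $\left\Vert f|_{G_{CM}}\right\Vert_{\mathcal{H}_{T}^{2}\left(G_{CM}\right)}$ with $\left\Vert \hat{f}\left(\mathbf{e}\right)\right\Vert_{J_{T}^{0}\left(\mathfrak{g}_{CM}\right)}$, exactly as you note). The point you flag as the ``main obstacle'' is in fact harmless, since both uses only require the trivial direction that a supremum over an increasing sequence of projections is dominated by the supremum over all of $\operatorname*{Proj}\left(W\right)$.
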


\begin{proof}
See Theorems \ref{t.h5.9} and \ref{t.h6.11}.
\end{proof}

\begin{lem}
\label{l.h8.2}Let $f:G\rightarrow\mathbb{C}$ be a continuous function such
that $f|_{G_{CM}}$ is holomorphic and let $\delta>0$ be as in Theorem
\ref{t.h4.11}. If there exists an $\varepsilon\in\left(  0,\delta\right)  $
such that $\left\vert f\left(  \cdot\right)  \right\vert \leqslant
Ce^{\varepsilon\rho^{2}\left(  \cdot\right)  /\left(  2T\right)  }$ on $G$,  then%
\begin{equation}
\left\Vert f\right\Vert_{L^{2}\left(  \nu_{T}\right)  }=\left\Vert
f\right\Vert_{\mathcal{H}_{T}^{2}\left(  G_{CM}\right) }=\left\Vert
\hat
{f}\left(  \mathbf{e}\right)  \right\Vert_{J_{T}^{0}\left(  \mathfrak{g}%
_{CM}\right)  }<\infty. \label{e.h8.2}%
\end{equation}
(It will be shown in Corollary \ref{c.h8.4} that $f$ is actually in
$\mathcal{H}_{T}^{2}\left(  G\right)$.) In particular, Eq.
(\ref{e.h8.2}) holds for all $f\in\mathcal{P}$. \end{lem}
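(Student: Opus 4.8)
The plan is to identify all three quantities in Eq.~(\ref{e.h8.2}) as a single common limit of the finite-dimensional norms $\|f|_{G_{P_n}}\|_{L^2(G_{P_n},\nu_T^{P_n})}$, approached from two different sides. First I would fix a sequence $P_n\in\operatorname*{Proj}(W)$ with $P_n|_H\uparrow I_H$; since $\pi_P$ acts as the identity on the central factor $\mathbf{C}$, this is equivalent to $P_n|_{\mathfrak{g}_{CM}}\uparrow I_{\mathfrak{g}_{CM}}$, so a single sequence $\{P_n\}$ simultaneously meets the hypotheses of Lemma~\ref{l.h6.5} and of Proposition~\ref{p.h4.12}.

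The second equality and the finiteness come essentially for free. Applying Lemma~\ref{l.h6.5} to $f|_{G_{CM}}\in\mathcal{H}(G_{CM})$ (which is holomorphic by hypothesis) gives
\[
\lim_{n\to\infty}\|f|_{G_{P_n}}\|_{L^2(G_{P_n},\nu_T^{P_n})}
=\|f\|_{\mathcal{H}_T^2(G_{CM})}
=\|\hat{f}(\mathbf{e})\|_{J_T^0(\mathfrak{g}_{CM})},
\]
so the two rightmost terms of Eq.~(\ref{e.h8.2}) already agree. Finiteness of $\|f\|_{L^2(\nu_T)}$ is then immediate, since the growth hypothesis yields $|f|^2\leqslant C^2 e^{\varepsilon\rho^2/T}$, and Theorem~\ref{t.h4.11} gives $\int_G e^{\varepsilon\rho^2/T}\,d\nu_T<\infty$ for $\varepsilon\in(0,\delta)$.

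The first equality is where the growth hypothesis does its real work. Feeding $|f(\cdot)|\leqslant Ce^{\varepsilon\rho^2(\cdot)/(2T)}$ into Proposition~\ref{p.h4.12} with $p=2$ and $h=\mathbf{e}$ produces $\mathbb{E}|f(g(T))-f(g_{P_n}(T))|^2\to 0$, i.e.\ $L^2$-convergence of $f(g_{P_n}(T))$ to $f(g(T))$. Because $g_{P_n}(T)$ has law $\nu_T^{P_n}$ on $G_{P_n}$, this reads $\|f|_{G_{P_n}}\|_{L^2(G_{P_n},\nu_T^{P_n})}=(\mathbb{E}|f(g_{P_n}(T))|^2)^{1/2}\to(\mathbb{E}|f(g(T))|^2)^{1/2}=\|f\|_{L^2(\nu_T)}$. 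Matching this limit against the one supplied by Lemma~\ref{l.h6.5} forces $\|f\|_{L^2(\nu_T)}=\|f\|_{\mathcal{H}_T^2(G_{CM})}$, which completes Eq.~(\ref{e.h8.2}).

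For the last assertion, a holomorphic cylinder polynomial $f=F\circ\pi_P\in\mathcal{P}$ is continuous with $f|_{G_{CM}}$ holomorphic, and its polynomial growth in $\rho$ guarantees that for any chosen $\varepsilon\in(0,\delta)$ there is a constant $C$ with $|f|\leqslant Ce^{\varepsilon\rho^2/(2T)}$; hence the hypotheses hold and the conclusion applies. I do not anticipate a genuine obstacle, because the substantive analytic content---the uniform integrability that licenses passage to the limit in $L^2(\nu_T)$, and the evaluation of the finite-dimensional $L^2(G_P,\nu_T^P)$-norm through the Taylor coefficients $\hat{f}(\mathbf{e})$---is already packaged into Proposition~\ref{p.h4.12} and Lemma~\ref{l.h6.5}. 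The one point meriting care is the compatibility of the approximating sequences used by those two results, which is precisely what the first step arranges.
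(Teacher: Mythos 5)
Your proposal is correct and follows essentially the same route as the paper: the paper's proof likewise fixes $P_n|_{\mathfrak{g}_{CM}}\uparrow I_{\mathfrak{g}_{CM}}$ and identifies $\left\Vert f\right\Vert_{L^{2}\left(\nu_{T}\right)}$ with $\lim_{n}\left\Vert f\right\Vert_{L^{2}\left(G_{P_{n}},\nu_{T}^{P_{n}}\right)}$ via Proposition \ref{p.h4.12} (with $h=0$, $p=2$) and with $\left\Vert f\right\Vert_{\mathcal{H}_{T}^{2}\left(G_{CM}\right)}=\Vert\hat{f}\left(\mathbf{e}\right)\Vert_{J_{T}^{0}\left(\mathfrak{g}_{CM}\right)}$ via Lemma \ref{l.h6.5}. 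Your additional remarks on the compatibility of the two approximating sequences and on the polynomial case are fine elaborations of details the paper leaves implicit.
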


\begin{proof}
Let $\left\{  P_{n}\right\}
_{n=1}^{\infty}\subset\operatorname*{Proj}\left( W\right)  $ be a
sequence such that $P_{n}|_{\mathfrak{g}_{CM}}\uparrow
I_{\mathfrak{g}_{CM}}$ as $n\rightarrow\infty$. Then, by Lemma
\ref{l.h6.5} and Proposition \ref{p.h4.12} with $h=0$,  \[
\infty>\left\Vert f\right\Vert_{L^{2}\left(  \nu_{T}\right)  }=\lim
_{n\rightarrow\infty}\left\Vert f\right\Vert_{L^{2}\left(
G_{P_{n}}\nu_{T}^{P_{n}}\right)  }=\left\Vert f\right\Vert
_{\mathcal{H}_{T}^{2}\left( G_{CM}\right)  }=\left\Vert
\hat{f}\left(  \mathbf{e}\right)  \right\Vert_{J_{T}^{0}\left(
\mathfrak{g}_{CM}\right)  }.
\]
\end{proof}

We are now ready to complete the proof of the Skeleton isomorphism Theorem
\ref{t.h1.8}.

\subsection{Proof of Theorem \ref{t.h1.8}\label{s.h8.1}}

\begin{proof}
By Corollary \ref{c.h5.10}, $S_{T}f=f|_{G_{CM}}$ for all $f\in\mathcal{P}$ and
hence by Lemma \ref{l.h8.2}, $\left\Vert S_{T}f\right\Vert_{\mathcal{H}%
_{T}^{2}\left(  G_{CM}\right)  }=\left\Vert f\right\Vert
_{L^{2}\left( \nu_{T}\right)  }$. It therefore follows that
$S_{T}|_{\mathcal{P}}$ extends uniquely to an isometry,
$\bar{S}_{T}$,  from $\mathcal{H}_{T}^{2}\left( G\right)  $ to
$\mathcal{H}_{T}^{2}\left(  G_{CM}\right)$ such that $\bar
{S}_{T}\left(  \mathcal{P}\right)  =\mathcal{P}_{CM}$. Since
$\bar{S}_{T}$ is isometric and $\mathcal{P}_{CM}$ is dense in
$\mathcal{H}_{T}^{2}\left( G_{CM}\right)$, it follows that
$\bar{S}_{T}$ is surjective, i.e. $\bar
{S}_{T}:\mathcal{H}_{T}^{2}\left(  G\right)  \rightarrow\mathcal{H}_{T}%
^{2}\left(  G_{CM}\right)$ is a unitary map. To finish the proof we
only need to show $S_{T}f=\bar{S}_{T}f$ for all
$f\in\mathcal{H}_{T}^{2}\left( G\right)$. Let $p_{n}\in\mathcal{P}$
such that $p_{n}\rightarrow f$ in $L^{2}\left( \nu_{T}\right)$. Then
$p_{n}=S_{T}p_{n}\rightarrow\bar{S}_{T}f$ in
$\mathcal{H}_{T}^{2}\left(  G_{CM}\right)  $ and hence by the
Gaussian pointwise bounds in Eq. (\ref{e.h6.19}),
$\bar{S}_{T}f\left(  g\right) =\lim_{n\rightarrow\infty}p_{n}\left(
g\right)  $ for all $g\in G_{CM}$. Similarly, using the Gaussian
bounds in Corollary \ref{c.h4.8}, it follows that
\begin{align}
\left\vert S_{T}f\left(  g\right)  -p_{n}\left(  g\right)  \right\vert  &
=\left\vert S_{T}\left(  f-p_{n}\right)  \left(  g\right)  \right\vert
\nonumber\\
&  \leqslant\left\Vert f-p_{n}\right\Vert_{L^{2}\left(  \nu_{T}\right)  }%
\exp\left(  \frac{c\left(  k\left(  \omega\right)  T/2\right)  }{T}d_{G_{CM}%
}^{2}\left(  \mathbf{e},g\right)  \right)  \label{e.h8.3}%
\end{align}
and hence we also have, $S_{T}f\left(  g\right)
=\lim_{n\rightarrow\infty }p_{n}\left(  g\right)$ for all $g\in
G_{CM}$. Therefore, $S_{T}f=\bar {S}_{T}f$ as was to be shown.
\end{proof}

\begin{cor}
\label{c.h8.3}If $f:G\rightarrow\mathbb{C}$ is a continuous function
such that $f|_{G_{CM}}\in\mathcal{H}_{T}^{2}\left(  G_{CM}\right) $,
then $f\in\mathcal{H}_{T}^{2}\left(  G\right)$,
$S_{T}f=f|_{G_{CM}}$,  and $\left\Vert f\right\Vert_{L^{2}\left(
\nu_{T}\right)  }=\left\Vert f\right\Vert
_{\mathcal{H}_{T}^{2}\left(  G_{CM}\right)  }$. \end{cor}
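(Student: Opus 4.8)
The plan is to deduce all three assertions from Proposition~\ref{p.h8.1}, the density Theorem~\ref{t.h7.1}, and the unitarity of $S_{T}$ established in Theorem~\ref{t.h1.8}. The only genuinely new point is to upgrade the hypothesis $f|_{G_{CM}}\in\mathcal{H}_{T}^{2}(G_{CM})$ into the conclusion $f\in\mathcal{H}_{T}^{2}(G)$, i.e. that $f$ lies in the $L^{2}(\nu_{T})$--closure of the holomorphic cylinder polynomials $\mathcal{P}$. Since $\nu_{T}(G_{CM})=0$, knowledge of $f$ on the skeleton $G_{CM}$ looks a priori irrelevant to the $L^{2}(\nu_{T})$--norm; the device that circumvents this is the one-sided bound $\|g\|_{L^{2}(\nu_{T})}\leqslant\|g|_{G_{CM}}\|_{\mathcal{H}_{T}^{2}(G_{CM})}$ of Proposition~\ref{p.h8.1}, which transfers control from $G_{CM}$ to all of $G$. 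This is the conceptual crux of the argument.

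Concretely, I would first use Theorem~\ref{t.h7.1} to choose $p_{n}\in\mathcal{P}$ with $p_{n}|_{G_{CM}}\to f|_{G_{CM}}$ in $\mathcal{H}_{T}^{2}(G_{CM})$; this is legitimate because $f|_{G_{CM}}\in\mathcal{H}_{T}^{2}(G_{CM})$ and, by Corollary~\ref{c.h6.6}, $\mathcal{P}_{CM}\subset\mathcal{H}_{T}^{2}(G_{CM})$. The key step is then to apply Proposition~\ref{p.h8.1} to the \emph{differences} $f-p_{n}$ rather than to $f$ itself: each $f-p_{n}$ is continuous on $G$, its restriction $(f-p_{n})|_{G_{CM}}=f|_{G_{CM}}-p_{n}|_{G_{CM}}$ is holomorphic with finite $\mathcal{H}_{T}^{2}(G_{CM})$--norm, so Proposition~\ref{p.h8.1} yields
\[
\|f-p_{n}\|_{L^{2}(\nu_{T})}\leqslant\|f|_{G_{CM}}-p_{n}|_{G_{CM}}\|_{\mathcal{H}_{T}^{2}(G_{CM})}\longrightarrow 0\quad\text{as }n\to\infty.
\]
Hence $p_{n}\to f$ in $L^{2}(\nu_{T})$, and since $\mathcal{H}_{T}^{2}(G)$ is by definition the $L^{2}(\nu_{T})$--closure of $\mathcal{P}$, this shows $f\in\mathcal{H}_{T}^{2}(G)$.

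To finish, Proposition~\ref{p.h8.1} applied to $f$ directly gives $S_{T}f=f|_{G_{CM}}$; and now that $f\in\mathcal{H}_{T}^{2}(G)$, the unitarity of $S_{T}\colon\mathcal{H}_{T}^{2}(G)\to\mathcal{H}_{T}^{2}(G_{CM})$ from Theorem~\ref{t.h1.8} forces $\|f\|_{L^{2}(\nu_{T})}=\|S_{T}f\|_{\mathcal{H}_{T}^{2}(G_{CM})}=\|f|_{G_{CM}}\|_{\mathcal{H}_{T}^{2}(G_{CM})}$, which is the last assertion. I expect the only point requiring care to be the verification that $f-p_{n}$ genuinely meets the hypotheses of Proposition~\ref{p.h8.1} (continuity on $G$, and holomorphy together with finite norm of the restriction), everything else being a routine passage to the limit. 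As a consistency check, the norm identity can also be read off directly by combining $p_{n}\to f$ in $L^{2}(\nu_{T})$ with the isometry $\|p_{n}\|_{L^{2}(\nu_{T})}=\|p_{n}|_{G_{CM}}\|_{\mathcal{H}_{T}^{2}(G_{CM})}$ of Lemma~\ref{l.h8.2} and letting $n\to\infty$.
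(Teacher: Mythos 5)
Your proof is correct, and its crux --- applying the one-sided bound of Proposition \ref{p.h8.1} to the differences $f-p_{n}$ so as to transfer convergence in $\mathcal{H}_{T}^{2}\left(G_{CM}\right)$ into convergence in $L^{2}\left(\nu_{T}\right)$ --- is exactly the paper's. The only divergence is in how the approximating polynomials are produced: you invoke the density Theorem \ref{t.h7.1} directly to obtain $p_{n}\in\mathcal{P}$ with $p_{n}|_{G_{CM}}\rightarrow f|_{G_{CM}}$ in $\mathcal{H}_{T}^{2}\left(G_{CM}\right)$, whereas the paper first uses the surjectivity of $S_{T}$ (Theorem \ref{t.h1.8}) to produce $u\in\mathcal{H}_{T}^{2}\left(G\right)$ with $S_{T}u=f|_{G_{CM}}$, chooses $p_{n}\rightarrow u$ in $L^{2}\left(\nu_{T}\right)$, pushes this forward through the isometry to get $p_{n}|_{G_{CM}}=S_{T}p_{n}\rightarrow S_{T}u=f|_{G_{CM}}$, and at the end identifies $f=u$ by uniqueness of $L^{2}$--limits. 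Since Theorem \ref{t.h1.8} is itself built on Theorem \ref{t.h7.1}, your route is marginally more direct and loses nothing; the verification you flag (that $f-p_{n}$ meets the hypotheses of Proposition \ref{p.h8.1}) is indeed immediate, and your closing remark that the norm identity can be read off either from unitarity or from the isometry of Lemma \ref{l.h8.2} on $\mathcal{P}$ plus a passage to the limit is also sound.
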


\begin{proof}
By Proposition \ref{p.h8.1} we already know that
$S_{T}f=f|_{G_{CM}}$. By Theorem \ref{t.h1.8}, there exists
$u\in\mathcal{H}_{T}^{2}\left(  G\right)  $ such that
$f|_{G_{CM}}=S_{T}u$. Let $p_{n}\in\mathcal{P}$ be chosen so that
$p_{n}\rightarrow u$ in $L^{2}\left(  \nu_{T}\right)  $ and hence
$p_{n}|_{G_{CM}}=S_{T}p_{n}\rightarrow S_{T}u=S_{T}f$ in $\mathcal{H}_{T}%
^{2}\left(  G_{CM}\right)  $ as $n\rightarrow\infty$. Hence it
follows from
Proposition \ref{p.h8.1} that%
\[
\left\Vert f-p_{n}\right\Vert_{L^{2}\left(  \nu_{T}\right)
}\leqslant
\left\Vert \left(  f-p_{n}\right)  |_{G_{CM}}\right\Vert_{\mathcal{H}_{T}%
^{2}\left(  G_{CM}\right)  }=\left\Vert S_{T}\left(  f-p_{n}\right)
\right\Vert_{\mathcal{H}_{T}^{2}\left(  G_{CM}\right)  },
\]
and therefore, $\lim_{n\rightarrow\infty}\left\Vert
f-p_{n}\right\Vert_{L^{2}\left(  \nu_{T}\right)  }=0$,  i.e.
$p_{n}\rightarrow f$ in $L^{2}\left(  \nu_{T}\right)$. Since
$p_{n}\rightarrow u$ in $L^{2}\left(
\nu_{T}\right)  $ as well, we may conclude that $f=u\in\mathcal{H}_{T}%
^{2}\left(  G\right)$. \end{proof}

\begin{cor}
\label{c.h8.4}Suppose that $f:G\rightarrow\mathbb{C}$ is a continuous function
such that $\left\vert f\right\vert \leqslant Ce^{\varepsilon\rho^{2}/\left(
2T\right)  }$ and $f|_{G_{CM}}$ is holomorphic, then $f\in\mathcal{H}_{T}%
^{2}\left(  G\right)  $ and $S_{T}f=f$. \end{cor}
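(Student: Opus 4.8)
The plan is to combine the two immediately preceding results, Lemma \ref{l.h8.2} and Corollary \ref{c.h8.3}, since the present corollary is essentially their conjunction; the only genuine content is to verify that the hypotheses chain together correctly. First I would observe that the function $f$ in the statement satisfies precisely the hypotheses of Lemma \ref{l.h8.2}: it is continuous, its restriction to $G_{CM}$ is holomorphic, and it obeys the Gaussian-type bound $\left\vert f\right\vert \leqslant Ce^{\varepsilon\rho^{2}/\left(2T\right)}$ with $\varepsilon\in\left(0,\delta\right)$. (If one allows $\varepsilon=0$, a bounded $f$ a fortiori satisfies the same bound with any small positive $\varepsilon^{\prime}\in\left(0,\delta\right)$, so Lemma \ref{l.h8.2} still applies.)

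Applying Lemma \ref{l.h8.2} then yields
\[
\left\Vert f\right\Vert_{\mathcal{H}_{T}^{2}\left(G_{CM}\right)}=\left\Vert \hat{f}\left(\mathbf{e}\right)\right\Vert_{J_{T}^{0}\left(\mathfrak{g}_{CM}\right)}<\infty.
\]
Since $f|_{G_{CM}}$ is holomorphic and now has finite $\mathcal{H}_{T}^{2}\left(G_{CM}\right)$--norm, it follows from Definition \ref{d.h1.2} that $f|_{G_{CM}}\in\mathcal{H}_{T}^{2}\left(G_{CM}\right)$.

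Having established this membership, I would invoke Corollary \ref{c.h8.3} directly: because $f$ is continuous with $f|_{G_{CM}}\in\mathcal{H}_{T}^{2}\left(G_{CM}\right)$, that corollary furnishes both $f\in\mathcal{H}_{T}^{2}\left(G\right)$ and $S_{T}f=f|_{G_{CM}}$. Interpreting the asserted identity $S_{T}f=f$ in the only sense that is meaningful---namely as equality on $G_{CM}$, since $S_{T}f$ is by definition a function on $G_{CM}$---this completes the proof.

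I do not anticipate any real obstacle here, as the substantive analytic work has already been done upstream: the identification of the $L^{2}\left(\nu_{T}\right)$--norm with the Fock-space norm $\left\Vert \hat{f}\left(\mathbf{e}\right)\right\Vert_{J_{T}^{0}\left(\mathfrak{g}_{CM}\right)}$ comes from Lemma \ref{l.h8.2} (ultimately from the finite-dimensional approximation in Lemma \ref{l.h6.5} and Proposition \ref{p.h4.12}), while the membership in the $L^{2}$--closure of the holomorphic cylinder polynomials comes from Corollary \ref{c.h8.3} (ultimately from the density Theorem \ref{t.h7.1} and the skeleton isometry of Theorem \ref{t.h1.8}). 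The corollary merely repackages these facts under the convenient pointwise growth hypothesis $\left\vert f\right\vert\leqslant Ce^{\varepsilon\rho^{2}/\left(2T\right)}$, which is exactly the condition satisfied by every $p\in\mathcal{P}$ via Theorem \ref{t.h4.11}.
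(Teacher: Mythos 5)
Your proposal is correct and follows exactly the paper's own route: the paper proves this corollary verbatim as "a consequence of Lemma \ref{l.h8.2} and Corollary \ref{c.h8.3}," which is precisely the chaining you carry out. Your additional care about the range of $\varepsilon$ and the interpretation of $S_{T}f=f$ on $G_{CM}$ is sensible but does not change the argument.
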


\begin{proof}
This is a consequence of Lemma \ref{l.h8.2} and Corollary \ref{c.h8.3}.
\end{proof}

\section{The holomorphic chaos expansion\label{s.h9}}

This section is devoted to the proof of the holomorphic chaos expansion
Theorem \ref{t.h1.9} (or equivalently Theorem \ref{t.h9.10}). Before going to
the proof we will develop the machinery necessary in order to properly define
the right side of Eq. (\ref{e.h1.8}).

\subsection{Generalities about multiple It\^{o} integrals\label{s.h9.1}}

Let $\left(  \mathbb{H},\mathbb{W}\right)  $ be a complex abstract Wiener
space. Analogous to the notation used in Subsection \ref{s.h6.1} we will
denote the norm on $\mathbb{H}^{\ast\otimes n}$ by $\Vert\cdot\Vert_{n}$.

\begin{nota}
\label{n.h9.1}For $\alpha\in\mathbb{H}^{\ast\otimes n}$ and $P\in
\operatorname*{Proj}\left(  \mathbb{W}\right)$,  let
$\alpha_{P}:=\alpha\circ P^{\otimes n}\in\mathbb{H}^{\ast\otimes
n}$. \end{nota}

\begin{prop}
\label{p.h9.2}Let $n\in\mathbb{N}$ and $\alpha\in\mathbb{H}^{\ast\otimes n}$
and $P_{k}\in\operatorname*{Proj}\left(  \mathbb{W}\right)  $ with
$P_{k}|_{\mathbb{H}}\uparrow I|_{\mathbb{H}}$. Then $\alpha_{P_{k}}%
\rightarrow\alpha$ in $\mathbb{H}^{\ast\otimes n}$. \end{prop}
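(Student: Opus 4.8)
The plan is to reduce the statement to the strong convergence of the tensor-power projections $P_k^{\otimes n}$ on the Hilbert space $\mathbb{H}^{\otimes n}$ and then transport this to the dual space $\mathbb{H}^{\ast\otimes n}$. First I would record the structural observation that since each $P_k|_{\mathbb{H}}$ is an orthogonal projection (by the definition of $\operatorname{Proj}\left(\mathbb{W}\right)$, cf.\ Notation \ref{n.h1.1} and Lemma \ref{l.h2.8}), the operator $P_k^{\otimes n}$ is an orthogonal projection on $\mathbb{H}^{\otimes n}$, and its Banach-space transpose is exactly the map $\alpha\mapsto\alpha_{P_k}=\alpha\circ P_k^{\otimes n}$ of Notation \ref{n.h9.1}. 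Under the conjugate-linear isometry $\mathbb{H}\cong\mathbb{H}^{\ast}$ induced by the inner product (and its $n$-fold tensor power $\mathbb{H}^{\otimes n}\cong\mathbb{H}^{\ast\otimes n}$), this transpose again corresponds to $P_k^{\otimes n}$, because $P_k$ is self-adjoint. Consequently $\alpha\mapsto\alpha_{P_k}$ is itself an orthogonal projection on $\mathbb{H}^{\ast\otimes n}$; in particular $\Vert\alpha_{P_k}\Vert_n\leqslant\Vert\alpha\Vert_n$, and the whole family $\{\alpha\mapsto\alpha_{P_k}\}_k$ is uniformly bounded with operator norm $1$. Because of this uniform bound, it suffices to prove strong convergence to the identity on a dense subset.

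Next I would verify convergence on elementary tensors. For $h_1,\dots,h_n\in\mathbb{H}$ a telescoping estimate gives
\begin{equation*}
\left\Vert P_k h_1\otimes\dots\otimes P_k h_n-h_1\otimes\dots\otimes h_n\right\Vert_{\mathbb{H}^{\otimes n}}\leqslant\sum_{j=1}^{n}\left(\prod_{i<j}\left\Vert h_i\right\Vert_{\mathbb{H}}\right)\left\Vert\left(P_k-I\right)h_j\right\Vert_{\mathbb{H}}\left(\prod_{i>j}\left\Vert P_k h_i\right\Vert_{\mathbb{H}}\right),
\end{equation*}
where each factor $\Vert P_k h_i\Vert_{\mathbb{H}}\leqslant\Vert h_i\Vert_{\mathbb{H}}$ since $P_k$ is a projection. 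As $P_k|_{\mathbb{H}}\uparrow I_{\mathbb{H}}$ strongly, every summand tends to $0$, so $P_k^{\otimes n}\left(h_1\otimes\dots\otimes h_n\right)\to h_1\otimes\dots\otimes h_n$. Transporting through the isometry of the first paragraph, the same holds for elementary tensors in $\mathbb{H}^{\ast\otimes n}$, and by linearity on their (dense) span.

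Finally I would run the standard three-$\varepsilon$ argument: given $\beta\in\mathbb{H}^{\ast\otimes n}$ and $\varepsilon>0$, choose a finite linear combination $\gamma$ of elementary tensors with $\Vert\beta-\gamma\Vert_n<\varepsilon$; then
\begin{equation*}
\left\Vert\beta_{P_k}-\beta\right\Vert_n\leqslant\left\Vert\left(\beta-\gamma\right)_{P_k}\right\Vert_n+\left\Vert\gamma_{P_k}-\gamma\right\Vert_n+\left\Vert\gamma-\beta\right\Vert_n\leqslant 2\varepsilon+\left\Vert\gamma_{P_k}-\gamma\right\Vert_n,
\end{equation*}
using the operator-norm bound $1$ from the first paragraph. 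Letting $k\to\infty$ kills the middle term, so $\limsup_k\Vert\beta_{P_k}-\beta\Vert_n\leqslant 2\varepsilon$; since $\varepsilon>0$ was arbitrary, $\beta_{P_k}\to\beta$, and taking $\beta=\alpha$ finishes the proof. The only step demanding genuine care is the identification in the first paragraph—matching the precomposition $\alpha\circ P_k^{\otimes n}$ with an honest orthogonal projection on $\mathbb{H}^{\ast\otimes n}$, which yields both the uniform bound and the reduction to $\mathbb{H}^{\otimes n}$—whereas the telescoping estimate and the density argument are routine, so I do not expect a serious obstacle.
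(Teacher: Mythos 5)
Your proof is correct, but it follows a different route from the paper's. The paper picks a single orthonormal basis $\Lambda=\cup_{k}\Lambda_{k}$ of $\mathbb{H}$ adapted to the whole increasing family, with $\Lambda_{k}$ an orthonormal basis of $\operatorname{Ran}\left(P_{k}\right)$ and $\Lambda_{k}\subset\Lambda_{k+1}$; then for every basis vector $u$ one has $P_{k}u=u$ or $P_{k}u=0$, so each term of the basis expansion of $\left\Vert \alpha-\alpha_{P_{k}}\right\Vert_{n}^{2}$ is dominated by $\left\vert \left\langle \alpha,u_{1}\otimes\dots\otimes u_{n}\right\rangle \right\vert^{2}$ and the conclusion follows from dominated convergence. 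You instead identify $\alpha\mapsto\alpha_{P_{k}}$ with the orthogonal projection $P_{k}^{\otimes n}$ on $\mathbb{H}^{\ast\otimes n}$, get the uniform operator-norm bound $1$, check convergence on elementary tensors by telescoping, and finish with the usual density argument. Both are complete; the paper's computation is shorter but leans on the monotonicity of the $P_{k}$ to build the nested basis, whereas your argument never uses monotonicity and so proves the slightly more general statement that $\alpha_{Q_{k}}\rightarrow\alpha$ for \emph{any} sequence of orthogonal projections $Q_{k}$ converging strongly to the identity on $\mathbb{H}$ --- a flexibility the authors in effect recover later only by comparing two increasing sequences inside a common larger projection (as in Lemma \ref{l.h9.7} and Proposition \ref{p.h9.8}).
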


\begin{proof}
Let $\Lambda:=\cup_{k}\Lambda_{k}$ be an orthonormal basis for $\mathbb{H}$
where $\Lambda_{k}$ is chosen to be an orthonormal basis for
$\operatorname*{Ran}\left(  P_{k}\right)  $ such that $\Lambda_{k}%
\subset\Lambda_{k+1}$ for all $k$. Since $P_{k}u=u$ or $P_{k}u=0$
for all
$u\in\Lambda$ and $k\in\mathbb{N}$,  we have%
\[
\left\vert \left\langle \alpha,u_{1}\otimes\dots\otimes u_{n}-P_{k}%
u_{1}\otimes\dots\otimes P_{k}u_{n}\right\rangle \right\vert ^{2}%
\leq\left\vert \left\langle \alpha,u_{1}\otimes\dots\otimes u_{n}\right\rangle
\right\vert ^{2}%
\]
where%
\[
\sum_{u_{1},\dots,u_{n}\in\Lambda}\left\vert \left\langle \alpha,u_{1}%
\otimes\dots\otimes u_{n}\right\rangle \right\vert ^{2}=\left\Vert
\alpha\right\Vert_{n}^{2}<\infty.
\]
An application of the dominated convergence theorem then implies,
\begin{align*}
\lim_{k\rightarrow\infty}\left\Vert
\alpha-\alpha_{P_{k}}\right\Vert_{n}^{2} &
=\lim_{k\rightarrow\infty}\sum_{u_{1},\dots,u_{n}\in\Lambda}\left\vert
\left\langle \alpha,u_{1}\otimes\dots\otimes u_{n}-P_{k}u_{1}\otimes
\dots\otimes P_{k}u_{n}\right\rangle \right\vert ^{2}\\
&  =\sum_{u_{1},\dots,u_{n}\in\Lambda}\lim_{k\rightarrow\infty}\left\vert
\left\langle \alpha,u_{1}\otimes\dots\otimes u_{n}-P_{k}u_{1}\otimes
\dots\otimes P_{k}u_{n}\right\rangle \right\vert ^{2}=0.
\end{align*}

\end{proof}

\begin{lem}
\label{l.h9.3}Suppose that $\left\{  b\left(  t\right)
\right\}_{t\geq0}$
is a $\mathbb{W}$--valued Brownian motion normalized by%
\begin{equation}
\mathbb{E}\left[  \ell_{1}\left(  b\left(  t\right)  \right)
\ell_{2}\left( b\left(  s\right)  \right)  \right]
=\frac{1}{2}s\wedge t\left(  \ell
_{1},\ell_{2}\right)_{\mathbb{H}_{\operatorname{Re}}^{\ast}}\text{
for all
}\ell_{1},\ell_{2}\in\mathbb{W}_{\operatorname{Re}}^{\ast}. \label{e.h9.1}%
\end{equation}
If $P\in\operatorname*{Proj}\left(  \mathbb{W}\right)$,  $T>0$, and
$\left\{ f_{s}\right\}_{s\geq0}$ is a $\left( P\mathbb{H}\right)
^{\ast}$--valued continuous adapted process, such that
$\mathbb{E}\int_{0}^{T}\left\vert f_{s}\right\vert_{\left(
P\mathbb{H}\right)^{\ast}}^{2}ds<\infty$,  then
\begin{equation}
\mathbb{E}\left\vert \int_{0}^{T}\left\langle f_{s},d\left(  Pb\right)
\left(  s\right)  \right\rangle \right\vert ^{2}=\int_{0}^{T}\mathbb{E}%
\left\vert f_{s}\right\vert_{\left(  P\mathbb{H}\right)
^{\ast}}^{2}ds.
\label{e.h9.2}%
\end{equation}

\end{lem}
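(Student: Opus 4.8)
The plan is to reduce the statement to the classical It\^o isometry in finitely many real dimensions, exploiting the fact that $P$ has finite rank so that $Pb$ is an honest finite-dimensional Brownian motion. First I would fix a complex orthonormal basis $\{e_{j}\}_{j=1}^{N}$ of the finite-dimensional space $P\mathbb{H}=P\mathbb{W}$, chosen (via Lemma \ref{l.h2.8}) so that each functional $\langle\cdot,e_{j}\rangle_{\mathbb{H}}$ extends continuously to $\mathbb{W}$ and $Pw=\sum_{j=1}^{N}\langle w,e_{j}\rangle_{\mathbb{H}}e_{j}$. As in the proof of Lemma \ref{l.h2.6}, $\{e_{j},ie_{j}\}_{j=1}^{N}$ is then a real orthonormal basis of $(P\mathbb{H})_{\operatorname{Re}}$. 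Setting $\beta_{j}(s):=\langle b(s),e_{j}\rangle_{\mathbb{H}_{\operatorname{Re}}}=\operatorname{Re}\langle b(s),e_{j}\rangle_{\mathbb{H}}$ and $\gamma_{j}(s):=\langle b(s),ie_{j}\rangle_{\mathbb{H}_{\operatorname{Re}}}$, the process decomposes as $Pb(s)=\sum_{j=1}^{N}\left[\beta_{j}(s)e_{j}+\gamma_{j}(s)ie_{j}\right]$.

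Next I would verify directly from the normalization (\ref{e.h9.1}) that the $2N$ real processes $\{\beta_{j},\gamma_{j}\}$ are independent Brownian motions with covariance $\tfrac12\,s\wedge t$. The coordinate functionals $\langle\cdot,e_{j}\rangle_{\mathbb{H}_{\operatorname{Re}}}$ and $\langle\cdot,ie_{j}\rangle_{\mathbb{H}_{\operatorname{Re}}}$ are orthonormal in $\mathbb{H}_{\operatorname{Re}}^{\ast}$, so (\ref{e.h9.1}) forces all cross-covariances to vanish and puts variance parameter $\tfrac12$ on the diagonal; since $b$ is Gaussian, the coordinate processes are jointly Gaussian and hence genuinely independent, and continuity together with the covariance identifies each as a Brownian motion. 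In particular $d\langle\beta_{j}\rangle_{s}=d\langle\gamma_{j}\rangle_{s}=\tfrac12\,ds$ and all cross-variations vanish, so $Pb$ is a finite-dimensional Brownian motion in $(P\mathbb{H})_{\operatorname{Re}}$.

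I would then expand the integrand in this basis: writing $d(Pb)(s)=\sum_{j}\left[e_{j}\,d\beta_{j}(s)+ie_{j}\,d\gamma_{j}(s)\right]$ and using that $f_{s}$ is complex linear on $P\mathbb{H}$ gives
\[
\langle f_{s},d(Pb)(s)\rangle=\sum_{j=1}^{N}\Big[\langle f_{s},e_{j}\rangle\,d\beta_{j}(s)+\langle f_{s},ie_{j}\rangle\,d\gamma_{j}(s)\Big],
\]
exhibiting $\int_{0}^{T}\langle f_{s},d(Pb)(s)\rangle$ as a finite sum of ordinary scalar It\^o integrals against independent Brownian motions; the hypothesis $\mathbb{E}\int_{0}^{T}|f_{s}|_{(P\mathbb{H})^{\ast}}^{2}\,ds<\infty$ places each integrand in the It\^o class. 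Applying the scalar It\^o isometry termwise, using the vanishing cross-variations to annihilate the off-diagonal terms and $d\langle\beta_{j}\rangle_{s}=d\langle\gamma_{j}\rangle_{s}=\tfrac12\,ds$, I obtain
\[
\mathbb{E}\left|\int_{0}^{T}\langle f_{s},d(Pb)(s)\rangle\right|^{2}=\frac{1}{2}\int_{0}^{T}\mathbb{E}\sum_{j=1}^{N}\Big[|\langle f_{s},e_{j}\rangle|^{2}+|\langle f_{s},ie_{j}\rangle|^{2}\Big]\,ds.
\]
Finally, complex linearity gives $\langle f_{s},ie_{j}\rangle=i\langle f_{s},e_{j}\rangle$, so the bracket equals $2\sum_{j}|\langle f_{s},e_{j}\rangle|^{2}=2\,|f_{s}|_{(P\mathbb{H})^{\ast}}^{2}$ since $\{e_{j}\}$ is a complex orthonormal basis of $P\mathbb{H}$; the prefactor $\tfrac12$ cancels the $2$ and (\ref{e.h9.2}) follows.

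The basis bookkeeping is routine; the one delicate point, which I expect to be the main obstacle, is the interplay between the complex and real structures and the compensating constants, namely the factor $\tfrac12$ coming from the normalization (\ref{e.h9.1}) against the factor $2$ arising from $|\langle f_{s},ie_{j}\rangle|^{2}=|\langle f_{s},e_{j}\rangle|^{2}$. This is exactly the same factor-of-two phenomenon already recorded in Eq. (\ref{e.h2.14}) of Lemma \ref{l.h2.6}, so I would model the constant-tracking on that identity to ensure the normalizations match.
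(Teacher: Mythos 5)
Your proof is correct and follows essentially the same route as the paper's: decompose $Pb$ into real coordinate Brownian motions $\{X_j, Y_j\}$ relative to the real orthonormal basis $\{e_j, ie_j\}$ of $(P\mathbb{H})_{\operatorname{Re}}$, read off from the normalization \eqref{e.h9.1} that $\{\sqrt{2}X_j,\sqrt{2}Y_j\}$ are independent standard Brownian motions, apply the scalar It\^o isometry termwise, and let the factor $\tfrac12$ from the quadratic variation cancel the factor $2$ coming from $|\langle f_s, ie_j\rangle| = |\langle f_s, e_j\rangle|$. Your extra remarks (joint Gaussianity to upgrade uncorrelatedness to independence, and the appeal to Lemma \ref{l.h2.8} for the coordinate functionals) only make explicit steps the paper leaves implicit.
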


\begin{proof}
Let $\left\{  e_{j}\right\}_{j=1}^{d}$ be an orthonormal basis for
$P\mathbb{H}$ and write
\[
Pb\left(  s\right)  =\sum_{j=1}^{d}\left[  X_{j}\left(  s\right)  e_{j}%
+Y_{j}\left(  s\right)  ie_{j}\right]
\]
where $X_{j}\left(  s\right)  =\operatorname{Re}\left(  Pb\left(
s\right) ,e_{j}\right)  $ and $Y_{j}\left(  s\right)
=\operatorname{Im}\left( Pb\left(  s\right),e_{j}\right)$. From the
normalization in Eq. (\ref{e.h9.1}) it follows that $\left\{
\sqrt{2}X_{j},\sqrt{2}Y_{j}\right\}_{j=1}^{d}$ is a sequence of
independent standard Brownian motions, and
therefore the quadratic covariations of these processes are given by:%
\begin{equation}
dX_{j}dY_{k}=0\text{ and }dX_{j}dX_{k}=dY_{j}dY_{k}=\frac{1}{2}\delta
_{jk}dt\text{ for all }j,k=1,\dots,d. \label{e.h9.3}%
\end{equation}
Using Eq. (\ref{e.h9.3}) along with the identity,%
\begin{equation}
\left\langle f_{s},d\left(  Pb\right)  \left(  s\right)  \right\rangle
=\sum_{j=1}^{d}\left[  \left\langle f_{s},e_{j}\right\rangle dX_{j}\left(
s\right)  +\left\langle f_{s},ie_{j}\right\rangle dY_{j}\left(  s\right)
\right], \label{e.h9.4}%
\end{equation}
it follows by the basic isometry property of the stochastic integral that%
\begin{align*}
\mathbb{E}\left\vert \int_{0}^{T}\left\langle f_{s},d\left(  Pb\right)
\left(  s\right)  \right\rangle \right\vert ^{2}  &  =\frac{1}{2}\sum
_{j=1}^{d}\mathbb{E}\left[  \int_{0}^{T}\left\vert \left\langle f_{s}%
,e_{j}\right\rangle \right\vert ^{2}ds+\int_{0}^{T}\left\vert \left\langle
f_{s},ie_{j}\right\rangle \right\vert ^{2}ds\right] \\
&  =\mathbb{E}\int_{0}^{T}\sum_{j=1}^{d}\left\vert \left\langle f_{s}%
,e_{j}\right\rangle \right\vert
^{2}ds=\int_{0}^{T}\mathbb{E}\left\vert f_{s}\right\vert_{\left(
P\mathbb{H}\right)^{\ast}}^{2}ds.
\end{align*}

\end{proof}

\begin{df}
\label{d.h9.4}For $P\in\operatorname*{Proj}\left(  \mathbb{W}\right)$,  $n\in\mathbb{N}$,  and $T>0$,  let%
\[
M_{n}^{P}\left(  T\right)  :=\int_{0\leq s_{1}\leq
s_{2}\leq\dots\leq s_{n}\leq T}dPb\left(  s_{1}\right)  \otimes
dPb\left(  s_{2}\right) \otimes\dots\otimes dPb\left(  s_{n}\right).
\]
Alternatively put, $M_{0}^{P}\left(  T\right)  \equiv1$ and
$M_{n}^{P}\left( t\right)  \in\left(  P\mathbb{H}\right)^{\otimes
n}$ is defined inductively
by%
\begin{equation}
M_{n}^{P}\left(  t\right)  :=\int_{0}^{t}M_{n-1}^{P}\left(  s\right)  \otimes
dPb\left(  s\right)  \text{ for all }t\geq0. \label{e.h9.5}%
\end{equation}

\end{df}

\begin{cor}
\label{c.h9.5}Suppose that $T>0$,  $\alpha\in\mathbb{H}^{\ast\otimes
n}$,  and $P\in\operatorname*{Proj}\left(  \mathbb{W}\right)$, then
$\left\langle \alpha,M_{n}^{P}\left(  T\right)  \right\rangle $ is a
square integrable
random variable and%
\[
\mathbb{E}\left\vert \left\langle \alpha,M_{n}^{P}\left(  T\right)
\right\rangle \right\vert ^{2}=\frac{T^{n}}{n!}\left\Vert \alpha
_{P}\right\Vert_{n}^{2}.
\]

\end{cor}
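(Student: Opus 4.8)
The plan is to prove the identity by induction on $n$, using the It\^o isometry of Lemma \ref{l.h9.3} to strip off one stochastic integration at each stage. First I would reduce to tensors supported on $P\mathbb{H}$: since $M_n^P(T)\in\left(P\mathbb{H}\right)^{\otimes n}$ and $\alpha_P=\alpha\circ P^{\otimes n}$ acts as $\alpha$ on such tensors, we have $\langle\alpha,M_n^P(T)\rangle=\langle\alpha_P,M_n^P(T)\rangle$. Hence it suffices to prove that for every $\beta\in\left(P\mathbb{H}\right)^{\ast\otimes n}$ the random variable $\langle\beta,M_n^P(T)\rangle$ is square integrable with $\mathbb{E}\left|\langle\beta,M_n^P(T)\rangle\right|^2=\frac{T^n}{n!}\Vert\beta\Vert_n^2$, and then to note $\Vert\beta\Vert_n=\Vert\alpha_P\Vert_n$ when $\beta=\alpha_P$. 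The base case $n=0$ is immediate since $M_0^P(T)\equiv1$ and $\langle\beta,1\rangle=\beta$ is a constant.

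For the inductive step, I would use the recursive definition $M_n^P(t)=\int_0^t M_{n-1}^P(s)\otimes dPb(s)$ from Eq. (\ref{e.h9.5}) to write $\langle\beta,M_n^P(T)\rangle=\int_0^T\langle f_s,dPb(s)\rangle$, where $f_s\in\left(P\mathbb{H}\right)^{\ast}$ is the $\left(P\mathbb{H}\right)^{\ast}$-valued process defined by $f_s(v):=\langle\beta,M_{n-1}^P(s)\otimes v\rangle$ for $v\in P\mathbb{H}$. This process is continuous and adapted because $M_{n-1}^P$ is, so once I establish the required integrability I may apply Lemma \ref{l.h9.3} to get $\mathbb{E}\left|\langle\beta,M_n^P(T)\rangle\right|^2=\int_0^T\mathbb{E}\left|f_s\right|_{\left(P\mathbb{H}\right)^{\ast}}^2\,ds$. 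Fixing an orthonormal basis $\{e_j\}_{j=1}^d$ of $P\mathbb{H}$, I would expand $\left|f_s\right|_{\left(P\mathbb{H}\right)^{\ast}}^2=\sum_j\left|\langle\beta,M_{n-1}^P(s)\otimes e_j\rangle\right|^2$ and, for each $j$, introduce the contracted tensor $\beta^{(j)}\in\left(P\mathbb{H}\right)^{\ast\otimes(n-1)}$ given by $\beta^{(j)}(\eta):=\langle\beta,\eta\otimes e_j\rangle$, so that $\langle\beta,M_{n-1}^P(s)\otimes e_j\rangle=\langle\beta^{(j)},M_{n-1}^P(s)\rangle$. The induction hypothesis then gives $\mathbb{E}\left|\langle\beta^{(j)},M_{n-1}^P(s)\rangle\right|^2=\frac{s^{n-1}}{(n-1)!}\Vert\beta^{(j)}\Vert_{n-1}^2$.

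The computational heart of the argument is the Parseval identity $\sum_j\Vert\beta^{(j)}\Vert_{n-1}^2=\Vert\beta\Vert_n^2$, which follows by writing out both sides as sums of $\left|\langle\beta,e_{i_1}\otimes\dots\otimes e_{i_{n-1}}\otimes e_j\rangle\right|^2$ over all indices; summing the contracted last slot merely restores the full sum defining $\Vert\beta\Vert_n^2$. Combining this with the displayed expectation yields $\mathbb{E}\left|f_s\right|_{\left(P\mathbb{H}\right)^{\ast}}^2=\frac{s^{n-1}}{(n-1)!}\Vert\beta\Vert_n^2$, which is finite and confirms the integrability needed for Lemma \ref{l.h9.3}; integrating over $s\in[0,T]$ produces $\frac{T^n}{n!}\Vert\beta\Vert_n^2$, completing the induction. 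The only genuine care required is the bookkeeping of tensor slots (the new factor is appended on the right in Eq. (\ref{e.h9.5}), so $f_s$ and $\beta^{(j)}$ must contract the last slot) and the simultaneous propagation of both square integrability and the explicit formula through the induction, so that the hypothesis of Lemma \ref{l.h9.3} is verified at each level rather than assumed.
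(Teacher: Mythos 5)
Your proposal is correct and follows essentially the same route as the paper: induction on $n$, peeling off the outermost stochastic integral via the It\^o isometry of Lemma \ref{l.h9.3}, contracting the last tensor slot against an orthonormal basis of $P\mathbb{H}$, and invoking the Parseval identity $\sum_{j}\Vert\beta^{(j)}\Vert_{n-1}^{2}=\Vert\beta\Vert_{n}^{2}$ before integrating in $s$. Your preliminary reduction $\langle\alpha,M_{n}^{P}(T)\rangle=\langle\alpha_{P},M_{n}^{P}(T)\rangle$ is a welcome bit of extra care that the paper's terser argument leaves implicit.
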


\begin{proof}
The proof is easily carried out by induction with the case $n=1$ following
directly from Lemma \ref{l.h9.3}. Similarly from Lemma \ref{l.h9.3}, Eq.
(\ref{e.h9.5}), and induction we have%
\begin{align*}
\mathbb{E}\left\vert \tilde{\alpha}_{P}\right\vert ^{2}  &  =\mathbb{E}%
\left\vert \int_{0}^{T}\left\langle \alpha,M_{n-1}^{P}\left(  s\right)
\otimes dPb\left(  s\right)  \right\rangle \right\vert ^{2}\\
&  =\int_{0}^{T}\sum_{j=1}^{d}\mathbb{E}\left\vert \left\langle \alpha
,M_{n-1}^{P}\left(  s\right)  \otimes e_{j}\right\rangle \right\vert ^{2}ds\\
&  =\sum_{j=1}^{d}\int_{0}^{T}\frac{s^{n-1}}{\left(  n-1\right)  !}\left\Vert
\left\langle \alpha,\left(  \cdot\right)  \otimes e_{j}\right\rangle
\right\Vert_{n-1}^{2}ds=\frac{T^{n}}{n!}\left\Vert \alpha\right\Vert_{n}%
^{2}.
\end{align*}

\end{proof}

\begin{nota}
\label{n.h9.6}We now fix $T>0$ and for $P\in\operatorname*{Proj}\left(
\mathbb{W}\right)$,  let $\tilde{\alpha}_{P}=\left\langle \alpha,M_{n}%
^{P}\left(  T\right)  \right\rangle $,  i.e.%
\[
\tilde{\alpha}_{P}=\left\langle \alpha,\int_{0\leq s_{1}\leq s_{2}\leq
\dots\leq s_{n}\leq T}dPb\left(  s_{1}\right)  \otimes dPb\left(
s_{2}\right)  \otimes\dots\otimes dPb\left(  s_{n}\right)  \right\rangle .
\]

\end{nota}

\begin{lem}
\label{l.h9.7}If $P,Q\in\operatorname*{Proj}\left(  \mathbb{W}\right)$,  then%
\[
\left\Vert \tilde{\alpha}_{P}-\tilde{\alpha}_{Q}\right\Vert_{L^{2}}%
^{2}:=\mathbb{E}\left\vert
\tilde{\alpha}_{P}-\tilde{\alpha}_{Q}\right\vert
^{2}=\frac{T^{n}}{n!}\left\Vert
\alpha_{P}-\alpha_{Q}\right\Vert_{n}^{2}.
\]

\end{lem}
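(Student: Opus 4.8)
The plan is to realize the difference $\tilde{\alpha}_P-\tilde{\alpha}_Q$ as a \emph{single} iterated It\^o integral against one common Brownian motion, and then read off its $L^2$-norm directly from Corollary \ref{c.h9.5}. No separate polarization or cross-covariance computation is then needed: the ``overlap'' between the two projections is automatically encoded in the inner product $\langle\cdot,\cdot\rangle_n$ on $\mathbb{H}^{\ast\otimes n}$ through the term $\|\alpha_P-\alpha_Q\|_n^2$.

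Concretely, first I would choose a common refinement $R\in\operatorname*{Proj}\left(\mathbb{W}\right)$ whose range contains $P\mathbb{H}+Q\mathbb{H}$; since $P\mathbb{H}$ and $Q\mathbb{H}$ both lie in $\mathbb{H}_{\ast}$, their (finite-dimensional) sum does too, so Lemma \ref{l.h2.8} produces such an $R$ as the orthogonal projection onto $P\mathbb{H}+Q\mathbb{H}$. The key elementary fact is that for this choice $PR=P$ and $QR=Q$ as maps $\mathbb{W}\rightarrow\mathbb{H}$: one checks $PRw=Pw$ first on the dense subspace $\mathbb{H}$ (projecting onto $P\mathbb{H}\subseteq\operatorname*{Ran}R$ is unaffected by pre-projecting onto $\operatorname*{Ran}R$), and then extends to all $w\in\mathbb{W}$ by continuity of the relevant functionals. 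Using $PR=P$ one gets, slot by slot inside the integral,
\[
\langle \alpha_P, M_n^R(T)\rangle=\langle \alpha\circ P^{\otimes n}\circ R^{\otimes n}, (db)^{\otimes n}\text{-integrand}\rangle=\langle \alpha\circ (PR)^{\otimes n},\cdot\rangle=\langle\alpha, M_n^P(T)\rangle=\tilde{\alpha}_P,
\]
and likewise $\langle\alpha_Q,M_n^R(T)\rangle=\tilde{\alpha}_Q$. Hence, writing $\gamma:=\alpha_P-\alpha_Q\in\mathbb{H}^{\ast\otimes n}$,
\[
\tilde{\alpha}_P-\tilde{\alpha}_Q=\langle \gamma, M_n^R(T)\rangle .
\]

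Now I would apply Corollary \ref{c.h9.5} to the tensor $\gamma$ and the projection $R$, which gives $\mathbb{E}\left|\langle\gamma,M_n^R(T)\rangle\right|^2=\frac{T^n}{n!}\|\gamma_R\|_n^2$, where $\gamma_R=\gamma\circ R^{\otimes n}$ in the sense of Notation \ref{n.h9.1}. Because $PR=P$ and $QR=Q$, both $\alpha_P$ and $\alpha_Q$ are fixed by $R^{\otimes n}$, so $\gamma_R=\gamma$ and $\|\gamma_R\|_n=\|\alpha_P-\alpha_Q\|_n$, yielding
\[
\mathbb{E}\left|\tilde{\alpha}_P-\tilde{\alpha}_Q\right|^2=\frac{T^n}{n!}\|\alpha_P-\alpha_Q\|_n^2,
\]
as desired. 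The main obstacle is the single lemma that such an $R$ exists \emph{and} satisfies $PR=P$, $QR=Q$ on all of $\mathbb{W}$ rather than merely on $\mathbb{H}$; everything else is a one-line invocation of Corollary \ref{c.h9.5}. As a fallback that avoids the common-refinement step, one can instead prove the sesquilinear identity $\mathbb{E}[\langle\alpha,M_n^P(T)\rangle\overline{\langle\beta,M_n^Q(T)\rangle}]=\frac{T^n}{n!}\langle\alpha_P,\beta_Q\rangle_n$ by induction on $n$, using the It\^o isometry of Lemma \ref{l.h9.3} and the covariation relations \eqref{e.h9.3} in the inductive step, and then expand $\mathbb{E}|\tilde{\alpha}_P-\tilde{\alpha}_Q|^2$ into diagonal terms (given by Corollary \ref{c.h9.5}) and this cross term; this route is more computational but requires no statement about projections on $\mathbb{W}$.
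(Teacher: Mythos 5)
Your proposal is correct and follows essentially the same route as the paper: the paper's proof also takes $R$ to be the orthogonal projection onto $\operatorname{Ran}(P)+\operatorname{Ran}(Q)$, uses $(\alpha_P)_R=\alpha_P$ and $(\alpha_Q)_R=\alpha_Q$ to write $\tilde{\alpha}_P-\tilde{\alpha}_Q=\left(\alpha_P-\alpha_Q\right)_R^{\sim}$, and then invokes Corollary \ref{c.h9.5}. Your explicit verification that $PR=P$ and $QR=Q$ extend from $\mathbb{H}$ to all of $\mathbb{W}$ by continuity is a detail the paper leaves implicit, and is handled correctly.
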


\begin{proof}
Let $R\in\operatorname*{Proj}\left(  \mathbb{W}\right)  $ be the orthogonal
projection onto $\operatorname*{Ran}\left(  P\right)  +\operatorname*{Ran}%
\left(  Q\right)$. We then have $\left(  \alpha_{P}\right)
_{R}=\alpha_{P}$ and $\left(  \alpha_{Q}\right)_{R}=\alpha_{Q}$ and
therefore, by Corollary
\ref{c.h9.5},%
\begin{align*}
\mathbb{E}\left\vert
\tilde{\alpha}_{P}-\tilde{\alpha}_{Q}\right\vert ^{2}  &
=\mathbb{E}\left\vert \left(
\alpha_{P}\right)_{R}^{\symbol{126}}-\left( \alpha_{Q}\right)
_{R}^{\symbol{126}}\right\vert ^{2}=\mathbb{E}\left\vert
\left(  \alpha_{P}-\alpha_{P}\right)_{R}^{\symbol{126}}\right\vert ^{2}\\
&  =\frac{T^{n}}{n!}\left\Vert \left(  \alpha_{P}-\alpha_{P}\right)
_{R}\right\Vert_{n}^{2}=\frac{T^{n}}{n!}\left\Vert \alpha_{P}-\alpha
_{P}\right\Vert_{n}^{2}.
\end{align*}

\end{proof}

\begin{prop}
\label{p.h9.8}Let $\alpha\in\mathbb{H}^{\ast\otimes n}$ and $P_{k}%
\in\operatorname*{Proj}\left(  \mathbb{W}\right)  $ with $P_{k}|_{\mathbb{H}%
}\uparrow I|_{\mathbb{H}}$,  then $\left\{
\tilde{\alpha}_{P_{k}}\right\}_{k=1}^{\infty}$ is an
$L^{2}$--convergent series. We denote the limit by $\tilde{\alpha}$.
This limit is independent of the choice of orthogonal projections
used in constructing $\tilde{\alpha}$. \end{prop}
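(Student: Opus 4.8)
The plan is to reduce the $L^{2}$-convergence of the random variables $\{\tilde{\alpha}_{P_{k}}\}$ entirely to the $\mathbb{H}^{\ast\otimes n}$-convergence of the deterministic tensors $\{\alpha_{P_{k}}\}$, exploiting the fact that Lemma \ref{l.h9.7} gives an \emph{exact} identity rather than an inequality. First I would invoke Proposition \ref{p.h9.2}: the hypothesis $P_{k}|_{\mathbb{H}}\uparrow I|_{\mathbb{H}}$ yields $\alpha_{P_{k}}\rightarrow\alpha$ in $\mathbb{H}^{\ast\otimes n}$, so in particular $\{\alpha_{P_{k}}\}_{k=1}^{\infty}$ is Cauchy in $\mathbb{H}^{\ast\otimes n}$. (Each $\tilde{\alpha}_{P_{k}}$ is already known to be square integrable by Corollary \ref{c.h9.5}.)

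Next I would apply Lemma \ref{l.h9.7} with $P=P_{k}$ and $Q=P_{m}$ to obtain
\[
\left\Vert \tilde{\alpha}_{P_{k}}-\tilde{\alpha}_{P_{m}}\right\Vert_{L^{2}}^{2}=\frac{T^{n}}{n!}\left\Vert \alpha_{P_{k}}-\alpha_{P_{m}}\right\Vert_{n}^{2}.
\]
Since the right-hand side tends to $0$ as $k,m\rightarrow\infty$, the sequence $\{\tilde{\alpha}_{P_{k}}\}$ is Cauchy in $L^{2}$ of the underlying probability space, and by completeness of $L^{2}$ it converges to a limit, which I would denote $\tilde{\alpha}$.

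Finally, for the independence claim I would take a second sequence $\{Q_{k}\}\subset\operatorname*{Proj}\left(\mathbb{W}\right)$ with $Q_{k}|_{\mathbb{H}}\uparrow I|_{\mathbb{H}}$. Proposition \ref{p.h9.2} again gives $\alpha_{Q_{k}}\rightarrow\alpha$, and Lemma \ref{l.h9.7} together with the triangle inequality in $\mathbb{H}^{\ast\otimes n}$ yields
\[
\left\Vert \tilde{\alpha}_{P_{k}}-\tilde{\alpha}_{Q_{k}}\right\Vert_{L^{2}}=\sqrt{\frac{T^{n}}{n!}}\left\Vert \alpha_{P_{k}}-\alpha_{Q_{k}}\right\Vert_{n}\leqslant\sqrt{\frac{T^{n}}{n!}}\left(\left\Vert \alpha_{P_{k}}-\alpha\right\Vert_{n}+\left\Vert \alpha-\alpha_{Q_{k}}\right\Vert_{n}\right)\rightarrow0,
\]
so the two $L^{2}$-limits agree, proving that $\tilde{\alpha}$ does not depend on the approximating sequence. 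I do not expect a genuine obstacle here: the argument is a standard ``Cauchy sequence plus completeness'' passage, and the only point meriting care is that the identity in Lemma \ref{l.h9.7} is exact, so that no further estimates beyond Lemma \ref{l.h9.7} and Proposition \ref{p.h9.2} are required.
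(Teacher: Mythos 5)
Your proof is correct and follows essentially the same route as the paper: the paper likewise combines the isometry identity of Lemma \ref{l.h9.7} with the convergence $\alpha_{P_{k}}\rightarrow\alpha$ from Proposition \ref{p.h9.2} to get Cauchyness in $L^{2}$, and then repeats the same estimate with a second sequence $Q_{l}$ for independence of the limit. The only (cosmetic) difference is that you carry the constant $\sqrt{T^{n}/n!}$ explicitly, which the paper's displayed equations drop.
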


\begin{proof}
For $k,l\in\mathbb{N}$,  by Lemma \ref{l.h9.7},
\[
\left\Vert \tilde{\alpha}_{P_{l}}-\tilde{\alpha}_{P_{k}}\right\Vert_{L^{2}%
}=\left\Vert
\alpha_{P_{l}}-\alpha_{P_{k}}\right\Vert_{n}\rightarrow0\text{ as
}l,k\rightarrow\infty,
\]
because, as we have already seen, $\alpha_{P_{l}}\rightarrow\alpha$
in $\mathbb{H}^{\ast\otimes n}$. Therefore $\tilde{\alpha}:=L^{2}$--
$\lim_{k\rightarrow\infty}\tilde{\alpha}_{P_{k}}$ exists.

Now suppose that $Q_{l}\in\operatorname*{Proj}\left(
\mathbb{W}\right)  $ also increases to $I|_{\mathbb{H}}$. By Lemma
\ref{l.h9.7} and the fact that both $\alpha_{P_{l}}$ and
$\alpha_{Q_{l}}$ converge to $\alpha$ in $\mathbb{H}^{\ast\otimes
n}$,  we have
\[
\left\Vert \tilde{\alpha}_{P_{l}}-\tilde{\alpha}_{Q_{l}}\right\Vert_{L^{2}%
}=\left\Vert \alpha_{P_{l}}-\alpha_{Q_{l}}\right\Vert_{\mathbb{H}%
^{\ast\otimes n}}\rightarrow0\text{ as }l\rightarrow\infty.
\]

\end{proof}

By polarization of the identity, $\left\Vert \tilde{\alpha}\right\Vert
_{L^{2}}^{2}=T^{n}\left\Vert \alpha\right\Vert_{n}^{2}/n!$,  it follows that%
\[
\left(
\tilde{\alpha},\tilde{\beta}\right)_{L^{2}}=\frac{T^{n}}{n!}\left(
\alpha,\beta\right)_{\mathbb{H}^{\ast\otimes n}}\text{ for all
}\alpha ,\beta\in\mathbb{H}^{\ast\otimes n}.
\]
Moreover, if $\alpha\in\mathbb{H}^{\ast\otimes n}$ and $\beta\in
\mathbb{H}^{\ast\otimes m}$ with $m\neq n$,  by the orthogonality of
the finite
dimensional approximations, $\tilde{\alpha}_{P_{l}}$ and $\tilde{\beta}%
_{P_{l}}$, we have that $\left(  \tilde{\alpha},\tilde{\beta}\right)_{L^{2}%
}=0$.
\begin{cor}
[It\^{o}'s isometry]\label{c.h9.9}Suppose that $\alpha=\left\{  \alpha
_{n}\right\}_{n=0}^{\infty}\in\bigoplus\limits_{n=0}^{\infty}\frac{T^{n}%
}{n!}\mathbb{H}^{\ast\otimes n}$,  i.e.
$\alpha_{n}\in\mathbb{H}^{\ast\otimes n}$ for all $n$ such that
\[
\left\Vert \alpha\right\Vert_{T}^{2}=\sum_{n=0}^{\infty}\frac{T^{n}}%
{n!}\left\Vert \alpha_{n}\right\Vert_{n}^{2}<\infty.
\]
Then $\tilde{\alpha}:=\sum_{n=0}^{\infty}\tilde{\alpha}_{n}$ is $L^{2}\left(
\mathbf{P}\right)  $--convergent and the map,%
\[
\bigoplus\limits_{n=0}^{\infty}\frac{T^{n}}{n!}\mathbb{H}^{\ast\otimes n}%
\ni\alpha\mapsto\tilde{\alpha}\in L^{2}\left(  \mathbf{P}\right),
\]
is an isometry, where $\mathbf{P}$ is the probability measure used
in describing the law of $\left\{  b\left(  t\right)  \right\}
_{t\geq0}$. \end{cor}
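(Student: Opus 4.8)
The plan is to recognize this corollary as the assertion that the orthogonal direct sum of the single-degree isometries constructed just above is itself an isometry into the complete space $L^{2}\left(\mathbf{P}\right)$. All of the analytic content has already been isolated: for each fixed $n$, Proposition \ref{p.h9.8} produces a well-defined $\tilde{\alpha}_{n}\in L^{2}\left(\mathbf{P}\right)$ satisfying $\left\Vert \tilde{\alpha}_{n}\right\Vert_{L^{2}}^{2}=\frac{T^{n}}{n!}\left\Vert \alpha_{n}\right\Vert_{n}^{2}$ and independent of the approximating projections, while the remark following that proposition records that $\tilde{\alpha}_{n}$ and $\tilde{\beta}_{m}$ are orthogonal in $L^{2}\left(\mathbf{P}\right)$ whenever $n\neq m$. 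Thus the proof should reduce to a routine Pythagorean argument.

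First I would set $S_{N}:=\sum_{n=0}^{N}\tilde{\alpha}_{n}$ and show that $\left\{S_{N}\right\}$ is Cauchy in $L^{2}\left(\mathbf{P}\right)$. Using the cross-degree orthogonality together with the single-degree norm identity, for $M<N$ one has
\[
\left\Vert S_{N}-S_{M}\right\Vert_{L^{2}}^{2}=\sum_{n=M+1}^{N}\left\Vert \tilde{\alpha}_{n}\right\Vert_{L^{2}}^{2}=\sum_{n=M+1}^{N}\frac{T^{n}}{n!}\left\Vert \alpha_{n}\right\Vert_{n}^{2},
\]
which is the tail of the convergent series $\sum_{n=0}^{\infty}\frac{T^{n}}{n!}\left\Vert \alpha_{n}\right\Vert_{n}^{2}=\left\Vert \alpha\right\Vert_{T}^{2}<\infty$ and hence tends to $0$ as $M,N\rightarrow\infty$. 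Since $L^{2}\left(\mathbf{P}\right)$ is complete, the limit $\tilde{\alpha}:=\lim_{N\rightarrow\infty}S_{N}=\sum_{n=0}^{\infty}\tilde{\alpha}_{n}$ exists, which establishes the claimed $L^{2}$-convergence.

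Second, to obtain the isometry I would simply pass to the limit in the norm. By continuity of the $L^{2}$-norm and the orthogonality of distinct degrees,
\[
\left\Vert \tilde{\alpha}\right\Vert_{L^{2}}^{2}=\lim_{N\rightarrow\infty}\left\Vert S_{N}\right\Vert_{L^{2}}^{2}=\lim_{N\rightarrow\infty}\sum_{n=0}^{N}\frac{T^{n}}{n!}\left\Vert \alpha_{n}\right\Vert_{n}^{2}=\left\Vert \alpha\right\Vert_{T}^{2},
\]
so the assignment $\alpha\mapsto\tilde{\alpha}$ preserves norms; since each single-degree map is linear, the full map is linear and therefore an isometry.

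Because every genuinely substantive step—the $L^{2}$-construction of $\tilde{\alpha}_{n}$, its independence of the approximating projections, and both the within-degree norm formula and the cross-degree orthogonality—is already in hand from Corollary \ref{c.h9.5}, Lemma \ref{l.h9.7}, and Proposition \ref{p.h9.8}, I do not expect any serious obstacle. The only point demanding a moment's care is the bookkeeping confirming that the cross-degree orthogonality is inherited by the $L^{2}$-limits, so that the partial sums are genuine orthogonal decompositions; but this is precisely what the remark preceding the corollary supplies.
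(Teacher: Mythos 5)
Your proof is correct and follows exactly the route the paper intends: the paper states this corollary without a written proof, as an immediate consequence of the single-degree isometry from Corollary \ref{c.h9.5}/Proposition \ref{p.h9.8} and the cross-degree orthogonality recorded in the paragraph just before the corollary, and your Pythagorean/completeness argument is precisely the routine verification being left to the reader.
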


\subsection{The stochastic Taylor map\label{s.h9.2}}

Let $b\left(  t\right)  =\left(  B\left(  t\right),B_{0}\left(
t\right) \right)  \in\mathfrak{g}$ and $g\left(  t\right)  \in G$ be
the Brownian motions introduced at the start of Section \ref{s.h4}.
We are going to use the results of the previous subsection with
$\mathbb{H}=\mathfrak{g}_{CM}$,  $\mathbb{W}=\mathfrak{g,}$ and
$b\left(  t\right)  =\left(  B\left(  t\right) ,B_{0}\left( t\right)
\right)$. Let $f\in\mathcal{H}_{T}^{2}\left( G\right)  $ and
$\alpha_{f}:=\mathcal{T}_{T}S_{T}f\in J_{T}^{0}\left(
\mathfrak{g}_{CM}\right)  $. The following theorem is a (precise)
restatement of Theorem \ref{t.h1.9}.

\begin{thm}
\label{t.h9.10}For any $f\in\mathcal{H}_{T}^{2}\left(  G\right)  $
\begin{equation}
f\left(  g\left(  T\right)  \right)  =\tilde{\alpha}_{f}, \label{e.h9.6}%
\end{equation}
where $\tilde{\alpha}_{f}$ was introduced in Corollary \ref{c.h9.9}. (The
right hand side of Eq. (\ref{e.h1.8}) is to be interpreted as $\tilde{\alpha
}_{f}.$)
\end{thm}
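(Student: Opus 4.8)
The plan is to exploit the fact that both sides of \eqref{e.h9.6} are continuous linear maps of $f\in\mathcal{H}_{T}^{2}\left(G\right)$ into $L^{2}\left(\mathbf{P}\right)$ and to verify the identity only on a dense subspace. Indeed, since $\nu_{T}=\operatorname*{Law}(g(T))$, the assignment $f\mapsto f(g(T))$ is an isometry of $\mathcal{H}_{T}^{2}(G)\subset L^{2}(\nu_{T})$ into $L^{2}(\mathbf{P})$, while $f\mapsto\tilde{\alpha}_{f}$ is the composition of the unitary maps $S_{T}$ (Theorem \ref{t.h1.8}) and $\mathcal{T}_{T}$ (Theorem \ref{t.h6.10}) with the It\^{o} isometry of Corollary \ref{c.h9.9}; both maps are therefore isometric, hence continuous. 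By Definition \ref{d.h1.7}, $\mathcal{P}$ is dense in $\mathcal{H}_{T}^{2}\left(G\right)$, so it suffices to prove $f(g(T))=\tilde{\alpha}_{f}$ for every holomorphic cylinder polynomial $f\in\mathcal{P}$. For such an $f$ we have $S_{T}f=f|_{G_{CM}}$ by Corollary \ref{c.h5.10}, whence $\alpha_{f}=\mathcal{T}_{T}S_{T}f=\hat{f}(\mathbf{e})$, which lies in $J_{T}^{0}(\mathfrak{g}_{CM})$ by Corollary \ref{c.h6.6} and moreover has finite rank, say $\hat{f}_{n}(\mathbf{e})=0$ for $n>m$, because $f$ is a polynomial. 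Thus the task reduces to showing $f(g(T))=\widetilde{\hat{f}(\mathbf{e})}=\sum_{n=0}^{m}\widetilde{(\hat{f}_{n}(\mathbf{e}))}$.

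Write $f=F\circ\pi_{P}$ with $P\in\operatorname*{Proj}(W)$ and choose $P_{k}\in\operatorname*{Proj}(W)$ with $\operatorname*{Ran}P\subset\operatorname*{Ran}P_{k}$ and $P_{k}|_{H}\uparrow I_{H}$; then $\pi_{P}\circ\pi_{P_{k}}=\pi_{P}$, so $f\circ\pi_{P_{k}}=f$ and $f|_{G_{P_{k}}}$ is a holomorphic polynomial on the finite dimensional group $G_{P_{k}}$. The heart of the argument is the finite dimensional stochastic Taylor formula
\[
f\bigl(g_{P_{k}}(T)\bigr)=\sum_{n=0}^{m}\bigl\langle \hat{f}_{n}(\mathbf{e}),M_{n}^{P_{k}}(T)\bigr\rangle ,
\]
with $M_{n}^{P_{k}}$ as in Definition \ref{d.h9.4}. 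I would prove this by iterating It\^{o}'s formula on $G_{P_{k}}$: because $f|_{G_{P_{k}}}$ is holomorphic, the generator of $g_{P_{k}}$ annihilates it (Corollary \ref{c.h5.8}, applied on $G_{P_{k}}$), so by Proposition \ref{p.h4.4} the It\^{o} drift vanishes and $df(g_{P_{k}}(t))=\langle \hat{f}_{1}(g_{P_{k}}(t)),dP_{k}b(t)\rangle$; each left invariant derivative $\tilde{h}f$ is again a holomorphic polynomial on $G_{P_{k}}$ (Corollary \ref{c.h5.5}), so the procedure iterates, and the grading induced by the dilations $\delta_{\lambda}(w,c)=(\lambda w,\lambda^{2}c)$ forces the expansion to terminate at the graded degree $m$ (compare \cite{Driver1995a}). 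Two bookkeeping points must be checked to write the coefficients as $\hat{f}_{n}(\mathbf{e})$: first, since $G_{P_{k}}$ is a subgroup of $G_{CM}$ and $e^{th}=th$ (Proposition \ref{p.h3.5}), the $G_{P_{k}}$-intrinsic left invariant derivatives of $f|_{G_{P_{k}}}$ agree with $\hat{f}_{n}(\mathbf{e})$ on $\mathfrak{g}_{P_{k}}^{\otimes n}$; second, since $M_{n}^{P_{k}}(T)\in\mathfrak{g}_{P_{k}}^{\otimes n}$, the pairing only ever sees this restriction.

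Finally I would let $k\to\infty$. On the left, $f$ is a polynomial and hence satisfies the growth bound \eqref{e.h4.21} for every $\varepsilon\in(0,\delta)$, so Proposition \ref{p.h4.12} (with $h=\mathbf{e}$ and $p=2$) gives $f(g_{P_{k}}(T))\to f(g(T))$ in $L^{2}(\mathbf{P})$. On the right, for each fixed $n$ one has $\langle \hat{f}_{n}(\mathbf{e}),M_{n}^{P_{k}}(T)\rangle=\langle(\hat{f}_{n}(\mathbf{e}))_{P_{k}},M_{n}^{P_{k}}(T)\rangle=\widetilde{(\hat{f}_{n}(\mathbf{e}))}_{P_{k}}$, because $M_{n}^{P_{k}}(T)$ is $P_{k}$-valued, and Proposition \ref{p.h9.8} yields $\widetilde{(\hat{f}_{n}(\mathbf{e}))}_{P_{k}}\to\widetilde{(\hat{f}_{n}(\mathbf{e}))}$ in $L^{2}(\mathbf{P})$; as the sum is finite, the right side converges to $\sum_{n=0}^{m}\widetilde{(\hat{f}_{n}(\mathbf{e}))}=\tilde{\alpha}_{f}$. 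Equating the two limits establishes $f(g(T))=\tilde{\alpha}_{f}$ for $f\in\mathcal{P}$, and the continuity--density argument of the first paragraph promotes this to all of $\mathcal{H}_{T}^{2}(G)$. I expect the main obstacle to be the finite dimensional stochastic Taylor formula displayed above --- verifying that holomorphicity kills the It\^{o} drift at every stage of the iteration and correctly matching the iterated It\^{o} integrals with the tensors $\hat{f}_{n}(\mathbf{e})$ --- rather than the routine limiting steps; descending to finite dimensions is precisely what lets me avoid running It\^{o}'s formula against the intermediate integrands $\tilde{h}f$, which are holomorphic but no longer cylinder functions on $G$.
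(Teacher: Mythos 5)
Your proposal is correct and follows essentially the same route as the paper: prove the identity for $f\in\mathcal{P}$ by iterating It\^{o}'s formula on the finite dimensional approximations $g_{P_{k}}(T)$ (where holomorphicity kills the drift and the nilpotent grading terminates the expansion), pass to the limit in $k$ via Propositions \ref{p.h4.12} and \ref{p.h9.8}, and then extend to all of $\mathcal{H}_{T}^{2}(G)$ by density of $\mathcal{P}$ together with the isometry of both sides --- which is exactly the paper's closing argument using Theorem \ref{t.h7.1}, the unitarity of $S_{T}$ and $\mathcal{T}_{T}$, and Corollary \ref{c.h9.9}. The extra detail you supply on the stochastic Taylor expansion is what the paper delegates to the citation of Proposition 5.2 of \cite{Driver1995a}.
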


\begin{proof}
First suppose that $f$ is a holomorphic polynomial and $P\in
\operatorname*{Proj}\left(  W\right)  $ so that $\pi_{P}\in
\operatorname*{Proj}\left(  \mathfrak{g}\right)$. Then by It\^{o}'s formula,%
\[
f\left(  g_{P}\left(  T\right)  \right)  =f\left(  \mathbf{e}\right)
+\int_{0}^{T}\left\langle Df\left(  g_{P}\left(  t\right)
\right),d\pi _{P}b\left(  t\right)  \right\rangle .
\]
Iterating this equation as in the proof of \cite[Proposition 5.2]%
{Driver1995a}, if $N\in\mathbb{N}$ is sufficiently large, then%
\begin{align*}
f\left(  g_{P}\left(  T\right)  \right)   &  =f\left(
\mathbf{e}\right) +\sum_{n=1}^{N}\int_{0\leq s_{1}\leq
s_{2}\leq\dots\leq s_{n}\leq T}\left\langle D^{n}f\left(
\mathbf{e}\right),d\pi_{P}b\left( s_{1}\right)  \otimes\dots\otimes
d\pi_{P}b\left(  s_{n}\right)  \right\rangle
\\
&  =f\left(  \mathbf{e}\right)  +\sum_{n=1}^{N}\left[  D^{n}f\left(
\mathbf{e}\right)  \right]_{\pi_{P}}^{\symbol{126}}.
\end{align*}
We now replace $P$ by $P_{k}\in\operatorname*{Proj}\left(  W\right)  $ with
$P_{k}\uparrow I$ in this identity. Using Propositions \ref{p.h4.12} and
\ref{p.h9.8}, we may now pass to the limit as $k\rightarrow\infty$ in order to
conclude,
\begin{equation}
f\left(  g\left(  T\right)  \right)  =f\left(  \mathbf{e}\right)  +\sum
_{n=1}^{N}\left[  D^{n}f\left(  \mathbf{e}\right)  \right]^{\symbol{126}%
}=\tilde{\alpha}_{f}. \label{e.h9.7}%
\end{equation}

Now suppose that $f\in\mathcal{H}_{T}^{2}\left(  G\right)$. By
Theorem \ref{t.h7.1} we can find a sequence of holomorphic
polynomials $\left\{ f_{n}\right\}
_{n=1}^{\infty}\subset\mathcal{P}$ such that
\[
\mathbb{E}\left\vert f\left(  g\left(  T\right)  \right)  -f_{n}\left(
g\left(  T\right)  \right)  \right\vert ^{2}=\left\Vert f-f_{n}\right\Vert
_{L^{2}\left(  \nu_{T}\right)  }^{2}\rightarrow0\text{ as }n\rightarrow
\infty.
\]
The isometry property of the Taylor and skeleton maps (Theorem \ref{t.h6.10}
and Corollary \ref{c.h8.3}), shows that $\alpha_{f_{n}}\rightarrow\alpha_{f}$
in $J_{T}^{0}$ and therefore by Corollary \ref{c.h9.9} $\tilde{\alpha}_{f_{n}%
}\rightarrow\tilde{\alpha}_{f}$ as $n\rightarrow\infty$. Hence we
may pass to the limit in Eq. (\ref{e.h9.7}) applied to the sequence
$f_{n}\left(  g\left( T\right)  \right)  =\tilde{\alpha}_{f_{n}}$,
to complete the proof of Eq. (\ref{e.h9.6}).
\end{proof}

\section{Future directions and questions\label{s.h10}}

In this last section we wish to speculate on a number of ways that the results
in this paper might be generalized.

\begin{enumerate}
\item It should be possible to remove the restriction on $\mathbf{C}$ being
finite dimensional, i.e. we expect much of what have done in this
paper to go through when $\mathbf{C}$ is replaced by a separable
Hilbert space. In doing so one would have to modify the finite
dimensional approximations used in our construction to truncate
$\mathbf{C}$ as well.

\item We also expect that the level of non-commutativity of $G$ may be
increased. To be more precise, under suitable hypothesis it should be possible
to handle more general graded nilpotent Lie groups.

\item Open questions:

\begin{enumerate}
\item as we noted in Remark \ref{r.h5.13} we do not know if
$\mathcal{A}_{T}^{p}=\mathcal{H}_{T}^{p}\left(  G\right)$. It might
be easier to try to answer this question for $p=2$.

\item give an intrinsic characterization of
$\mathcal{H}_{T}^{2}\left(  G\right)$ as in Shigekawa
\cite{Shigekawa1991} in terms of functions in $L^{2}\left(
\nu_{T}\right)$ solving a weak form of the Cauchy--Riemann
equations.

\end{enumerate}
\end{enumerate}

\begin{acknowledgement}
We are grateful to Professor Malliavin whose question during a workshop
at the Hausdorff Institute (Bonn, Germany) led us to include a section on a holomorphic chaos expansion.
\end{acknowledgement}

\def\cprime{$'$}
\providecommand{\bysame}{\leavevmode\hbox to3em{\hrulefill}\thinspace}
\providecommand{\MR}{\relax\ifhmode\unskip\space\fi MR }
\providecommand{\MRhref}[2]{%
  \href{http://www.ams.org/mathscinet-getitem?mr=#1}{#2}
}
\providecommand{\href}[2]{#2}

\end{document}